\newcommand{\poubelle}[1]{}
  \newcolumntype{x}[1]{>{\centering\hspace{0pt}}p{#1}} 
\def\R{\mathbb{R}}
\def\K{\mathbb{K}}
\def\{{\left\lbrace}
\def\}{\right\rbrace}
\newtheorem{theorem}{Theorem}[section]
\newtheorem{corollary}[theorem]{Corollary}
\newtheorem{define}[theorem]{Definition}
\newtheorem{lemma}[theorem]{Lemma}
\newtheorem{prop}[theorem]{Proposition}
\newtheorem{remark}[theorem]{Remark}
\renewcommand{\v}{\mathrm{v}}
\newcommand{\w}{\mathrm{w}}
\newcommand{\aut}{\mathrm{Aut}}
\newcommand{\red}{\mathrm{red}}
\newcommand{\M}{\mathbf {M}}
\newcommand{\J}{\mathbf {J}}
\newcommand{\T}{\mathbb{T}}
\renewcommand{\K}{\mathbb{K}}
\newcommand{\C}{\mathbb{C}}
\newcommand{\ext}{\mathrm{ext}}
\newcommand{\ent}{{\rm Ent}}
\newcommand{\Ent}{{\rm { Ent}}}
\newcommand{\vol}{{\rm Vol}}
\newcommand{\Tr}{\Lambda_0}
\newcommand{\uu}{\mathrm{v}}
\renewcommand{\K}{\mathcal{K}(X,\omega_0)^\T}
\newcommand{\Kn}{\mathring{\mathcal{K}}(X,\omega_0)^\T}
\newcommand{\TC}{\mathbb{T}^{\mathbb{C}}}
\title[Weighted cscK metric (II): the continuity method]{Weighted cscK metric (II): the continuity method}
\author{Eleonora Di Nezza, Simon Jubert and Abdellah Lahdili}
\begin{document}

\begin{abstract}
In this paper we investigate the existence of metrics with weighted constant scalar curvature (wcscK for short) on a compact Kähler manifold $X$: this notion include constant scalar curvature K\"ahler metrics, weighted solitons, Calabi’s extremal Kähler metrics and extremal metric on semisimple principal fibrations. We prove that the coercivity of the weighted Mabuchi functional implies the existence of a wcscK metric, thereby achieving the equivalence. 

We then give several applications in Kähler and toric geometry, such as a weighted version of the toric Yau–Tian–Donaldson correspondence, and the characterization of the existence of wcscK metric on total space of semisimple principal fibration $Y$ in term of existence of wcscK metric on its fiber $X$.
\end{abstract}

\maketitle

\tableofcontents

\section{Introduction}
Let $(X, \omega_0)$ be a compact K\"ahler manifold of complex dimension $n$, endowed with an a action of a compact torus $\T$ in the group $\aut_{\red}(X)$ of reduced automorphisms of $X$ and let $\omega_0$ be a $\mathbb{T}$-invariant K\"ahler form on $X$. We denote $\mathcal{K}(X,\omega_0)^{\mathbb{T}}$ the set of smooth $\omega_0$-relative K\"ahler potentials which are $\mathbb{T}$-invariant. For $\varphi \in \mathcal{K}(X,\omega_0)^{\mathbb{T}}$ we denote by $\omega_{\varphi}=\omega_0+dd^c\varphi$  the corresponding $\T$-invariant K\"ahler metric. In our convention, $dd^cf=2i \partial \bar{\partial}f$, for any smooth function $f$ on $X$. It is well-known that the $\mathbb{T}$-action on $X$ is $\omega_{\varphi}$-Hamiltonian (see e.g. \cite[Section 2.5]{Gau}) and  that $P_{\varphi}:= \mu_{\varphi}(X)$ is a convex polytope in $\mathfrak{t}^{*}$ \cite{Ati, GS, Lah19}. Here $\mu_\varphi: X \rightarrow \mathfrak{t}^{*}$ is the moment map associated to $\omega_\varphi$ and $\mathfrak{t}^{*}$ stands for the dual vector space for the Lie algebra $\mathfrak{t}$ of $\mathbb{T}$. We normalize $\mu_{\varphi}$ by

\begin{equation}{\label{normalizing-moment-map}}
\mu_{\varphi}=\mu_{0} + d^c\varphi,
\end{equation}

\noindent in such a way that $P=P_{\varphi}$ is $\varphi$-independent, see \cite[Lemma 1]{Lah19}. Sometimes, when confusion may arise, we denote by $\mu_\omega$ the moment map of a given K\"ahler metric $\omega$. 
 
For a given positive weight function $\v \in {C}^{\infty}(P,\R_{>0})$, we define the $\v$-weighted scalar curvature of a $\T$-invariant K\"ahler metric $\omega_\varphi \in [\omega_0]$  by

\begin{equation}{\label{def:scalv2}}
 {\mathrm{Scal}_\v(\omega_\varphi)}:= 2\v(\mu_\varphi) \Lambda_{\varphi,\v}(\mathrm{Ric}_\v(\omega_\varphi)),
\end{equation}

\noindent where $\Lambda_{\varphi,\v}$ is the $\v$-weighted trace with respect to $\omega_\varphi$ (see \cite[Appendix A]{DJL}) and $ \mathrm{Ric}_\v(\omega_\varphi) := \mathrm{Ric}(\omega_\varphi) -\frac{1}{2}dd^c \log \v(\mu_\varphi)$ is the $\v$-weighted Ricci form of $\omega_\varphi$.

Given a second weight $\w \in {C}^{\infty}(P,\R)$, a $\T$-invariant K\"ahler metric $\omega_\varphi \in [\omega_0]$ is called \emph{$(\v,\w)$-weighted cscK} if its $\v$-weighted scalar curvature $\mathrm{Scal}_\v(\omega_\varphi)$ satisfies

\begin{equation}{\label{wcsck}}
    \mathrm{Scal}_\v(\omega_\varphi)=\w(\mu_\varphi).
\end{equation}

The significance of \eqref{wcsck} in relation to various geometric conditions is thoroughly examined in \cite{Lah19} (see also \cite{AJL} for $\v$-solitons and semisimple principal fibration).  However, we shall mention a few specific cases below:

\begin{enumerate}
    \item[(a)] for $\v = 1$ and $\w=c$ is a constant, \eqref{wcsck} corresponds to the classical cscK equation;
    \item[(b)]  for $\v = 1$ and $\w=\ell_{\ext}$, where $\ell_{\ext}$ is the affine extremal function, the weighted cscK metrics are Calabi's extremal K\"ahler metrics \cite{EC82};
    \item[(c)] for $\v>0$ is any smooth function and $\w=2\v(x)(n+ \langle d\log\v(x),x \rangle )$, \eqref{wcsck} corresponds to the weighted $\v$-soliton examined in \cite{AJL, HL}, generalizing the well-studied K\"ahler-Ricci solitons (see e.g. \cite{BW, Blo, TZ, TZ02});
    \item[(d)] for $\v$ and $\w$ polynomials, then $(\v,\w)$-cscK metrics on $X$ correspond to Calabi's extremal K\"ahler metrics on the total space of an holomorphic fibration $Y$ with fiber $X$, called semisimple principal fibration \cite{ACGT11, AJL, Jub23};
    \item[(e)] for $\v=\ell^{-n-1}, \w=a \ell^{-n-2}$,  $[\omega_0]=c_1(L)$, where $\ell$ is a positive affine-linear function on the polytope, $a$ is a constant,  and $L$ is  a polarization of $X$, \eqref{wcsck} describes  a scalar flat cone K\"ahler metric on the affine cone $(L^{-1})^{\times}$ polarized by the lift of $\xi = d\ell$ to $L^{-1}$ via $\ell$ \cite{AC,ACL}.
\end{enumerate}

In \cite{Lah19}, an extension of Calabi's extremal K\"ahler metrics \cite{EC82} is proposed. Suppose that $\v, \w_0>0$ are given positive smooth functions on $P$. One can then find a unique affine-linear function $\ell^{\ext}_{\v,\w_0}$ on $\mathfrak{t}^*$, called \emph{the weighted extremal function}, such that 

\begin{equation*}
\int_X \Big(\mathrm{Scal}_\v(\omega) - \ell^{\ext}_{\v,\w_0}(\mu_{\omega})\w_0(\mu_{\omega})\Big) \ell(\mu_{\omega})\omega^{n} =0
\end{equation*}
any affine-linear function $\ell$ on $\mathfrak{t}^*$.  In this case, a solution of the $(\v, \w)$-cscK problem \eqref{wcsck} is called \emph{$(\v,\w)$-extremal K\"ahler metric}. We emphasize that $(\v, \w)$-extremal K\"ahler metrics are  $(\v, \w)$-cscK metrics in the particular case $\w = \ell^{\ext}_{\v,\w_0} \w_0$ with $\w_0>0$ on $P$ and $\ell^{\ext}_{\v,\w_0}$ affine-linear. 

\smallskip

Our main result states as follows:

\begin{theorem}{\label{t:exi}} Let $\v>0$, $\w$ be two weight functions on $P$ such that $\v$ is $\log$-concave and $\v,\w$ satisfy
\begin{equation}\label{norm_intro}
   \int_X \big(\mathrm{Scal}_{\mathrm{v}}(\omega_{\varphi})-\mathrm{w}(\mu_{\varphi})\big) \omega_\varphi^{[n]} = 0,
\end{equation}
for any $\varphi\in \mathcal{K}(X,\omega_0)^{\mathbb{T}}$.
Assume that the weighted Mabuchi energy $\M_{\v,\w}$ is coercive, i.e. there exist positive constants $A, B\in \mathbb{R}$ such that
\[\M_{\v,\w}(\varphi) \ge A \inf_{\gamma \in \T^{\C}} d_1(0, \gamma \cdot \varphi) - B, \]
for any $\varphi\in \mathcal{K}(X,\omega_0)^{\mathbb{T}}$. Then there exists a $(\v,\w)$-cscK in $[\omega_0]$.
\end{theorem}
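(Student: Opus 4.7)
The plan is to adapt Chen--Cheng's continuity method for the classical cscK equation to the weighted setting. I introduce a one-parameter family of equations
\[
\mathrm{Scal}_{\v}(\omega_{\varphi_t}) \;=\; t\,\w(\mu_{\varphi_t}) + (1-t)\,h_0(\mu_{\varphi_t}),
\]
where $h_0$ is chosen so that $t=0$ corresponds to an equation solved by the reference $\omega_0$ (for instance by setting $h_0(\mu_{\varphi})=\mathrm{Scal}_{\v}(\omega_0)$ composed with $\mu_0$, then correcting by the unique affine-linear term in $\mu_\varphi$ needed to preserve the integral normalization \eqref{norm_intro} throughout the path). Let $S \subset [0,1]$ be the subset of parameters for which a smooth $\T$-invariant solution $\varphi_t \in \mathcal{K}(X,\omega_0)^\T$ exists. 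Then $0 \in S$, and the goal is to show $S$ is both open and closed.

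For openness, I would linearize the weighted scalar curvature operator at $\varphi_t$. As established in part (I) of this series, this linearization is a fourth-order self-adjoint elliptic operator on $\T$-invariant functions, whose kernel modulo constants consists of real holomorphy potentials for the reduced automorphism group. Working on a $\TC$-slice (quotienting by the action of $\TC$ on $\mathcal{K}(X,\omega_0)^\T$, along which both sides of the equation are equivariant), the implicit function theorem in a suitable Hölder space of $\T$-invariant potentials gives openness, since the family of right-hand sides remains orthogonal to the kernel by our choice of normalization.

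The bulk of the work lies in closedness. First, using the monotonicity of $\M_{\v,\w_t}$ along solutions of the continuity path (a standard computation showing $\frac{d}{dt}\M_{\v,\w_t}(\varphi_t) \le 0$ up to controllable drift in $t$, combined with log-concavity of $\v$ which ensures convexity of $\M_{\v,\w}$ along weak geodesics, as set up in part (I)), one obtains a uniform bound $\M_{\v,\w}(\varphi_t) \le C$. Coercivity then gives
\[
\inf_{\gamma \in \TC} d_1(0,\gamma \cdot \varphi_t) \;\le\; \frac{C+B}{A},
\]
so after translating by an appropriate $\gamma_t \in \TC$ one may assume $d_1(0,\varphi_t) \le C'$ uniformly. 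In the second stage I would promote this $d_1$-bound, together with an entropy bound extracted from $\M_{\v,\w_t}(\varphi_t) \le C$, into full a priori $C^{k,\alpha}$ estimates: a $C^0$ estimate via pluripotential-theoretic compactness of the sublevel sets $\{\M_{\v,\w}\le C\} \cap \{d_1 \le C'\}$, then a $C^2$ (equivalently $C^{1,\bar 1}$) estimate via a weighted version of Chen--Cheng's auxiliary equation and maximum principle, and finally higher regularity by bootstrap via Evans--Krylov and Schauder theory applied to the weighted complex Monge--Amp\`ere / scalar curvature system.

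The main obstacle, and what I expect to occupy most of the proof, is extending Chen--Cheng's a priori estimates to the weighted context. The difficulty is that $\mathrm{Scal}_{\v}(\omega_\varphi)$ depends not only on the complex Hessian of $\varphi$ but also on $\varphi$ itself through the moment map $\mu_\varphi = \mu_0 + d^c\varphi$; this introduces lower-order nonlinear terms whose control requires both the positivity of $\v$ and its log-concavity to preserve the sign structure underlying the maximum principle arguments in the $C^0$ and $C^2$ estimates. Once the a priori estimates are in place, closedness follows: a limit $\varphi_\infty$ of solutions on a sequence $t_k \to t_\infty$ is smooth and $\T$-invariant, hence $t_\infty \in S$. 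Combined with openness and $0 \in S$, we conclude $S=[0,1]$, and $\varphi_1$ is the desired $(\v,\w)$-cscK potential.
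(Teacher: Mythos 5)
Your high-level skeleton (continuity path, openness, closedness via $d_1$/entropy bounds plus Chen--Cheng type estimates) matches the paper, but the specific continuity path you propose does not work, and this is not a cosmetic issue: the structure of the paper's path is precisely what makes openness and closedness tractable.

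First, your path is ill-posed at $t=0$. You want $h_0$ to be a weight (a function on $P$) such that $\mathrm{Scal}_\v(\omega_0)=h_0(\mu_0)$, but $\mathrm{Scal}_\v(\omega_0)$ is a general $\T$-invariant function on $X$ that does not factor through the moment map for a generic $\omega_0$. So $t=0$ is not actually solved by $\varphi=0$, and $0\in S$ fails. The paper instead uses the twisted path
\[
\tfrac{t}{\v(\mu_\varphi)}\big(\mathrm{Scal}_\v(\omega_\varphi)-\w(\mu_\varphi)\big)=(1-t)\big(\Lambda_{\varphi,\v}(\omega_0)-\tilde\v(\mu_\varphi)\big),
\]
which degenerates at $t=0$ (the equation is void, not solved by $\omega_0$) but whose formal limit $\Lambda_{\varphi,\v}(\omega_0)=\tilde\v(\mu_\varphi)$ is solved by $\varphi=0$. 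Existence for small $t>0$ is not obtained by the ordinary IFT around a solution; the paper constructs approximate solutions $\omega_j$ (Lemma 3.8) and applies a \emph{quantitative} IFT à la Hashimoto, carefully tracking how the operator norm of the inverse blows up as $t\to 0$.

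Second, and more fundamentally, the twisted term $(1-t)\Lambda_{\varphi,\v}(\omega_0)$ is what kills the kernel of the linearization. Along your path the linearized operator is essentially $-2\mathbb{L}_{\varphi,\v}$, whose kernel consists of all $\T$-reduced holomorphy potentials, and handling this by "working on a $\TC$-slice" is not straightforward: there is no canonical slice, and the range condition for the IFT is delicate. The paper's linearization is $-2\mathbb{L}_{\varphi,\v}+r\,\mathbb{H}^{\omega_0}_{\varphi,\v}$ (Lemma 3.7), and the added weighted-Laplacian-type term $\mathbb{H}$ has kernel equal to constants (Lemma 3.4), so the full operator has only constants in its kernel and openness is immediate for $t<1$.

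Third, the closedness argument at $t=1$ is the real novelty of the paper and your proposal does not engage with it. For $t<1$, the twist contributes a coercive term $\J^{\omega_0}_{\v,\v\tilde\v}$ to the Euler--Lagrange functional $\M^t_{\v,\w}$, giving a bound $d_1(0,\varphi_t)\le C/(1-t)$; this blows up as $t\to 1$. To pass to the limit one must use $\TC$-coercivity, translate $\varphi_i$ by $\gamma_i\in\TC$ realizing $\inf_\gamma d_1(0,\gamma\cdot\varphi_i)$, and then observe that the translated system becomes a genuinely different, \emph{twisted} weighted cscK system (equation \eqref{twisted cscK_eq}) involving $\beta_i=\frac{1-t_i}{2t_i}\gamma_i^*\omega_0$ and $\tilde f_i=\frac{1-t_i}{2t_i}h_i$. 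This requires the new $W^{1,2p}$ estimates of Section 2 for such twisted systems, with the crucial integrability hypothesis $\int_X e^{-p_0\tilde f_i}\omega_0^{[n]}\le C$, which in turn relies on \cite[Lemma 3.12]{CC21c}. Your proposal replaces this entire mechanism with a generic "weighted Chen--Cheng estimate", but in the limit $t\to 1$ one cannot uniformly bound $d_1(0,\varphi_t)$ without first translating by $\TC$, and after translating one is forced into the twisted system; this is where the essential work is.

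In short: the choice of a twisted path against $\omega_0$ is not an implementation detail but the engine of both openness (trivial kernel, quantitative IFT) and closedness at $t=1$ (twisted a priori estimates), and your alternative path omits all of it.
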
 
Theorem \ref{t:exi} recovers the main result of \cite{HL25}, in which the authors establish the existence of weighted extremal metrics -a special class of weighted cscK metrics. Moreover, our proof differs in several aspects, particularly in the techniques used to obtain the $C^0$-estimates and in the application of the continuity method.
\smallskip

The weighted Mabuchi energy was first introduced in \cite{Lah19} and it is defined by its variation

\begin{equation*}
    D_{\varphi}\left(\mathbf{M}_{\v,\w}\right)(\dot{\varphi})=-\int_X \dot{\varphi}\big(\mathrm{Scal}_\v(\omega_\varphi)-\w(\mu_\varphi)\big)\v(\mu_\varphi)\omega_\varphi^{[n]}, \quad \mathbf{M}_{\v,\w}(0)=0,
\end{equation*}
so that its critical points are exactly $(\v, \w)$-cscK metrics. The coercivity condition links the behavior of $\mathbf{M}_{\v,\w}$ with respect to the distance $d_1$. This distance was defined and studied in \cite{Dar}. The tangent space of $ \mathcal{K}:=\mathcal{K}(X,\omega_0)^{\mathbb{T}}$ can be identified with the space of smooth functions and Darvas considered the norm 
$$\|\dot{\psi}\|_\varphi:= \int_X |\dot{\psi}| \omega_\varphi^n, \qquad \dot{\varphi} \in T_\varphi \mathcal{K}.$$ Such a norm gives a way to measure the length of smooth paths $\gamma:[0,1]\rightarrow \mathcal{K}$, given as
$$\ell_1(\gamma):= \int_0^1 \|\dot \gamma(t)\|_{\gamma(t)} \, dt$$
Given $\varphi_0, \varphi_1 \in \mathcal{K}$, the distance $d_1$ is defined as 
$$d_1(\varphi_0, \varphi_1):= \inf\{ \ell_1(\gamma)\quad \gamma:[0,1]\rightarrow \mathcal{K}, \gamma(0)=\varphi_0, \gamma(1)=\varphi_1\}.$$

\smallskip
It is worth mentioning that the condition on the weights \eqref{norm_intro} is very natural. Indeed, if a $(\v,\w)$-cscK metric exists, then the weighted Futaki invariant $\mathbf{F}_{\v,\w}$ defined on the space of affine functions on $P$

\begin{equation*}
\mathbf{F}_{\v,\w}(\ell):=\int_X \big(\mathrm{Scal}_\v(\omega) - {\w} (\mu_{\omega})\big) \ell(\mu_{\omega}) \omega^{[n]},
\end{equation*}
identically vanishes; and in particular the weights $(\v,\w)$ satisfy the normalization condition \eqref{norm_intro}. 

\smallskip

We also remark that the weight $\v$ associated to example \textnormal{(a)}, \textnormal{(b)}, \textnormal{(d)} and the one for the K\"ahler-Ricci solitons in \textnormal{(c)} are $\log$-concave.

Since $(\v,\w)$-extremal metric are particular examples of weighted $(\v,\w)$-cscK metric (with $\w=\ell \w_0$) our main result (Theorem \ref{t:exi}) combined with the main result in \cite[Theorem 1]{AJL} gives

\begin{theorem}
Assume $\T\subset {\rm Aut}_{\red}(X)$ is a maximal torus and $\v$ is $\log$-concave. Then there exists a $(\v,\w)$-extremal metric in $[\omega_0]$ if and only if the weighted Mabuchi energy $\M_{\v,\w}$ is $\TC$-coercive.
\end{theorem}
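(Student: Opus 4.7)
The plan is to deduce this equivalence directly from Theorem \ref{t:exi} (for the direction ``$\TC$-coercivity implies existence'') combined with the converse direction established in \cite[Theorem 1]{AJL}. Neither ingredient requires new machinery; the only point that must be checked is that the normalization hypothesis \eqref{norm_intro} of Theorem \ref{t:exi} is automatic for the extremal choice of $\w$.

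For the implication ``$\TC$-coercivity $\Rightarrow$ existence'', I would verify the hypotheses of Theorem \ref{t:exi} for the pair $(\v,\w)$. Log-concavity of $\v$ is assumed. The coercivity hypothesis of Theorem \ref{t:exi} is precisely the $\TC$-coercivity stated here. It remains to establish \eqref{norm_intro}. By definition of a $(\v,\w)$-extremal problem, one has $\w = \ell^{\ext}_{\v,\w_0}\w_0$ for some positive $\w_0 \in C^\infty(P,\R_{>0})$ and some affine-linear function $\ell^{\ext}_{\v,\w_0}$, the latter characterized precisely by the vanishing of the weighted Futaki invariant $\mathbf{F}_{\v,\w}(\ell)$ on every affine function $\ell$ on $\mathfrak{t}^*$. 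Testing this vanishing on the constant function $\ell \equiv 1$ gives exactly \eqref{norm_intro}, and since $\mathbf{F}_{\v,\w}$ depends only on the Kähler class $[\omega_0]$, the equality holds for every $\varphi \in \mathcal{K}(X,\omega_0)^\T$. Hence Theorem \ref{t:exi} applies and produces a $(\v,\w)$-cscK metric in $[\omega_0]$, which by construction is a $(\v,\w)$-extremal metric.

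For the reverse implication the hypothesis that $\T \subset \aut_{\red}(X)$ is a maximal torus becomes essential, and I would simply invoke \cite[Theorem 1]{AJL}, which asserts that under maximality the existence of a $(\v,\w)$-extremal metric in $[\omega_0]$ forces the $\TC$-coercivity of $\M_{\v,\w}$ on $\mathcal{K}(X,\omega_0)^\T$. The main analytic content is thus concentrated in Theorem \ref{t:exi} and in \cite{AJL}; the only real step to supply here is the formal observation, via the weighted Futaki obstruction from \cite{Lah19}, that the normalization \eqref{norm_intro} comes for free as soon as one is in the extremal regime $\w = \ell^{\ext}_{\v,\w_0}\w_0$. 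There is no serious obstacle to overcome.
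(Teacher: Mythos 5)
Your proof is correct and follows essentially the same route as the paper: the statement is obtained by combining Theorem \ref{t:exi} (coercivity $\Rightarrow$ existence) with \cite[Theorem 1]{AJL} (existence $\Rightarrow$ coercivity, under maximality of $\T$), noting that the extremal regime $\w=\ell^{\ext}_{\v,\w_0}\w_0$ is a particular case of the $(\v,\w)$-cscK problem. Your verification that \eqref{norm_intro} holds automatically — by evaluating the defining identity of $\ell^{\ext}_{\v,\w_0}$ on the constant affine function and using the class-invariance of $\mathbf{F}_{\v,\w}$ — is precisely the detail the paper leaves implicit, and is a correct way to fill it in.
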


The strategy consists in resolving a weighted version of the continuity path introduced by Chen in \cite{Ch17}:

\begin{equation}{\label{int:weighted:cont}}
  \frac{t}{\v(\mu_{\varphi})}\big({\mathrm{Scal}}_{\v}(\omega_{\varphi})-\w(\mu_{\varphi})\big) = (1-t)(\Lambda_{\varphi,\v}(\omega_0)-\tilde{\v}(\mu_\varphi)), \text{ } \text{ } t\in[0,1],
  \end{equation}
 where $\varphi=\varphi_t$ and $  \tilde{\v}(x):= n + \langle d \log(\v(x)), x \rangle$. We note that this weight satisfies $\Lambda_{\varphi_0,\v}(\omega_0)=\tilde{\v}(\mu_0)$.
 
The openness and the existence of a solution (for small time) of the continuity path is established through a generalization of Hasimoto's result \cite{Hash}, and an application of a quantitative version of the Implicit Function Theorem.

The proof for closeness is divided into two parts: we first prove closeness up to a time $t < 1$, primarily relying on the a priori estimates for weighted cscK metrics established in our previous work \cite{DJL}. To extend the closeness to $t = 1$, we derive $W^{1,2p}$-estimates for solutions of a $(\v,\w)$-cscK type equation:

 \begin{equation}{\label{w:type:eq}}
     \begin{cases}
     F &= \log\left( \v(\mu_\varphi) \frac{\omega_\varphi^n}{\omega^n_0}\right) \\
    \Delta_{\varphi,\v} F  &=   \w ( \mu_\varphi) + 2 \Lambda_{\varphi,\v} ( \mathrm{Ric}(\omega_0 ) - \beta_\varphi).
     \end{cases}
\end{equation}
Here $\beta_\varphi$ is a given positive form, potentially depending on $\varphi$, and $\Delta_{\varphi, \v}$ represents the $\v$-weighted Laplacian with respect to $\varphi$ (see e.g. \cite[App. A]{DJL}). When $\beta_\varphi = 0$, the solutions of equation \eqref{w:type:eq} correspond to $(\v,\w)$-cscK metrics \eqref{wcsck} (c.f. \cite[Section 3]{DJL}).
\smallskip

Theorem \ref{t:exi} can be combined with previous known results to get new existence theorems.

\subsubsection*{Stability properties w.r.t. weights} We define the space of weights for which the weighted Futaki invariant vanishes:

\begin{equation*}
    F(X,[\omega_0]):= \{ (\v,\w) \in \mathcal{C}_{\log\text{-}\mathrm{c}}^{\infty}(P,\R_{>0}) \times \mathcal{C}^{\infty}(P,\R) \text{ s.t. } \mathbf{F}_{\v,\w} \equiv 0\}.
\end{equation*}
Here $C^\infty_{\log\text{-}c}$ is the set of smooth functions which are log-concave.
%We also set
%\begin{equation*}
 %   S(X,[\omega_0]):= \{ (\v,\w) \in \mathcal{C}_{\log\text{-}\mathrm{c}}^{\infty}(P,\R_{>0}) \times \mathcal{C}^{\infty}(P,\R) \text{ s.t. } \exists \omega \text{ } (\v,\w)\text{-cscK} \text{ in } [\omega_0]\},
%\end{equation*}
%Observe that, since $(\v,\w)$-cscK metric are minimizer of $\M_{\v,\w}$, by Lemma \ref{l:T-indep},$    S(X,[\omega_0]) \subset F(X,[\omega_0])$. 

\begin{corollary} 
%[Corollary \ref{c:weight:open}]
Set $$
 S(X,[\omega_0]):= \{ (\v,\w) \in \mathcal{C}_{\log\text{-}\mathrm{c}}^{\infty}(P,\R_{>0}) \times \mathcal{C}^{\infty}(P,\R) \text{ s.t. } \exists \, \omega \text{ } (\v,\w)\text{-cscK} \text{ in } [\omega_0]\}. $$ Then $ S(X,[\omega_0]) \subset F(X,[\omega_0])$. Moreover,
$S(X,[\omega_0])$ is open in $F(X,[\omega_0])$ for the smooth topology.
\end{corollary}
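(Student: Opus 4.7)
The inclusion $S(X,[\omega_0])\subset F(X,[\omega_0])$ is immediate from the definitions: if $\omega$ is a $(\v,\w)$-cscK metric, then $\mathrm{Scal}_\v(\omega)-\w(\mu_\omega)\equiv 0$, hence $\mathbf{F}_{\v,\w}(\ell)=\int_X 0\cdot \ell(\mu_\omega)\,\omega^{[n]}=0$ for every affine-linear $\ell$ on $P$, so $(\v,\w)\in F(X,[\omega_0])$.

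For the openness statement, the plan is to combine Theorem \ref{t:exi} with the fact that $\T^{\mathbb{C}}$-coercivity of $\M_{\v,\w}$ is an open condition on $F(X,[\omega_0])$ in the smooth topology. I would first fix $(\v_0,\w_0)\in S(X,[\omega_0])$ and, using the reverse implication of Theorem \ref{t:exi} (available from \cite{AJL}), deduce that $\M_{\v_0,\w_0}$ is $\T^{\mathbb{C}}$-coercive. I would then produce a smooth neighborhood $\U$ of $(\v_0,\w_0)$ in $F(X,[\omega_0])$ such that $\M_{\v,\w}$ remains coercive for every $(\v,\w)\in\U$; Theorem \ref{t:exi} then furnishes the desired $(\v,\w)$-cscK metric.

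The transfer of coercivity to nearby weights would follow from a continuity estimate based on the Chen--Tian-type decomposition of the weighted Mabuchi functional from \cite{Lah19},
\[
\M_{\v,\w}(\varphi)=\int_X \log\!\bigl(\omega_\varphi^n/\omega_0^n\bigr)\,\v(\mu_\varphi)\,\omega_\varphi^{[n]}+\mathcal{E}_{\v,\w}(\varphi),
\]
where the ``energy'' piece $\mathcal{E}_{\v,\w}$ depends linearly on $\v,\w$ and their first derivatives and satisfies $|\mathcal{E}_{\v,\w}(\varphi)|\le C(1+d_1(0,\varphi))$ with $C$ controlled by a $C^k$-norm of $(\v,\w)$. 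The variation of the weighted entropy under a $C^0$-perturbation of $\v$ is bounded by $\|\v-\v_0\|_{C^0}\int_X|\log(\omega_\varphi^n/\omega_0^n)|\omega_\varphi^{[n]}$; combining with the energy-term continuity yields an estimate of the form
\[
|\M_{\v,\w}(\varphi)-\M_{\v_0,\w_0}(\varphi)|\le \varepsilon\bigl(\M_{\v_0,\w_0}(\varphi)+d_1(0,\varphi)+1\bigr),
\]
with $\varepsilon\to 0$ as $(\v,\w)\to(\v_0,\w_0)$ in the smooth topology; coercivity of $\M_{\v_0,\w_0}$ then transfers, with slightly weakened constants, to every $\M_{\v,\w}$ in a sufficiently small smooth neighborhood.

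The main technical obstacle will be bounding the weighted entropy perturbation, since $\int_X|\log(\omega_\varphi^n/\omega_0^n)|\omega_\varphi^{[n]}$ is not a priori controlled by $d_1(0,\varphi)$; one must exploit the coercivity of $\M_{\v_0,\w_0}$ itself, together with the two-sided comparison between $\v$-weighted and unweighted entropies (valid because the weights are bounded away from $0$ and $\infty$ on the compact polytope $P$), to absorb the entropy variation into the coercivity estimate. The remainder is a routine application of Theorem \ref{t:exi}.
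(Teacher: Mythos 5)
Your argument for the inclusion $S(X,[\omega_0])\subset F(X,[\omega_0])$ is correct and in fact simpler than the paper's: the paper deduces vanishing of $\mathbf{F}_{\v,\w}$ from boundedness of $\M_{\v,\w}$ via Lemma \ref{l:T-indep}, whereas you observe directly that the integrand in \eqref{don:fut:gene} vanishes at a $(\v,\w)$-cscK metric. Both are fine.

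Your strategy for openness is the same as the paper's: use the converse of Theorem \ref{t:exi} from \cite{AJL} to get coercivity at $(\v_0,\w_0)$, show coercivity is an open condition in the smooth topology within $F(X,[\omega_0])$, then invoke Theorem \ref{t:exi} again. Your ``main technical obstacle'' is less serious than you suggest: for $g=\omega_\varphi^n/\omega_0^n$ one has $\int_X|\log g|\,g\,\omega_0^{[n]}=\Ent(\varphi)+2\int_{\{g<1\}}(-g\log g)\,\omega_0^{[n]}\le\Ent(\varphi)+2e^{-1}\vol(\omega_0)$ since $x\mapsto -x\log x$ is bounded on $[0,1]$, and then the Chen--Tian decomposition \eqref{Chen-Tian} together with Lemma \ref{J and d1} bounds $\Ent(\varphi)$ by $\M_{\v_0,\w_0}(\varphi)+C(1+d_1(0,\varphi))$, which is exactly what lets you absorb the entropy term. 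The paper instead establishes a multiplicative comparison $\Ent_{\v_\varepsilon}(\varphi)\ge M_\varepsilon\Ent_{\v}(\varphi)-C_\varepsilon$ by splitting on $\{f_\varepsilon>1\}$ versus $\{f_\varepsilon\le1\}$ and treats the energy terms $\J^{2\mathrm{Ric}(\omega_0)}_\cdot$, $\I_\cdot$ one at a time using their linearity in the weights; the two routes are close in spirit.

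There is, however, a genuine gap in your sketch. Theorem \ref{t:exi} requires $\T^{\C}$-coercivity, i.e.\ a lower bound in terms of $\inf_{\gamma\in\TC}d_1(0,\gamma\cdot\varphi)$, but your perturbation estimate only transfers $d_1(0,\varphi)$-coercivity. To get $\TC$-coercivity you must use that both $\M_{\v_0,\w_0}$ and $\M_{\v,\w}$ are $\TC$-invariant (which holds because $(\v_0,\w_0)$ and $(\v,\w)$ both lie in $F(X,[\omega_0])$, via Lemma \ref{l:T-indep}), apply your estimate at the optimal $\gamma_0\cdot\varphi$ realizing $d_{1,\TC}(0,\varphi)$, and then use the $\TC$-invariance to replace $\M_{\v,\w}(\gamma_0\cdot\varphi)$ and $\M_{\v_0,\w_0}(\gamma_0\cdot\varphi)$ by $\M_{\v,\w}(\varphi)$ and $\M_{\v_0,\w_0}(\varphi)$. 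This is precisely the final step in the paper's proof of Lemma \ref{l:coerc}, and it is where the hypothesis $(\v,\w)\in F(X,[\omega_0])$ is really used; your proposal does not record it.
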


This generalizes \cite[Theorem 1.1]{ALL} which holds for $\v$-soltions.
\smallskip

%The approach to proving the above Corollary hinges on the fact that the existence of a weighted extremal metric implies the coercivity of the weighted Mabuchi functional, as established in \cite[Theorem 1]{AJL}. Consequently, we show that the Mabuchi functional remains coercive for weights sufficiently close to the original ones. The proof is then completed by applying Theorem \ref{t:exi}
\subsubsection*{Yau-Tian-Donaldson correspondence in the toric case}
We address the existence problem in the toric setting:
\begin{corollary}
Let $(X,\omega_0,\T)$ be a compact toric K\"ahler manifold and let $(P,\mathbf{L})$ be the labeled Delzant polytope. Assume $\v>0$ is $\log$-concave. Then there exists a $(\v,\w)$-cscK metric in $[\omega_0]$ if and only if $(P,\mathbf{L})$ is $(\v,\w)$-uniformly K-stable.
\end{corollary}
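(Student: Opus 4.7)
The plan is to couple Theorem~\ref{t:exi} with the standard toric dictionary between $\T$-invariant K\"ahler potentials and convex functions on the labeled polytope $(P,\mathbf{L})$. Recall that in the toric case Legendre duality identifies $\mathcal{K}(X,\omega_0)^\T$, modulo the $\TC$-action by translation by affine-linear functions, with an open set of smooth strictly convex \emph{symplectic potentials} $u$ on $P$. Under this identification the weighted Mabuchi energy reads, up to lower-order terms,
\begin{equation*}
\M_{\v,\w}(u) = -\int_{P}\log\det(\hess u)\,\v\, d\mu + \int_{\partial P} u\,\v\, d\sigma_{\mathbf{L}} - \int_P u\,\w\, d\mu,
\end{equation*}
whose linear part is the weighted Donaldson--Futaki invariant $\mathbf{DF}_{\v,\w}(f)= \int_{\partial P} f\,\v\, d\sigma_{\mathbf{L}} - \int_P f\,\w\, d\mu$. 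Then $(\v,\w)$-uniform K-stability of $(P,\mathbf{L})$ is precisely the bound $\mathbf{DF}_{\v,\w}(f)\ge \lambda \|f\|_{1,\v}$ for every non-affine simple PL convex $f$ on $P$, in a weighted $L^1$-norm that the Legendre dictionary identifies with $\inf_{\gamma\in\TC}d_1(0,\gamma\cdot\varphi_f)$.

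For the direction ``existence $\Rightarrow$ stability'', since $\T$ is automatically maximal in $\aut_{\red}(X)$ in the toric case, the existence of a $(\v,\w)$-cscK metric together with \cite[Theorem 1]{AJL} gives $\TC$-coercivity of $\M_{\v,\w}$. Testing this coercivity against the weak geodesic rays coming from rational PL convex $f$ (i.e.\ toric test configurations), the slope at infinity of $\M_{\v,\w}$ along the ray equals $\mathbf{DF}_{\v,\w}(f)$, while the slope of $\inf_{\gamma\in\TC}d_1(0,\gamma\cdot\varphi_s)$ equals the normalized weighted $L^1$-norm of $f$. Comparing the two slopes yields $(\v,\w)$-uniform K-stability of $(P,\mathbf{L})$.

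For the main direction ``stability $\Rightarrow$ existence'', by Theorem~\ref{t:exi} it suffices to deduce $d_1$-coercivity of $\M_{\v,\w}$ from $(\v,\w)$-uniform K-stability of $(P,\mathbf{L})$. I would follow Donaldson's toric scheme (with the refinements of Zhou--Zhu and Chen--Li--Sheng in the cscK case): split $\M_{\v,\w}(u)$ into the entropy part $\Ent_\v(u) := -\int_P \log\det(\hess u)\,\v\, d\mu$ and the linear part $\mathbf{L}_{\v,\w}(u) := \int_{\partial P} u\,\v\, d\sigma_{\mathbf{L}} - \int_P u\,\w\, d\mu$. For symplectic potentials $u$ normalized to vanish at a prescribed interior point of $P$, uniform K-stability combined with approximation by simple PL convex functions and lower semicontinuity yields the linear bound $\mathbf{L}_{\v,\w}(u)\ge \lambda \|u\|_{L^1(\v\, d\mu)}$, while Jensen's inequality applied to the probability measure proportional to $\v\, d\mu$ controls $\Ent_\v(u)$ from below by $-C_\varepsilon - \varepsilon \|u\|_{L^1(\v\, d\mu)}$ for arbitrarily small $\varepsilon>0$. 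Taking $\varepsilon<\lambda$ gives the required coercivity estimate.

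The hard step is this last convex-analytic passage: promoting the pointwise stability inequality on PL convex functions to a uniform $L^1$-coercivity bound on smooth symplectic potentials in the presence of the weight $\v$. This is where the log-concavity assumption on $\v$ enters crucially, both to preserve the Jensen-type convexity estimates underlying Donaldson's original argument and to ensure compactness of sequences of normalized convex functions in $L^1(\v\, d\mu)$. With these ingredients in place, the toric reduction proceeds in parallel with the unweighted case.
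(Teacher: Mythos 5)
Your overall route agrees with the paper's first proof: reduce to the equivalence ``$(\v,\w)$-uniform K-stability $\Leftrightarrow$ $\TC$-coercivity of $\M_{\v,\w}$'' and then apply Theorem~\ref{t:exi}. The paper, however, simply imports the coercivity direction from \cite[Prop.~7.3]{Jub23} (Proposition~\ref{p:stable:propre}) and cites \cite{LLS} for the converse; it does not re-derive either implication, and it also offers a second, independent proof via a continuity path in the symplectic potentials using the estimates of \cite{DJL} (Section~\ref{s:ytd2}). Your proposal instead tries to re-prove the stability-to-coercivity step from scratch, and this is where it is not yet sound.

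Two concrete issues. First, the entropy bound is not ``Jensen's inequality.'' Jensen applied to $-\log$ against the probability measure $\v\,d\mu/\int_P\v\,d\mu$ gives
\begin{equation*}
-\int_P \log\det(\hess u)\,\v\,d\mu \ \geq\ -C\,\log\!\left(\int_P \det(\hess u)\,\v\,d\mu\right)-C',
\end{equation*}
but $\int_P\det(\hess u)\,\v\,d\mu$ has no a priori relation to $\|u\|_{L^1(\v\,d\mu)}$; one needs the genuine Donaldson/Zhou--Zhu/Li argument (convexity of $-\log\det$, Legendre duality and a sup-level-set estimate) to get the $-\varepsilon\|u\|_{L^1}-C_\varepsilon$ lower bound. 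You are citing the wrong mechanism for the step you call ``the hard step.''

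Second, the role of $\log$-concavity is misattributed. Log-concavity of $\v$ plays no role in the toric stability-to-coercivity passage: the convexity used there is that of $A\mapsto-\log\det A$ on positive symmetric matrices, and compactness of normalized convex functions in $L^1(\v\,d\mu)$ only needs $\v$ bounded between two positive constants on the compact $P$. Indeed, \cite[Prop.~7.3]{Jub23} does not assume log-concavity. Where log-concavity genuinely enters is in Theorem~\ref{t:exi} itself (through the $W^{1,2p}$ estimates of Section~\ref{s:esti}, cf.\ \cite[Lemma~5.6]{DJL}), so it must appear in the hypotheses of the corollary for that reason, not for the reason you give. With these two points repaired --- replacing the ``Jensen'' handwave by the actual toric entropy estimate, and moving the log-concavity remark to the invocation of Theorem~\ref{t:exi} --- your argument becomes a reconstruction of what the paper quotes from \cite{Jub23} and \cite{LLS}.
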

The notion of weighted stability is the one introduced in \cite[Section 7]{Jub23}, which extends the framework originally proposed by Donaldson \cite{SKD} for cscK metrics. 
For completeness, we mention that the converse direction ``existence implies stability" was previously proved in \cite{LLS}.

%This result arises from the fact that the $(\v,\w)$-uniform K-stability of $(P, \mathbf{L})$ ensures the coercivity of the weighted Mabuchi functional, as demonstrated in \cite[Proposition 7.3 and (7.17)]{Jub23}. We also provided a second proof of the above Corollary in Section  \ref{s:ytd2} only reposing on the estimates show in \cite{DJL}.
\smallskip
\subsubsection*{Semisimple principal fibration} 
We consider a semisimple principal fibration $(Y, \T)$ over a product of cscK manifold $(B,\omega_B):=\prod_{a=1}^k(B_a,\omega_{B_a})$ with fiber a K\"ahler manifold $(X, \T)$, as constructed in \cite{AJL} (see Section \ref{s:fibration}). In \cite[Theorem 2]{AJL}, it was proved that the existence of an extremal metric on the total space $Y$ is equivalent to the existence of a  weighted cscK metric on $X$ for an appropriate weight function. We extend this result to the weighted extremal setting:

\begin{corollary}{\label{c:fibration}}
Let $\tilde{\omega}_0$ be the compatible K\"ahler metric on $Y$ induced by a K\"ahler metric $\omega_0$ on $X$ and $(\v,\w)$ be weight functions such that $\v>0$ and $\log$-concave. The followings are equivalent:

\begin{itemize}
\item[(i)] $Y$ admits a $(\v,\w)$-extremal metric in $[\tilde{\omega}_0]$;
\item[(ii)] $X$ admits a  $\T$-invariant $(\mathrm{p}\v , \tilde \w)$-cscK metric in the K\"ahler class $[\omega_0]$,  where
\begin{equation*}
\mathrm{p}(x)=\prod_{a=1}^k(\langle p_a, x\rangle+ c_a)^{m_a}, \qquad \tilde \w(x)=  \w-\sum_{a=1}^k \frac{\mathrm{Scal}(\omega_{B_a})}{\left(\langle p_a, x \rangle + c_a\right)};
\end{equation*}
\end{itemize}
Moreover, suppose that $(X,\omega_0,\T)$ is a toric manifold with labeled Delzant polytope $(P,\mathbf{L})$. Then all conditions above are equivalent to:
\begin{itemize}
\item[(iii)] $(P,\mathbf{L})$ is $(\mathrm{p}\v ,\tilde{\w})$-uniformly $K$-stable.
\end{itemize}
\end{corollary}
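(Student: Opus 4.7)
The plan is to reduce both equivalences to Theorem \ref{t:exi} (and to the weighted toric YTD corollary stated just above) by means of the fibration dictionary from \cite{ACGT11, AJL} that translates geometric data on $Y$ into weighted data on $X$. Recall that any compatible $\T$-invariant K\"ahler metric $\tilde{\omega}_\varphi \in [\tilde{\omega}_0]$ is parametrized by a $\T$-invariant K\"ahler potential $\varphi$ on $X$, and that the relative fiber volume of $Y\to B$ produces the polynomial weight $\mathrm{p}(x)=\prod_a (\langle p_a,x\rangle+c_a)^{m_a}$ with $m_a=\dim_{\C} B_a$. Observe that $\mathrm{p}$ is log-concave on $P$ (being a product of positive affine-linear functions, $\log\mathrm{p}$ is a sum of concave functions), so $\mathrm{p}\v$ is log-concave as well. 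The starting point is a pointwise scalar curvature identity, a weighted refinement of the analogous formula in \cite{AJL}:
\begin{equation*}
\mathrm{Scal}_{\v}(\tilde{\omega}_\varphi) \;=\; \mathrm{Scal}_{\mathrm{p}\v}(\omega_\varphi) \;+\; \v(\mu_\varphi)\sum_{a=1}^k \frac{\mathrm{Scal}(\omega_{B_a})}{\langle p_a,\mu_\varphi\rangle+c_a},
\end{equation*}
which shows that $\tilde{\omega}_\varphi$ solves the $(\v,\w)$-cscK equation on $Y$ if and only if $\omega_\varphi$ solves the $(\mathrm{p}\v,\tilde{\w})$-cscK equation on $X$, with $\tilde{\w}$ as in the statement. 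The normalization \eqref{norm_intro} for $(\mathrm{p}\v,\tilde{\w})$ on $X$ is then equivalent to that for $(\v,\w)$ on $Y$ by integrating over the base factor.

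For the equivalence (i)$\Leftrightarrow$(ii), I would push the pointwise identity to the Mabuchi energies and to the $d_1$-distances, using the Fubini-type relation $\tilde{\omega}_\varphi^{n+m} \sim \mathrm{p}(\mu_\varphi)\,\omega_\varphi^{n}\wedge \prod_a \omega_{B_a}^{m_a}$. Following the template of \cite[Sect.~6]{AJL}, this yields, up to an affine $\varphi$-invariant correction,
\begin{equation*}
\M_{\v,\w}^{Y}(\tilde{\varphi}) \;=\; C\,\M_{\mathrm{p}\v,\tilde{\w}}^{X}(\varphi), \qquad C>0,
\end{equation*}
and analogously $d_1^Y(0,\tilde{\varphi})$ is comparable to the weighted $d_1^X$-distance associated to the measure $\mathrm{p}(\mu_\varphi)\omega_\varphi^{n}$. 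Consequently, $\TC$-coercivity of $\M_{\v,\w}^Y$ is equivalent to $\TC$-coercivity of $\M_{\mathrm{p}\v,\tilde{\w}}^X$. Theorem \ref{t:exi} applied on $X$ with the log-concave positive weight $\mathrm{p}\v$, together with the reduction ``$(\v,\w)$-extremal $=$ $(\v,\ell^{\ext}_{\v,\w_0}\w_0)$-cscK'' plus Theorem \ref{t:exi} on $Y$, then yields (i)$\Leftrightarrow$(ii).

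In the toric setting, $X$ is a toric K\"ahler manifold and the weight $\mathrm{p}\v$ remains positive and log-concave, so the preceding weighted toric YTD corollary applies verbatim to the weights $(\mathrm{p}\v,\tilde{\w})$ and yields (ii)$\Leftrightarrow$(iii).

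The main obstacle is the Mabuchi reduction in the second paragraph: the polynomial weight $\mathrm{p}$ enters through $\mathrm{p}(\mu_\varphi)$ and thus depends on $\varphi$, so one must verify that the identity between $\M_{\v,\w}^Y$ and $\M_{\mathrm{p}\v,\tilde{\w}}^X$ holds modulo terms that are absorbed by the $\TC$-action, and that the $d_1$-comparison is compatible with this action on both sides. Once this identification is in place, the corollary follows by combining Theorem \ref{t:exi} with the weighted theory already developed in \cite{Lah19, DJL, AJL}.
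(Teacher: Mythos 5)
Your proposal follows the same route as the paper's proof of Corollary \ref{c:fibra:sec}: both pass through $\TC$-coercivity of the weighted Mabuchi energies on $Y$ and on $X$ as intermediate conditions, use the scalar curvature identity of Lemma \ref{l:Y-Scal} together with the fibration calculus from \cite{AJL}, apply Theorem \ref{t:exi} on each side, and invoke Corollary \ref{c:ytd} and Proposition \ref{p:stable:propre} for the toric equivalence. The reduction you flag as the main obstacle --- identifying $\M^Y_{\v,\w}$ with $\M^X_{\mathrm{p}\v,\tilde\w}$ up to a multiplicative constant and an affine correction, and the accompanying $d_1$-compatibility --- is precisely where the paper appeals to Lemma \ref{l:operators-tilde} (the decomposition of the weighted Laplacian, Lichnerowicz, and Hashimoto operators on $Y$ in terms of those on $X$ and $B$) combined with adaptations of \cite[Theorem 6.1]{Jub23} and \cite[Theorem 2]{AJL}; so your sketch and the paper's own proof are equally reliant on that imported machinery at exactly that point. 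Two small refinements worth recording: the constant $C$ in your Mabuchi identity is the $\omega_B$-volume of the base (Fubini over the fiber), and the comparison between the $\mathrm{p}$-weighted and standard $d_1$ on $X$ is harmless because $\mathrm{p}$ is bounded above and below on the compact polytope $P$.
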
  
We refer to Section \ref{s:fibration} for the notations.

\subsection{Organisation of the paper}
In Section \ref{s:esti} we study a twisted weighted system which will appear naturally in Section \ref{s:close} when we take into account the action of the torus.
More precisely we derive a priori $W^{1,2p}$-estimates for solutions of the system \eqref{w:type:eq}. In Section \ref{s:open}, we show openess of the weighted continuity path \eqref{int:weighted:cont}, and in Section \ref{s:close} we prove its closeness. Section \ref{s:app} is devoted to the applications.

%\section{$W^{1,p}$ estimates}

\section*{Acknowledgements}
The first and second author are funded by the ERC SiGMA - 101125012 (PI: Eleonora Di Nezza). The third-named author gratefully acknowledges insightful discussions with Chung-Ming Pan and the hospitality of CIRGET-UQAM during the preparation of this work. Part of this material is based upon work done while the first author was supported by the National Science Foundation under Grant No. DMS-1928930, while the first author was in residence at the Simons Laufer Mathematical Sciences Institute
(formerly MSRI) in Berkeley, California, during the Fall 2024 semester. \\
We thank the referee for their comments who helped to improve the paper.

\section{Estimates for a weighted cscK type equation}{\label{s:esti}}
As it will be explained in Section \ref{s:close}, in order to run the continuity method to solve the weighted cscK equation we need to consider the following elliptic PDEs system

 \begin{equation}\label{wsystem1}
     \begin{cases}
     F &= \log\left( \v(\mu_\varphi) \frac{\omega_\varphi^n}{\omega^n_0}\right) \\
    \Delta_{\varphi,\v} F  &=   \w ( \mu_\varphi) + 2 \Lambda_{\varphi,\v} ( \mathrm{Ric}(\omega_0 ) - \beta_\varphi),
     \end{cases}
\end{equation}
where $\w\in C^{\infty}(P, \R)$, $\beta_\varphi$ is a  $\T$-invariant K\"ahler form possibly depending on $\varphi$. We write $\beta_\varphi:=\beta_0 + dd^cf$, for $\beta_0$ independent on $\varphi$ and $f$ normalize such that $\sup_Xf=0$. We let $\mu_{\beta_0}$ the moment map of $\beta_0$ and we normalize the moment map $\mu_{\beta_\varphi}$ by

\begin{equation}{\label{moment:beta}}
    \mu_{\beta_{\varphi}} = \mu_{\beta_0} + d^cf.
\end{equation}
The image of $\mu_{\beta_{\varphi}}$ is a compact polytope in $\mathfrak{t}^*$ independent of $\varphi$. In particular, for any vector $\xi \in \mathfrak{t}$, $  d^cf(\xi)$ is bounded independently of $\varphi$. Then \eqref{wsystem1} is equivalent to

 \begin{equation}\label{wsystem2}
     \begin{cases}
     F &= \log\left( \v(\mu_\varphi) \frac{\omega_\varphi^n}{\omega^n_0}\right) \\
    \Delta_{\varphi,\v}( F +f)  &=   \w ( \mu_\varphi) + 2 \Lambda_{\varphi,\v} ( \alpha_0),
     \end{cases}
\end{equation}
where $\alpha_0:= \mathrm{Ric}(\omega_0)-\beta_0$.   Observe that for $\beta_\varphi$=0, we recover the system  studied in \cite{DJL} corresponding to the weighted cscK equation. The goal to this section is to show the following estimates:

\begin{theorem}{\label{t:estimates:w12p}}
Suppose $\v$ is log-concave. Let $\varphi$ be a smooth solution of \eqref{wsystem1}. Let $p_0 \geq \kappa_n>1$ for some constant $\kappa_n$ depending only on $n$. Then for any $q<p_0$,

$$
\|F+f\|_{W^{1,2 q}(\omega^n_0)} \leq C, \quad\|n+\Delta \varphi\|_{L^q\left(\omega^n_0\right)} \leq C,
$$

\noindent where $C$ depends on $\omega_0$, $\|\w\|_{C^0}$, $\|\v\|_{C^0}$, $\max _X\left|\beta_0\right|_0$, $\Ent(\varphi)$, $p_0$, and $\int_X e^{-p_0 f} \omega_0^{[n]}$.
\end{theorem}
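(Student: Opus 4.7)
The plan is to adapt the Chen--Cheng strategy, as implemented for the weighted cscK equation in \cite{DJL}, to accommodate the extra twist $\beta_\varphi$ and to replace the standard $L^\infty$-control on the Monge--Amp\`ere right-hand side by the weaker integrability hypothesis $\int_X e^{-p_0 f}\omega_0^{[n]}\leq C$. The first step is to pass to the reformulation \eqref{wsystem2}: the new unknown $G:=F+f$ satisfies the equation
\[
\Delta_{\varphi,\v} G = \w(\mu_\varphi)+2\Lambda_{\varphi,\v}(\alpha_0),
\]
whose right-hand side involves only the fixed bounded form $\alpha_0=\mathrm{Ric}(\omega_0)-\beta_0$ and so is $\varphi$-independent up to the $\Lambda_{\varphi,\v}$-factor, while the Monge--Amp\`ere relation becomes $\omega_\varphi^n=(e^G/\v(\mu_\varphi))\,e^{-f}\omega_0^n$. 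In this form, the function $f$ enters only through the density $e^{-f}$ relating $\omega_\varphi^n$ to $\omega_0^n$, and its contribution to every integral can be tracked by H\"older's inequality and the hypothesis on $\int_X e^{-p_0 f}\omega_0^{[n]}$.

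The heart of the proof is an $L^p$-bound for $G$. The idea is to test the second equation of \eqref{wsystem2} against $e^{\lambda G}$ and integrate against the weighted volume form $\v(\mu_\varphi)\omega_\varphi^n$, with respect to which $\Delta_{\varphi,\v}$ is self-adjoint. Integration by parts yields an identity relating the weighted Dirichlet energy $\lambda\int_X e^{\lambda G}|dG|_\varphi^2\,\v(\mu_\varphi)\omega_\varphi^n$ to the integrals of $e^{\lambda G}\w(\mu_\varphi)$ and $e^{\lambda G}\Lambda_{\varphi,\v}(\alpha_0)$. Using $|\Lambda_{\varphi,\v}(\alpha_0)|\leq \max_X|\alpha_0|_0\,\Lambda_{\varphi,\v}(\omega_0)$ and the log-concavity of $\v$, which guarantees the correct sign for the Bochner-type identity needed to produce the weighted Sobolev inequality as in \cite{DJL}, one controls $\int_X e^{\lambda G}\v(\mu_\varphi)\omega_\varphi^n$ in terms of $\Ent(\varphi)$ and the remaining weighted integrals. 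Converting back to an integral against $\omega_0^n$ introduces the factor $e^{-f}/\v(\mu_\varphi)$, and H\"older together with the hypothesis $\int_X e^{-p_0 f}\omega_0^{[n]}\leq C$ closes the argument to give $\|G\|_{L^{2q}(\omega_0^n)}\leq C$ for every $q<p_0$, provided $\lambda$ is chosen in the admissible range that determines the threshold $\kappa_n$.

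Having an $L^{2q}$-bound on $G$, the $L^q$-bound on $n+\Delta\varphi$ is derived from Yau's pointwise second-order inequality applied to $\log(\omega_\varphi^n/\omega_0^n)=G-f-\log\v(\mu_\varphi)$, which, after rewriting $\Delta_{\omega_0}$ of the right-hand side via the second equation of \eqref{wsystem2}, expresses $\Delta_\varphi\log(n+\Delta\varphi)$ in terms of quantities already controlled in $L^p$, up to the standard $-C\Lambda_\varphi(\omega_0)$ term. A test-function/Moser-type argument then upgrades the $L^p$-bound on $G$ into an $L^q$-bound on $n+\Delta\varphi$; feeding this back into the second equation of \eqref{wsystem2} controls $\Delta_{\varphi,\v}G$ in $L^q(\omega_0^n)$, and a standard integration by parts combined with the $L^{2q}$-bound on $G$ finally yields the $W^{1,2q}$-bound on $G=F+f$. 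The principal difficulty is the simultaneous presence of the weight $\v$ and the twist $f$: the exponential test function $e^{\lambda G}$ must be chosen so that log-concavity of $\v$ provides the sign needed to absorb the indefinite term $\Lambda_{\varphi,\v}(\alpha_0)$ into the Dirichlet energy, while the change of measure between $\omega_\varphi^n$ and $\omega_0^n$ simultaneously produces an integrand compatible with the H\"older exponent permitted by $\int_X e^{-p_0 f}\omega_0^{[n]}$; balancing these two constraints pins down both the threshold $\kappa_n$ and the admissible range $q<p_0$.
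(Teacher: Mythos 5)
Your proposal correctly identifies the reformulation \eqref{wsystem2} as the right starting point, and your broad outline (entropy controls $G=F+f$, trace inequalities and iteration control $n+\Delta\varphi$, then a gradient argument controls $\|dG\|_{L^{2q}}$) matches the overall architecture of the paper's proof. However, there are several genuine gaps in the way you propose to carry out these steps.

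The first and most serious gap is that you treat an $L^{2q}(\omega_0^n)$-bound on $G$ as ``the heart of the proof'', whereas the paper first establishes a full $C^0$-bound on \emph{both} $F+f$ and $\varphi$ (Theorem~\ref{t:c0:estimates}, via the Chen--Cheng maximum-principle argument involving the auxiliary Monge--Amp\`ere potential $\psi$, Ko\l odziej's estimate, and the barrier $e^{-a(F+bf+C\varphi)}$). This uniform bound is not a convenience: it is explicitly required in the statement of Theorem~\ref{t:int:c2:estimates} and in Proposition~\ref{p:estim:c1}, because the iteration test function $u=e^{-\gamma(F+\lambda\varphi+\delta f)}\Lambda_0(\omega_\varphi)$ only has uniform estimates once $F+f$ and $\varphi$ are known to be bounded (otherwise the factor $e^{-\gamma\lambda\varphi}$ is uncontrolled). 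Your proposal never produces this pointwise bound, and an $L^{2q}$-bound on $G$ is not a substitute.

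The second gap is in the mechanism you propose to get the $L^p$-bound on $G$: testing $\Delta_{\varphi,\v}G=\w(\mu_\varphi)+2\Lambda_{\varphi,\v}(\alpha_0)$ against $e^{\lambda G}$ brings into the game the term $\int e^{\lambda G}\Lambda_{\varphi,\v}(\alpha_0)\,\v(\mu_\varphi)\omega_\varphi^n$, and $\Lambda_{\varphi,\v}(\alpha_0)$ is controlled only by $\Lambda_\varphi(\omega_0)$, which is itself unbounded until second-order estimates are in place. So this step does not close by itself; one cannot absorb $\Lambda_\varphi(\omega_0)$ merely via log-concavity of $\v$ and H\"older with $\int e^{-p_0 f}$. (In the paper's $C^0$-argument this circularity is broken by the maximum principle, not by an energy estimate.)

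The third gap is at the end: you claim that an $L^q(\omega_0^n)$-control on $\Delta_{\varphi,\v}G$ together with the $L^{2q}(\omega_0^n)$-bound on $G$ yields the $W^{1,2q}(\omega_0^n)$-bound ``by integration by parts''. Integration by parts against $\Delta_{\varphi,\v}$ controls $\int |dG|^2_\varphi\,\v(\mu_\varphi)\omega_\varphi^n$, which is a weighted $\omega_\varphi$-Dirichlet energy, not $\int |dG|^{2q}_0\,\omega_0^n$. Passing from $|dG|_\varphi$ to $|dG|_0$ costs a factor $\Lambda_0(\omega_\varphi)$, and there is no reason the exponents match. The paper gets around this by proving the \emph{pointwise} bound $|d(F+f)|^2_\varphi\leq C$ (Proposition~\ref{p:estim:c1}, via the Bochner formula of Lemma~\ref{l:w:eq:F:plus:f} and Moser iteration), and then using $|d(F+f)|^2_0\leq |d(F+f)|^2_\varphi\,\Lambda_0(\omega_\varphi)\leq C\,\Lambda_0(\omega_\varphi)$, which is in $L^q$ by Corollary~\ref{c:int:estim}; your argument skips the pointwise gradient bound entirely.

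Finally, your interpretation of the threshold $\kappa_n$ as coming from the choice of $\lambda$ in the exponential test function is not where it originates in the paper: $\kappa_n$ appears in the Moser iteration of Proposition~\ref{p:estim:c1}, where the Sobolev exponent $\tfrac{2n(2-\varepsilon)}{2n-2+\varepsilon}$ with $\varepsilon=1/(2n)$ forces $p_0$ of order $n^2$ so that the constants $K_\varepsilon$, $L_\varepsilon$ are under control.
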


\subsection{Estimates for uniform norm}

In this section, we prove a-priori $C^0$-estimates for $\varphi$ solution of \eqref{wsystem2}.

\begin{theorem}\label{t:c0:estimates}
The functions $\varphi, F+f$ are uniformly bounded by a constant that only depends on $\omega_0$, $\ent(\varphi)$ and $\int_Xe^{-p_0 f} \omega_0^{[n]}$.
\end{theorem}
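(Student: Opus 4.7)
The plan is to adapt the Chen--Cheng method, as implemented in the weighted untwisted case in \cite{DJL}, and carefully track the extra term $f$ coming from the twist $\beta_\varphi$. I would split the argument into two stages: first establish a uniform bound on $u := F+f$ by running Moser iteration on the second equation of \eqref{wsystem2}; then translate this bound into a uniform bound on $\varphi$ via a Ko\l odziej-type estimate.

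For the iteration stage, setting $u=F+f$ the second equation of \eqref{wsystem2} reads $\Delta_{\varphi,\v} u = \w(\mu_\varphi)+2\Lambda_{\varphi,\v}(\alpha_0)$, with $\w, \v$ bounded on the compact polytope $P$ (and $\v$ bounded below) and $\alpha_0=\mathrm{Ric}(\omega_0)-\beta_0$ a fixed closed real $(1,1)$-form on $X$. I would test against $|u|^{2p-2}u\,\v(\mu_\varphi)\omega_\varphi^{[n]}$ and integrate by parts using the self-adjointness of $\Delta_{\varphi,\v}$ with respect to the weighted measure $\v(\mu_\varphi)\omega_\varphi^{[n]}$. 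The $\log$-concavity of $\v$ supplies a uniform weighted Sobolev inequality with constants depending only on $\omega_0$ and $\Ent(\varphi)$, exactly as in \cite{DJL}. To handle the unsigned term $\Lambda_{\varphi,\v}(\alpha_0)$, I would decompose $\alpha_0=\alpha_0^+-\alpha_0^-$ with $|\alpha_0^\pm|_{\omega_0}\leq C$, use the pointwise comparison between the weighted and ordinary traces together with $\Lambda_{\varphi,\v}(\omega_0)\omega_\varphi^{[n]}\leq C\,\omega_0\wedge\omega_\varphi^{[n-1]}$, and exploit that $\int_X \alpha_0\wedge\omega_\varphi^{n-1}$ is a cohomological constant. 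The entropy bound then converts the remaining $\omega_\varphi^n$-integrals into $\omega_0^n$-integrals, and a standard Moser recursion gives $\|u\|_{L^\infty}\leq C$ with the advertised dependencies.

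Once $\|F+f\|_{L^\infty}\leq C$ is in hand, the first equation of \eqref{wsystem2} gives
\[
\omega_\varphi^n \;=\; \v(\mu_\varphi)^{-1}\,e^F\,\omega_0^n \;\leq\; C\,e^{-f}\,\omega_0^n,
\]
so the Monge--Amp\`ere density lies in $L^{p_0}(\omega_0^n)$ with norm bounded in terms of $\int_X e^{-p_0 f}\omega_0^{[n]}$. Ko\l odziej's $L^\infty$ estimate for the Monge--Amp\`ere equation then delivers $\|\varphi\|_{L^\infty}\leq C$ with precisely the stated dependence, completing the proof.

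The hardest step will be the Moser iteration, specifically controlling $\Lambda_{\varphi,\v}(\alpha_0)$: because $\alpha_0$ is unsigned, the clean integration-by-parts identity available in \cite{DJL} for the case $\beta_\varphi=0$ no longer applies directly, and I would have to rely on the $\pm$-decomposition together with Stokes' theorem for the closed form $\alpha_0$ to reduce the dangerous integrals to quantities that can either be absorbed into the Dirichlet term on the left or bounded via entropy. A secondary subtlety is to ensure that $f$ enters the final constant only through $\int e^{-p_0 f}\omega_0^{[n]}$ and not pointwise; this is built into the setup because \eqref{moment:beta} forces $\mu_{\beta_\varphi}$ to remain in a fixed compact polytope, so only $\T$-derivatives of $f$ (which are uniformly bounded) appear in the pointwise part of the estimate, while $f$ itself enters only through integrals on the right-hand side.
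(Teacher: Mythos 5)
Your second stage (Ko{\l}odziej once $\|F+f\|_{L^\infty}$ is known) is fine in outline, but your first stage — Moser iteration on the equation $\Delta_{\varphi,\v}(F+f)=\w(\mu_\varphi)+2\Lambda_{\varphi,\v}(\alpha_0)$ tested against powers of $F+f$ — has a structural problem that cannot be patched by the devices you list. Running a Moser recursion for this equation requires a Sobolev inequality for the metric $\omega_\varphi$ with a uniform constant. Your claim that ``the $\log$-concavity of $\v$ supplies a uniform weighted Sobolev inequality with constants depending only on $\omega_0$ and $\Ent(\varphi)$, exactly as in \cite{DJL}'' is not something that holds, nor is it what \cite{DJL} does: entropy bounds the $L\log L$-mass of $\omega_\varphi^n$ but gives no control whatsoever of the Sobolev constant of $\omega_\varphi$ (this is the whole obstruction in the Chen--Cheng program, and $\log$-concavity of $\v$ plays no role at this stage — it enters only in the second-order integral estimates). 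Similarly, the cohomological constancy of $\int_X\alpha_0\wedge\omega_\varphi^{n-1}$ does not help inside the iteration, because the offending integral is $\int_X |u|^{2p-2}u\,\Lambda_{\varphi,\v}(\alpha_0)\v(\mu_\varphi)\omega_\varphi^{[n]}$, which carries the power weight $|u|^{2p-2}u$ and is not a cohomological quantity.

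The paper circumvents this exactly as Chen--Cheng do: instead of iterating, it introduces an auxiliary potential $\psi$ solving $\omega_\psi^{[n]}=b^{-1}\sqrt{F^2+1}\,\omega_\varphi^{[n]}$ with $\sup_X\psi=0$, and runs a maximum-principle argument on $H:=F+f+\varepsilon\psi-A\varphi$ (adapting \cite[Theorem 4.2]{DJL}) to get the one-sided inequality $F+f+\varepsilon\psi-A\varphi\leq C$. From there the logic is ordered differently from yours: this preliminary bound plus the $\alpha$-invariant integrability estimate on $\psi$ (from \cite[Theorem 8.11]{GZ17}) gives $\|e^F\|_{L^2(\omega_0)}\leq C$, Ko{\l}odziej then bounds $\|\varphi\|_{L^\infty}$, and only afterwards does one close the loop on $\psi$ (via $\|e^F\sqrt{F^2+1}\|_{L^2}$ and the Moser--Trudinger/Ko{\l}odziej bound of \cite[Theorem 2.2]{DJL}) and thus on the upper bound of $F+f$. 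The lower bound on $F+f$ is not a by-product of this and requires a separate argument, applying the weighted Laplacian to a function of the form $e^{-a(F+bf+C\varphi)}$ as in \cite[Lemma 2.3]{CC21c}. Your plan to obtain the full two-sided $L^\infty$ bound on $F+f$ first, independently of $\varphi$, is not how the estimate closes, and the iteration you propose for it does not have the analytic inputs it needs.
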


In order to prove the above theorem we introduce an auxiliary function. Let $\psi$ be the unique solution of the Monge-Amp\`ere equation
$$\omega_\psi^{[n]}= b^{-1} \sqrt{F^2+1}  \omega_\varphi^{[n]}, \quad \sup_X \psi = 0, $$
\noindent $b:= \int_X \sqrt{F^2+1}\,\omega_\varphi^{[n]}$. We normalize $\omega_0$ such that $
    \int_X \omega_0^{[n]}=1 $.
As observed in \cite[Section 4]{DJL}, $0<b$ is dominated by the weighted entropy $\Ent_\v(\varphi):= \int_X \log\left( \frac{\v(\mu_\varphi) \omega_\varphi^n}{m_\v \omega_0^n} \right) \frac{\v(\mu_\varphi) \omega_\varphi^{[n]}}{{m_\v}}$, which is comparable to the classical entropy $\Ent(\varphi):= \int_X \log\left( \frac{\omega_\varphi^n}{\omega_0^n} \right)  \omega_\varphi^{[n]}$.

The same computations in \cite[Theorem 4.2]{DJL} applied to $H:=F+f+\varepsilon \psi- A\varphi$ give

\begin{prop}\label{step 1 C0 weighted}
Given $\varepsilon\in(0,1)$, there exists $C=C(\varepsilon, n, \omega_0, \w, \v, b)$ such that

\begin{equation*}
    F+f+\varepsilon \psi- A\varphi\leq C,
\end{equation*}
where $A>0$ is a uniform constant depending only on the lower bound of $\mathrm{Ric}(\omega_0)$. 
\end{prop}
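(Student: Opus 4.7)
The plan is to follow the maximum-principle argument of \cite[Theorem 4.2]{DJL}, with the single but structurally important substitution $F \leadsto F+f$. The observation at the heart of the reduction is that the twisted system \eqref{wsystem2} written for $F+f$ is formally identical to the weighted cscK system treated in \cite{DJL}, except that $\mathrm{Ric}(\omega_0)$ is replaced by the background form $\alpha_0 = \mathrm{Ric}(\omega_0) - \beta_0$. Hence the proof should proceed by a direct transcription once one checks that the potential $f$ does not produce any extra uncontrolled term.

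Concretely, I consider $H:= F+f+\varepsilon\psi - A\varphi$ and apply the weighted maximum principle at a point $x_0$ where $H$ attains its maximum: $\Delta_{\varphi,\v} H(x_0)\le 0$. Using the second equation of \eqref{wsystem2} and the identity $\Delta_{\varphi,\v}\varphi=\tilde{\v}(\mu_\varphi)-\Lambda_{\varphi,\v}(\omega_0)$, together with the pointwise lower bound $\alpha_0 \geq -C_1\omega_0$ (which follows from the lower bound of $\mathrm{Ric}(\omega_0)$ and from $0\le\beta_0\le (\max_X|\beta_0|_0)\,\omega_0$), I collect all $\Lambda_{\varphi,\v}(\omega_0)$ contributions on one side. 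Choosing $A$ large enough to make the resulting coefficient non-negative, this term can be dropped, leaving
\[
\varepsilon\, \Lambda_{\varphi,\v}(\omega_\psi)(x_0)\;\leq\; A\,\tilde{\v}_{\max}+\|\w\|_{C^0}.
\]

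The decisive step is then the lower bound on $\Lambda_{\varphi,\v}(\omega_\psi)$ coming from the auxiliary Monge--Amp\`ere equation $\omega_\psi^n=b^{-1}\sqrt{F^2+1}\,\omega_\varphi^n$. The weighted arithmetic--geometric mean inequality, which is precisely where the hypothesis that $\v$ is log-concave is used, yields
\[
\Lambda_{\varphi,\v}(\omega_\psi)\;\geq\; c_\v\, n\, b^{-1/n}\,(F^2+1)^{1/(2n)}.
\]
Plugging this into the previous inequality forces $F(x_0)$ to be bounded by a constant $C$ depending only on $(\varepsilon,n,\omega_0,\w,\v,b)$. Finally, using the normalizations $\sup_X\psi=0$ and $\sup_X f=0$, together with the standing convention $\sup_X\varphi=0$ for the K\"ahler potentials, the pointwise bound $H(x_0)\le C$ translates into $H\le C$ on all of $X$.

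The main obstacle I anticipate is the weighted AM--GM inequality for $\Lambda_{\varphi,\v}(\omega_\psi)$: log-concavity of $\v$ is essential to ensure that the $\v$-weighted trace interacts correctly with the Monge--Amp\`ere density, and this is where the hypothesis on $\v$ is substantively used. Everything else is bookkeeping: the term $f$ is harmless because $d^c f(\xi)$ is bounded uniformly by \eqref{moment:beta}, while the presence of $\beta_0$ only modifies the constant $C_1$ (and thus the minimal allowed $A$) through its sup-norm, without changing the structure of the argument.
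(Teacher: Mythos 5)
Your reduction is the right one and matches the paper's stated strategy: treat $F+f$ as the unknown, replace $\mathrm{Ric}(\omega_0)$ by $\alpha_0 = \mathrm{Ric}(\omega_0)-\beta_0$, and run the same computation as in \cite[Theorem~4.2]{DJL}. The quantity $f$ is indeed harmless in the way you say. However, the last step of your argument does not close, and the issue is structural rather than a matter of bookkeeping. The pointwise maximum principle at a maximum $x_0$ of $H$, combined with the AM--GM bound for $\Lambda_{\varphi,\v}(\omega_\psi)$, only shows that $F(x_0)$ is bounded by a constant $C(\varepsilon, n, \omega_0,\w,\v,b)$. It does not bound $H(x_0)$ itself, because
\[
H(x_0)=F(x_0)+f(x_0)+\varepsilon\psi(x_0)-A\varphi(x_0)\le C\;-\;A\varphi(x_0),
\]
and $-A\varphi(x_0)\ge 0$ is a priori unbounded above (the only available information is $\sup_X\varphi=0$; a lower bound for $\varphi$ is precisely what Corollary~\ref{coro-covid-19} \emph{deduces from} this Proposition via Ko\l odziej's estimate, so it cannot be assumed here). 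This is why Chen--Cheng, and hence \cite{DJL}, replace the naive pointwise maximum principle by an Alexandrov--Bakelman--Pucci type estimate for $\Delta_{\varphi,\v}$: the differential inequality $\Delta_{\varphi,\v}H\ge -C_1+C_2\,b^{-1/n}(F^2+1)^{1/(2n)}$ is fed into a global ABP-type inequality that controls $\sup_X H$ through an $L^{2n}$-integral of the negative part (coupled with the Monge--Amp\`ere relation $\omega_\varphi^n=\v(\mu_\varphi)^{-1}e^F\omega_0^n$), rather than by evaluating at a single point. Without this step, the claimed conclusion $H\le C$ does not follow.

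A secondary point: log-concavity of $\v$ is not needed in this Proposition, and in particular not for your weighted AM--GM bound. One has $\Lambda_{\varphi,\v}(\omega_\psi)=\Lambda_\varphi(\omega_\psi)+\langle d\log\v(\mu_\varphi),\mu_\psi\rangle$, and the correction term is bounded solely by $\|\log\v\|_{C^1(P)}$ and the diameter of the polytope, so the inequality reads $\Lambda_{\varphi,\v}(\omega_\psi)\ge n\,b^{-1/n}(F^2+1)^{1/(2n)}-C_\v$ with an additive (not multiplicative) error, for any smooth positive $\v$. Log-concavity is used later, in the proof of Theorem~\ref{t:int:c2:estimates}, via \cite[Lemma~5.6]{DJL}.
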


\begin{corollary}\label{coro-covid-19}
The functions $\psi, \varphi, f+F$ are uniformly bounded by a constant that only depends on $\omega_0$, $\v$, $\w$ and $\Ent(\varphi)$ and $\int_Xe^{-p_0 f} \omega_0^{[n]}$. In particular, $F$ is bounded from below by a constant $C$ with the same dependence.
\end{corollary}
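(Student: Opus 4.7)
The argument follows the blueprint of \cite[Section~4]{DJL}, where the analogous estimate is established in the case $f \equiv 0$: the strategy is to combine Proposition~\ref{step 1 C0 weighted} with two successive applications of Kolod\'ziej's $L^\infty$ estimate for the complex Monge--Amp\`ere equation, with the integrability hypothesis $\int_X e^{-p_0 f}\,\omega_0^{[n]} < \infty$ entering only at the stage where one bounds $\varphi$.

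\emph{Step 1 --- Bounding $\psi$.} The excerpt records that $b = \int_X \sqrt{F^2+1}\,\omega_\varphi^{[n]}$ is controlled by $\Ent(\varphi)$. The defining equation of $\psi$ reads $\omega_\psi^n = g\,\omega_0^n$ with
\begin{equation*}
g \;=\; \frac{\sqrt{F^2+1}}{b}\cdot \frac{e^F}{\v(\mu_\varphi)}.
\end{equation*}
Combining the bound on $b$ with the $L\log L$-integrability of $F$ with respect to $\omega_\varphi^n$ that follows from $\Ent(\varphi) \le C$, a standard Young/H\"older argument (as in \cite[Sec.~4]{DJL}) yields $\|g\|_{L^{1+\delta}(\omega_0)} \le C$ for some $\delta > 0$. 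Kolod\'ziej's $L^\infty$ estimate then produces $\|\psi\|_{L^\infty} \le C$, with constant depending only on the announced data.

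\emph{Step 2 --- Upper bound on $F+f$.} Plugging $\|\psi\|_{L^\infty} \le C$ into Proposition~\ref{step 1 C0 weighted} and normalising $\sup_X \varphi = 0$ yields the pointwise bound
\begin{equation*}
F + f \;\le\; A\varphi + C \;\le\; C.
\end{equation*}

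\emph{Step 3 --- Bounding $\varphi$.} Rewrite the first equation of \eqref{wsystem2} as a complex Monge--Amp\`ere equation $\omega_\varphi^n = h\,\omega_0^n$ with density $h = e^F/\v(\mu_\varphi)$. By Step 2, $h \le C\,e^{-f}$, hence
\begin{equation*}
\int_X h^{p_0}\,\omega_0^n \;\le\; C \int_X e^{-p_0 f}\,\omega_0^n,
\end{equation*}
and this is exactly where the assumed integrability of $e^{-p_0 f}$ enters. A second application of Kolod\'ziej's $L^\infty$ estimate then gives $\|\varphi\|_{L^\infty} \le C$.

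\emph{Step 4 --- Lower bound on $F+f$, and on $F$.} A symmetric maximum-principle argument (running the computation of \cite[Theorem~4.2]{DJL} at the minimum of $F+f+\varepsilon\psi - A\varphi$, equivalently at the maximum of $-(F+f) - \varepsilon\psi + A\varphi$) yields the dual pointwise inequality $F + f \ge A'\varphi - C$. Combined with $\|\varphi\|_{L^\infty} \le C$ from Step~3, this produces $F + f \ge -C$. Since $f \le 0$ by the normalisation $\sup_X f = 0$, the ``in particular'' statement $F \ge -C$ follows from $F = (F+f) - f \ge -C$.

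The main technical obstacle is Step~1: producing an honest $L^{1+\delta}$ bound on the density $g$ without yet having an a priori pointwise upper bound on $F$. This is handled by the weighted adaptation of the Chen--Cheng/Kolod\'ziej machinery already carried out in \cite{DJL}; it transfers verbatim to the present setting, because the auxiliary potential $\psi$ is defined independently of $f$ and $\beta_\varphi$, and both $b$ and the $L\log L$-mass of $F$ are controlled purely by $\Ent(\varphi)$.
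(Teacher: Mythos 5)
Your proposal has a genuine gap in Step 1, and it is the crux of the argument, not a routine transfer from the $f\equiv 0$ case.

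\emph{What goes wrong.} You claim that the entropy bound, together with the control on $b$, yields an $L^{1+\delta}(\omega_0)$ bound on the density $g=b^{-1}\sqrt{F^2+1}\,e^F/\v(\mu_\varphi)$ of the Monge--Amp\`ere equation for $\psi$. This is false: $\Ent(\varphi)\le C$ gives only $L\log L$ control on the density $\omega_\varphi^n/\omega_0^n$, which is strictly weaker than $L^{1+\delta}$ for any $\delta>0$. Worse, $g$ carries an extra factor $\sqrt{F^2+1}$ relative to $\omega_\varphi^n/\omega_0^n$, so what the entropy actually controls is precisely $\|g\|_{L^1}$ (this is the statement that $b$ is bounded), and nothing more. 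The higher integrability of $g$ is exactly the output one is trying to produce, not an input. Your parenthetical remark that the $L^{1+\delta}$ bound ``transfers verbatim'' from \cite{DJL} is not correct: in \cite{DJL} (the case $f\equiv 0$), the bound on $\psi$ is \emph{not} obtained from entropy alone either; it comes from the maximum-principle estimate of Proposition~\ref{step 1 C0 weighted} combined with the Skoda-type exponential integrability of the auxiliary potential, and that part of the argument is precisely what needs adaptation here because of the new twist $f$.

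\emph{What the paper actually does.} The order of the bootstrap is reversed and is crucial. One first uses Proposition~\ref{step 1 C0 weighted}, which gives the pointwise inequality $F+f\le C-\varepsilon\psi$ (using $\sup_X\varphi=0$). Writing $\int_X e^{2F}\omega_0^{[n]}=\int_X e^{2(F+f)}e^{-2f}\omega_0^{[n]}$ and applying H\"older with exponents $p_0/2$ and its conjugate $q_0$, the factor involving $F+f$ is dominated by $\int_X e^{-2q_0\varepsilon\psi}\omega_0^{[n]}$, which is bounded by \cite[Theorem 8.11]{GZ17} once $\varepsilon q_0<\nu_{\omega_0}^{-1}$, while the factor involving $f$ is bounded by the $p_0$-integrability hypothesis. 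This gives $\|e^F\|_{L^2}$, whence $\|\varphi\|_{L^\infty}$ via Ko{\l}odziej. Repeating the H\"older argument with $p_0/4$ and its conjugate $q_1$ gives $\|e^F\sqrt{F^2+1}\|_{L^2}$, and only then does \cite[Theorem 2.2]{DJL} produce $\|\psi\|_{L^\infty}$. Finally the pointwise inequality closes the upper bound $F+f\le C$. So the Skoda/H\"older step together with Proposition~\ref{step 1 C0 weighted} is indispensable, and it is where the assumption $\int_X e^{-p_0f}\omega_0^{[n]}<+\infty$ actually enters---not in Step 3 as your outline suggests. A lesser point: the paper's lower bound on $F+f$ is obtained by applying the weighted Laplacian to $e^{-a(F+bf+C\varphi)}$ in the style of \cite[Lemma 2.3]{CC21c}, not by ``running the maximum-principle computation at the minimum''; the computation in Proposition~\ref{step 1 C0 weighted} does not reverse symmetrically, since the convexity terms that help at a maximum hurt at a minimum.
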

\begin{proof}
From  Proposition \ref{step 1 C0 weighted} we know that $F+f\leq C-\varepsilon\psi+A\varphi \leq C-\varepsilon \psi$,
since $\sup_X \varphi=0$. Therefore
\begin{equation*}
\begin{split}
 \int_X e^{2F} \omega_0^{[n]}= & \int_X e^{2(F+f)} e^{-2f} \omega_0^{[n]} \\
 \leq &   \left(\int_X e^{2q_0(F+f)}\omega_0^{[n]} \right)^{1/q_0} \left(\int_X e^{-2p_0f}  \,\omega_0^{[n]} \right)^{1/p_0}   \\
  \leq & C' \left(\int_X e^{-2q_0\varepsilon \psi} \omega_0^{[n]} \right)^{1/q_0} \left(\int_X e^{-p_0f} \omega_0^{[n]} \right)^{2/p_0},  
\end{split}
\end{equation*}
where $q_0$ is the conjugate exponent of $p_0/2$.
Choosing $\varepsilon $ such that $\varepsilon q_0< \nu_{\omega_0}^{-1}$, by \cite[Theorem 8.11]{GZ17} we get a bound for $\|e^F\|_{L^2}$ which depends on $\int_X e^{-p_0f} \omega_0^{[n]}$. It follows from Ko{\l}odziej uniform estimates \cite{Kolo} applied to the equation $\omega_\varphi^{[n]}=e^F\omega_0^{[n]}$, that $\varphi \geq -C(\|e^{F}\|_{L^2},\omega_0)$. In particular, since $\sup_X\varphi=0$ we do get a uniform control on $\|\varphi\|_{L^{\infty}}.$ Also, the same type of computations as before give

\begin{equation*}
\begin{split}
  \int_X e^{2F} (F^2+1) \,\omega_0^{[n]} \leq  & \int_X e^{4F}  \,\omega_0^{[n]}   \\
\leq  & \left( \int_X e^{4q_1(F+f)}\omega_0^{[n]}\right)^{1/q_1}  \left(\int_X e^{-p_0f}  \,\omega_0^{[n]} \right)^{4/p_0}   \\
  \leq & C' \int_X e^{-4q_1\varepsilon\psi} \omega_0^{[n]},
\end{split}  
\end{equation*}
where now $q_1$ is the H\"older conjugate of $p_0/4$. Once again, thanks to Theorem \cite[Theorem 8.11]{GZ17}, choosing $\varepsilon$ small enough we get a bound for $\|e^F\sqrt{F^2+1}\|_{L^2}$ which depends on $\omega_0, n, \int_X e^{-p_0f} \omega_0^{[n]}$. \cite[Theorem 2.2]{DJL}  then gives a uniform control for $\|\psi\|_{L^{\infty}}.$
We can then infer that $$F+f\leq C-\varepsilon\psi +A\varphi\leq -\varepsilon\inf_X \psi \leq C_4$$ for some uniform positive constant $C_4$.

The proof of the lower bound of $F+f$ follows the arguments in \cite[Lemma 2.3.]{CC21c} by applying the weighted Laplacian to $e^{-a(F+bf+C \varphi)}.$
\end{proof}

\subsection{Integral second order estimates }

The proof is similar to that of \cite[Theorem 5.1]{DJL} (where $f=0$) but it presents crucial modifications.  
%We will only highlight the difference. The argument is a generalization of \cite[Theorem 2.1.]{CC21c}.

\begin{lemma}{\label{l:trace:bound}}
Let $\varphi$ be solution of \eqref{wsystem2}. Then there exist a positive constants $C$,   depending on $\|F\|_{C^0}$, $\|\v\|_{C^0}$, $\inf_P(\v)$, and $n$ such that
\begin{equation*}
    \Lambda_0(\omega_\varphi) \leq C e^{-f} \Lambda_\varphi (\omega_0)^{n-1}, \qquad   \Lambda_\varphi(\omega_0)\leq n \sup_{P}|\v| e^{-F} \Lambda_0(\omega_\varphi)^{n-1} \leq C \Lambda_0(\omega_\varphi)^{n-1},
\end{equation*}
and
\begin{equation*}
    \Lambda_0(\omega_\varphi) C_1 \geq 1  \hspace*{2cm}   e^{-f/n} \Lambda_\varphi(\omega_0) C_2 \geq 1,
\end{equation*}
\end{lemma}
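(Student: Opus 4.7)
The proof reduces to a pointwise estimate on eigenvalues of $\omega_\varphi$ with respect to $\omega_0$, combined with the $C^0$-estimates already obtained in Corollary \ref{coro-covid-19}. At a fixed point of $X$, I would simultaneously diagonalize and write $\lambda_1,\dots,\lambda_n>0$ for the eigenvalues of $\omega_\varphi$ in an $\omega_0$-orthonormal frame. Then
\[
\Lambda_0(\omega_\varphi)=\sum_i\lambda_i,\qquad \Lambda_\varphi(\omega_0)=\sum_i\lambda_i^{-1},\qquad \prod_i\lambda_i=\frac{\omega_\varphi^n}{\omega_0^n}=\frac{e^F}{\v(\mu_\varphi)},
\]
where the last equality comes from the first line of \eqref{wsystem2}.

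\textbf{Upper bounds.} I would combine $\prod_{j\ne i}\lambda_j\le(\max_j\lambda_j)^{n-1}\le(\sum_j\lambda_j)^{n-1}$ with the identity $\sum_i\lambda_i^{-1}=(\prod_i\lambda_i)^{-1}\sum_i\prod_{j\ne i}\lambda_j$ to obtain
\[
\Lambda_\varphi(\omega_0)\le n\,\v(\mu_\varphi)\,e^{-F}\,\Lambda_0(\omega_\varphi)^{n-1}\le n\sup_P|\v|\,e^{-F}\,\Lambda_0(\omega_\varphi)^{n-1}.
\]
Since by Corollary \ref{coro-covid-19} the function $F$ has a uniform lower bound, the right-hand side is bounded by $C\,\Lambda_0(\omega_\varphi)^{n-1}$. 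Exchanging the roles of $\lambda_i$ and $\lambda_i^{-1}$ and using that $F+f$ is bounded above together with $\v\ge c>0$ on $P$, one gets symmetrically
\[
\Lambda_0(\omega_\varphi)\le n\,\frac{e^F}{\v(\mu_\varphi)}\,\Lambda_\varphi(\omega_0)^{n-1}=\frac{n\,e^{F+f}}{\v(\mu_\varphi)}\,e^{-f}\,\Lambda_\varphi(\omega_0)^{n-1}\le C\,e^{-f}\,\Lambda_\varphi(\omega_0)^{n-1}.
\]

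\textbf{Lower bounds.} These follow from the arithmetic-geometric mean inequality applied to the eigenvalues: $\Lambda_0(\omega_\varphi)\ge n(\prod_i\lambda_i)^{1/n}=n(e^F/\v(\mu_\varphi))^{1/n}$ and $\Lambda_\varphi(\omega_0)\ge n(\prod_i\lambda_i^{-1})^{1/n}=n(\v(\mu_\varphi)e^{-F})^{1/n}$. The first quantity is bounded below by a uniform positive constant, using the lower bound on $F$ from Corollary \ref{coro-covid-19} and the positivity of $\v$ on the compact polytope $P$, which yields $C_1\Lambda_0(\omega_\varphi)\ge 1$. For the second, I write $e^{-F/n}=e^{f/n}\,e^{-(F+f)/n}$ and use the upper bound on $F+f$ to conclude $\Lambda_\varphi(\omega_0)\ge c\,e^{f/n}$, i.e.\ $C_2\,e^{-f/n}\Lambda_\varphi(\omega_0)\ge 1$.

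\textbf{Main obstacle.} There is no genuine analytic difficulty: the statement is a direct consequence of elementary symmetric-function inequalities once the ratio $\omega_\varphi^n/\omega_0^n$ is read off from \eqref{wsystem2}. The subtle point, and the reason why the factors $e^{-f}$ and $e^{-f/n}$ appear explicitly in the conclusion rather than being absorbed into the constants, is that Corollary \ref{coro-covid-19} only controls the sum $F+f$ together with a one-sided bound for $F$ alone; the function $f$ itself is a priori only bounded above by the normalization $\sup_X f=0$, so it must be retained on the right-hand side of each estimate. Keeping track of this scaling is precisely what will make these bounds usable in the $W^{1,2p}$-estimates of Section \ref{s:esti}, where the weight $e^{-p_0 f}$ enters through the integrability assumption on $f$.
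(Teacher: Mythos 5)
Your proof takes the same route as the paper. The paper invokes directly the elementary inequality $n(\alpha^n/\beta^n)^{1/n}\le\Lambda_\beta(\alpha)\le n\,(\alpha^n/\beta^n)\,\Lambda_\alpha(\beta)^{n-1}$ for positive $(1,1)$-forms, whereas you rederive it from eigenvalue identities, but the subsequent substitutions $e^F=e^{F+f}e^{-f}$ and $e^{-F}=e^{f}e^{-(F+f)}$ and the appeal to Corollary~\ref{coro-covid-19} are identical; your closing remark on why $e^{-f}$ and $e^{-f/n}$ cannot be absorbed into the constants accurately reflects what is available at this stage of the argument.
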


\begin{proof}
Recall that for smooth positive $(1,1)$-forms $\alpha$, $\beta$
\begin{equation*}
  n\left(\frac{\alpha^n}{\beta^n}\right)^{1/n} \leq  \Lambda_\beta(\alpha) \leq n \frac{\alpha^n}{\beta^n}\Lambda_\alpha(\beta)^{n-1}.
\end{equation*}
By Corollary \ref{coro-covid-19} and the second inequality above, we infer 
$$\Lambda_0(\omega_\varphi) \leq n \frac{e^F}{\v(\mu_\varphi)}  \Lambda_\varphi (\omega_0)^{n-1}=n \frac{e^{F+f}e^{-f}}{\v(\mu_\varphi)}  \Lambda_\varphi (\omega_0)^{n-1} \leq Ce^{-f}\Lambda_\varphi (\omega_0)^{n-1}. $$
Similarly, using the fact that $F$ is uniformly bounded form below we get
$$\Lambda_\varphi(\omega_0) \leq n \v(\mu_\varphi) {e^{-F}}  \Lambda_0 (\omega_\varphi)^{n-1} \leq C \Lambda_0 (\omega_\varphi)^{n-1} $$\\
For the other side we find 
\begin{equation*}
  \Lambda_0(\omega_\varphi) \geq n \left( \frac{e^F}{\v(\mu_\varphi) } \right)^{1/n} \geq C \quad \text{ and } \quad  \Lambda_\varphi(\omega_0) \geq n \left( {\v(\mu_\varphi)  e^{-F}} \right)^{1/n} \geq C' e^{f/n}.
\end{equation*}

\end{proof}

\begin{theorem}{\label{t:int:c2:estimates}}
Assume $\v$ is log-concave and that $\int_X e^{-p_0f} \omega_0^{[n]}< +\infty$ for some $p_0>1$. Then, for any $p\geq1$, there exists a constant $C>0$, depending on $\|F + f\|_{C^0}$, $\|\varphi\|_{C^0}$, $\v$, $\w$,  $\sup_X|\beta_0|_0$, $\omega_0$, a bound for $\int_X e^{-p_0f} \omega_0^{[n]}$, and $p$, such that

\begin{equation*}
    \int_Xe^{(p-1)f}\Lambda_0(\omega_\varphi)^p\omega_0^{[n]}\leq C.
\end{equation*}

\end{theorem}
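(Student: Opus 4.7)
The strategy is to adapt the proof of \cite[Theorem~5.1]{DJL}, which treats the case $f=0$, to the twisted setting, using the integrability hypothesis $\int_X e^{-p_0 f}\omega_0^{[n]}<\infty$ to absorb the extra exponential factors produced by the twist $\beta_\varphi=\beta_0+dd^c f$. The argument proceeds in three steps.

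\emph{Step 1 (differential inequality).} I first apply the weighted Chern--Lu inequality from \cite[App.~A]{DJL} to the identity map $(X,\omega_\varphi)\to(X,\omega_0)$ to obtain a lower bound of the form
\[
\Delta_{\varphi,\v}\log\Lambda_\varphi(\omega_0) \;\geq\; -B\,\Lambda_\varphi(\omega_0)\;-\;(\text{weighted Ricci term of }\omega_0),
\]
with $B$ a bound on the bisectional curvature of $\omega_0$; the log-concavity of $\v$ is crucially used to discard an otherwise unsigned Hessian term involving $\log\v(\mu_\varphi)$. Combining this with the identity $\mathrm{Ric}_\v(\omega_\varphi)=\mathrm{Ric}(\omega_0)-\tfrac12 dd^c F$ (from the first equation of \eqref{wsystem2}), with the second equation $\Delta_{\varphi,\v}(F+f)=\w(\mu_\varphi)+2\Lambda_{\varphi,\v}(\alpha_0)$, with the trivial estimate $|\Lambda_{\varphi,\v}(\alpha_0)|\leq C(1+\Lambda_\varphi(\omega_0))$, and with the standard $\Delta_\varphi\varphi=n-\Lambda_\varphi(\omega_0)$, I then introduce an auxiliary function
\[
u := \log\Lambda_\varphi(\omega_0) + A(F+f) - A_1\varphi,
\]
with $A>0$ and $A_1>B$ sufficiently large, to derive the key pointwise inequality
\[
\Delta_{\varphi,\v}u \;\geq\; \epsilon\,\Lambda_\varphi(\omega_0) - C_1
\]
for some $\epsilon, C_1>0$ depending on the allowed data.

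\emph{Step 2 (integration by parts and Moser iteration).} For an exponent $q\geq 0$, I multiply the previous inequality by $e^{qu}\v(\mu_\varphi)$ and integrate against $\omega_\varphi^{[n]}$; the nonnegative gradient term from the integration by parts is discarded, yielding
\[
\epsilon\int_X\Lambda_\varphi(\omega_0)\,e^{qu}\v(\mu_\varphi)\omega_\varphi^{[n]} \;\leq\; C_1\int_X e^{qu}\v(\mu_\varphi)\omega_\varphi^{[n]}.
\]
Using $\v(\mu_\varphi)\omega_\varphi^{[n]}=e^F\omega_0^{[n]}$ together with the $C^0$ bounds on $\varphi$ and $F+f$ from Theorem \ref{t:c0:estimates}, all exponential factors except $e^{-qAf}$ are absorbed into uniform constants, giving
\[
\int_X\Lambda_\varphi(\omega_0)^{q+1}e^{-qAf}\omega_0^{[n]} \;\leq\; C_2\int_X\Lambda_\varphi(\omega_0)^{q}e^{-qAf}\omega_0^{[n]}.
\]
Bootstrapping from the cohomological $L^1$ base case $\int_X\Lambda_\varphi(\omega_0)\,\omega_\varphi^{[n-1]}\wedge\omega_0$ and iterating in the Moser spirit, with H\"older's inequality against $\int_X e^{-p_0 f}\omega_0^{[n]}$ to keep $qA$ strictly below $p_0$ throughout, I obtain
\[
\int_X\Lambda_\varphi(\omega_0)^k e^{-rf}\omega_0^{[n]} \;\leq\; C_{k,r}
\]
for any prescribed $k\geq 1$ and $0\leq r<p_0$.

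\emph{Step 3 (trace conversion).} Lemma \ref{l:trace:bound} gives $\Lambda_0(\omega_\varphi)\leq C e^{-f}\Lambda_\varphi(\omega_0)^{n-1}$, whence
\[
e^{(p-1)f}\Lambda_0(\omega_\varphi)^p \;\leq\; C^p\,e^{-f}\Lambda_\varphi(\omega_0)^{p(n-1)}.
\]
A H\"older decomposition with exponents $p_0$ and $p_0/(p_0-1)$ separates the $e^{-f}$ factor (controlled by hypothesis) from the power $\Lambda_\varphi(\omega_0)^{p(n-1)p_0/(p_0-1)}$ (controlled by Step 2 with $k$ chosen large enough), concluding the proof.

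\emph{Main obstacle.} The technical heart is Step 2: the constants $A$ and $q$ must be calibrated so that $qA<p_0$ holds while the Moser iteration nevertheless progresses to arbitrarily large $k$, and this must be consistent with maintaining the correct sign in the differential inequality of Step 1. The log-concavity of $\v$ is precisely what makes the first step possible; without it, the unsigned Hessian of $\log\v(\mu_\varphi)$ appearing in the Chern--Lu computation would obstruct the desired lower bound.
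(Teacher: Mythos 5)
Your Steps 2 and 3 are structurally sound, but Step 1 has a genuine gap and is not what the paper does, so the argument does not close.

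The gap is in the claimed differential inequality. You propose to get
$\Delta_{\varphi,\v}\log\Lambda_\varphi(\omega_0) \geq -B\Lambda_\varphi(\omega_0) - (\text{weighted Ricci term of }\omega_0)$
from a ``weighted Chern--Lu inequality'' for the identity map $(X,\omega_\varphi)\to(X,\omega_0)$, and then convert the Ricci contribution into something absorbable using $\mathrm{Ric}_\v(\omega_\varphi)$ and the second line of \eqref{wsystem2}. But the Chern--Lu inequality for $\Lambda_\varphi(\omega_0)$ produces a $\mathrm{Ric}(\omega_\varphi)$ term contracted against $\omega_0$ via $g_\varphi$ \emph{twice}, i.e.\ a term of the form $g_\varphi^{i\bar l}g_\varphi^{k\bar j}R^\varphi_{i\bar j}(g_0)_{k\bar l}/\Lambda_\varphi(\omega_0)$. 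After substituting $\mathrm{Ric}(\omega_\varphi)=\mathrm{Ric}(\omega_0)+dd^c\log\v(\mu_\varphi)-dd^cF$, the $dd^cF$ part becomes a mixed contraction that is neither $\Delta_0F$, nor $\Delta_\varphi F$, nor $\Delta_{\varphi,\v}(F+f)$, and there is no pointwise way to invoke \eqref{wsystem2} to control it. The only alternative is a lower bound on $\mathrm{Ric}(\omega_\varphi)$, which is precisely what one does \emph{not} have a priori in the weighted cscK setting (unlike in the Monge--Amp\`ere case, where Chern--Lu is the natural tool). The log-concavity of $\v$ only kills the $dd^c\log\v(\mu_\varphi)$ piece; it cannot help with the $dd^cF$ piece. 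So the pointwise inequality $\Delta_{\varphi,\v}u\geq\epsilon\,\Lambda_\varphi(\omega_0)-C_1$ with $u=\log\Lambda_\varphi(\omega_0)+A(F+f)-A_1\varphi$ cannot be established.

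The paper sidesteps this by working with the other trace, $\Lambda_0(\omega_\varphi)$, and the Aubin--Yau--Calabi inequality (the cited \cite[Lemma 5.6]{DJL}), for which the Ricci contribution is the clean contraction $\Lambda_0(\mathrm{Ric}(\omega_\varphi))$. This produces a $\Delta_0F$ term, which is still \emph{not} controllable pointwise, but \emph{is} controllable at the level of integrals: the paper uses the exponential auxiliary function $u=e^{-\gamma(F+\lambda\varphi+\delta f)}\Lambda_0(\omega_\varphi)$, keeps the gradient term from $\Delta_{\varphi,\v}(u^{2p+1})$, and absorbs the $\Delta_0F$ contribution via the integration-by-parts trick in \eqref{bound I1}. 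Discarding the gradient term, as you do, would also break this absorption. Finally, a minor accounting point: with your auxiliary function the weight appearing in the iterated integrals is $e^{-f}\omega_0^{[n]}$ uniformly in $q$, not $e^{-qAf}\omega_0^{[n]}$; this in fact makes Step 3 direct and makes the final H\"older split against $\int e^{-p_0f}\omega_0^{[n]}$ unnecessary, but it does not rescue Step 1.
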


\begin{proof}

Consider $$u:= e^{-\gamma (F+\lambda \varphi+{\delta f})} \Tr\, (\omega_\varphi) \geq 0,$$ where $\gamma,\lambda>1$ and $\delta\in(0,1)$ are constants to be chosen later in a suitable way. Standard computations (see also the beginning of the proof of \cite[Theorem 5.1]{DJL}) give

\begin{equation*}
\begin{split}
    \Delta_{\varphi,\v} u  
    \geq & u \, \Delta_{\varphi,\v} \log u \\
    =&  -\gamma u\Delta_{\varphi,\v} (F {+ f} + \lambda \varphi +{(\delta-1)} f) + u \Delta_{\varphi,\v} \left( \log \Lambda_0(\omega_\varphi)\right).
\end{split}
\end{equation*}
From \eqref{wsystem2} and the fact that $\Lambda_{\varphi,\v}(\omega_0) \leq \Lambda_\varphi(\omega_0)+C$, $C>0$ uniform, we deduce that
 
 \begin{equation*}
     \begin{split}
         \Delta_{\varphi,\v} (F +{f} + \lambda \varphi + {(\delta-1) f)}
    \leq   C_3 + (A - \lambda) \Lambda_\varphi(\omega_0) +  (\delta-1) \Delta_{\varphi,\v} f,
     \end{split}
 \end{equation*}
where $C_3$ is a constant independent of $\varphi$ and $A$ is such that $2\alpha_0\leq A \omega_0$. Since $\v$ is $\log$-concave, it follows from \cite[Lemma 5.6]{DJL} that

\begin{equation*}
\begin{split}
\Delta_{\varphi,\v} u  \geq & e^{-\gamma (F+\lambda \varphi + \delta f)} \bigg(-\gamma C_3   \Tr( \omega_\varphi) +  \Delta_{0}F+(\lambda \gamma -A\gamma - B)  \Tr( \omega_\varphi) \Lambda_{\varphi}(\omega_0) \bigg) \\
&+ \gamma  u(1-\delta)\Delta_{\varphi,\v} f,
\end{split}
\end{equation*}
where $B>0$ is a lower bound for the holomorphic bisectional curvature of $\omega_0$ and $\Delta_0=\Delta_{\omega_0}$.

We now choose $\lambda \geq 4 \max (A, B)$ (in order to have $\lambda \gamma -A\gamma -B\geq \frac{\lambda \gamma }{2}$) so that
\begin{equation*}
\begin{split}
\Delta_{\varphi,\v} u  \geq &  -\gamma C_3  u + \frac{\lambda \gamma}{2}  \Lambda_{\varphi}(\omega_0) \,u +e^{-\gamma (F+\lambda \varphi+\delta f)} \Delta_{0} F + \gamma u (1-\delta)\Delta_{\varphi,\v}f.
\end{split}
\end{equation*}
The same arguments in \cite[eq. (36)]{DJL} show that
\begin{equation}{\label{integral bound}}
\begin{split}
 0  &= \frac{1}{2p+1}\int_X \Delta_{\varphi, \v} (u^{2p+1}) \, \v(\mu_\varphi) \omega_\varphi^n\\
&\geq   2p \int_X u^{2p-2}|d u |_0^2  e^{-\gamma (F+\lambda \varphi + {\delta f)}+F} \omega_0^{[n]} - \gamma C_3 \int_X u^{2p+1} e^F \omega_0^{[n]}\\
&\quad+\frac{\gamma \lambda}{2}  \int_X u^{2p} \Tr(\omega_\varphi) \Lambda_{\varphi}(\omega_0) e^{-\gamma (F+\lambda \varphi + {\delta f)}+F}\omega_0^{[n]} \\
&\quad+\int_X u^{2p} e^{-\gamma (F+\lambda \varphi+ \delta f)+F} \Delta_{0} F\omega_0^{[n]} \\
&\quad+ \int_X u^{2p} \gamma (1-\delta)\Delta_{\varphi,\v} f\Lambda_{0}(\omega_\varphi) e^{-\gamma (F+\lambda \varphi+ {\delta f)}+F}\omega_0^{[n]}.
\end{split}
\end{equation}

Next, we focus on finding a suitable lower bound for the term involving the Laplacian of $F$. Set $G:=(1-\gamma)F-\gamma\lambda \varphi{ - \gamma \delta f}$. A formal trick gives that
\begin{equation*}
\begin{split}
I:=& -\int_X u^{2p} \Delta_{0} F e^{G} \, \omega_0^{[n]}\\
=& \frac{1}{\gamma-1} \int_X u^{2p} \,\Delta_{0} G e^{G} \, \omega_0^{[n]}+\frac{\gamma \lambda }{\gamma-1} \int_X u^{2p} \Delta_{0}\varphi\, e^{G} \, \omega_0^{[n]} + \frac{\gamma\delta}{ \gamma-1}\int_Xu^{2p}\Delta_{0}f\, e^{G} \omega_0^{[n]} \\
:=&   I_1+I_2  + \frac{\gamma\delta}{\gamma-1} \int_X u^{2p}\Delta_{0}f \, e^{G} \omega_0^{[n]} 
\end{split}
\end{equation*}
By \cite[eq. (37)]{DJL}

\begin{equation}{\label{bound I1}}
 I_1 
\leq  \frac{ 2p^2}{\gamma -1} \int_X u^{2p-2} |du|_0^2 e^G \,\omega_0^{[n]}.
\end{equation}
 Choosing $\delta=\frac{\gamma-1}{\gamma}$ (i.e. $1-\delta=1/\gamma$) and using Corollary \ref{coro-covid-19}, we deduce that $G 
=(1-\gamma)(F+f)-\gamma \lambda \varphi \leq C$ and that $u \leq C \Lambda_0(\omega_\varphi)e^{\gamma(1-\delta)f}=C \Lambda_0(\omega_\varphi)e^{f}$. Hence
\begin{equation}{\label{bound I222}}
\begin{split}
I_2 = &\frac{\gamma \lambda }{\gamma-1} \int_X u^{2p} \Lambda_{0}(\omega_\varphi) e^{G}\, \omega_0^{[n]} -\frac{n\gamma \lambda }{\gamma-1} \int_X u^{2p}  e^{G} \, \omega_0^{[n]} \\
\leq & C \int_X u^{2p}  \Lambda_{0}(\omega_\varphi)\, \omega_0^{[n]} \\
\leq &  C_5 \int_X e^{2p{f}} \Lambda_0(\omega_\varphi)^{2p+1} \omega_0^{[n]}.
\end{split}
\end{equation}
Also,

\begin{eqnarray*}
\nonumber  \gamma(1-\delta)\Lambda_0(\omega_\varphi) \Delta_{\varphi,\v} f- \frac{\gamma \delta}{\gamma-1}\Delta_{0}f &=&  \Lambda_0(\omega_\varphi) \Delta_{\varphi,\v}f-\Delta_{0}f 
\nonumber    \\
& = & \Lambda_0(\omega_\varphi)\Delta_{\varphi}f  +\Lambda_0(\omega_\varphi) \langle d\log(\v(\mu_\varphi)), d^cf \rangle - \Delta_{0}f \\
 \nonumber       &\geq& -C_6 \Lambda_0(\omega_\varphi)  -\Lambda_0(\omega_\varphi)\Delta_\varphi f - \Delta_0f \\
 & \geq &- C_6 \Lambda_0(\omega_\varphi) - \max_X|\beta_0|_{0} \Lambda_0(\omega_\varphi)\Lambda_\varphi(\omega_0).
\end{eqnarray*}

For the first inequality we use that $d^cf(\xi)$ is bounded independently of $\varphi$, see the discussion below \eqref{moment:beta}. We use \cite[(2.33)]{CC21c} for the last inequality. 
 We then obtain:

\begin{equation}{\label{comp:int}}
    \begin{split}
&\frac{\gamma \lambda}{2}  \int_X u^{2p} \Tr(\omega_\varphi) \Lambda_{\varphi}(\omega_0)e^G\omega_0^{[n]}+ \int_X u^{2p}\left(\gamma(1-\delta)\Lambda_0(\omega_\varphi) \Delta_{\varphi,\v}f- \frac{\gamma \delta}{\gamma-1}\Delta_{0}f\right)e^G\omega_0^{[n]}        \\
\geq&   \int_X \left(  \frac{\gamma \lambda}{2}   -   \max_X|\beta_0|_{0}\right) \Lambda_0(\omega_\varphi)\Lambda_\varphi(\omega_0)u^{2p}e^G\omega_0^{[n]} -C_6 \int_X u^{2p} \Lambda_0(\omega_\varphi)e^G\omega_0^{[n]} \\
 % \geq&   \int_X\bigg(  \frac{\gamma \lambda}{2}  u^{2p+1} \Tr( \omega_\varphi) \Lambda_{\varphi}(\omega_0)e^F  -  C_u   \max_X|\beta_0|_{0}\Lambda_0(\omega_\varphi)\Lambda_\varphi(\omega_0)u^{2p+1}e^G\bigg)\omega_0^{[n]}\\
%  & -C_6 \int_X u^{2p}  \Lambda_0(\omega_\varphi)e^G\omega_0^{[n]} \\ 
 % \geq & \int_Xu^{2p+1}e^F\Tr( \omega_\varphi) \Lambda_{\varphi}(\omega_0)\bigg( \frac{\gamma \lambda}{2}   - e^{-F}C_u \max_X | \beta_0 |_0 \bigg)\omega_0^{[n]}  -C_7 \int_X  e^{2pf}\Lambda_0(\omega_\varphi)^{2p+1}\omega_0^{[n]} \\
  \geq & C_8 \int_X u^{2p}  \Tr( \omega_\varphi) \Lambda_{\varphi}(\omega_0) e^G\omega_0^{[n]} -C_7 \int_X  e^{2pf}\Lambda_0(\omega_\varphi)^{2p+1}\omega_0^{[n]}\\
    \geq & C_9 \int_X u^{2p+1}  e^{\left(\frac{n-2}{n-1}\right)F} (\Tr\,(\omega_\varphi))^{\frac{1}{n-1}}\omega_0^{[n]} -C_7 \int_X  e^{2pf} \Lambda_0(\omega_\varphi)^{2p+1}\omega_0^{[n]},
    \end{split}
\end{equation}
where we choose $\lambda$ big enough such that $C_9:= \frac{\gamma \lambda}{2} -\max|\beta_0|_0>0$. For the last inequality we use that $ \Tr(\omega_\varphi) \Lambda_\varphi (\omega_0) \geq  C  e^{\frac{-F}{n-1}} \Tr(\omega_\varphi)^{1+\frac{1}{n-1}}$ via similar arguments to those used in the proof of Lemma \ref{l:trace:bound}.

Combining  \eqref{integral bound}, \eqref{bound I1}, \eqref{bound I222}, \eqref{comp:int} and choosing $\gamma$ big enough (say $\gamma= a p$, with $a>>1$) we obtain that

\begin{equation*}
\begin{split}
0&\geq  2\left( p-\frac{p^2}{\gamma -1}\right)   \int_X u^{2p-2} |d u|_0^2 e^{G} \, \omega_0^{[n]} - (\gamma C_3+C_5+C_7) \int_X e^{2pf} \Lambda_0(\omega_\varphi)^{2p+1} \omega_0^{[n]} \\
&\quad +C_9 \int_X u^{2p+1} e^{\left(\frac{n-2}{n-1}\right)F} (\Tr\, \omega_\varphi)^{\frac{1}{n-1}} \omega_0^{[n]}  \\
&\geq  -C_{10} \int_X  e^{2pf}\Tr\,  (\omega_\varphi)^{2p+1}\,\omega_0^{[n]} + C_{11} \int_X e^{\left(2p+1- \frac{n-2}{n-1}\right)f} (\Tr\, \omega_\varphi)^{2p+1+\frac{1}{n-1}}\,\omega_0^{[n]}. 
\end{split}
\end{equation*}
Note that $p-\frac{p^2}{\gamma -1}>0$ thanks to the choice of $\gamma$. Also, by the choice of $\delta$ we do have $ u= e^{-\lambda \gamma \varphi}\Lambda_0(\omega_\varphi)e^{-\gamma (F+f)+f} 
 \geq  C \Lambda_0(\omega_\varphi) e^f$, hence the last inequality.\\
By taking $2p=\frac{k}{n-1}$ with $k\geq0$, we find that
$I(k):= \int_X  e^{\frac{k}{n-1}f}\Tr\,  (\omega_\varphi)^{\frac{k}{n-1}+1}\,\omega_0^{[n]} $ satisfies $I(k+1)\leq C' I(k).$ In particular, for any $k\in \mathbb{N}$, $I(k)$ is bounded by $I(0)= \int_X \Tr\,  (\omega_\varphi)\,\omega_0^{[n]} = \frac{1}{(n-1)!}\int_X   \omega_\varphi \wedge \omega_0^{n}=\frac{\vol(\omega_0)}{(n-1)!}$. This concludes the proof.
\end{proof}

The proof of the corollary below is essentially \cite[Corollary 2.4]{CC21c} but we write down the details for reader’s convenience. 

\begin{corollary}{\label{c:int:estim}}
Assume $\v$ is log-concave and that $\int_X e^{-p_0f} \omega_0^{[n]}< +\infty$ for some $p_0>1$. For any $1 < q < p_0$, there exists a constant $C_q$, depending only on
the bound of $\int_Xe^{-p_0f}\omega_0^{[n]}$, $\|\w\|_{C^0}$,  $\|\v\|_{C^0}$, $\max_X|\beta_0|_0$, $\|F+f\|_{C^0}$, $\|\varphi_0\|_{C^0}$, $\omega_0$ such that

\begin{equation*}
    \int_X\Lambda_0(\omega_\varphi)^q\omega_0^{[n]} \leq C_q.
\end{equation*}

\end{corollary}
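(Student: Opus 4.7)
The plan is to deduce Corollary \ref{c:int:estim} from Theorem \ref{t:int:c2:estimates} by a Hölder interpolation, trading the unweighted $L^q$-norm of $\Lambda_0(\omega_\varphi)$ against the $L^p$-norm weighted by $e^{(p-1)f}$ and an exponential integrability factor involving $f$. The fact that we are given $\int_X e^{-p_0 f}\omega_0^{[n]} < \infty$ for some $p_0 > 1$ is exactly what is needed to absorb the negative power of $e^f$ produced by this splitting.

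Concretely, for $1 < q < p_0$ and any $p > q$ to be chosen, write
\begin{equation*}
\Lambda_0(\omega_\varphi)^q = \bigl(\Lambda_0(\omega_\varphi)^q\, e^{(p-1)qf/p}\bigr)\cdot e^{-(p-1)qf/p},
\end{equation*}
and apply Hölder's inequality with conjugate exponents $p/q$ and $p/(p-q)$:
\begin{equation*}
\int_X \Lambda_0(\omega_\varphi)^q \omega_0^{[n]} \leq \left(\int_X \Lambda_0(\omega_\varphi)^p e^{(p-1)f} \omega_0^{[n]}\right)^{q/p} \left(\int_X e^{-\beta f}\omega_0^{[n]}\right)^{(p-q)/p},
\end{equation*}
where $\beta := (p-1)q/(p-q)$. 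The first factor is controlled by Theorem \ref{t:int:c2:estimates} applied with exponent $p$, whose bound depends on the quantities listed in the corollary.

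The choice of $p$ is the only delicate point: we need $\beta \leq p_0$ in order to control the second factor by the hypothesis on $e^{-p_0 f}$. Observe that $\beta = (p-1)q/(p-q) \to q$ as $p \to \infty$, and $q < p_0$ by assumption, so $p$ may be fixed large enough (depending only on $q$ and $p_0$) so that $\beta < p_0$. Once this is done, since $\omega_0^{[n]}$ is a probability measure (after the normalization $\int_X \omega_0^{[n]} = 1$ already in force), the monotonicity of $L^r$-norms on probability spaces yields
\begin{equation*}
\int_X e^{-\beta f}\omega_0^{[n]} \leq \left(\int_X e^{-p_0 f}\omega_0^{[n]}\right)^{\beta/p_0},
\end{equation*}
which is bounded by hypothesis. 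Combining the two estimates gives the required bound on $\int_X \Lambda_0(\omega_\varphi)^q \omega_0^{[n]}$, with the dependence claimed in the statement. There is no real obstacle here beyond the careful tracking of exponents; the entire content of the corollary is the compatibility of Theorem \ref{t:int:c2:estimates} with the hypothesis $\int_X e^{-p_0 f}\omega_0^{[n]} < \infty$ via this single Hölder step.
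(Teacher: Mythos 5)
Your proof is correct and takes essentially the same route as the paper: a single Hölder interpolation to trade the unweighted $L^q$-norm of $\Lambda_0(\omega_\varphi)$ against the weighted $L^p$-estimate of Theorem \ref{t:int:c2:estimates} and the integrability of $e^{-p_0 f}$. The only cosmetic difference is in the choice of exponent: the paper sets $s=\tfrac{(q-1)p_0}{p_0-1}$ so that the Hölder split produces the factor $\bigl(\int_X e^{-p_0 f}\omega_0^{[n]}\bigr)^{s/p_0}$ \emph{exactly} and, with $p=\tfrac{q(p_0-1)}{p_0-q}$, the other factor is \emph{exactly} $\int_X e^{(p-1)f}\Lambda_0(\omega_\varphi)^p\,\omega_0^{[n]}$ (this is the borderline where your $\beta$ would equal $p_0$); you instead take a slightly larger $p$, get $\beta<p_0$, and close the loop with one extra Jensen step on the probability space $(X,\omega_0^{[n]})$. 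Both are sound and have the same dependence.
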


\begin{proof}
Choose $s = \frac{(q-1)p_0}{p_0 - 1}$. We compute

\begin{equation*}
    \begin{split}
        \int_X \Lambda_0(\omega_\varphi)^q \omega_0^{[n]} =& \int_X e^{-sf} \cdot e^{sf} \Lambda_0(\omega_\varphi)^q \omega_0^{[n]} \\
        \leq &\left( \int_X e^{-p_0 f} \omega_0^{[n]} \right)^{\frac{s}{p_0}}  
\left( \int_X e^{\frac{s p_0}{p_0 - s} f} (\Lambda_0(\omega_\varphi))^{\frac{p_0 q}{p_0 - s}} \omega_0^{[n]} \right)^{1 - \frac{s}{p_0}}.
    \end{split}
\end{equation*}
By definition of $s$ we have that 
\[
\frac{s p_0}{p_0 - s} = \frac{p_0 q}{p_0 - s} - 1,
\]
Hence the result follows from Theorem \ref{t:int:c2:estimates}.
\end{proof}

\subsection{Integral estimates for the gradient}

%This section is devoted to conclude the proof of Theorem \ref{t:estimates:w12p}.

We start with the following lemma:

\begin{lemma}{\label{l:w:eq:F:plus:f}}

Let $H:=F+f$. Then
\begin{equation*}
\begin{split}
\Delta_{\varphi, \v}\left(e^{H/2} |dH|_\varphi^{2}\right)\geq &  2e^{H/2} g_\varphi( d\Delta_{\varphi, \v} H,dH)  \\
&- C e^{H/2} |dH|_\varphi^{2} \left(   e^{-2F}\Lambda_0(\omega_\varphi)^{2n-2}+e^{-F}\Lambda_0(\omega_\varphi)^{n-1}+1 \right) .
\end{split}
\end{equation*}
\end{lemma}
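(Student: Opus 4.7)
The plan is to combine the Leibniz rule for the weighted Laplacian $\Delta_{\varphi,\v}$ with a weighted complex Bochner identity, and then to control the resulting Ricci term via the Monge-Amp\`ere equation (the first line of \eqref{wsystem2}). Concretely, I first expand
\begin{equation*}
\Delta_{\varphi,\v}(e^{H/2}|dH|_\varphi^2) = e^{H/2}\Delta_{\varphi,\v}|dH|_\varphi^2 + |dH|_\varphi^2\,\Delta_{\varphi,\v}(e^{H/2}) + 2g_\varphi(de^{H/2},d|dH|_\varphi^2),
\end{equation*}
with $\Delta_{\varphi,\v}e^{H/2} = \tfrac{1}{2}e^{H/2}\Delta_{\varphi,\v}H + \tfrac{1}{4}e^{H/2}|dH|_\varphi^2$ and $g_\varphi(de^{H/2},d|dH|_\varphi^2) = \tfrac{1}{2}e^{H/2}g_\varphi(dH, d|dH|_\varphi^2)$. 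The last, cross-term will later be absorbed via Cauchy-Schwarz against the positive Hessian term produced by Bochner.

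Next, I invoke the weighted Bochner identity for the Bakry-\'Emery Ricci form (a version of which is recorded in \cite[App.~A]{DJL}),
\begin{equation*}
\Delta_{\varphi,\v}|dH|_\varphi^2 = 2|\nabla dH|_\varphi^2 + 2g_\varphi(d\Delta_{\varphi,\v}H, dH) + 2\,\mathrm{Ric}_\v(\omega_\varphi)(dH,dH),
\end{equation*}
with $\mathrm{Ric}_\v(\omega_\varphi) = \mathrm{Ric}(\omega_\varphi) - \tfrac{1}{2}dd^c\log\v(\mu_\varphi)$. This isolates the desired $2e^{H/2}g_\varphi(d\Delta_{\varphi,\v}H, dH)$ term, and the reservoir $2|\nabla dH|_\varphi^2$ will absorb all Cauchy-Schwarz cross-terms; the task reduces to bounding $\mathrm{Ric}_\v(\omega_\varphi)(dH,dH)$ from below. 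Differentiating $\omega_\varphi^n = e^F\omega_0^n/\v(\mu_\varphi)$ yields $\mathrm{Ric}_\v(\omega_\varphi) = \mathrm{Ric}(\omega_0) - dd^cF + \tfrac{1}{2}dd^c\log\v(\mu_\varphi)$. The first and third pieces are $(1,1)$-forms bounded in $\omega_0$-norm (the third since $\mu_\varphi \in P$ and $\v$ is smooth on $P$), so Lemma~\ref{l:trace:bound} controls their $\omega_\varphi$-operator norms by $C\Lambda_\varphi(\omega_0) \leq Ce^{-F}\Lambda_0(\omega_\varphi)^{n-1}$. Evaluated at $(dH,dH)$ this gives the linear $e^{-F}\Lambda_0(\omega_\varphi)^{n-1}|dH|_\varphi^2$ error; squaring (via Young's inequality when absorbing the Leibniz cross-term into $|\nabla dH|_\varphi^2$) produces the quadratic $e^{-2F}\Lambda_0(\omega_\varphi)^{2n-2}|dH|_\varphi^2$ piece.

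The delicate remaining piece is $-dd^cF(dH,dH)$, for which there is no pointwise Hessian bound on $F$. Here I rewrite $F = H - f$, so $dd^cF = dd^cH - (\beta_\varphi - \beta_0)$ by \eqref{moment:beta}; the Hessian part $dd^cH(dH,dH)$ is bounded by $|\nabla dH|_\varphi|dH|_\varphi^2$ and absorbed into the positive Bochner term via Young's inequality, contributing an $O(1)|dH|_\varphi^2$ term; the bounded $\beta_0$ piece gives an additional $e^{-F}\Lambda_0(\omega_\varphi)^{n-1}|dH|_\varphi^2$ correction via Lemma~\ref{l:trace:bound}; the $\beta_\varphi\geq 0$ piece is controlled using the observation below \eqref{moment:beta} that the image of $\mu_{\beta_\varphi}$ is a fixed polytope. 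The main obstacle is precisely this $-dd^cF$ term: since $F$'s Hessian is not directly bounded, one must detour through $H$ and exploit the Bochner positivity, while bookkeeping the powers of $\Lambda_0(\omega_\varphi)$ and the factors $e^{-F}$ coming from Lemma~\ref{l:trace:bound} to land on the stated right-hand side.
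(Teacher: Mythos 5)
Your high-level plan—Leibniz plus a weighted Bochner formula plus differentiating the Monge--Amp\`ere equation, with the rewrite $F = H - f$ so that the bad Hessian $dd^cF$ becomes $dd^cH - dd^cf$ with $dd^cH$ absorbable and $dd^cf = \beta_\varphi - \beta_0$ controllable---is in the spirit of the paper's argument. However, two steps do not hold up as written, and they are not cosmetic.

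First, the weighted Bochner identity you posit,
\begin{equation*}
\Delta_{\varphi,\v}|dH|_\varphi^2 = 2|\nabla dH|_\varphi^2 + 2g_\varphi(d\Delta_{\varphi,\v}H, dH) + 2\,\mathrm{Ric}_\v(\omega_\varphi)(dH,dH),
\end{equation*}
is not correct. The inequality the paper actually starts from, quoted from \cite[eq.~(42)]{DJL}, has a more delicate structure:
\begin{align*}
e^{-H/2}\Delta_{\varphi,\v}\bigl(e^{H/2}|dH|^2_\varphi\bigr) \geq\ &2g_\varphi(d\Delta_{\varphi,\v}H, dH) + 2|\nabla^{\varphi,+}dH|^2_\varphi \\
&+ 2\bigl(\mathrm{ric}_\varphi + \nabla^{\varphi,+}dH - \nabla^\varphi d\log\v(\mu_\varphi)\bigr)(\nabla^\varphi H, \nabla^\varphi H) + \tfrac{1}{2}\Delta_{\varphi,\v}H\,|dH|^2_\varphi.
\end{align*}
Here the curvature bracket carries the \emph{full} Hessian $\nabla^\varphi d\log\v(\mu_\varphi)$ (not the $J$-invariant $\tfrac12 dd^c$ piece), an \emph{extra} $\nabla^{\varphi,+}dH$ term, and the reservoir is only $|\nabla^{\varphi,+}dH|^2_\varphi$. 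That extra $\nabla^{\varphi,+}dH$ is what makes the proof work: with $H = F + f$ and the differentiated Monge--Amp\`ere identity in symmetric-tensor form, $\mathrm{ric}_0 = \mathrm{ric}_\varphi + \nabla^{\varphi,+}dF - \nabla^{\varphi,+}d\log\v(\mu_\varphi)$, the bracket collapses to $\mathrm{ric}_0 + \nabla^{\varphi,+}df - \nabla^{\varphi,-}d\log\v(\mu_\varphi)$, so that the $J$-invariant part of the Hessian of $\log\v(\mu_\varphi)$ is \emph{cancelled} rather than bounded. Your version, with $\mathrm{Ric}_\v$ sitting bare in the curvature slot, does not offer this cancellation; the relevant Hessian of $\log\v$ then has to be estimated directly, which is the second problem.

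Second, the assertion that $\tfrac12 dd^c\log\v(\mu_\varphi)$ is a $(1,1)$-form uniformly bounded in $\omega_0$-norm ``since $\mu_\varphi\in P$ and $\v$ is smooth on $P$'' is false. Compactness of $P$ and smoothness of $\v$ control $\log\v$ and its $\mu$-derivatives, but the chain rule gives $dd^c(\log\v\circ\mu_\varphi) = (\log\v)_{,a}(\mu_\varphi)\,dd^c\mu^a_\varphi + (\log\v)_{,ab}(\mu_\varphi)\,d\mu^a_\varphi \wedge d^c\mu^b_\varphi$, and since $\mu^a_\varphi = \mu^a_0 + d^c\varphi(\xi_a)$ the Hessian $dd^c\mu^a_\varphi$ involves higher-order derivatives of $\varphi$---it is of at least the same order as the quantity $\Lambda_0(\omega_\varphi)$ you are trying to estimate, and the gradient terms $d\mu^a_\varphi = -i_{\xi_a}\omega_\varphi$ are not pointwise uniformly bounded either. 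There is no a priori bound available. The paper's algebraic cancellation, enabled by the precise form of \cite[eq.~(42)]{DJL} and the identity $\nabla^{\varphi,+}df = \tfrac12(\beta_\varphi - \beta_0)(\cdot,J\cdot)$ from \cite[Lemma~6.2]{DJL} (together with $\beta_\varphi \geq 0$), is precisely what avoids ever having to bound this $(1,1)$-form. Without these corrections your argument does not close.
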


\begin{proof}
By \cite[eq. (42)]{DJL} we know that
\begin{equation}\label{ineq:c2:0}
\begin{split}
e^{-H/2}\Delta_{\varphi,\v}\Big(e^{H/2} |dH|_\varphi^{2}\Big) &\geq  2 g_\varphi( d\Delta_{\varphi,\v} H,dH) +2|\nabla^{\varphi,+}dH|_\varphi^{2}\\
&+2\left(\mathrm{ric}_{\varphi}+\nabla^{\varphi,+}dH-\nabla^{\varphi}d\log \v(\mu_\varphi)\right)(\nabla^{\varphi}H,\nabla^{\varphi}H)\\
&+\frac{1}{2}\Delta_{\varphi,\v} H |dH|_\varphi^{2},
\end{split}    
\end{equation}
where $\nabla^{\varphi, +}\alpha$ for a 1-form $\alpha$ is defined by

\begin{equation}{\label{nabla:pm}}
\nabla_V^{\varphi, \pm}\alpha (W):=\frac{1}{2}\left(\nabla^\varphi_V\alpha(W) {\pm}\nabla^\varphi_{JV}\alpha(JW)\right)
\end{equation}
for any vector fields $V$, $W$ and $\mathrm{ric}_\varphi$ is the symmetric Ricci-$2$ tensor associate to the K\"ahler metric $\omega_\varphi$. We estimate the second line of \eqref{ineq:c2:0}. Using the fact that $
    \mathrm{ric}_{0}=\mathrm{ric}_{\varphi}+\nabla^{\varphi,+}dF -\nabla^{\varphi,+}d\log\v(\mu_\varphi) $ (cf \cite[page 25]{DJL}) and \cite[Lemma 6.2]{DJL} (recall that $\beta_\varphi=\beta_0+dd^c f>0$) we arrive at

\begin{equation*}
\begin{split}
    \mathrm{ric}_{\varphi}+\nabla^{\varphi,+}dH-\nabla^{\varphi}d\log \v(\mu_\varphi)& = \mathrm{ric}_{\varphi}+\nabla^{\varphi,+}dF +\nabla^{\varphi,+}df -\nabla^{\varphi}d\log \v(\mu_\varphi) \\
    &= \mathrm{ric}_0 + \nabla^{\varphi,+}df -\nabla^{\varphi,-}d\log \v(\mu_\varphi) \\
    & = \mathrm{ric}_0 + \frac{1}{2}(\beta_\varphi -   \beta_0)(\cdot, J\cdot)-\nabla^{\varphi,-}d\log \v(\mu_\varphi) \\
    & \geq \mathrm{ric}_0 - \frac{1}{2} \beta_0(\cdot, J\cdot)-\nabla^{\varphi,-}d\log \v(\mu_\varphi).
\end{split}    
\end{equation*}
Substituting back in \eqref{ineq:c2:0}, we obtain

\begin{equation*}
\begin{split}
e^{-H/2}\Delta_{\varphi,\v}\Big(e^{H/2} |dH|_\varphi^{2}\Big) &\geq  2 g_\varphi( d\Delta_{\varphi,\v} H,dH) +2|\nabla^{\varphi,+}dH|_\varphi^{2}\\
&+2\left( \mathrm{ric}_0 -  \frac{1}{2} \beta_0(\cdot, J\cdot)-\nabla^{\varphi,-}d\log \v(\mu_\varphi)\right)(\nabla^{\varphi}H,\nabla^{\varphi}H)\\
&+\frac{1}{2}\Delta_{\varphi,\v} H |dH|_\varphi^{2}.
\end{split}    
\end{equation*}
We also have

\begin{equation*}
\begin{split}
\left|\big(\mathrm{ric}_0-\frac{1}{2}\beta_0(\cdot, J\cdot)\big) (\nabla^{\varphi}H,\nabla^{\varphi}H)\right|=&\left|g_\varphi\left(\mathrm{ric}_0-\frac{1}{2}\beta_0(\cdot, J\cdot),dH\otimes dH\right)\right|\\
\leq& \left|\mathrm{Ric}_0-\frac{1}{2}\beta_0  \right|_\varphi |dH|^{2}_\varphi\\
\leq& C |dH|^{2}_\varphi\, \Lambda_\varphi (\omega_0)^2 \\
\leq& C' |dH|^{2}_\varphi e^{-2F}\, \Lambda_0(\omega_\varphi)^{2(n-1)}.
\end{split}
\end{equation*}
In the second line we apply Cauchy-Schwarz inequality for $2$-tensors. In the third line the constant $C$ is such that $\mathrm{Ric}_0-\frac{1}{2}\beta(\cdot, J\cdot) \leq C\omega_0$ and we use the well known inequality (see e.g. \cite[(1.12.5)]{Gau})
\begin{equation*}
    |\omega_0|^2_\varphi = \Lambda_\varphi(\omega_0)^2 - \frac{\omega_0^2\wedge \omega_\varphi^{[n-2]}}{\omega_0^{[n]}} \leq \Lambda_\varphi(\omega_0)^2.
\end{equation*}
The last inequality follows from Lemma \ref{l:trace:bound}. By \cite[(45)]{DJL} and Lemma \ref{l:trace:bound} we deduce
 
 \begin{equation*}
    \Delta_{\varphi, \v} H |dH|_\varphi^2 \geq  -C \left( \Lambda_\varphi(\omega_0)+ 1\right) |dH|^2\geq  - C\big( e^{-F} \Lambda_0(\omega_\varphi)^{n-1}+1\big) |dH|_\varphi^2,
 \end{equation*}
which conclude the proof.
\end{proof}

Now we show this key Proposition which provides an estimate for the norm of the differential of $F+f$ with respect to $\varphi$.

\begin{prop}{\label{p:estim:c1}}
There exists $k_n$, depending only on $n
$, such that as long as $p_0 > k_n$, we
have
\begin{equation*}
    | d (F+f) |^2_{\varphi} \leq C, 
\end{equation*}
where $C$ is as in Theorem \ref{t:int:c2:estimates}.
\end{prop}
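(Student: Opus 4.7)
Since $H:=F+f$ is uniformly bounded by Corollary \ref{coro-covid-19}, so is $e^{\pm H/2}$, and it therefore suffices to prove an $L^\infty$-bound for $u := e^{H/2}|dH|_\varphi^2$. The strategy is a Moser iteration based on the differential inequality of Lemma \ref{l:w:eq:F:plus:f}, closed using the integral estimates of Corollary \ref{c:int:estim}.

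First, I would multiply the inequality in Lemma \ref{l:w:eq:F:plus:f} by $u^{2p-1}$ and integrate against $\v(\mu_\varphi)\omega_\varphi^{[n]}$. Integration by parts turns $\int u^{2p-1}\Delta_{\varphi,\v}u\,\v\omega_\varphi^{[n]}$ into $-\tfrac{2p-1}{p^2}\int |d(u^p)|_\varphi^2\,\v\omega_\varphi^{[n]}$, producing the master inequality
\[
\frac{2p-1}{p^2}\int_X|d(u^p)|_\varphi^2\,\v\omega_\varphi^{[n]}\le -2\int_X u^{2p-1}e^{H/2}g_\varphi(d\Delta_{\varphi,\v}H,dH)\,\v\omega_\varphi^{[n]} + C\int_X u^{2p}\,\Theta\,\v\omega_\varphi^{[n]},
\]
with $\Theta := e^{-2F}\Lambda_0(\omega_\varphi)^{2n-2} + e^{-F}\Lambda_0(\omega_\varphi)^{n-1}+1$. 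To handle the cross term I would integrate by parts a second time, invoking the second equation of \eqref{wsystem2} to replace $\Delta_{\varphi,\v}H$ by $\w(\mu_\varphi)+2\Lambda_{\varphi,\v}(\alpha_0)$. This factor is pointwise controlled by $C(1+\Lambda_0(\omega_\varphi)^{n-1})$ via Lemma \ref{l:trace:bound}, and Young's inequality absorbs the resulting gradient contributions of $u$ into the left-hand side, leaving the rest as a modification of the factor $\Theta$ by at worst an additional power $\Lambda_0(\omega_\varphi)^{n-1}$.

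Next, I would transfer everything to the fixed background metric. Since $F$ is uniformly bounded below by Corollary \ref{coro-covid-19} and $\v$ is bounded, $e^{-F}$ is bounded and $\omega_\varphi^{[n]}$ is comparable to $e^{F}\omega_0^{[n]}$; the right-hand side then reduces, up to uniform constants, to $C\int u^{2p}(1+\Lambda_0(\omega_\varphi)^{2n-2})\,e^{F}\omega_0^{[n]}$. H\"older's inequality with a conjugate pair $(a,b)$ bounds this by
\[
C\,\|u\|_{L^{2pa}(\omega_0^{[n]})}^{2p}\,\bigg(\int_X \Lambda_0(\omega_\varphi)^{b(2n-2)}e^{bF}\omega_0^{[n]}\bigg)^{1/b},
\]
and a further H\"older split of the last integral, combined with the hypothesis $\int e^{-p_0 f}\omega_0^{[n]}<\infty$ and Corollary \ref{c:int:estim}, makes it uniformly finite provided $b(2n-2)<p_0$ (and a companion constraint from the exponential factor). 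A standard Sobolev embedding $W^{1,2}\hookrightarrow L^{2\chi}$ on $(X,\omega_0)$, with $\chi = n/(n-1)$, upgrades the left-hand gradient term to $\|u\|_{L^{2p\chi}(\omega_0^{[n]})}^{2p}$.

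The iteration $\|u\|_{L^{2p\chi}}\le (Cp)^{1/(2p)}\|u\|_{L^{2pa}}$ closes only when $\chi>a$, equivalently $b>n$. Coupling this with the integrability constraint $b(2n-2)<p_0$ forces $p_0>k_n$ for some explicit $k_n=k_n(n)$ (of order $n^2$). Starting from an initial $L^{2p_1}$ bound for $u$ produced by a single application of the master inequality together with Corollary \ref{c:int:estim}, a standard Moser iteration then yields the $L^\infty$-bound for $u$, hence for $|d(F+f)|_\varphi^2$. The principal obstacle is precisely this balance: the Sobolev exponent $\chi$, the H\"older exponent $a$, and the dimensional exponent $2n-2$ in the worst term of $\Theta$ must all be reconciled, and the quantitative threshold $k_n$ is exactly what emerges. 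The log-concavity of $\v$ (used to apply Lemma \ref{l:w:eq:F:plus:f}) and the boundedness of $d^cf$ guaranteed by \eqref{moment:beta} are both essential ingredients preventing additional unbounded contributions from the cross term and from the twisting.
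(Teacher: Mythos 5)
Your overall strategy is the same as the paper's: reduce to an $L^\infty$-bound for $u=e^{H/2}|dH|_\varphi^2$ (the paper adds $+1$ so that $u\ge 1$, a small but useful normalization), feed the differential inequality of Lemma~\ref{l:w:eq:F:plus:f} into a Moser-type iteration, handle the term $g_\varphi(d\Delta_{\varphi,\v}H,dH)$ by a second integration by parts combined with Young's inequality and the second equation of \eqref{wsystem2}, and close the iteration using Corollary~\ref{c:int:estim} together with the $L^{p_0}$-control of $e^{-f}$. The balance of exponents forcing $p_0>k_n$ is the same phenomenon, and the paper's choice $\varepsilon=1/(2n)$ in its version of the iteration inequality \eqref{bound:u} gives $p_0\ge 8n^2$, consistent with your ``of order $n^2$.''

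However, there is a genuine gap at the base of the iteration. You claim that ``an initial $L^{2p_1}$ bound for $u$ [is] produced by a single application of the master inequality together with Corollary~\ref{c:int:estim}.'' The master inequality always has the form
\[
\|u\|_{L^{q'}}\le C(p)\,\|u\|_{L^{q}},\qquad q'>q,
\]
so applying it once cannot produce an initial bound: it requires some external control on a low $L^q$-norm of $u$, which Corollary~\ref{c:int:estim} does not supply (that corollary bounds powers of $\Lambda_0(\omega_\varphi)$, not of $|dH|_\varphi^2$). The paper handles this by a separate, direct argument: it observes
\[
\Delta_{\varphi,\v}\bigl(e^{H/2}\bigr)=\tfrac14 e^{H/2}|dH|_\varphi^2+\tfrac12 e^{H/2}\Delta_{\varphi,\v}H,
\]
integrates over $X$ against $\v(\mu_\varphi)\omega_\varphi^{[n]}$ (so the left side vanishes), replaces $\Delta_{\varphi,\v}H$ via \eqref{wsystem2} and uses Lemma~\ref{l:trace:bound} and $\int_X\Lambda_\varphi(\omega_0)\,\omega_\varphi^{[n]}=n\int_X\omega_0^{[n]}$ to conclude $\int_X e^{H/2}|dH|_\varphi^2\,\omega_0^{[n]}\le C$. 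Without some such starting input the iteration you describe never leaves the ground; you should add this $L^1$-bound (or an equivalent) before invoking the iteration.
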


\begin{proof}
Denote $H= F+f$, $u:=e^{\frac{1}{2} H}\left|d H\right|_{\varphi}^{2}+1, \tilde{G}=C\left(2 e^{-2 F}\Lambda_0(\omega_\varphi)^{2 n-1}+e^{-F}\Lambda_0(\omega_\varphi)^{n-1}+1\right)$. By Lemma \ref{l:w:eq:F:plus:f}, we have
\begin{equation*}
\Delta_{\varphi,\v} u \geq 2 e^{\frac{1}{2} H} g_\varphi(d H, d \Delta_{\varphi,\v} H)-u \tilde{G}.
\end{equation*}
 By definition of $\Delta_{\varphi,\v}$ and $d^*_\varphi$, for any $\T$-invariant functions $f_1$ and $f_2$ on $X$ we have that (see e.g. \cite[Appendix A]{DJL})

\begin{equation}\label{def d*}
\begin{split}
\int_X(- \Delta_{\varphi,\v} f_1) f_2  \, \v(\mu_\varphi) \omega_\varphi^{[n]}=& \int_X  d^*_\varphi\big(\v(\mu_\varphi)df_1\big) f_2  \,\omega_\varphi^{[n]} \\
=&\int_X g_{\varphi}(df_1,df_2)  \v(\mu_\varphi) \omega_\varphi^{[n]}.
\end{split}
\end{equation} 
Hence, for any $p \geq 1$

\begin{eqnarray}\label{ineq. dH}
\nonumber  \int_{X}(p-1) u^{p-2}\left|d u\right|_{\varphi}^{2} \v(\mu_\varphi) \omega^{[n]}_{\varphi}
 &=&\int_{X} u^{p-1}\left(-\Delta_{\varphi,\v} u\right) \v(\mu_\varphi)\omega^{[n]}_{\varphi}\\
& \leq& \int_{X} u^{p} \tilde{G} \v(\mu_\varphi)\omega^{[n]}_{\varphi}  \\
\nonumber & &- \int_X2 u^{p-1} e^{\frac{1}{2} H} g_\varphi(d H, d \Delta_{\varphi,\v} H)\v(\mu_\varphi)\omega^{[n]}_{\varphi}.
\end{eqnarray}
 We now estimate the last term of the above inequality. By definition of $u$ we have

\begin{equation}{\label{eq:u:lapH}}
   u^{p-1} e^{\frac{1}{2} H}\left|d H\right|_{\varphi}^{2} \Delta_{\varphi,\v} H=u^{p} \Delta_{\varphi,\v} H-u^{p-1} \Delta_{\varphi,\v} H. 
\end{equation} 
Using \eqref{def d*} and the above identity we obtain

\begin{equation*}
\begin{split}
- \int_X2 u^{p-1} e^{\frac{1}{2} H}& g_\varphi(d H, d \Delta_{\varphi,\v} H)\v(\mu_\varphi)\omega^{[n]}_{\varphi}\\ =&\int_{X} 2 u^{p-1} e^{\frac{1}{2} H}(\Delta_{\varphi,\v}H)^{2} \v(\mu_\varphi) \omega^{[n]}_{\varphi} +\int_{X} u^{p-1} e^{\frac{1}{2} H}\left|d H\right|_{\varphi}^{2} \Delta_{\varphi,\v} H \v(\mu_\varphi) \omega^{[n]}_{\varphi}\\
 &+\int_{X} 2(p-1) u^{p-2} e^{\frac{1}{2} H} g_\varphi(d u , d H) \Delta_{\varphi,\v} H \v(\mu_\varphi) \omega^{[n]}_{\varphi} \\
 \leq& \int_{X} 2 u^{p-1} e^{\frac{1}{2} H}{\left(\Delta_{\varphi,\v} H\right)^{2}}\v(\mu_\varphi) \omega^{[n]}_{\varphi} +\int_{X} u^{p} \Delta_{\varphi,\v}( H )\v(\mu_\varphi)\omega^{[n]}_{\varphi}\\
 &-\int_{X} u^{p-1} \Delta_{\varphi,\v}( H )\v(\mu_\varphi)\omega^{[n]}_{\varphi}  +\int_{X} \frac{p-1}{2} u^{p-2}\left|d u\right|_{\varphi}^{2}\v(\mu_\varphi)\omega^{[n]}_{\varphi}\\
&+\int_{X} 2(p-1) u^{p-2} e^{H}\left|d H\right|_{\varphi}^{2}\left(\Delta_{\varphi,\v} H\right)^{2} \v(\mu_\varphi) \omega^{[n]}_{\varphi}\\
 \leq &\int_{X} 2 {p} u^{p-1} e^{\frac{1}{2} H}\left(\Delta_{\varphi, {\v}} H\right)^{2} \v(\mu_\varphi) \omega^{[n]}_{\varphi}+\int_{X} u^{p}\left(\left({\Delta_{\varphi,\v}} H\right)^{2}+1\right) \v(\mu_\varphi) \omega^{[n]}_{\varphi} \\
 &+\int_{X} \frac{p-1}{2} u^{p-2}\left|d u\right|_{\varphi}^{2} \v(\mu_\varphi) \omega^{[n]}_{\varphi}.
\end{split}
\end{equation*}
In the first inequality we used Young inequality which gives

$$
2(p-1) u^{p-2} e^{\frac{1}{2} H} g_{\varphi}(d u, dH ) \Delta_{\varphi,\v} H \leq \frac{p-1}{2} u^{p-2}\left|d u\right|_{\varphi}^{2}+2(p-1) u^{p-2} e^{H}\left|d H\right|_{\varphi}^{2}\left(\Delta_{\varphi,\v} H\right)^{2}.
$$
In the second inequality we use the inequality $ab-cb\leq |a-c| |b|\leq |a+c| \frac{(1+b^2)}{2}$. Together with the fact that $u\geq 1$, this gives

$$
u^{p} \Delta_{\varphi,\v} H-u^{p-1} \Delta_{\varphi,\v} H \leq \frac{1}{2}\left(u^{p}+u^{p-1}\right)\left(1+\left(\Delta_{\varphi,\v} H\right)^{2}\right) \leq u^{p}\left(1+\left(\Delta_{\varphi,\v} H\right)^{2}\right).
$$
We also use \eqref{eq:u:lapH} applied to $$2(p-1) u^{p-2} e^{H}\left|d H\right|_{\varphi}^{2}\left(\Delta_{\varphi,\v} H\right)^{2}= 2(p-1)  e^{H/2} \left( u^{p-2} e^{H/2}\left|d H\right|_{\varphi}^{2} \Delta_{\varphi,\v} H \right) \Delta_{\varphi,\v} H.$$ Injecting in \eqref{ineq. dH} and using the fact that $u \geq 1$ we get

\begin{equation*}
\begin{split}
 \int_{X} \frac{p-1}{2} u^{p-2}\left| du\right|_{\varphi}^{2} \v(\mu_\varphi) \omega^{[n]}_{\varphi} \leq &\int_{X} u^{p}\left(\tilde{G}+\left(\Delta_{\varphi,\v} H\right)^{2}+1\right) \v(\mu_\varphi) \omega^{[n]}_{\varphi} \\
 &+\int_{X} 2 {p} u^{p} e^{\frac{1}{2} H}\left(\Delta_{\varphi, {\v}} H\right)^{2} \v(\mu_\varphi) \omega^{[n]}_{\varphi}.
\end{split}
\end{equation*}
 Let $G:=\tilde{G}+\left(\Delta_{\varphi,\v} H\right)^{2}+1+2 e^{\frac{1}{2} H}\left(\Delta_{\varphi,\v} H\right)^{2}$. Using \eqref{wsystem2}, we deduce that

\begin{equation*}
\int_{X} \frac{p-1}{2} u^{p-2}\left|d u\right|_{\varphi}^{2} \v(\mu_\varphi) \omega_\varphi^{[n]} \leq  \int_{X} {p} u^{p} G e^{F}  \omega^{[n]}_{0}.
\end{equation*}
Standard computations give $\left|d\left(u^{\frac{p}{2}}\right)\right|= \frac{p^2}{4} u^{p-2} |du|^2_\varphi$. We then infer 

\begin{equation*}
\int_{X} \frac{p-1}{2} u^{p-2}\left|d u\right|_{\varphi}^{2}\v(\mu_\varphi) \omega^{[n]}_{\varphi} \geq C \int_{X} \frac{2(p-1)}{p^{2}}\left|d\left(u^{\frac{p}{2}}\right)\right|_{\varphi}^{2} \omega^{[n]}_{0}.
\end{equation*}
In the above inequality we use that $\v(\mu_\varphi) \omega^{n}_{\varphi}= e^F\omega_0^n$ and that 

\begin{equation*}
  e^F= e^{F+f-f}\geq e^{-\| F+f \|_{C^0}}e^{-f} \geq e^{-\| F+f \|_{C^0}}>C,  
\end{equation*}
where $C>0$ is uniform thanks to Theorem \ref{t:c0:estimates}. The same arguments in \cite[(2.62)]{CC21c} show that for $\varepsilon$ small enough,

\begin{equation}{\label{bound:u}}
\left\|u^{\frac{p}{2}}\right\|_{L^{\frac{2 n(2-\varepsilon)}{2 n-2+\varepsilon}}}^{2} \leq C \frac{p^{3}}{p-1}\left(K_{\varepsilon} L_{\varepsilon}+1\right)\left\|u^{\frac{p}{2}}\right\|_{L^{\frac{4}{2-\varepsilon}}}^{2}, 
\end{equation}
where

\begin{equation*}
L_{\varepsilon}:=\left(\int_{X} G^{\frac{2}{\varepsilon}} e^{\frac{2 F}{\varepsilon}} \omega_0^{[n]}\right)^{\frac{\varepsilon}{2}} \quad \text{ and } \quad  K_{\varepsilon}:=\left(\int_{M}(\Lambda_0(\omega_\varphi))^{\frac{2}{\varepsilon}-1} \omega^{[n]}_{0}\right)^{\frac{\varepsilon}{2-\varepsilon}}.
\end{equation*}
\\
We now establish a bound for $K_{\varepsilon}, L_{\varepsilon}$ for $\varepsilon=\frac{1}{2n}$. Hence, we need to show that $\int_{X}\Lambda_0(\omega_\varphi)^{4 n-1} \omega_0^{[n]}$ and  $\int_{X} G^{4 n} e^{4 n F} \omega_0^{[n]}$ are bounded. \\
From Corollary \ref{c:int:estim} we know that $K_{1/2n}=\int_{X}\Lambda_0(\omega_\varphi)^{4 n-1} \omega_0^{[n]}$ is bounded as soon as $p_0>4n$. In order to bound the second quantity we observe that $$\tilde{G}:=C\left(2 e^{-2 F}\Lambda_0(\omega_\varphi)^{2 n-1}+e^{-F}\Lambda_0(\omega_\varphi)^{n-1}+1\right)\leq C_1\Lambda_0(\omega_\varphi)^{2 n-1}$$ thanks to the fact that $F$ is uniformly bounded (Corollary \ref{coro-covid-19}). Also, we recall that $H:=F+f$ is uniformly bounded. Hence by \eqref{wsystem2}
\begin{equation*}
\begin{split}
 G=& \tilde{G}+ (2e^{\frac{1}{2}(F+f)}+1)\left(\Delta_{\varphi,\v}(F+f)\right)^{2}+1 \\
\leq &  C_1 \Lambda_0(\omega_\varphi)^{2 n-1} + C_2 (\w(\mu_\varphi)+2 \Lambda_{\varphi,\v}(\alpha_0))^2 \\
\leq& C_{3}(\Lambda_0(\omega_\varphi)^{2 n-1}+ \Lambda_{\varphi}(\omega_0)^{2})  \\
 \leq &C_{4}\Lambda_0(\omega_\varphi)^{2 n-1},
\end{split}
\end{equation*}
where the last inequality follows from Lemma \ref{l:trace:bound}. 

Thus, the above bound together with H\"older inequality and Theorem \ref{t:c0:estimates} give
\begin{eqnarray*}
 \int_{X} G^{4n} e^{4nF}\omega_0^{[n]}&=& \int_{X} G^{4n} e^{4n(F+f)} e^{-4nf}\omega_0^{[n]}\\
 &\leq & C_4 \int_{X} \Lambda_0(\omega_\varphi)^{4n(2n-1)} e^{4n(F+f)} e^{-4nf}\omega_0^{[n]}\\
  &\leq & C_5 \left(\int_{X} \Lambda_0(\omega_\varphi)^{ \frac{4n(2n-1) p_0}{p_0-4n}} \omega_0^{[n]} \right)^{\frac{p_0-4n}{p_0}} 
   \left(\int_{X} e^{-p_0f}\omega_0^{[n]}\right)^{\frac{4n}{p_0}}.
\end{eqnarray*}
Choosing $p_0$ big enough  (e.g. $p_{0} \geq 8n^2$) we get a bound of $K_{\varepsilon}$ and $L_{\varepsilon}$.\\
Once \eqref{bound:u} is in hand, we use the same arguments in \cite[page 10]{DD21} to ensure that the $L^\infty$-norm of $u$ is controlled by the $L^1$-norm. To conclude we then need to establish an estimate for $\|u\|_{L^1}$. We then observe that 

\begin{equation*}
\Delta_{\varphi,\v}\left(e^{\frac{1}{2} H}\right)=\frac{1}{4} e^{\frac{1}{2} H}\left|d H\right|_{\varphi}^{2}+\frac{1}{2} e^{\frac{1}{2} H} \Delta_{\varphi,\v} H,
\end{equation*}
and

\begin{equation*}
\begin{split}
 \int_{X} e^{\frac{1}{2} H} \left|d H\right|_{\varphi}^{2} \omega_0^{[n]} &=\int_{X} e^{\frac{1}{2} H}\left|d H\right|_{\varphi}^{2}e^{-F} \v(\mu_\varphi)\omega_\varphi^{[n]}\\
 &\leq C \int_{X} e^{\frac{-1}{2} H}\left|d H\right|_{\varphi}^{2} \v(\mu_\varphi)\omega_\varphi^{[n]} \\
 &= C \int_{X} 2 e^{\frac{1}{2} H}\left(-\Delta_{\varphi,\v} H\right) \v(\mu_\varphi)\omega_\varphi^{[n]}  \\
  &\leq C_1 \int_{X}\left(-\w(\mu_\varphi) -2 \Lambda_{\varphi,\v}(\alpha_0)\right) \v(\mu_\varphi)\omega_\varphi^{[n]} \\
&  \leq C_2\sup \v\int_{X} \left(\Lambda_\varphi(\omega_0) +1\right)\, \omega_\varphi^{[n]} \\
&\leq C_{3} \operatorname{Vol}(\omega_0),
\end{split}
\end{equation*}
where in the third line we use \eqref{wsystem2} and that $H$ is uniformly bounded. The last inequality simply follows from $\int_{X} \Lambda_\varphi(\omega_0) \omega_\varphi^{n} = n\int_{X}  \omega_0\wedge  \omega_\varphi^{n-1}=n\int_{X}  \omega_0^n$. This concludes the $L^1(\omega_0)$-bound of $u$, hence the proof.
\end{proof}

We can now conclude the proof of Theorem \ref{t:estimates:w12p}.

\begin{proof}
For any function $u$ we have $|du|^2_0\leq |du|^2_\varphi \Lambda_0(\omega_\varphi)$, thus by Proposition \ref{p:estim:c1} we have

\begin{equation*}
        |d(F+f)|^2_{0}  \leq    |d (F+f)|^2_{\varphi} \Lambda_0(\omega_\varphi) \leq C \Lambda_0(\omega_\varphi).
\end{equation*}
Theorem \ref{t:int:c2:estimates} allows to conclude.
\end{proof}

\section{Openness and existence of a twisted solution}{\label{s:open}}

\subsection{The weighted continuity path of Chen}

\noindent We suppose that $\v$, $\w$ satisfy

\begin{equation}{\label{norm:scalv}}
   \int_X \big(\mathrm{Scal}_{\mathrm{v}}(\omega_{\varphi})-\mathrm{w}(\mu_{\varphi})\big) \omega_\varphi^{[n]} = 0,
\end{equation}
for any $\varphi \in \mathcal{K}(X,\omega_0)^\T$. We observe that if the weighed Futaki invariant vanishes, then \eqref{norm:scalv} is satisfied. We consider the continuity path for $ \varphi \in \mathcal{K}(X,\omega_0)^{\mathbb{T}}$ given by

\begin{equation}{\label{continuity-path-weithed}}
  \frac{t}{\v(\mu_\varphi)}\big(\mathrm{Scal}_{\v}(\omega_{\varphi})-\w(\mu_{\varphi})\big) = (1-t)(\Lambda_{\varphi,\v}(\omega_0)-\tilde{\v}(\mu_\varphi)), \text{ } \text{ } t\in[0,1],
  \end{equation}
where $\varphi=\varphi_t$ and $\tilde{\v}\in C^{\infty}(P,\mathbb{R})$ is the weight function
\begin{equation}\label{eq:tilde-v}
    \tilde{\v}(x):= n + \langle d \log(\v(x)), x \rangle, \quad x \in P.
\end{equation}
    By definition, a solution $\varphi_t$ at $t=1$ is $(\v,\w)$-cscK and we call a solution at $t\in (0,1)$ a $\omega_0$-twisted $(\v,\w)$-cscK metric. For $t_1 \in (0,1]$, we define

\begin{equation}{\label{set-solution}}
    S_{t_1}:=\{ t \in (0,t_1] \text{ }| \text{ } \eqref{continuity-path-weithed} \text{ has a solution } \varphi_t \in \mathcal{K}(X,\omega_0)^{\mathbb{T}}\}.
\end{equation}

\noindent The goal is to show that $S_{t_1}$ is open, closed and non-empty in $(0,1]$. This will imply that $S_{t_1}=S_1=(0,1]$. In particular \eqref{continuity-path-weithed} admits a solution for $t=1$.

Let $r=\frac{(1-t)}{t}$ and $\theta\in [\omega_0]$ a $\T$-invariant K\"ahler form. We consider the nonlinear operator $\mathcal{S}^{(r)}_\theta : \mathcal{K}(X,\omega_0)^\T \longrightarrow \mathcal{C}^{\infty}(X)^{\T}$ given by
\begin{equation}\label{eq:S_r}
   \mathcal{S}^{(r)}_\theta(\varphi):=\frac{1}{\v(\mu_{\varphi})}(\mathrm{Scal}_{\v}(\omega_\varphi)-\w(\mu_{\varphi}))-r\left(\Lambda_{\varphi, \v}(\theta)-\tilde{\v}(\mu_{\varphi})\right),
\end{equation}
Since $\mathcal{S}^{(r)}_\theta$ is translation invariant, i.e. $\mathcal{S}^{(r)}_\theta(\varphi)=\mathcal{S}^{(r)}_\theta(\varphi +C)$ for any constant $C$, we will sometimes use both notations $\mathcal{S}^{(r)}_\theta(\omega_\varphi)$, $\mathcal{S}^{(r)}_\theta(\varphi)$.

We observe that for any $\varphi \in \mathcal{K}(X,\omega_0)^\T$, the image $\mathcal{S}^{(r)}_\theta(\varphi)$ is $L^2\left(\v(\mu_\varphi)\omega_\varphi^{[n]}\right)$-orthogonal to constants. Indeed it follows from \eqref{norm:scalv} and Lemma \ref{cohomo:const:trace} below that

\begin{equation*}
    \int_X (\Lambda_{\varphi,\v}(\theta)-\tilde{\v}(\mu_\varphi))\v(\mu_{\varphi}) {\omega_\varphi}^{[n]}= \int_X (\Lambda_{\varphi,\v}(\omega_\varphi)-\tilde{\v}(\mu_\varphi))\v(\mu_\varphi) \omega_\varphi^{[n]}=0.
\end{equation*}

\begin{lemma}{\label{cohomo:const:trace}}
Let $\omega$ be a $\T$-invariant K\"ahler form and $\theta$ be a  $\T$-invariant closed real $(1,1)$-form on $X$. The integral
\begin{equation}\label{Const}
\int_X\Lambda_{\omega,\v}(\theta)\, \v(\mu_\omega)\omega^{[n]}
\end{equation}
is constant and it does not dependent on the choice of a $\T$-invariant representative $\omega,\theta$ respectively in $[\omega]$, $[\theta]$.
\end{lemma}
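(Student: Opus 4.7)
The strategy is to establish invariance separately under changes of $\T$-invariant representative in each of the two classes, and in each case reduce the problem to the duality formula $\int_X \Delta_{\omega,\v}f\cdot \v(\mu_\omega)\omega^{[n]}=0$ recorded in \eqref{def d*} (with the roles of $f_1,f_2$ swapped and taking one of them equal to $1$).

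\textbf{Step 1: Independence of the representative of $[\theta]$.} Fix $\omega$ and write $\theta'=\theta+dd^c\psi$ for $\psi\in C^\infty(X)^\T$. By $\mathbb{R}$-linearity of the weighted trace it suffices to show $\int_X \Lambda_{\omega,\v}(dd^c\psi)\,\v(\mu_\omega)\omega^{[n]}=0$. By the very definition of the weighted Laplacian recalled in \cite[Appendix A]{DJL}, one has the pointwise identity $2\Lambda_{\omega,\v}(dd^c\psi)=\Delta_{\omega,\v}\psi$; the desired vanishing is then exactly \eqref{def d*} applied with $f_1=\psi$ and $f_2=1$.

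\textbf{Step 2: Independence of the representative of $[\omega]$.} Now fix $\theta$ and consider the path $\omega_s=\omega+s\,dd^c\varphi$ with $\varphi\in C^\infty(X)^\T$. The normalization \eqref{normalizing-moment-map} yields $\mu_{\omega_s}=\mu_\omega+s\,d^c\varphi$. I would compute
\[
\frac{d}{ds}\bigg|_{s=0}\int_X\Lambda_{\omega_s,\v}(\theta)\,\v(\mu_{\omega_s})\,\omega_s^{[n]}
\]
by differentiating the three factors separately: the variation of $\omega_s^{[n]}$ contributes $\Lambda_\omega(dd^c\varphi)\omega^{[n]}$; the variation of $\v(\mu_{\omega_s})$ contributes $\langle d\v(\mu_\omega),d^c\varphi\rangle$; and the variation of $\Lambda_{\omega_s,\v}(\theta)$ contributes a linear combination of $\theta$ paired with $dd^c\varphi$ and with $d^c\varphi$ through the $\mu$-dependence of $\v$. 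Regrouping these terms using the Appendix A formulas and integrating once by parts, the whole expression rewrites as $-\int_X\varphi\,\Delta_{\omega,\v}\bigl(\Lambda_{\omega,\v}(\theta)\bigr)\v(\mu_\omega)\omega^{[n]}$ up to terms of the form $\int_X\varphi\cdot d^*_\omega(\cdots)\v(\mu_\omega)\omega^{[n]}$; each such term vanishes by \eqref{def d*} (applied with $f_2=\varphi$ after a further integration by parts) since $\theta$ is closed. Because the space $\mathcal{K}(X,\omega_0)^\T$ is path-connected, this infinitesimal invariance promotes to invariance between any two $\T$-invariant representatives.

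\textbf{Main obstacle.} The conceptual content is easy; the real difficulty is bookkeeping in Step 2, because the simultaneous variation of the weight $\v(\mu_{\omega_s})$, the volume $\omega_s^{[n]}$, and the weighted trace $\Lambda_{\omega_s,\v}(\theta)$ produces several terms whose mutual cancellation requires using both the $\T$-equivariance of $\theta$ (so that $d\theta=0$ plus $\T$-invariance generate a moment-type primitive in the Cartan model) and the precise normalization $\mu_\omega=\mu_0+d^c\varphi$ from \eqref{normalizing-moment-map}. A cleaner, essentially equivalent alternative would be to identify the integrand with $n\,\theta\wedge \v(\mu_\omega)\omega^{[n-1]}$ modulo an exact $\T$-invariant form, so that the statement becomes the $\v$-weighted analogue of the standard fact that $\int_X f(\mu_\omega)\theta\wedge\omega^{[n-1]}$ depends only on the equivariant classes; this is the Duistermaat--Heckman-type viewpoint used implicitly throughout \cite{Lah19,AJL}.
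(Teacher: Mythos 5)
Your Step~1 is essentially identical to the paper's argument for the $\theta$-independence: one adds the (vanishing) quantity $\int_X\Delta_{\omega,\v}(u)\,\v(\mu_\omega)\omega^{[n]}$, identifies $\Delta_{\omega,\v}(u)$ with $\Lambda_{\omega,\v}(dd^cu)$, and uses linearity. A cosmetic remark: in the paper's conventions there is no factor of~$2$, i.e.\ $\Delta_{\omega,\v}u=\Lambda_{\omega,\v}(dd^cu)$; your "$2\Lambda_{\omega,\v}(dd^c\psi)=\Delta_{\omega,\v}\psi$" is a normalization mismatch, harmless here since the integral is zero regardless.

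For Step~2 the paper simply invokes \cite[Lemma~2]{Lah19}, so you are on a genuinely different route by attempting a variational argument along $\omega_s=\omega+s\,dd^c\varphi$. The strategy is viable, but as written it contains a real gap. You assert that, after integrating by parts, the derivative equals
\[
-\int_X\varphi\,\Delta_{\omega,\v}\bigl(\Lambda_{\omega,\v}(\theta)\bigr)\,\v(\mu_\omega)\,\omega^{[n]}
\]
"up to terms" that vanish by \eqref{def d*}. But this displayed integral is not obviously zero (it would be zero only if $\Lambda_{\omega,\v}(\theta)$ were constant), and the claimed cancellation of the residual terms is asserted, not established. What actually happens, if you do the bookkeeping carefully, is the following: the derivative of $\v(\mu_{\omega_s})\omega_s^{[n]}$ is $\v(\mu_\omega)\Delta_{\omega,\v}(\varphi)\,\omega^{[n]}$ (computed in the paper just before \eqref{i-j}); integrating that term against $\Lambda_{\omega,\v}(\theta)$ by parts via \eqref{def d*} produces $-g_\omega(d\Lambda_{\omega,\v}(\theta),d\varphi)$, which exactly cancels the $g_\omega$-terms coming from $D_\omega\bigl(\Lambda_{\omega,\v}(\theta)\bigr)(\varphi)$ (the computation is carried out in the proof of Lemma~\ref{l:line:op}), so that the whole derivative reduces to $-\int_X\mathbb{H}^\theta_{\omega,\v}(\varphi)\,\v(\mu_\omega)\omega^{[n]}$, and this vanishes by Lemma~\ref{lem-F-elliptic} with $\dot\psi\equiv 1$. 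Your sketch does not carry out this cancellation, so as it stands the $\omega$-independence is not proved. Either complete the calculation along these lines, or, more in keeping with the paper, cite \cite[Lemma~2]{Lah19} directly for this half of the statement.
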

\begin{proof}
The fact that \eqref{Const} does not depend on the choice of a $\T$-invariant representative of $\omega_\varphi\in [\omega]$ is proved in \cite[Lemma 2]{Lah19}. Now, let $\theta+dd^c u=:\theta_u \in [\theta]$ be a $\T$-invariant form. We then have 
\[
\begin{split}
&\int_X\Lambda_{\omega,\v}(\theta)\v(\mu_\omega)\omega^{[n]}\\
&=\int_X\Lambda_{\omega,\v}(\theta)\v(\mu_\omega)\omega^{[n]} +\int_X \Delta_{\omega,\v}(u) \, \v(\mu_{\omega})\omega^{[n]}\\
&= \int_X\Lambda_{\omega,\v}(\theta)\v(\mu_\omega)\omega^{[n]}+\int_X\Lambda_{\omega,\v}(dd^c u) \v(\mu_{\omega})\omega^{[n]}\\
&=\int_X\Lambda_{\omega,\v}(\theta_{u})\v(\mu_{\omega})\omega^{[n]}.
\end{split}
\]
where $\Lambda_{\omega,\v}(\theta)+\Lambda_{\omega,\v}(dd^c u)= \Lambda_{\omega,\v}(\theta_u)$ since $\mu_{\theta_u}=\mu_\theta+d^{c}u$ (see \cite[Lemma 1]{Lah19}).
\end{proof}

\subsection{Linearization of the continuity path}
Following  \cite{Hash, Jub23} we introduce the operator $\mathbb{H}^{\theta}_{\varphi,\v} : \mathcal{C}^{\infty}(X)^{\mathbb{T}} \longrightarrow \mathcal{C}^{\infty}(X)^{\mathbb{T}}$  defined by

\begin{equation}{\label{hash}}
  \mathbb{H}^{\theta}_{\varphi,\v}(\dot{\varphi}):= g_{\varphi}\big(\theta, dd^c \dot{\varphi} \big) + g_{\varphi}\big(d \Lambda_{\varphi} \theta, d\dot{\varphi}\big)  + g_{\varphi}\big(\theta,d\log(\v(\mu_{\varphi})) \wedge d^c\dot{\varphi}\big).
\end{equation}

\begin{lemma}{\label{lem-F-elliptic}}
Suppose that $\theta$ is K\"ahler. The operator $\mathbb{H}^\theta_{\varphi, \v}$ is a $\v(\mu_\varphi)\omega_\varphi^{[n]}$-self-adjoint, $\T$-invariant, second order elliptic linear operator, satisfying
\begin{equation}\label{eq-(Fphi,psi)}
\int_X \mathbb{H}^\theta_{\varphi, \v}(\dot{\varphi})\dot{\psi} \,\v(\mu_\varphi)\omega_{\varphi}^{[n]}=-\int_Xg_{\varphi}\left(\theta,d\dot{\varphi}\wedge d^c\dot{\psi}\right)\uu(\mu_\varphi)\omega_{\varphi}^{[n]}.
\end{equation}
In particular $\ker( \mathbb{H}^\theta_{\varphi, \v})$ consists of constant functions.
\end{lemma}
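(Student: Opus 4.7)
The plan is to first establish the integration-by-parts identity \eqref{eq-(Fphi,psi)}; every other assertion in the lemma then follows quickly. Self-adjointness is an immediate corollary, since decomposing $d\dot\varphi$ and $d^c\dot\psi$ into $(1,0)$ and $(0,1)$ parts (with the convention $d^c = i(\bar\partial - \partial)$) shows that the $(1,1)$-component of $d\dot\varphi\wedge d^c\dot\psi$ equals $i\,\partial\dot\varphi\wedge\bar\partial\dot\psi + i\,\partial\dot\psi\wedge\bar\partial\dot\varphi$, which is visibly symmetric in $\dot\varphi$ and $\dot\psi$; and $g_\varphi(\theta,\cdot)$ only sees the $(1,1)$-component of a 2-form when paired with the $(1,1)$-form $\theta$.

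To prove \eqref{eq-(Fphi,psi)}, I would integrate each of the three terms of $\mathbb{H}^\theta_{\varphi,\v}(\dot\varphi)$ against $\dot\psi\,\v(\mu_\varphi)\omega_\varphi^{[n]}$. For the principal term I would use the pointwise identity
\[
g_\varphi(\alpha,\beta)\,\omega_\varphi^{[n]} = \Lambda_\varphi\alpha\cdot\Lambda_\varphi\beta\cdot\omega_\varphi^{[n]} - \alpha\wedge\beta\wedge\omega_\varphi^{[n-2]}
\]
for $(1,1)$-forms $\alpha, \beta$, with $\alpha=\theta$ and $\beta=dd^c\dot\varphi$. This splits $\int_X g_\varphi(\theta, dd^c\dot\varphi)\,\dot\psi\,\v(\mu_\varphi)\omega_\varphi^{[n]}$ into a trace piece involving $\Lambda_\varphi\theta\cdot\Delta_\varphi\dot\varphi$ and a wedge piece involving $\theta\wedge dd^c\dot\varphi\wedge\omega_\varphi^{[n-2]}$. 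Applying Stokes' theorem (using $d\theta = 0$ and $d\omega_\varphi^{[n-2]} = 0$) moves $d$ and $d^c$ off $\dot\varphi$; the derivatives then land on three factors, namely $\Lambda_\varphi\theta$, $\v(\mu_\varphi)$, and $\dot\psi$. The first two produce, respectively, $-g_\varphi(d\Lambda_\varphi\theta, d\dot\varphi)\cdot\dot\psi$ and $-g_\varphi(\theta, d\log\v(\mu_\varphi)\wedge d^c\dot\varphi)\cdot\dot\psi$ against the measure $\v(\mu_\varphi)\omega_\varphi^{[n]}$, which cancel exactly the second and third terms of $\mathbb{H}^\theta_{\varphi,\v}(\dot\varphi)$; the derivative on $\dot\psi$ reassembles into $-g_\varphi(\theta, d\dot\varphi\wedge d^c\dot\psi)$, giving \eqref{eq-(Fphi,psi)}.

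The remaining properties now follow immediately. The operator is $\T$-invariant because $\theta$, $\omega_\varphi$, $\mu_\varphi$ and $\v$ are all $\T$-invariant, so each term in \eqref{hash} commutes with the torus action. For ellipticity, only the first term contributes to the principal symbol, which at a cotangent vector $\xi\neq 0$ evaluates (up to sign) to $g_\varphi(\theta, \xi\wedge J\xi)$; this is strictly positive because $\theta$ is Kähler and $\xi\wedge J\xi$ is a positive $(1,1)$-form. For the kernel, setting $\dot\psi=\dot\varphi$ in \eqref{eq-(Fphi,psi)} and using $\mathbb{H}^\theta_{\varphi,\v}(\dot\varphi) = 0$ yields
\[
0 = -\int_X g_\varphi(\theta, d\dot\varphi\wedge d^c\dot\varphi)\,\v(\mu_\varphi)\,\omega_\varphi^{[n]};
\]
the integrand is pointwise non-negative, since the $(1,1)$-part of $d\dot\varphi\wedge d^c\dot\varphi$ is $2i\,\partial\dot\varphi\wedge\bar\partial\dot\varphi\geq 0$ and $\theta > 0$, and $\v(\mu_\varphi)>0$. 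Therefore $d\dot\varphi\equiv 0$, and $\dot\varphi$ is constant.

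The main obstacle is the bookkeeping in the Stokes step, specifically verifying that the two error terms produced by differentiating $\Lambda_\varphi\theta$ and $\v(\mu_\varphi)$ are exactly the two first-order corrections built into \eqref{hash}. This is the algebraic raison d'être of the definition: $\mathbb{H}^\theta_{\varphi,\v}$ is precisely the operator whose three terms cancel these boundary contributions and leave the symmetric quadratic form $-g_\varphi(\theta, d\dot\varphi\wedge d^c\dot\psi)$. In the unweighted case $\v=1$ this reduces to Hashimoto's operator from \cite{Hash}; the third term in \eqref{hash} is exactly the modification forced by replacing $\omega_\varphi^{[n]}$ by $\v(\mu_\varphi)\omega_\varphi^{[n]}$ as the reference measure, in parallel with the weighted extension carried out in \cite{Jub23}.
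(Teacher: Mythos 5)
Your proposal is correct and follows essentially the same route as the paper: apply the pointwise identity $g_\varphi(\theta,dd^c\dot\varphi)\omega_\varphi^{[n]}=-\theta\wedge dd^c\dot\varphi\wedge\omega_\varphi^{[n-2]}+(\Delta_\varphi\dot\varphi)(\Lambda_\varphi\theta)\omega_\varphi^{[n]}$, integrate by parts, observe that the terms created by differentiating $\Lambda_\varphi\theta$ and $\v(\mu_\varphi)$ are exactly cancelled by the lower-order terms in \eqref{hash}, read off self-adjointness from the symmetry of $g_\varphi(\theta,d\dot\varphi\wedge d^c\dot\psi)$, and set $\dot\psi=\dot\varphi$ for the kernel. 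The only cosmetic inaccuracies are that $\xi\wedge J\xi$ is a rank-one semi-positive (not strictly positive) $(1,1)$-form and that, after the leading minus sign, the integrand in the kernel argument is pointwise non-positive rather than non-negative — neither affects the conclusion.
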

\begin{proof}
From the definition it is clear that $\mathbb{H}^\theta_{\varphi, \v}$ is a second order linear operator. Since $\theta$ is a $\T$-invariant K\"ahler form, it follows that $\mathbb{H}^\theta_{\varphi, \v}$ is a $\T$-invariant elliptic operator. We now show \eqref{eq-(Fphi,psi)}. By definition we have
\begin{align}
\begin{split}\label{(F-v-w,psi)}
\int_X\dot{\psi}\mathbb{H}^\theta_{\varphi, \v}\dot{\varphi})\v(\mu_{\varphi})\omega_{\varphi}^{[n]}
=&\int_X\left( g_{\varphi}(\theta,dd^c\dot{\varphi}\right)\dot{\psi}\uu(\mu_\varphi)\omega_{\varphi}^{[n]}+\int_Xg_{\varphi}\left(d\Lambda_\varphi(\theta),d\dot{\varphi}\right)\dot{\psi}\uu(\mu_\varphi)\omega_{\varphi}^{[n]}\\
&+ \int_X\dot{\psi} g_{\varphi}\big(\theta,d\log(\v(\mu_{\varphi})) \wedge d^c\dot{\varphi}\big)\v(\mu_\varphi)\omega_{\varphi}^{[n]}.
\end{split}
\end{align}
We recall the well-know identity  (see e.g. \cite[(1.12.5)]{Gau})
\begin{equation*}
g_\varphi\left(\theta,dd^c\dot{\varphi}\right)\omega_{\varphi}^{[n]}=-\theta\wedge dd^{c}\dot{\varphi}\wedge \omega^{[n-2]}_\varphi+(\Delta_\varphi\dot{\varphi})(\Lambda_\varphi\theta)\omega^{[n]}_\varphi.
\end{equation*} 
Integrating by parts the above gives
\begin{align}
\begin{split}\label{(theta,ddc)}
&\int_X g_\varphi\left(\theta,dd^c\dot{\varphi}\right)\dot{\psi}\uu(\mu_\varphi)\omega_{\varphi}^{[n]}=\\
&+\int_X \dot{\psi}(\Delta_\varphi\dot{\varphi})(\Lambda_\varphi\theta)\, \uu(\mu_\varphi)\omega^{[n]}_\varphi-\int_X\dot{\psi}\uu(\mu_\varphi)\theta\wedge dd^{c}\dot{\varphi}\wedge \omega^{[n-2]}_\varphi=\\
&-\int_Xg_\varphi\left(d\Lambda_\varphi\theta,d\dot{\varphi}\right)\dot{\psi}\uu(\mu_\varphi)\omega_{\varphi}^{[n]}-\int_Xg_\varphi\left(d\dot{\varphi},d\dot{\psi}\right)(\Lambda_\varphi\theta)\uu(\mu_\varphi)\omega_{\varphi}^{[n]}\\
& -\int_Xg_\varphi\left(d\dot{\varphi},d\log(\v(\mu_\varphi))\right)(\Lambda_\varphi\theta)\dot{\psi}\v(\mu_\varphi)\omega_{\varphi}^{[n]}+\int_X\uu(\mu_\varphi)\theta\wedge d\dot{\psi}\wedge d^{c}\dot{\varphi}\wedge \omega^{[n-2]}_\varphi\\
& +\int_X\dot{\psi}\theta\wedge d\uu(\mu_\varphi)\wedge d^{c}\dot{\varphi}\wedge \omega^{[n-2]}_\varphi.
\end{split}
\end{align}
Using again \cite[(1.12.5)]{Gau} we find
\begin{align*}
\theta\wedge d\dot{\psi}\wedge d^{c}\dot{\varphi}\wedge \omega^{[n-2]}_\varphi=&\left(-g_\varphi\left(\theta,d\dot{\psi}\wedge d^{c}\dot{\varphi}\right)+g_\varphi\left(d\dot{\varphi},d\dot{\psi}\right)(\Lambda_\varphi\theta)\right)\omega^{[n]}_\varphi,
\end{align*}
and
\begin{eqnarray*}
\theta\wedge d(\v(\mu_\varphi))\wedge d^{c}\dot{\varphi}\wedge \omega^{[n-2]}_\varphi
&=&\left(-g_\varphi\left(\theta,(d(\v(\mu_\varphi))\wedge d^{c}\dot{\varphi}\right)+g_\varphi\left(d\dot{\varphi},d(\v(\mu_\varphi))\right)(\Lambda_\varphi\theta)\right)\omega^{[n]}_\varphi, 
\end{eqnarray*}
where the last equality follows from the proof of \cite[Lemma 4]{Lah19}. \\
Thus \eqref{(theta,ddc)} writes as
\begin{align*}
\int_Xg_\varphi\left(\theta,dd^c\dot{\varphi}\right)\dot{\psi}\uu(\mu_\varphi)\omega_{\varphi}^{[n]}
=&-\int_X g_\varphi\left(d\Lambda_\varphi\theta,d\dot{\varphi}\right)\dot{\psi}\uu(\mu_\varphi)\omega_{\varphi}^{[n]}-\int_X g_\varphi\left(\theta,d\dot{\psi}\wedge d^{c}\dot{\varphi}\right)\uu(\mu_\varphi)\omega^{[n]}_\varphi\\
&-\int_X\dot{\psi} g_\varphi\left(\theta,(d(\log \v(\mu_\varphi))\wedge d^{c}\dot{\varphi}\right) \v(\mu_\varphi) \omega^{[n]}_\varphi.
\end{align*}
Substituting back in \eqref{(F-v-w,psi)}, and noting that the $J$-invariance of $g_\varphi$ ensures $g_\varphi(\theta, d\dot{\psi}\wedge d^c \dot{\varphi})=g_\varphi(\theta, d\dot{\varphi}\wedge d^c \dot{\psi})$, we obtain \eqref{eq-(Fphi,psi)}. Using that $\dot{\varphi}, \dot{\psi}$ plays symmetric roles in \eqref{eq-(Fphi,psi)}, it follows that $\mathbb{H}^{\theta}_{\varphi,\v}$ is a self-adjoint operator. 

The last statement follows from \eqref{eq-(Fphi,psi)}. Indeed, if $\mathbb{H}^\theta_{\varphi, \v}(\dot{\varphi})=0$, taking $\dot{\psi}=\dot{\varphi}$ implies $\int_X |\nabla^\varphi \dot{\varphi}|^2_{\theta}\uu(\mu_\varphi)\omega_{\varphi}^{[n]}=0$. Thus $|\nabla^\varphi \dot{\varphi}|^2_{\theta}=0$, meaning that $\dot{\varphi}$ is a constant.
\end{proof}

 \begin{lemma}{\label{l:line:op}}
 The linearization $D_\varphi\mathcal{S}_{\theta}^{(r)}(\dot{\varphi})$ of $ \mathcal{S}_{\theta}^{(r)}$ in the direction $\dot{\varphi}$ at a potential $\varphi$ satisfying $\mathcal{S}_{\theta}^{(r)}(\varphi)=0$ is given by 

\begin{equation*}
D_\varphi\mathcal{S}_{\theta}^{(r)}(\dot{\varphi})=-2\mathbb{L}_{\varphi,\v}(\dot{\varphi})+r\mathbb{H}^{\theta}_{\varphi,\v}(\dot{\varphi}).
\end{equation*}
In particular, $D_\varphi\mathcal{S}_{\theta}^{(r)}$ is an elliptic and self-adjoint operator with respect to $\v(\mu_\varphi)\omega_\varphi^{[n]}$.
\end{lemma}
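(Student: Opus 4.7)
The plan is to split $\mathcal{S}^{(r)}_\theta = A - rB$ with
$$A(\varphi) := \frac{1}{\v(\mu_\varphi)}\bigl(\mathrm{Scal}_\v(\omega_\varphi) - \w(\mu_\varphi)\bigr), \qquad B(\varphi) := \Lambda_{\varphi,\v}(\theta) - \tilde\v(\mu_\varphi),$$
and to linearize each piece separately, using the standard variations $D_\varphi \omega_\varphi(\dot\varphi) = dd^c\dot\varphi$, $D_\varphi\mu_\varphi(\dot\varphi) = d^c\dot\varphi$ (cf.\ \eqref{normalizing-moment-map}), $D_\varphi\Lambda_\varphi(\theta)(\dot\varphi) = -g_\varphi(\theta, dd^c\dot\varphi)$, together with the chain rule applied to the compositions $\v(\mu_\varphi)$, $\w(\mu_\varphi)$, $\tilde\v(\mu_\varphi)$.

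For the scalar curvature piece, the linearization of $\mathrm{Scal}_\v$ was computed in \cite{Lah19} (see also \cite{DJL}): its fourth-order leading part is $-2\v(\mu_\varphi)\mathbb{L}_{\varphi,\v}$, modulo first-order corrections in $\dot\varphi$ whose coefficients are proportional to $\mathrm{Scal}_\v(\omega_\varphi) - \w(\mu_\varphi)$. Dividing by $\v(\mu_\varphi)$ and collecting the additional first-order contributions that arise from varying $\v(\mu_\varphi)^{-1}$ and $\w(\mu_\varphi)$ produces $D_\varphi A(\dot\varphi) = -2\mathbb{L}_{\varphi,\v}(\dot\varphi) + \mathcal{R}_A(\varphi,\dot\varphi)$, with $\mathcal{R}_A$ a first-order operator in $\dot\varphi$ whose coefficients are multiples of $A(\varphi)$. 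For the twisting piece $B$, a direct calculation in the spirit of \cite{Hash, Jub23} adapted to the weighted trace $\Lambda_{\varphi,\v}$ yields $-D_\varphi B(\dot\varphi) = \mathbb{H}^\theta_{\varphi,\v}(\dot\varphi) + \mathcal{R}_B(\varphi,\dot\varphi)$: the first summand of \eqref{hash} arises from $D_\varphi\Lambda_\varphi(\theta)$, the second from differentiating the weighted correction built into $\Lambda_{\varphi,\v}(\theta)$, and the third balances against $D_\varphi\tilde\v(\mu_\varphi)$; the remainder $\mathcal{R}_B$ is first-order in $\dot\varphi$ with coefficients proportional to $B(\varphi)$. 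Summing and using the identity $A(\varphi) = rB(\varphi)$ (which is precisely the hypothesis $\mathcal{S}^{(r)}_\theta(\varphi)=0$), the combined remainder $\mathcal{R}_A - r\mathcal{R}_B$ vanishes, yielding $D_\varphi\mathcal{S}^{(r)}_\theta(\dot\varphi) = -2\mathbb{L}_{\varphi,\v}(\dot\varphi) + r\mathbb{H}^\theta_{\varphi,\v}(\dot\varphi)$.

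The ellipticity and self-adjointness of $D_\varphi\mathcal{S}^{(r)}_\theta$ with respect to $\v(\mu_\varphi)\omega_\varphi^{[n]}$ then follow immediately from the corresponding properties of each summand: $\mathbb{L}_{\varphi,\v}$ is a fourth-order elliptic self-adjoint operator by its construction in \cite{Lah19, DJL}, while Lemma \ref{lem-F-elliptic} supplies the same for the second-order operator $\mathbb{H}^\theta_{\varphi,\v}$. The main obstacle will be the bookkeeping inside Step 1 and Step 2: one must track all chain-rule variations of $\v(\mu_\varphi), \w(\mu_\varphi)$, $\tilde\v(\mu_\varphi)$ and of the weighted trace, and verify that the first-order remainders $\mathcal{R}_A$ and $r\mathcal{R}_B$ cancel exactly at a twisted solution. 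Morally this cancellation is structural, reflecting that $\mathcal{S}^{(r)}_\theta$ arises as the gradient of a twisted weighted Mabuchi-type functional, but the explicit verification requires patient manipulation using the normalization $\tilde\v(\mu) = n + \langle d\log\v(\mu), \mu\rangle$ together with $\mu_\varphi = \mu_0 + d^c\varphi$.
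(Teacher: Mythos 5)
Your proposal follows the same route as the paper's proof: split $\mathcal{S}^{(r)}_\theta = A - rB$, linearize each piece (the scalar curvature part via \cite[Lemma B.1]{Lah19} as in \eqref{comp:lichne}, the twist part by direct computation using $D_\varphi\Lambda_\varphi(\theta)(\dot\varphi) = -g_\varphi(\theta, dd^c\dot\varphi)$ and $D_\varphi\mathrm{u}(\mu_\varphi)(\dot\varphi) = g_\varphi(d\dot\varphi, d\mathrm{u}(\mu_\varphi))$), identify $\mathbb{H}^\theta_{\varphi,\v}$ among the resulting terms, and observe that the first-order remainders cancel at a twisted solution.

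One imprecision worth fixing: you describe $\mathcal{R}_A$ as having coefficients ``proportional to $\mathrm{Scal}_\v - \w$'' or ``multiples of $A(\varphi)$'' and $\mathcal{R}_B$ as having coefficients ``proportional to $B(\varphi)$.'' What one actually finds (as the paper's computation makes explicit) is
\[
\mathcal{R}_A(\dot\varphi) = g_\varphi\bigl(dA(\varphi), d\dot\varphi\bigr), \qquad \mathcal{R}_B(\dot\varphi) = g_\varphi\bigl(dB(\varphi), d\dot\varphi\bigr),
\]
so the coefficients are the \emph{differentials} $dA(\varphi)$, $dB(\varphi)$, not $A(\varphi)$, $B(\varphi)$ themselves. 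The cancellation you invoke still goes through, but for a slightly stronger reason than ``$A(\varphi) = rB(\varphi)$'' pointwise: since $\mathcal{S}^{(r)}_\theta(\varphi) = A(\varphi) - rB(\varphi) \equiv 0$ as a function on $X$, both the function and its exterior derivative vanish, hence $\mathcal{R}_A - r\mathcal{R}_B = g_\varphi\bigl(d(A - rB), d\dot\varphi\bigr) = 0$. With this correction your argument is complete and matches the paper's.
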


 \noindent We recall that $\mathbb{L}_{\varphi,\v}$ is the $\v$-weighted Lichnerowicz operator introduced in \cite{Lah19}, defined on $\T$-invariant smooth functions $f$ by

\begin{equation}{\label{define-lichne-pondéré}}
    \mathbb{L}_{\varphi,\v}(f):=\frac{1}{\v(\mu_{\varphi})}\bigg((\nabla^{\varphi,-}d)^{*_\varphi} \big(\v(\mu_{\varphi})(\nabla^{\varphi,-}df)\big) \bigg),
\end{equation}

\noindent where $\nabla^{\varphi,-}$ is introduced in \eqref{nabla:pm} and $(\nabla^{\varphi,-}d)^{*_{\varphi}}$ denotes its adjoint operator with respect to $\omega_\varphi$. 

\begin{proof}
We start recalling the following classical variational formulas  which will be useful in the proof below

\begin{equation*}
    D_{\varphi}\big(\Lambda_{\varphi}(\theta)\big)(\dot{\varphi})= -g_{\varphi}(\theta,dd^c\dot{\varphi}) \quad \text{and  } \quad D_{\varphi}(\mathrm{u}(\mu_\varphi))(\dot{\varphi})=g_{\varphi}(d\dot{\varphi},d\mathrm{u}(\mu_\varphi)),
\end{equation*}
for any weight $\mathrm{u} \in \mathcal{C}^{\infty}(P,\mathbb{R})$. We refer to \cite[Section 3.2]{Gau} for the first one. For the second, we add some details below for reader's convenience. Let $\varphi_t$ be a path in $\mathcal{K}(X,\omega_0)^\T$ and denote $\dot{\varphi}:= \frac{d}{dt}|_{t=0}(\varphi_t)$. Then

\begin{equation*}
\begin{split}
    D_{\varphi}(\mathrm{u}(\mu_\varphi))(\dot{\varphi}) &= \frac{d}{dt}|_{t=0}(\mathrm{u}(\mu_{\varphi_t})) =\sum_{a=1}^r \mathrm{u}_{,a}(\mu_\varphi) d^c\dot{\varphi}(\xi_a) \\
    &= g_{\varphi}( d\dot{\varphi}, d\mathrm{u}(\mu_\varphi)).
\end{split}    
\end{equation*}
The second equality is due to the fact $\mu_{\varphi_t}= \mu_0 + d^c\varphi_t$ and the third follows from the fact that the gradient of $\mu_\varphi^a$ is $-J\xi_a$ by definition of the moment map.\\
Now, by \cite[Lemma B.1]{Lah19} 

\begin{equation}{\label{comp:lichne}}
\begin{split}
D_{\varphi}\left(\frac{1}{\v(\mu_\varphi)}\big({\mathrm{Scal}}_{\v}(\omega_{\varphi})-\w(\mu_{\varphi})\big)\right)(\dot{\varphi})=&-2\mathbb{L}_{\varphi,\v}(\dot{\varphi}) -  g_{\varphi}\left(d\left(\frac{\w(\mu_{\varphi})}{\v(\mu_{\varphi})}\right),d\dot{\varphi}\right) \\ 
&+ g_{\varphi}\left( d \left( \frac{{\mathrm{Scal}}_{\v}(\omega_{{\varphi}})}{\v(\mu_{\varphi})}\right), d\dot{\varphi}  \right).
\end{split}
\end{equation}
Moreover 
\begin{equation*}
    \begin{split}
D_{\varphi}\bigg( &(\Lambda_{\varphi,\v}(\theta)-\tilde{\v}(\mu_\varphi)) \bigg)(\dot{\varphi})\\   
= & D_{\varphi}\big( \Lambda_{\varphi}(\theta) \big)(\dot{\varphi})+ D_{\varphi}\bigg(  \langle d\log(\v(\mu_\varphi)), \mu_\theta \rangle -  \tilde{\v}(\mu_\varphi)\bigg)(\dot{\varphi}) \\
=& -g_{\varphi}(\theta,dd^c\dot{\varphi}) -  g_{\varphi}(d\tilde{\v}(\mu_{\varphi})), d\dot{\varphi}) + \sum_{a=1}^r\mu_\theta^a g_{\varphi}\left(d\left(\frac{\v_{,a}(\mu_\varphi)}{\v(\mu_\varphi)}\right), d\dot{\varphi}\right) \\
=&- g_{\varphi}(\theta,dd^c\dot{\varphi}) - g_{\varphi}(d\tilde{\v}(\mu_{\varphi}), d\dot{\varphi}) - g(d\Lambda_{\varphi}(\theta),d\dot{\varphi}) + g(d\Lambda_{\varphi,\v}(\theta),d\dot{\varphi}) \\
&-  \sum_{a=1}^r \log(\v)_{,a}(\mu_\varphi)g_{\varphi}\left(d\mu^{a}_\theta,d\dot{\varphi}\right)\\
=& -g_{\varphi}(\theta,dd^c\dot{\varphi}) - g_{\varphi}(d\Lambda_{\varphi}(\theta),d\dot{\varphi}) - g_{\varphi}\big(\theta,d\log(\v(\mu_{\varphi})) \wedge d^c\dot{\varphi}\big)\\
&   + g_{\varphi}\left(d\left(\Lambda_{\varphi,\v}(\theta)-\tilde{\v}(\mu_{\varphi})\right), d\dot{\varphi}\right),\\
  \end{split}
\end{equation*}
where the third identity follows from 

\begin{equation*}
   d\Lambda_{\varphi, \v}(\theta)= d\Lambda_{\varphi}(\theta)+\sum_a d (\log(\v)_{,a}(\mu_\varphi)\mu_\theta^a),
\end{equation*}
while the last identity follows from the fact that

\[  \sum_{a=1}^r \frac{\uu_{,a}(\mu_\varphi)}{\uu(\mu_\varphi)}g_{\varphi}\left(d\mu^{a}_\theta,d\dot{\varphi}\right)=g_{\varphi}\big(\theta,d\log(\v(\mu_{\varphi})) \wedge d^c\dot{\varphi}\big),\]
(see e.g. \cite[Lemma 4]{Lah19}). Combining this with \eqref{comp:lichne} and using that $\varphi$ satisfy $S^{(r)}_\theta(\varphi)=0$ we get the desired identity. 
The last statement follows from Lemma \ref{lem-F-elliptic} and \eqref{define-lichne-pondéré}.
\end{proof}

As a consequence, standard arguments of elliptic theory allow us to conclude:

\begin{corollary}
The set $S_{1}$ is open.
\end{corollary}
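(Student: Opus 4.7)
The plan is to apply the Banach Implicit Function Theorem to the operator $\mathcal{S}^{(r)}_{\omega_0}$ from \eqref{eq:S_r}. Fix $t_0\in S_1\cap(0,1)$ with $r_0=(1-t_0)/t_0>0$ and a smooth solution $\varphi_{t_0}$, and regard $(r,\varphi)\mapsto \mathcal{S}^{(r)}_{\omega_0}(\varphi)$ as a smooth map $\mathbb{R}\times U \to C^{0,\alpha}(X)^{\T}$, where $U\subset C^{4,\alpha}(X)^{\T}$ is a neighborhood of $\varphi_{t_0}$ on which $\omega_0+dd^c\varphi$ remains positive. Provided the partial linearization
\[
L_0 := D_{\varphi_{t_0}}\mathcal{S}^{(r_0)}_{\omega_0} = -2\mathbb{L}_{\varphi_{t_0},\v}+r_0\,\mathbb{H}^{\omega_0}_{\varphi_{t_0},\v}
\]
from Lemma \ref{l:line:op} is an isomorphism modulo constants, IFT will yield a $C^{4,\alpha}$-family of solutions for $r$ near $r_0$; elliptic bootstrapping on the system (which is a fourth-order quasilinear elliptic equation in $\varphi$ when $r>0$) then promotes them to smooth $\T$-invariant potentials, and inverting the diffeomorphism $t\mapsto(1-t)/t$ of $(0,1)$ produces a neighborhood of $t_0$ inside $S_1$.

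The core step is therefore computing $\ker L_0$. Since $L_0$ is fourth-order, elliptic, and self-adjoint with respect to $\v(\mu_{\varphi_{t_0}})\omega_{\varphi_{t_0}}^{[n]}$, the Fredholm alternative reduces the matter to showing $\ker L_0=\mathbb{R}$. Assume $L_0(\dot\varphi)=0$ and pair the equation with $\dot\varphi$ against this measure. Combining the definition \eqref{define-lichne-pondéré} of $\mathbb{L}_{\varphi,\v}$ (which gives a non-negative quadratic form) with the integration-by-parts identity \eqref{eq-(Fphi,psi)} applied to $\theta=\omega_0$ yields
\[
2\int_X \bigl|\nabla^{\varphi_{t_0},-}d\dot\varphi\bigr|^2_{\varphi_{t_0}}\,\v(\mu_{\varphi_{t_0}})\,\omega_{\varphi_{t_0}}^{[n]} + r_0\int_X g_{\varphi_{t_0}}\bigl(\omega_0,\,d\dot\varphi\wedge d^c\dot\varphi\bigr)\,\v(\mu_{\varphi_{t_0}})\,\omega_{\varphi_{t_0}}^{[n]} = 0.
\]
Both integrands are non-negative (the second because $\omega_0$ is Kähler) and $r_0>0$, so both integrals must vanish. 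The vanishing of the second term is exactly what forced $\dot\varphi$ to be constant at the end of the proof of Lemma \ref{lem-F-elliptic}, so $\ker L_0=\mathbb{R}$.

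By self-adjointness the cokernel is also spanned by the constants, and the observation following \eqref{eq:S_r} (which used Lemma \ref{cohomo:const:trace}) shows that $\mathcal{S}^{(r)}_{\omega_0}$ always takes values in the $L^2(\v(\mu_\varphi)\omega_\varphi^{[n]})$-orthogonal complement of the constants. Hence $L_0$ descends to a genuine isomorphism between $C^{4,\alpha}(X)^\T/\mathbb{R}$ and this orthogonal complement inside $C^{0,\alpha}(X)^\T$, so all hypotheses of IFT are satisfied and openness at $t_0$ follows. There is no substantial obstacle beyond the bookkeeping of the additive constant (invariance of $\omega_\varphi$ under $\varphi\mapsto \varphi+c$), which is resolved precisely by passing to the quotient; the strict positivity $r_0>0$ is what prevents the kernel of $L_0$ from collecting the possibly non-constant weighted Killing potentials that live in $\ker\mathbb{L}_{\varphi_{t_0},\v}$.
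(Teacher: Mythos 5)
Your proof is correct and is exactly the elaboration of what the paper compresses into ``standard arguments of elliptic theory allow us to conclude'' after Lemmas \ref{lem-F-elliptic} and \ref{l:line:op}: compute the linearization, show its kernel is the constants using the non-negativity of both quadratic forms (with $r_0>0$ making the $\mathbb{H}$-term decisive), invoke Fredholm plus self-adjointness, and apply the implicit function theorem modulo the constant gauge. Your closing remark about why $r_0>0$ is essential -- it kills the possibly non-trivial kernel of $\mathbb{L}_{\varphi,\v}$ coming from weighted Killing potentials -- is precisely the reason the twisted path is needed, and matches the paper's intent.
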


\subsection{Existence of a solution for $t$ small}
The goal of this section is to show that $S_{1}$ is non-empty.
We recall that $r=\frac{(1-t)}{t}$.

\begin{theorem}\label{Thm-openess}
There exists $R:=R(\omega_0,\uu,\w)$  such that for any $r\geq R$ there exists $\varphi\in \mathcal{K}(X,\omega_0)^{\T}$ solving
\begin{equation}\label{eq-twisted-weighted}
\frac{1}{\v(\mu_\varphi)}\big({\mathrm{Scal}}_{\v}(\omega_{\varphi})-\w(\mu_{\varphi})\big) -r (\Lambda_{\varphi,\v}(\omega_0)-\tilde{\v}(\mu_\varphi))=0.
\end{equation}
\end{theorem}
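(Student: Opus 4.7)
The strategy is to reparametrize with $s := 1/r$, observe that $s=0$ is solved trivially by $\varphi=0$, and apply a quantitative Implicit Function Theorem to produce solutions for all $s>0$ small. Define the family of operators
\[
\Phi(s,\varphi) := \frac{s}{\v(\mu_\varphi)}\bigl(\mathrm{Scal}_\v(\omega_\varphi)-\w(\mu_\varphi)\bigr)-\bigl(\Lambda_{\varphi,\v}(\omega_0)-\tilde{\v}(\mu_\varphi)\bigr),
\]
so that \eqref{eq-twisted-weighted} is equivalent to $\Phi(1/r,\varphi)=0$. By the identity recorded after \eqref{continuity-path-weithed}, the function $\Lambda_{0,\v}(\omega_0)-\tilde\v(\mu_0)$ vanishes identically on $X$; in particular $\Phi(0,0)=0$ and $d\bigl(\Lambda_{0,\v}(\omega_0)-\tilde\v(\mu_0)\bigr)=0$.

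Next I would compute the linearization of $\Phi$ in $\varphi$ at $(s,\varphi)=(0,0)$. Repeating the calculation of Lemma \ref{l:line:op} with $\theta=\omega_0$ and observing that the gradient correction term $g_\varphi(d(\cdots),d\dot\varphi)$ disappears because of the pointwise vanishing above, one obtains
\[
D_\varphi \Phi(0,0)(\dot\varphi) = \mathbb{H}^{\omega_0}_{0,\v}(\dot\varphi).
\]
By Lemma \ref{lem-F-elliptic}, $\mathbb{H}^{\omega_0}_{0,\v}$ is a $\T$-invariant, self-adjoint, second-order elliptic operator whose kernel consists exactly of the constants. Standard Fredholm theory then provides, for each $\alpha\in(0,1)$, a topological isomorphism
\[
\mathbb{H}^{\omega_0}_{0,\v}\colon C^{2+\alpha}(X)^\T/\mathbb{R}\;\xrightarrow{\;\sim\;}\;\Bigl\{f\in C^{\alpha}(X)^\T : \textstyle\int_X f\,\v(\mu_0)\omega_0^{[n]}=0\Bigr\}.
\]

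Given these two ingredients, a quantitative Implicit Function Theorem---generalizing Hashimoto's scheme \cite{Hash} to the weighted setting---yields a smooth family $\{\varphi_s\}_{s\in[0,s_0)}$ of $\T$-invariant potentials with $\varphi_0=0$ and $\Phi(s,\varphi_s)=0$; elliptic regularity then upgrades $\varphi_s$ to $\mathcal{K}(X,\omega_0)^\T$. Setting $R:=1/s_0$, the potential $\varphi:=\varphi_{1/r}$ solves \eqref{eq-twisted-weighted} for every $r\geq R$.

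The principal technical obstacle is the \emph{singular-perturbation} character of the problem: $\Phi(s,\cdot)$ is a 4th-order fully nonlinear PDE for $s>0$, while its linearization at $s=0$ is only 2nd order, so the naive IFT in Hölder pairs $C^{k+\alpha}\to C^{k-2+\alpha}$ does not apply directly. The key observation unlocking the argument is that the 4th-order contribution enters through the factor $s$; by calibrating the Hölder spaces to the leading 2nd-order operator $\mathbb{H}^{\omega_0}_{0,\v}$ and restricting to a $\varphi$-ball of radius proportional to $s$, the 4th-order term becomes a small perturbation of the invertible operator $\mathbb{H}^{\omega_0}_{0,\v}$, and the quantitative IFT closes the contraction, giving the dependence of $R$ only on $(\omega_0,\v,\w)$.
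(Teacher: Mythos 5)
Your reparametrization $s = 1/r$ and the observation that $\Phi(0,0) = 0$ with $D_\varphi\Phi(0,0) = \mathbb{H}^{\omega_0}_{0,\v}$ are both correct, and you have correctly identified the structural difficulty: the map $\Phi(s,\cdot)$ is fourth order for $s>0$ but its linearization at $s=0$ is only second order. However, the resolution you sketch does not close the argument, and this is a genuine gap rather than a technical detail to be filled in.

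Concretely, there is no pair of Banach spaces on which both the IFT hypotheses and the invertibility hold at $(0,0)$. If you work with the second-order Hölder scaling $C^{2,\alpha}/\R \to C^{0,\alpha}_0$, then $\mathbb{H}^{\omega_0}_{0,\v}$ is indeed an isomorphism, but for $s>0$ the term $\tfrac{s}{\v(\mu_\varphi)}\mathrm{Scal}_\v(\omega_\varphi)$ contains $s\,\Delta_\varphi^2\varphi$, which is simply not defined on $C^{2,\alpha}$; no amount of shrinking the $\varphi$-ball makes an unbounded operator bounded. If instead you use the fourth-order Sobolev scaling $W^{k+4,2}/\R \to W^{k,2}_0$ so that $\Phi(s,\cdot)$ is differentiable, then $D_\varphi\Phi(s,0) = -2s\mathbb{L}_{0,\v}+\mathbb{H}^{\omega_0}_{0,\v}$ is invertible for $s>0$ but the norm of its inverse blows up as $s\to 0$; the paper's analysis shows it is $O(s^{-2})$. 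Tracking the quantitative IFT (Theorem \ref{Quant-inv-thm}) then gives a threshold $\delta \lesssim s^{\nu+2}$ for some $\nu\geq 1$, while the residual $\|\Phi(s,0)\|$ is only $O(s)$. Since $s \gg s^{\nu+2}$ for small $s$, the contraction never starts from the naive base point $\varphi=0$.

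This is exactly why the paper's proof does not start from $\varphi = 0$: it first builds, by a formal power-series iteration in $1/r$ (Lemma \ref{Lem-Appro-Sol}), an approximate solution $\omega_j$ with residual $O(r^{-j})$ for $j$ arbitrarily large, then perturbs the twisting form to $\theta_j$ so that $\mathcal{S}^{(r)}_{\theta_j}(\omega_j)=0$ exactly, and only then applies the quantitative IFT with base point $(\theta_j,\omega_j)$, where the linearization $-2\mathbb{L}_{j,\v}+r\mathbb{H}_{j,\v}$ is a genuine fourth-order isomorphism. Choosing $j \gg \nu$ makes the distance $\|T^{(r)}(0)-(\omega_0,0)\| = O(r^{-j-1})$ beat the threshold $\delta$, which is what the direct argument from $\varphi=0$ cannot achieve. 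Your plan is missing precisely this high-order approximate-solution step, and without it the quantitative IFT does not apply.
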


The rest of the section is devoted to the proof of Theorem \ref{Thm-openess}, for which we use Hashimoto's arguments \cite{Hash} in the cscK case. The first step is to construct an approximate solution.

\subsubsection{Construction of an approximate solution} We now construct an approximate solution to \eqref{eq-twisted-weighted} for $r$ very big (or equivalently for $t$ very small):

\begin{lemma}\label{Lem-Appro-Sol}
 For each $j\in \mathbb{N}$, there exist $\varphi_1,\cdots\varphi_j\in C^\infty(X)^{\T}$ such that 
\begin{equation*}
\omega_j:=\omega_0+dd^c\left(\frac{\varphi_{1}}{r}+\cdots+\frac{\varphi_{j}}{r^{j}}\right),
\end{equation*}
satisfies 
\begin{equation*}
S^{(r)}_{\omega_0}(\omega_j)= r^{-j}f_{j,r},
\end{equation*}
for some $f_{j,r}\in C^\infty(X)^{\T}$ normalized by $\int_X f_{j,r}\v(\mu_{\omega_j})\omega_j^{[n]}=0$,  which is also bounded in $C^\infty(X)$ uniformly in $r$ (for $r>>1$).
\end{lemma}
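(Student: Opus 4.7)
The strategy is to construct $\omega_j$ by a formal perturbation expansion in the small parameter $\varepsilon := 1/r$. Writing $\phi_k := \varepsilon \varphi_1 + \varepsilon^2 \varphi_2 + \cdots + \varepsilon^k \varphi_k$, so that $\omega_j = \omega_0 + dd^c\phi_j$, I would expand each of the two summands of
\[
\mathcal{S}^{(r)}_{\omega_0}(\phi) = \frac{1}{\v(\mu_\phi)}\bigl(\mathrm{Scal}_\v(\omega_\phi) - \w(\mu_\phi)\bigr) - \varepsilon^{-1}\bigl(\Lambda_{\phi,\v}(\omega_0) - \tilde{\v}(\mu_\phi)\bigr)
\]
around $\phi=0$. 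The first piece is a smooth nonlinear differential operator in $\phi$, yielding a Taylor series $h_0 + \varepsilon h_1(\varphi_1) + \varepsilon^2 h_2(\varphi_1,\varphi_2) + \cdots$ with $h_0 = \frac{1}{\v(\mu_0)}(\mathrm{Scal}_\v(\omega_0) - \w(\mu_0))$. The normalization $\Lambda_{0,\v}(\omega_0) = \tilde{\v}(\mu_0)$ recorded after \eqref{eq:tilde-v} shows that $\Lambda_{\phi,\v}(\omega_0) - \tilde{\v}(\mu_\phi)$ vanishes at $\phi=0$, and by the linearization computed inside the proof of Lemma \ref{l:line:op} its differential there is $-\mathbb{H}^{\omega_0}_{0,\v}$; so after the factor $-\varepsilon^{-1}$ we obtain an expansion beginning at order $\varepsilon^0$ whose leading coefficient is $\mathbb{H}^{\omega_0}_{0,\v}(\varphi_1)$. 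Collecting everything,
\[
\mathcal{S}^{(r)}_{\omega_0}(\phi_j) = \sum_{k=0}^{j-1}\varepsilon^k\bigl[\mathbb{H}^{\omega_0}_{0,\v}(\varphi_{k+1}) + R_k(\varphi_1,\ldots,\varphi_k)\bigr] + \varepsilon^j\rho_j(\varepsilon),
\]
with $R_0 = h_0$, each $R_k$ a universal differential polynomial in $\varphi_1,\ldots,\varphi_k$, and $\rho_j(\varepsilon)$ smooth in $\varepsilon$ on an interval $[0,1/R]$ on which $\omega_0 + dd^c\phi_j$ remains K\"ahler.

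The inductive step is then to solve, for $k = 0, 1, \ldots, j-1$, the linear elliptic equation $\mathbb{H}^{\omega_0}_{0,\v}(\varphi_{k+1}) = -R_k(\varphi_1,\ldots,\varphi_k)$. By Lemma \ref{lem-F-elliptic}, $\mathbb{H}^{\omega_0}_{0,\v}$ is elliptic, self-adjoint with respect to $\v(\mu_0)\omega_0^{[n]}$, and has kernel equal to the constants; standard Fredholm theory thus produces a smooth $\T$-invariant solution $\varphi_{k+1}$ as soon as the right-hand side is $L^2(\v(\mu_0)\omega_0^{[n]})$-orthogonal to constants.

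The decisive point, which I expect to be the main obstacle, is verifying that this orthogonality condition is actually satisfied at every step of the induction. I would resolve it using the global identity $\int_X \mathcal{S}^{(r)}_{\omega_0}(\psi)\v(\mu_\psi)\omega_\psi^{[n]} = 0$ for every $\psi \in \mathcal{K}(X,\omega_0)^\T$, observed just after \eqref{eq:S_r} as a consequence of the normalization \eqref{norm:scalv} and Lemma \ref{cohomo:const:trace}. Applied to $\psi = \phi_{k-1}$, which by induction satisfies $\mathcal{S}^{(r)}_{\omega_0}(\phi_{k-1}) = \varepsilon^{k-1}(R_{k-1} + O(\varepsilon))$ (with $k=0$ handled separately via \eqref{norm:scalv} directly), the identity gives $\int_X (R_{k-1} + O(\varepsilon))\v(\mu_{\phi_{k-1}})\omega_{\phi_{k-1}}^{[n]} = 0$ for all small $\varepsilon$. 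Extracting the $\varepsilon^0$-coefficient (noting $\v(\mu_{\phi_{k-1}})\omega_{\phi_{k-1}}^{[n]} \to \v(\mu_0)\omega_0^{[n]}$ as $\varepsilon \to 0$) yields the required $\int_X R_{k-1}\v(\mu_0)\omega_0^{[n]} = 0$, closing the induction.

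After the $j$-th step all bracketed terms vanish, so $\mathcal{S}^{(r)}_{\omega_0}(\phi_j) = \varepsilon^j \rho_j(\varepsilon) =: r^{-j}f_{j,r}$. Since $\rho_j$ is smooth in $\varepsilon \in [0,1/R]$ with values in $C^\infty(X)$, the family $\{f_{j,r}\}_{r\geq R}$ is automatically bounded in every $C^k(X)$-seminorm uniformly in $r$. Finally, the required normalization $\int_X f_{j,r}\v(\mu_{\omega_j})\omega_j^{[n]} = 0$ follows from the same global orthogonality identity applied now to $\psi = \phi_j$, so no extra condition needs to be imposed.
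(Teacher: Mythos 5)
Your proof is correct and follows essentially the same approach as the paper: a formal power-series expansion of $\mathcal{S}^{(r)}_{\omega_0}$ in $\varepsilon=1/r$ about $\omega_0$, using the vanishing $\Lambda_{0,\v}(\omega_0)-\tilde{\v}(\mu_0)=0$ to shift the $\varepsilon^{-1}$-weighted piece up by one order, then solving at each order the $r$-independent linear problem $\mathbb{H}^{\omega_0}_{0,\v}(\varphi_{k+1})=-R_k$ by Fredholm theory via Lemma~\ref{lem-F-elliptic}. What you add, and what the paper treats implicitly, is an explicit verification that the compatibility condition $\int_X R_k\,\v(\mu_0)\omega_0^{[n]}=0$ holds at every order; your derivation of this from the global identity $\int_X\mathcal{S}^{(r)}_{\omega_0}(\psi)\,\v(\mu_\psi)\omega_\psi^{[n]}=0$ (a consequence of \eqref{norm:scalv} and Lemma~\ref{cohomo:const:trace}) is exactly the right argument, and it simultaneously gives the asserted normalization of $f_{j,r}$ for free. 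One small slip: to close step $k$, i.e.\ to solve for $\varphi_{k+1}$, you need $\int_X R_k\,\v(\mu_0)\omega_0^{[n]}=0$, so the global identity should be applied to $\psi=\phi_k$ rather than $\phi_{k-1}$; this is just an indexing typo and does not affect the validity of the argument.
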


\begin{proof}
Let $\omega_1:=\omega_0+dd^{c}(\varphi_1/r)$, for a certain $\varphi_1 \in \mathcal{C}^{\infty}(X)^\T$ that it will be suitably chosen in the following. Using Lemma \ref{l:line:op}, we expand $\mathcal{S}_{\omega_0}^{(r)}(\omega_1)$ in terms of $1/r$

\begin{align*}
\mathcal{S}_{\omega_0}^{(r)} (\omega_1)=&\mathcal{S}_{\omega_0}^{(r)} (\omega_0)+D_{\omega_0} \mathcal{S}_{\omega_0}^{(r)}\left(\frac{\varphi_1}{r}\right)+\cdots\\
=&\mathcal{S}_{\omega_0}^{(r)} (\omega_0)+\mathbb{H}_{\omega_{0},\v}^{\omega_0}(\varphi_1)+\mathcal{O}\left(\frac{1}{r}\right)\\
=&\frac{1}{\v(\mu_{\omega_0})}(\mathrm{Scal}_{\v}(\omega_0)-\w(\mu_{\omega_0}))+\mathbb{H}_{\omega_0,\v}^{\omega_0}(\varphi_1)+\mathcal{O}\left(\frac{1}{r}\right)\,
\end{align*}
where $\mathbb{H}_{\omega_0,\v}^{\omega_0}$ is the operator defined in \eqref{hash} for $\varphi=0$ and $\theta=\omega_0$. Also, observe that in the third identity we used that $ \Lambda_{\omega_0,\v} (\omega_0) -\tilde{\v}(\mu_{\omega_0})=0$. Using Lemma \ref{lem-F-elliptic} and standard elliptic theory, we can find $\varphi_1\in C^\infty(X)^{\T}$ such that
\begin{equation*}
\frac{1}{\v(\mu_{\omega_0})}(\mathrm{Scal}_{\v}(\omega_0)-\w(\mu_{\omega_0}))+\mathbb{H}_{\omega_0,\v}^{\omega_0}(\varphi_1)=0,\quad \int_X\varphi_1 \v(\mu_{\omega_0})\omega_0^{[n]}=0.
\end{equation*} 
Thus, we get $\omega_1:=\omega_0+dd^{c}(\varphi_1/r)$ such that
\begin{equation*}
\mathcal{S}^{(r)}_{\omega_0} (\omega_1)=r^{-1}f_{1,r}
\end{equation*} 
where $f_{1,r}$ is bounded in $C^\infty(X)^{\mathbb{T}}$ for all $r>0$, where $f_{1,r}$ are uniformly bounded in $C^\infty(X)^{\mathbb{T}}$ with respect to $r$ for $r\geq1$ and with zero mean value with respect to $\v(\mu_{\omega_1})\omega_1^{[n]}$. It is important to not that $\omega_1$ is still a K\"ahler form for $r$ large enough.
We now proceed by induction. Suppose we have constructed $\varphi_1,\cdots,\varphi_{j-1}$ such that for all $i=1,\cdots,j-1$ the $\T$-invariant K\"ahler metric
\begin{equation*}
\omega_i:=\omega_0+dd^c\left(\frac{\varphi_{1}}{r}+\cdots+\frac{\varphi_{i}}{r^{i}}\right),
\end{equation*}
satisfies
\begin{equation*}
\mathcal{S}^{(r)}_{\omega_0} (\omega_i)=r^{-i}f_{i,r}
\end{equation*}
where $f_{i,r}\in C^\infty(X)^{\T}$ are uniformly bounded in $C^\infty(X)$ with respect to $r$ for $r\geq1$  and $\int_X f_{i,r}\v(\mu_{\omega_i})\omega_{i}^{[n]}=0$. We prove the existence of the desired $\varphi_j\in C^\infty(X)^{\T}$. Denote $\omega_j=\omega_{j-1}+dd^c(r^{-j}\varphi_j)$. We have
\begin{align*}
\mathcal{S}^{(r)}_{\omega_0}(\omega_j)=& \mathcal{S}^{(r)}_{\omega_0}(\omega_{j-1})+ D_{\omega_{j-1}}\mathcal{S}^{(r)}_{\omega_0}(\frac{\varphi_{j}}{r^j})+\cdots\\
=& \mathcal{S}^{(r)}_{\omega_0}(\omega_{j-1})+\frac{1}{r^{j-1}}\mathbb{H}^{\omega_{0}}_{\omega_{j-1}}(\varphi_j)+\mathcal{O}\left(\frac{1}{r^{j}}\right)\\
=&\frac{1}{r^{j-1}}\left(f_{j-1,r}+\mathbb{H}^{\omega_{0}}_{\omega_{j-1}}(\varphi_j)\right)+\mathcal{O}\left(\frac{1}{r^j}\right).
\end{align*}
Using Lemma \ref{lem-F-elliptic} and standard elliptic theory, we can find the desired $\varphi_j\in C^\infty(X)^{\T}$. This completes the proof.
\end{proof}

In Lemma \ref{Lem-Appro-Sol} we have constructed a $\T$-invariant K\"ahler metric $\omega_j\in \alpha=[\omega_0]$ such that
\begin{equation*}
\frac{1}{\v(\mu_{\omega_j})}(\mathrm{Scal}_{\v}(\omega_j)-\w(\mu_{\omega_j})) -r (\Lambda_{\omega_j,\v}(\omega_0)-\tilde{\v}(\mu_{\omega_j}))= r^{-j}f_{j,r}
\end{equation*}
where $f_{j,r}\in C^\infty(X)^{\T}$ with $\int_X f_{j,r} \v(\mu_{\omega_j})\omega_j^{[n]} =0$. Since $\Delta_{\omega_j,\v}$ is elliptic and self-adjoint with respect to $\v(\mu_{\omega_j})\omega_j^{[n]}$ (see \cite[Appendix A]{DJL}), by elliptic theory we can find $G_{j,r}\in C^\infty(X)^{\T}$ such that
\begin{equation*}
\Delta_{\omega_j,\v}(G_{j,r})=f_{j,r}, \quad \int_XG_{j,r}\v(\mu_{\omega_j})\omega_{j}^{[n]}=0.
\end{equation*}
Also, for $p$ large enough, there exists a constant $C(p,\omega_j)$ such that
\[
\parallel G_{j,r}\parallel_{W^{2,p}(\omega_j^n)}\leq C(p,\omega_j)\parallel f_{j,r} \parallel_{W^{2,p-2}(\omega_j^n)}
\]
%where $\parallel \cdot\parallel_{L^{2}_p,\omega_j}$ stands for the $L^{2}_p$-Sobolev norm with respect to the metric $\omega_j$ (for $p$ large enough). 
From Lemma \ref{Lem-Appro-Sol} we know that $\omega_j-\omega_0=O(r^{-1})$, hence 

\[\parallel G_{j,r}\parallel_{{W^{2,p}(\omega_0^n)}}\leq C(p,\omega_0)\parallel f_{j,r}\parallel_{{W^{2,p-2}(\omega_0^n)}}
\]
for a constant $C(p,\omega_0)$ depending only on $p,\omega_0$. It then follows that $\parallel G_{j,r}\parallel_{W^{2,p}(\omega_0^n)}$ can be bounded uniformly with respect to $r$ since $\parallel f_{j,r}\parallel_{{W^{2,p-2}(\omega_0^n)}}$ is bounded by a constant uniform relative to $r$. 
We consider 
\begin{equation}{\label{theta:j}}
    \theta_j:=\omega_0+r^{-(j+1)}dd^c G_{j,r},
\end{equation}
which is a K\"ahler form since $r$ is taken very large and $dd^c G_{j,r}$ is uniformly under control. We then get

\begin{equation}{\label{eq:twist_j}}
\begin{split}
\mathcal{S}^{(r)}_{\theta_j}(\omega_j)=&\frac{1}{\v(\mu_{\omega_j})}(\mathrm{Scal}_{\v}(\omega_j)-\w(\mu_{\omega_j})) -r\left(\Lambda_{\omega_j,\v}(\theta_j)-\tilde{\v}(\mu_{\omega_j})\right)\\
=&\left(\frac{1}{\v(\mu_{\omega_j})}(\mathrm{Scal}_{\v}(\omega_j)-\w(\mu_{\omega_j})) -r(\Lambda_{\omega_j,\v}(\omega_0)-\tilde{\v}(\mu_{\omega_j}))\right)-\frac{1}{r^{j}}\Delta_{\omega_j,\v}(G_{j,r})\\
=&\mathcal{S}^{(r)}_{\omega_0}(\omega_j)-\frac{1}{r^j}f_{j,r}\\
=&0.
\end{split}
\end{equation}

\subsubsection{Applying the implicit function theorem}{\label{s:IFT}} 

Let $\omega_j$ be the metric constructed in Lemma \ref{Lem-Appro-Sol}. We now recall a quantitative version of the implicit function theorem \cite[Theorem 5.3]{Fin}, which will be used to perturb $\omega_j$ to a genuine solution of \eqref{eq-twisted-weighted}.

\begin{theorem}\label{Quant-inv-thm}
Suppose that $B_1$, $B_2$ are Banach spaces,
$U\subset B_1$ is an open set containing the origin, and that
\begin{enumerate}
\item $T : U \to B_2$ is a differentiable map whose derivative at $0$, $D_0T$, is an isomorphism of
Banach spaces, with inverse $P$;
\item $\delta^\prime$ is the radius of the closed ball in $B_1$, centered at 0, on which $T-D_0T$ is Lipschitz, with constant $1/(2\parallel P\parallel_{\rm op})$;
\item $\delta:=\delta^\prime/(2\parallel P\parallel_{\rm op})$.
\end{enumerate}
Then whenever $y \in B_2$ satisfies $\parallel y-T (0)\parallel < \delta$, there exists $x \in U$ such that $T(x)=y$. Moreover, such an $x$ is unique subject to the constraint $\parallel x\parallel <\delta^\prime$.
\end{theorem}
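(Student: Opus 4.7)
The plan is to reformulate $T(x)=y$ as a fixed-point problem for the map $\Phi: \overline{B}(0,\delta')\to B_1$ defined by $\Phi(x):=x-P(T(x)-y)$. Since $P$ is an isomorphism of Banach spaces, $x$ is a fixed point of $\Phi$ if and only if $T(x)=y$. The existence and uniqueness statement will then follow from the Banach fixed-point theorem applied to $\Phi$ on the closed ball $\overline{B}(0,\delta')\subset B_1$, provided we verify two things: that $\Phi$ is a strict contraction on this ball, and that it maps the ball into itself.

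For the contraction estimate, I would use the identity $P\circ D_0T=\mathrm{id}_{B_1}$ to rewrite
\[
\Phi(x_1)-\Phi(x_2)=(x_1-x_2)-P\bigl(T(x_1)-T(x_2)\bigr)=-P\bigl((T-D_0T)(x_1)-(T-D_0T)(x_2)\bigr).
\]
By hypothesis (2), $T-D_0T$ is Lipschitz on $\overline{B}(0,\delta')$ with constant $1/(2\|P\|_{\mathrm{op}})$, which yields
\[
\|\Phi(x_1)-\Phi(x_2)\|\le \|P\|_{\mathrm{op}}\cdot \frac{1}{2\|P\|_{\mathrm{op}}}\,\|x_1-x_2\|=\tfrac12\|x_1-x_2\|.
\]
For the self-mapping property, combining the contraction bound with the triangle inequality gives
\[
\|\Phi(x)\|\le \|\Phi(x)-\Phi(0)\|+\|\Phi(0)\|\le \tfrac12\|x\|+\|P\|_{\mathrm{op}}\,\|y-T(0)\|.
\]
Since $\|y-T(0)\|<\delta=\delta'/(2\|P\|_{\mathrm{op}})$, any $x\in\overline{B}(0,\delta')$ satisfies $\|\Phi(x)\|<\delta'/2+\delta'/2=\delta'$, so $\Phi$ sends $\overline{B}(0,\delta')$ into its interior (in particular, into itself, and one may also need to shrink $\delta'$ so that $\overline{B}(0,\delta')\subset U$, which is ensured by (2)).

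With both ingredients in hand, the Banach fixed-point theorem produces a unique fixed point $x\in\overline{B}(0,\delta')$ of $\Phi$, and the strict inequality above actually places it in the open ball $\|x\|<\delta'$; this proves existence and uniqueness as stated. The only delicate point in the argument — and arguably the whole content of the ``quantitative'' formulation — is the bookkeeping of constants: one must arrange simultaneously a Lipschitz bound for $T-D_0T$ with constant strictly smaller than $1/\|P\|_{\mathrm{op}}$ (to get a contraction after applying $P$) and a smallness condition on $y-T(0)$ by the factor $\|P\|_{\mathrm{op}}$ (to keep $\Phi$ inside the ball). The choices $1/(2\|P\|_{\mathrm{op}})$ in (2) and $\delta'/(2\|P\|_{\mathrm{op}})$ in (3) are precisely what balances these two requirements, after which the standard contraction-mapping argument closes the proof without further input.
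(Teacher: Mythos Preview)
Your proof is correct and is the standard contraction-mapping argument for this quantitative inverse function theorem. The paper does not actually prove this statement: it is quoted verbatim as \cite[Theorem 5.3]{Fin} and used as a black box, so there is no in-paper proof to compare against; your argument is precisely the classical one underlying that reference.
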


We denote by $W^{2,k}(\omega_j^n)^{\T}_{0}$, the $\T$-invariant Sobolev completion of the space of smooth $\T$-invariant functions normalized such that $\int_X \varphi \,\v(\mu_{\omega_j})\omega_j^{[n]}=0$; we also let $\Omega^{1,1}(X)^{\T}$ denote the Sobolev completion w.r.t. the $W^{2,k}(\omega_j^n)$-norm of the space of real $(1,1)$-forms.\\
We apply Theorem \ref{Quant-inv-thm} with the Banach spaces $B_1:=\Omega^{1,1}(X)^{\T}\times W^{2,k+4}(\omega_j^n)^{\T}_{0}$, $B_2:=\Omega^{1,1}(X)^{\T}\times W^{2,k}(\omega_j^n)^{\T}_{0}$, and the map $T^{(r)}_{\v,\w}: U\to B_2$ defined on the open set $$U:=\{(\chi,\varphi)\in B_1\text{ }| \text{ }\omega_{j,\varphi}:=\omega_{j}+dd^c\varphi>0 \}$$ by the expression
\begin{align*}
\begin{split}
T^{(r)}(\chi,\varphi):=&\left(\theta_j+\chi,\mathcal{S}_{\theta_j+\chi}^{(r)}(\omega_{j,\varphi})\right)\\
=&\left(\theta_j+\chi,\frac{1}{\v(\mu_{\omega_{j,\varphi}})}(\mathrm{Scal}_{\v}(\omega_{j,\varphi})-\w(\mu_{\omega_{j,\varphi}}))-r\left(\Lambda_{\omega_{j,\varphi},\v}(\theta_j+\chi)-\tilde{\v}(\mu_{\omega_{j,\varphi}}) \right)\right).
\end{split}
\end{align*}
We identify the origin $0\in B_1$ with the base point $(\theta_j,\omega_j)$. Using \eqref{eq:twist_j} and Lemma \ref{l:line:op}, the linearization of $T^{(r)}$ at $0$ is given by
\begin{equation*}
(D_0T^{(r)})(\dot{\chi},\dot{\varphi})=\begin{pmatrix} 
1 & 0 \\
-r{\Lambda_{\omega_{j},\v}} &  -2 \mathbb{L}_{j,\v}+r\mathbb{H}_{j,\v}
\end{pmatrix} \begin{pmatrix} 
\dot{\chi}  \\
\dot{\varphi} 
\end{pmatrix},
\end{equation*}
where for convenience we note

\begin{equation}{\label{not}}
    \mu_j:=\mu_{\omega_j}\text{, } \Lambda_{j,\v}:=\Lambda_{\omega_j,\v}\text{, } \mathbb{L}_{j,\v}:=\mathbb{L}_{\omega_j,\v}\text{, } \text{and } \mathbb{H}_{j,\v}:=\mathbb{H}_{\omega_j,\v}^{\theta_j}.
\end{equation}
By Lemma \ref{l:line:op} we know that  $-2 \mathbb{L}_{j,\v}+r\mathbb{H}_{j,\v}$ is elliptic and self-adjoint with respect to $\v(\mu_j)\omega_j^{[n]}$. 
Thus, the kernel of $-2 \mathbb{L}_{j,\v}+r\mathbb{H}_{j,\v}$ is zero, which implies surjectivity (by the Fredholm alternative). It follows that $D_0T^{(r)}:B_1\to B_2$ is an isomorphism with inverse $P^{(r)}$ given by
\begin{equation*}
P^{(r)}=\begin{pmatrix} 
1 & 0 \\
\left(-2 \mathbb{L}_{j,\v}+r\mathbb{H}_{j,\v}\right)^{-1}\circ \left(-r\Lambda_{j,\v}\right) &  \left(-2 \mathbb{L}_{j,\v}+r\mathbb{H}_{j,\v}\right)^{-1} 
\end{pmatrix}.
\end{equation*}
We now want to compute the operator norm $\parallel P^{(r)}\parallel_{\rm op}$. We start by computing the operator norm of $\left(-2 \mathbb{L}_{j,\v}+r\mathbb{H}_{j,\v}\right)^{-1}$. By definition all eigenvalues of $\left(-2 \mathbb{L}_{j,\v}+r\mathbb{H}_{j,\v}\right)$ are negative. We denote by $\lambda_{1,j}<0$ the largest non-zero eigenvalue.
\begin{lemma}
 Then there exists a constant $C=C(\v,\omega_0)>0$ such that $\lambda_{1,j}<-Cr$ for $r$ large enough.
\end{lemma}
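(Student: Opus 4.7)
The plan is to combine the non-negativity of $\mathbb{L}_{j,\v}$ with a uniform (in $r$) spectral gap for the second-order operator $-\mathbb{H}_{j,\v}$, the latter coming from the smooth convergence $\omega_j,\theta_j \to \omega_0$ as $r \to \infty$. First, I would observe that $\mathbb{L}_{j,\v}$ is a non-negative self-adjoint operator with respect to $\v(\mu_j)\omega_j^{[n]}$: this is immediate from \eqref{define-lichne-pondéré}, which writes it as a weighted $A^{*_\varphi}A$-operator. Combined with the identity in Lemma \ref{lem-F-elliptic}, which gives $-\mathbb{H}_{j,\v}$ the non-negative quadratic form $\dot{\varphi}\mapsto \int_X g_{\omega_j}(\theta_j,d\dot{\varphi}\wedge d^c\dot{\varphi})\v(\mu_j)\omega_j^{[n]}$, this yields, for every $\dot{\varphi}\in W^{2,k+4}(\omega_j^n)^{\T}_{0}$,
\[
\bigl\langle (-2\mathbb{L}_{j,\v}+r\mathbb{H}_{j,\v})\dot{\varphi},\dot{\varphi}\bigr\rangle_{\v(\mu_j)\omega_j^{[n]}} \;\leq\; r\,\bigl\langle \mathbb{H}_{j,\v}\dot{\varphi},\dot{\varphi}\bigr\rangle_{\v(\mu_j)\omega_j^{[n]}},
\]
so that the fourth-order term $-2\mathbb{L}_{j,\v}$ can only help. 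The problem is thereby reduced to producing a spectral gap of order $1$ (independent of $r$) for $-\mathbb{H}_{j,\v}$ on the zero-mean subspace.

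Next I would exploit the construction in Lemma \ref{Lem-Appro-Sol} and \eqref{theta:j}: by definition $\omega_j-\omega_0=O(1/r)$ and $\theta_j-\omega_0=O(1/r^{j+1})$ in the $C^\infty$ topology. Consequently, as $r\to\infty$, the coefficients of the elliptic operator $\mathbb{H}_{j,\v}=\mathbb{H}^{\theta_j}_{\omega_j,\v}$ converge in $C^\infty$ to those of $\mathbb{H}^{\omega_0}_{\omega_0,\v}$, and the measures $\v(\mu_j)\omega_j^{[n]}$ converge smoothly to $\v(\mu_0)\omega_0^{[n]}$. The limiting operator $\mathbb{H}^{\omega_0}_{\omega_0,\v}$ is self-adjoint and elliptic, with kernel consisting solely of constants by Lemma \ref{lem-F-elliptic}; hence its largest non-zero eigenvalue $-\mu_0<0$ depends only on $\omega_0$ and $\v$. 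Standard perturbation theory for eigenvalues of self-adjoint elliptic operators under $C^\infty$-perturbation of coefficients then yields, for all $r$ large enough, that the largest non-zero eigenvalue of $\mathbb{H}_{j,\v}$ on the zero-mean subspace is at most $-\mu_0/2$.

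Putting the two steps together, and noting that $\ker(-2\mathbb{L}_{j,\v}+r\mathbb{H}_{j,\v})$ is exactly the constants (both quadratic forms vanish only on constants, so the intersection of kernels does), the min--max characterisation gives
\[
\lambda_{1,j} \;=\; \sup_{\substack{\dot{\varphi}\neq 0\\ \int_X\dot{\varphi}\,\v(\mu_j)\omega_j^{[n]}=0}}\frac{\bigl\langle(-2\mathbb{L}_{j,\v}+r\mathbb{H}_{j,\v})\dot{\varphi},\dot{\varphi}\bigr\rangle}{\|\dot{\varphi}\|_{L^{2}(\v(\mu_j)\omega_j^{[n]})}^{2}} \;\leq\; -\frac{r\mu_0}{2},
\]
proving the claim with $C:=\mu_0/2>0$ depending only on $\v$ and $\omega_0$. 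The main technical subtlety is the uniform-in-$r$ spectral perturbation step, but this is a routine application of continuous dependence of the first non-zero eigenvalue on the symbol of a self-adjoint elliptic operator on the fixed compact manifold $X$, given that the perturbation is of size $O(1/r)$ in $C^\infty$.
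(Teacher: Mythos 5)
Your proposal is correct and follows essentially the same route as the paper: both drop the non-negative $\mathbb{L}_{j,\v}$-term from the Rayleigh quotient and then establish a uniform-in-$r$ spectral gap for the second-order part $\mathbb{H}_{j,\v}$ on the zero-mean subspace, using the $\mathcal{O}(1/r)$ convergence $\omega_j,\theta_j\to\omega_0$. The only cosmetic difference is that the paper makes the gap explicit, chasing it through the weight bound $C_\v$, the comparison $C_1\int|\nabla^j\varphi|^2_{\omega_j}\omega_j^{[n]}\le\int|\nabla^j\varphi|^2_{\theta_j}\omega_j^{[n]}$, and a Poincar\'e inequality for $\omega_j$ whose constant is shown uniform, whereas you appeal once to continuous dependence of the first non-zero eigenvalue of $\mathbb{H}_{j,\v}$ on its coefficients and on the reference measure; this is the same mechanism, just packaged differently.
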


\begin{proof}
 Let $\varphi_{1,j}$ be an eigenfunction corresponding to $\lambda_{1,j}$. By Lemma \ref{lem-F-elliptic} and the definition of $\mathbb{L}_{j,\v}$ in \eqref{define-lichne-pondéré}, we have
\begin{align*}
\lambda_{1,j}=&\frac{\int_X\varphi_{1,j}\left(-2 \mathbb{L}_{j,\v}+r\mathbb{H}_{j,\v}\right)(\varphi_{1,j})\v(\mu_j)\omega_j^{[n]}}{\int_X |\varphi_{1,j}|^2\v(\mu_j)\omega_j^{[n]}}\\
=&\frac{-1}{\int_X|\varphi_{1,j}|^2 \v(\mu_j)\omega_j^{[n]}}\left(2\int_X |\nabla^{j,-}d\varphi_{1,j}|_{\omega_j}^2\v(\mu_{j})\omega_{j}^{[n]}+\int_X r|\nabla^j\varphi_{1,j}|_{\theta_j}^2\v(\mu_{j})\omega_{j}^{[n]}\right)\\
\leq&\frac{-r}{\int_X |\varphi_{1,j}|^2 \v(\mu_j)\omega_j^{[n]}}\int_X |\nabla^j\varphi_{1,j}|_{\theta_j}^2\v(\mu_{j})\omega_{j}^{[n]}\\
\leq& \frac{-C_{\v} r}{\int_X|\varphi_{1,j}|^2 \omega_j^{[n]}}\int_X |\nabla^j\varphi_{1,j}|_{\theta_j}^2\omega_{j}^{[n]},
\end{align*}
where $\nabla^j:=\nabla^{\omega_j}$ and $C_{\v}>0$ is a constant depending only on the $C^0$-bound for $\v$. Since $\omega_j-\omega_0=\mathcal{O}(r^{-1})$ and $\theta_j-\omega_0=\mathcal{O}(r^{-(j+1)})$, see \eqref{theta:j}, there is a constant $C_1=C_1(\omega_0)>0$ such that

\begin{equation*}
C_1\int_X |\nabla^j \varphi_{1,j}|_{\omega_j}^2\omega_{j}^{[n]}\leq\int_X |\nabla^j \varphi_{1,j}|_{\theta_j}^2\omega_{j}^{[n]}.
\end{equation*}
Hence,
\begin{equation*}
\lambda_{1,j}\leq \frac{-rC_{\v}C_1 }{\int_X|\varphi_{1,j}|^2 \omega_j^{[n]}}\int_X | \nabla^j\varphi_{1,j}|_{\omega_j}^2\omega_{j}^{[n]}.
\end{equation*} 
By the Poincar{\'e} inequality we have
\begin{equation*}
\int_X|\varphi_{1,j}|^2 \omega_j^{[n]}\leq C_3(\omega_j)\int_X|\nabla^j \varphi_{1,j}|_{\omega_j}^2\omega_{j}^{[n]},
\end{equation*}
and since $\omega_j-\omega_0=\mathcal{O}(r^{-1})$, we can find a constant $C_4=C_4(\omega_0)>0$ such that $C_4>C_3$ uniformly of all large enough $r>0$. It follows that $\lambda_{1,j}\leq -\left(\frac{C_{\v} C_1 }{C_4}\right)r$.
\end{proof}

Now, we compute the operator norm of $P^{(r)}$. To simplify notations we denote $K^{(r)}=\left(-2 \mathbb{L}_{j,\v}+r\mathbb{H}_{j,\v}\right)^{-1}$. Observe that $\lambda_{1,j}^{-1}<0$ is the smallest eigenvalue of $K^{(r)}$. In particular all eigenvalues of $K^{(r)}$ are dominated by $|\lambda_{1,j}^{-1}|$.\\
By the proof of the fundamental elliptic estimates \cite[Theorem 11.1 in Chapter 5]{Taylor23} and the expansions $\omega_j-\omega_0=\mathcal{O}(r^{-1})$, $\theta_j-\omega_0=\mathcal{O}(r^{-(j+1)})$, we infer that
there exists a constant $C=C(\omega_0,k)$ independent of $r$ such that

\begin{equation*}
\parallel K^{(r)}(\dot{\varphi})\parallel_{W^{2,k+4}(\omega_0^n)}\leq r C(\omega_0,k)(\parallel \dot{\varphi}\parallel_{W^{2,k}(\omega_0^n)}+\parallel K^{(r)}(\dot{\varphi})\parallel_{L^2(\omega_0^n)}).
\end{equation*}
 We compute
\begin{align*}
\parallel P^{(r)}\parallel_{\rm op}=&\sup_{\dot{\chi},\dot{\varphi}}\frac{\parallel \dot{\chi}\parallel_{W^{2,k}(\omega_j^n)}+\parallel -rK^{(r)}(\Lambda_{j,\v}(\dot{\chi}))+K^{(r)}(\dot{\varphi})\parallel_{W^{2,k+4}(\omega_j^n)}}{\parallel\dot{\chi}\parallel_{W^{2,k}(\omega_j^n)}+\parallel \dot{\varphi}\parallel_{W^{2,k}(\omega_j^n)}}\\
\leq &\sup_{\dot{\chi},\dot{\varphi}}\frac{\parallel\dot{\chi}\parallel_{W^{2,k}(\omega_j^n)}+\parallel rK^{(r)}\Lambda_{j,\v}(\dot{\chi}) \parallel_{W^{2,k+4}(\omega_j^n)}+\parallel K^{(r)}(\dot{\varphi})\parallel_{W^{2,k+4}(\omega_j^n)}}{\parallel\dot{\chi}\parallel_{W^{2,k}(\omega_j^n)}+\parallel\dot{\varphi}\parallel_{W^{2,k}(\omega_j^n)}}\\
\leq& 1+ \sup_{\dot{\chi},\dot{\varphi}}\frac{ \parallel rK^{(r)}\Lambda_{j,\v}(\dot{\chi}) \parallel_{W^{2,k+4}(\omega_0^n)}+\parallel K^{(r)}(\dot{\varphi})\parallel_{W^{2,k+4}(\omega_0^n)}}{\parallel\dot{\chi}\parallel_{W^{2,k}(\omega_0^n)}+\parallel\dot{\varphi}\parallel_{W^{2,k}(\omega_0^n)}}\quad \text{(we use }\omega_j=\omega+\mathcal{O}(r^{-1}))\\
\leq & 1+rC(1+|\lambda_{1,j}|^{-1})\sup_{\dot{\chi},\dot{\varphi}}\frac{r\parallel \Lambda_{j,\v}(\dot{\chi})\parallel_{W^{2,k}(\omega_0^n)}+\parallel \dot{\varphi}\parallel_{W^{2,k}(\omega_0^n)}}{\parallel\dot{\chi}\parallel_{W^{2,k}(\omega_0^n)}+\parallel\dot{\varphi}\parallel_{W^{2,k}(\omega_0^n)}}\\
\leq& 1+C r^2(1+(Cr)^{-1})(1+\parallel \Lambda_{\omega_0,\v}\parallel_{\rm op}),
\end{align*}
where we use $\omega_j-\omega_0=\mathcal{O}(r^{-1})$ in the last inequality. Thus, we obtain
\[
\parallel P^{(r)}\parallel_{\rm op}< Cr^2
\]
 where $C$ is a uniform constant for $r$. 
 
 We claim that, for $\nu\gg 1$, the operator $T^{(r)}-D_0T^{(r)}$ is Lipschitz on the ball centered at $0\in B_1$ of radius $\delta^\prime:=r^{-\nu}$ with constant $1/(2 \parallel P^{(r)}\parallel_{\rm op})$. Indeed, applying the mean value theorem to the 4th order operator $N^{(r)}:=T^{(r)}-D_0T^{(r)}$ we get 
 \begin{equation*}
\parallel N^{(r)}(\chi_1,\varphi_1)-N^{(r)}(\chi_0,\varphi_0)\parallel_{W^{2,k}(\omega_j^{n})}\leq \underset{t\in [0,1]}{\sup}\parallel D_{(\chi_t,\varphi_t)}N^{(r)}\parallel_{\rm op} \parallel (\chi_1-\chi_0,\varphi_1-\varphi_0)\parallel_{\Omega^{1,1}\times W^{2,k+4}(\omega_j^{n})}
 \end{equation*}
 where $(\chi_t,\varphi_t)=(t\chi_1+(1-t)\chi_0,t\varphi_1+(1-t)\varphi_0)$. We then observe that
 $$  D_{(\chi_t,\varphi_t)} N^{(r)}=  D_{(\chi_t,\varphi_t)}T^{(r)}-D_0T^{(r)}, $$

 $$
 \parallel (\chi_t ,\varphi_t)\parallel_{\Omega^{1,1}\times W^{2,k}(\omega_j^{n}) }\leq \parallel (\chi_1 ,\varphi_1)\parallel_{\Omega^{1,1}\times W^{2,k}(\omega_j^{n})}+ \parallel (\chi_0 ,\varphi_0)\parallel_{\Omega^{1,1}\times W^{2,k}(\omega_j^{n})}
 $$
 and that
 \[
      \parallel D_{(\chi_t,\varphi_t)}N^{(r)}\parallel_{\rm op}=\parallel  
       D_{(\chi_t,\varphi_t)}T^{(r)}-D_0T^{(r)}
      \parallel_{\rm op}\leq B  \parallel (\chi_t ,\varphi_t)\parallel_{\Omega^{1,1}\times W^{2,k+4}(\omega_j^{n})}
 \]
where $B$ is a uniform constant. It follows that
 \[
 \begin{split}
&\parallel N^{(r)}(\chi_1,\varphi_1)-N^{(r)}(\chi_0,\varphi_0)\parallel_{W^{2,k+4}(\omega_j^{n})}   \\
&\leq B \left( \parallel (\chi_1 ,\varphi_1)\parallel_{\Omega^{1,1}\times W^{2,k+4}(\omega_j^{n})}+ \parallel (\chi_0 ,\varphi_0)\parallel_{\Omega^{1,1}\times W^{2,k+4}(\omega_j^{n})}\right) \parallel (\chi_1-\chi_0,\varphi_1-\varphi_0)\parallel_{\Omega^{1,1}\times W^{2,k+4}(\omega_j^{n})}
 \end{split}
 \]
Choosing a ball $B_1$ around 0 of radius $\delta':=r^{-\nu}$ for some $\nu\gg 1$ the operator $(N^{(r)})_{\mid B_1}$ is then Lipschitz with constant $1/(2 \parallel P^{(r)}\parallel_{\rm op})$. 
%\textcolor{red}{why exactly this constant?} \textcolor{magenta}{I think he said that by choising a ball $B_1$ small enough,  $B \left( \parallel (\chi_1 ,\varphi_1)\parallel_{\Omega^{1,1}\times W^{2,k+4}(\omega_j^{n})}+ \parallel (\chi_0 ,\varphi_0)\parallel_{\Omega^{1,1}\times W^{2,k+4}(\omega_j^{n})}\right)$ can be as small as we want, and hence equal to the desired norm.} \textcolor{blue}{Je suis d'accord!}\\
Taking  
 
 \[\delta:=\frac{\delta^\prime}{2\parallel P^{(r)}\parallel_{\rm op}}>\frac{1}{2C}r^{-\nu-1},\] the inverse function (Theorem \ref{Quant-inv-thm}), holds in a ball in $B_2$ of radius $\delta$ centered at $T^{(r)}(0)$ (recall that we identified the origin $0$ in $B_1$ with $(\theta_j,\omega_j)$) given by
$$T^{(r)}(0)=(\theta_j,0).$$  
%where the $0$ is the $\v(\mu_{\omega_{j,\varphi}})$-average of the second component of $T^{(r)}$.
Using $\theta_j-\omega_0=\mathcal{O}(r^{-j-1})$, we obtain
\begin{equation*}
\parallel T^{(r)}(0)-(\omega_0,0)\parallel_{W^{2,k}(\omega_j)}\leq \tilde{C}r^{-j-1}<\delta,
\end{equation*}
for $j\gg \nu$ big enough. It follows that, for all large enough $r>0$, there exist $(\chi,\varphi)\in U\subset B_1$ such that $T^{(r)}(\chi,\varphi)=(\omega_0, 0)$, i.e.
\begin{equation*}
\begin{cases} \theta_j+\chi=\omega_0,\\ 
\frac{1}{\v(\mu_{\omega_{j,\varphi}})}(\mathrm{Scal}_{\v}(\omega_{j,\varphi}))-\w(\mu_{\omega_{j,\varphi}})-r\left(\Lambda_{\omega_{j,\varphi},\v}(\theta_j+\chi)-\tilde{\v}(\mu_{\omega_{j,\varphi}})\right)=0.
\end{cases}
\end{equation*} 
We conclude that $\varphi\in C^\infty(X)^{\T}$ using a bootstrapping argument.

\section{Closeness}{\label{s:close}}

\subsection{Weighted Aubin-Mabuchi functionals}

Following \cite{AJL}, 
we let ${\mathbf{E}}_{\v}$ denote the functional on $\mathcal{K}(X, \omega_0)^\T$,  defined by
\begin{equation}{\label{def:e}}
    (D_{\varphi} \mathbf{E}_\v)(\dot \varphi) = \int_X \dot \varphi  \v(\mu_{\varphi})\omega_{\varphi}^{[n]}, \qquad {\mathbf{E}}_\v(0)=0,
\end{equation}  
where $D_\varphi$ denotes the differential of the Fr\'echet manifold $\mathcal{K}(X,\omega_0)^\T$ at the point $\varphi$, see e.g.  \cite[Lemma 2.14]{BW}.
We define the space of normalized potentials

\begin{equation*}
   \Kn:=\K \cap {\mathbf{E}}^{-1}(0),
\end{equation*}
where $\mathbf{E}:=\mathbf{E}_1$. Also we introduce 

 $${J}_{\v}(\varphi):= \int_X \varphi \v(\mu_{0})\omega_0^{[n]} - \mathbf{E}_\v(\varphi)$$ and 
\begin{equation}{\label{I:v:fonc}}
    \begin{split}
    I_\v(\varphi):=\int_X \varphi \v(\mu_0)\omega_0^{[n]}- \int_X \varphi\v(\mu_\varphi) \omega_\varphi^{[n]}.
    \end{split}
\end{equation}

\begin{lemma}{\label{l:au:ma}}
The weighted Aubin-Mabuchi functionals satisfy the following relation

\begin{equation*}
 0\leq  \frac{1}{n+1} I_\v(\varphi) \leq J_\v(\varphi) \leq \frac{n}{n+1} I_\v(\varphi).
\end{equation*}

\end{lemma}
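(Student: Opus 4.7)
The plan is to adapt the classical Aubin argument to the weighted setting by restricting to the affine path $\varphi_t:=t\varphi$, $t\in[0,1]$, along which $\omega_{\varphi_t}=(1-t)\omega_0+t\omega_\varphi$ remains K\"ahler by convexity. The core tool is the variational identity
\[
\frac{d}{dt}\Big[\v(\mu_{t\varphi})\omega_{t\varphi}^{[n]}\Big]=\Delta_{t\varphi,\v}\varphi\cdot \v(\mu_{t\varphi})\omega_{t\varphi}^{[n]},
\]
which I would obtain by combining $\tfrac{d}{dt}\omega_{t\varphi}^{[n]}=dd^c\varphi\wedge\omega_{t\varphi}^{[n-1]}=\Delta_{t\varphi}\varphi\cdot\omega_{t\varphi}^{[n]}$ with the pointwise identity $\tfrac{d}{dt}\v(\mu_{t\varphi})=g_{t\varphi}(d\varphi,d\log\v(\mu_{t\varphi}))\cdot \v(\mu_{t\varphi})$. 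The latter follows from $\mu_{t\varphi}=\mu_0+td^c\varphi$ and the moment-map relation $J\xi_a=-\mathrm{grad}_{t\varphi}\mu_{t\varphi}^{a}$, which rewrites $(d^c\varphi)(\xi_a)=g_{t\varphi}(d\varphi,d\mu_{t\varphi}^{a})$ (cf.\ \cite[Lemma 4]{Lah19}).

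Setting $h(t):=\int_X\varphi\,\v(\mu_{t\varphi})\omega_{t\varphi}^{[n]}$ and invoking the $\v(\mu_{t\varphi})\omega_{t\varphi}^{[n]}$-self-adjointness of $\Delta_{t\varphi,\v}$ (cf.\ \cite[App.\ A]{DJL}), one gets
\[
h'(t)=-E(t),\qquad E(t):=\int_X|d\varphi|^2_{t\varphi}\,\v(\mu_{t\varphi})\omega_{t\varphi}^{[n]}\geq 0.
\]
Since $\mathbf{I}_\v(\varphi)=\int_0^1 h(s)ds$, unwinding the definitions yields the representations
\[
I_\v(\varphi)=\int_0^1 E(t)\,dt,\qquad J_\v(\varphi)=\int_0^1(1-t)E(t)\,dt,
\]
from which $I_\v,J_\v\geq 0$ is immediate. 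Introducing the reverse functional $\tilde J_\v(\varphi):=\int_0^1 tE(t)dt=I_\v-J_\v$ (which is the $J$-functional based at $\omega_\varphi$ with potential $-\varphi$), the target inequalities $\tfrac{1}{n+1}I_\v\leq J_\v\leq\tfrac{n}{n+1}I_\v$ are equivalent to $\tfrac{J_\v}{n}\leq\tilde J_\v\leq nJ_\v$.

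To establish these two inequalities, I would expand $\omega_{t\varphi}^{[n-1]}=\sum_{k=0}^{n-1}(1-t)^k t^{n-1-k}\,\omega_0^{[k]}\wedge\omega_\varphi^{[n-1-k]}$ in the Bernstein basis, writing $E(t)=\sum_{k=0}^{n-1}(1-t)^k t^{n-1-k}A_k(t)$ with
\[
A_k(t):=\int_X d\varphi\wedge d^c\varphi\wedge\omega_0^{[k]}\wedge\omega_\varphi^{[n-1-k]}\v(\mu_{t\varphi})\geq 0.
\]
In the unweighted case $\v\equiv 1$, the $A_k$ are $t$-independent and the integrals $\int_0^1(1-t)^{k+1}t^{n-1-k}dt$ versus $\int_0^1(1-t)^k t^{n-k}dt$ reduce to Beta functions whose ratios $\tfrac{k+1}{n-k}$ lie in $[\tfrac{1}{n},n]$ for $k\in\{0,\dots,n-1\}$, which yields the result. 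The main obstacle I anticipate is the $t$-dependence of $A_k(t)$ through the weight $\v(\mu_{t\varphi})$: one cannot pull $A_k$ out of the $t$-integral, so the classical Beta-function computation must be performed pointwise in $x\in X$ before integration. Concretely, one writes the integrand as a non-negative Bernstein polynomial in $t$ with pointwise-in-$x$ coefficients and uses that multiplying by the positive factor $\v(\mu_{t\varphi}(x))$ preserves the one-sided Beta-integral comparison for each fixed $x$; integrating over $X$ then transfers the estimate to the global functionals.
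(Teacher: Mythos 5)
Your setup through the representations $I_\v=\int_0^1 E(t)\,dt$, $J_\v=\int_0^1(1-t)E(t)\,dt$, the equivalent formulation via $I_\v-J_\v=\int_0^1 tE(t)\,dt$, and the Bernstein expansion of $\omega_{t\varphi}^{[n-1]}$ in $t$ is all correct. The last step, however, has a genuine gap. You assert that ``multiplying by the positive factor $\v(\mu_{t\varphi}(x))$ preserves the one-sided Beta-integral comparison for each fixed $x$''. This fails precisely because the factor depends on $t$. Take the simplest weighted case, $\v$ affine on $P$, so that $\v(\mu_{t\varphi}(x))=(1-t)\v(\mu_0(x))+t\,\v(\mu_\varphi(x))$: the relevant weighted Beta ratios become
\[
\frac{\int_0^1(1-t)^{k+1}t^{n-k}\,dt}{\int_0^1(1-t)^{k+2}t^{n-1-k}\,dt}=\frac{n-k}{k+2}
\qquad\text{and}\qquad
\frac{\int_0^1(1-t)^{k}t^{n-k+1}\,dt}{\int_0^1(1-t)^{k+1}t^{n-k}\,dt}=\frac{n-k+1}{k+1},
\]
which equal $\tfrac{1}{n+1}$ at $k=n-1$ and $n+1$ at $k=0$, both strictly outside the interval $[\tfrac1n,n]$ that your argument requires. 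For general $a=\v(\mu_0(x))$, $b=\v(\mu_\varphi(x))$ the ratio is a weighted mediant of these two extremes, so it lands on the bad side whenever $b/a$ is far enough from $1$. The term-by-term comparison therefore does not survive the $t$-dependent weight, and the pointwise-in-$x$ reduction you sketch does not close the argument.

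For the record, the paper's proof is of an entirely different nature: for $\v=\mathrm{p}$ a product of positive affine functions on $P$ it identifies the weighted $(I_\mathrm{p}^X,J_\mathrm{p}^X)$ with unweighted Aubin functionals on a semisimple principal fibration $Y\to B$ with fiber $X$ (via \eqref{vol:comp}), applies the classical Aubin inequality on $Y$, and then extends to general $\v$ by linearity in the weight together with Bernstein polynomial approximation of $\v$ on $P$ (rather than your expansion in the path parameter $t$). The $t$-dependence of $\v(\mu_{t\varphi})$ is exactly what that fibration construction is designed to absorb; a direct pointwise Beta-function estimate is not enough.
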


\begin{proof}
The proof is based on  Bernstein polynomial approximation and the semisimple principal construction of \cite{AJL}. \\
 Let $Y$ be a semi-simple principal $(X,\T)$-fibration over a basis $B^m$ such that the volume form $\tilde{\omega}_0^{[n+m]}$ of a compatible K\"ahler metric $\tilde{\omega}_0$ on $Y$ associate to a K\"ahler metric $\omega_0$ on $X$ (see \cite[Definition 5.3]{AJL}) restricts to the volume

\begin{equation}{\label{vol:comp}}
   \tilde{\omega}_0^{[m+n]}|_{X}= \mathrm{p}(\mu_{\omega_0})\omega_0^{[n]},
\end{equation}
on each fibers $X$, where $\mathrm{p}(x)$ is a weight of the form $\mathrm{p}(x):=\prod_{a=1}^{k}(\langle p_a,x\rangle+c_a)^{m_a}$. We observe that such fibration always exists as explained in \cite[Section 5]{AJL}. \\
Let $\tilde{\omega}_0$ be the compatible K\"ahler metric on $Y$ associated to $\omega_0$ on $X$. For any $\tilde{\varphi} \in \mathcal{K}(Y,\tilde{\omega}_0)^\T$, it is known that \cite[Proposition 4.2.1]{Gau}   we have 

\begin{equation*}
 0\leq  \frac{1}{n+1} I_1^Y(\tilde{\varphi}) \leq J^Y_1(\tilde{\varphi}) \leq \frac{n}{n+1} I_1^Y(\tilde{\varphi})
\end{equation*}
where $I_1^Y, J^Y_1$ are the Aubin-Mabuchi energies on $Y$.

On the other hands, by (37) we have that for any $\varphi \in \mathcal{K}(X,\omega_0)^\T \subset \mathcal{K}(Y,\tilde{\omega}_0)^\T$ 
\begin{equation*}
    I_1^Y(\varphi)= \mathrm{Vol}(B)I^X_{\mathrm{p}}(\varphi) \quad \text{ and }  J_1^Y(\varphi)= \mathrm{Vol}(B)J^X_{\mathrm{p}}(\varphi).
\end{equation*}
Then we get for any $\varphi \in \mathcal{K}(X,\omega_0)^\T \subset \mathcal{K}(Y,\tilde{\omega}_0)^\T$ 

\begin{equation*}
 0\leq  \frac{1}{n+1} I_\mathrm{p}^X(\varphi) \leq J^X_\mathrm{p}(\varphi) \leq \frac{n}{n+1} I_\mathrm{p}^X(\varphi),
\end{equation*}
where $\mathrm{p}$ is the polynomial weight associated to the fibration. Now, using the Bernstein polynomial approximation (see e.g. \cite{HS33}), we approximate \(\v\) by a sequence of polynomials $(P_k)_{k\geq1}$, where each $P_k$ is a linear combination of weights of the form of \(\mathrm{p}\) with positive coefficients. By linearity relative to the weight function we get
  \[
        0 \leq \frac{1}{n+1} I_{P_k}^X(\varphi) \leq J_{P_k}^X(\varphi) \leq \frac{n}{n+1} I_{P_k}^X(\varphi).
        \]    
The conclusions follows by passing to the limit in the above inequalities.        
\end{proof}
Let $(\varphi_t)_{t\geq0}$ be a path in $\mathcal{K}(X,\omega_0)^\T$ satisfying $\varphi_0=\varphi$ and $\frac{d \varphi_t}{dt}|_{t=0}=\dot{\varphi}$. A direct computation shows that 

\begin{eqnarray*}
    \frac{d}{dt}\left(\v(\mu_{\varphi_t})\omega_{\varphi_t}^{[n]}\right)\big|_{t=0}
    &=& \left(\sum_{a=1}^r\v_{, a}(\mu_{\varphi})  \frac{d}{dt}(\mu^a_{\varphi_t}) + \Delta_{\varphi_t}\dot{\varphi_t}\,\right) \omega_{\varphi_t}^{[n]}\big|_{t=0}\\
    &=& \left(\sum_{a=1}^r\v_{, a}(\mu_{\varphi})  \frac{d}{dt}(\mu_0^a+d^c\varphi_t(\xi_a)) + \Delta_{\varphi_t}\dot{\varphi_t}\,\right) \omega_{\varphi_t}^{[n]}\big|_{t=0}\\
    &=& \v(\mu_{\varphi})\Delta_{\varphi,\v}(\dot{\varphi})\, \omega_{\varphi}^{[n]}.
\end{eqnarray*}

By definition of $I_\v$ and $J_\v$ 
\begin{equation*}
    I_\v(\varphi) - J_\v(\varphi)= \mathbf{E}_\v(\varphi) - \int_X \varphi\v(\mu_\varphi) \omega_\varphi^{[n]}.
\end{equation*}
Thus, the definition of $\mathbf{E}$ \eqref{def:e} leads to

\begin{eqnarray}{\label{i-j}}
 \nonumber   D_\varphi(I_\v-J_\v)(\dot{\varphi})&=&  \int_X \dot \varphi  \v(\mu_{\varphi})\omega_{\varphi}^{[n]}- \frac{d}{dt} \left( \int_X \varphi\v(\mu_\varphi) \omega_\varphi^{[n]} \right)\big|_{t=0}\\
      &=& -\int_X \varphi \Delta_{\varphi,\v}(\dot{\varphi})\v(\mu_\varphi)\omega_\varphi^{[n]}.
\end{eqnarray}
Also, for any $(1,1)$-form $\theta$, we define $\J^{\theta}_{\v,\w}: \mathcal{K}(X,\omega_0)^\T \longrightarrow \R$  by 
\begin{equation}{\label{J-theta-fonc}}
(D_\varphi\J^{\theta}_{\v,\w})(\dot{\varphi}):=\int_X \dot{\varphi}\left(\Lambda_{\varphi,\v}(\theta) -\frac{\w(\mu_\varphi)}{\v(\mu_\varphi)}\right)\v(\mu_\varphi)\omega_{\varphi}^{[n]} ,\quad \J^{\theta}_{\v,\w}(0)=0,
\end{equation}
which is well-defined by \cite[Lemma 4]{Lah19} and \cite[Lemma 2.14]{BW}. We deduce the following key Lemma:

\begin{lemma}{\label{l:j:coercive}}
The functional $\J^{\omega_0}_{\v,\v\tilde{\v}}(\varphi) : \Kn \longrightarrow \R $ is coercive, i.e. there exists positive constants $A$ and $B$ such that 

\begin{equation*}
    \J^{\omega_0}_{\v,\v\tilde{\v}}(\varphi)  \geq A d_1(0,\varphi) -B,
\end{equation*}
where $d_1$ is the Darvas distance on the space of relative potential $\mathcal{K}(X,\omega_0)^\T$ \cite{Dar}, and where $\tilde{\v}$  is defined in \eqref{eq:tilde-v}.
\end{lemma}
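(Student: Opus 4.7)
The strategy is to identify the functional $\J^{\omega_0}_{\v,\v\tilde{\v}}$ with the weighted Aubin energy $I_\v - J_\v$, compare it with its unweighted counterpart using that $\v$ is bounded below by a positive constant, and then invoke Darvas's coercivity of $J_1$ relative to $d_1$ on $\Kn$.

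First, I would show $\J^{\omega_0}_{\v,\v\tilde{\v}} \equiv I_\v - J_\v$. Writing $\omega_0 = \omega_\varphi - dd^c\varphi$ and using the linearity of the weighted trace together with $\Lambda_{\varphi,\v}(\omega_\varphi) = \tilde{\v}(\mu_\varphi)$ (noted just after \eqref{eq:tilde-v}), one obtains
\begin{equation*}
\Lambda_{\varphi,\v}(\omega_0) - \tilde{\v}(\mu_\varphi) = -\Delta_{\varphi,\v}\varphi.
\end{equation*}
Since $\w/\v = \tilde{\v}$ for the weight choice $\w = \v\tilde{\v}$, the definition \eqref{J-theta-fonc} yields
\begin{equation*}
(D_\varphi \J^{\omega_0}_{\v,\v\tilde{\v}})(\dot{\varphi}) = -\int_X \dot{\varphi}\,\Delta_{\varphi,\v}\varphi\,\v(\mu_\varphi)\omega_\varphi^{[n]}.
\end{equation*}
By the self-adjointness identity \eqref{def d*}, this equals $\int_X g_\varphi(d\varphi,d\dot{\varphi})\v(\mu_\varphi)\omega_\varphi^{[n]}$, which is symmetric in $\varphi$ and $\dot{\varphi}$; swapping their roles and reapplying \eqref{def d*} rewrites it as $-\int_X \varphi\,\Delta_{\varphi,\v}\dot{\varphi}\,\v(\mu_\varphi)\omega_\varphi^{[n]} = D_\varphi(I_\v - J_\v)(\dot{\varphi})$ by \eqref{i-j}. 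Since both functionals vanish at $\varphi = 0$, the identification follows.

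Next, along the linear path $\varphi_s = s\varphi$ the same computation shows $\tfrac{d}{ds}(I_\v - J_\v)(s\varphi) = s\int_X|d\varphi|^2_{\varphi_s}\v(\mu_{\varphi_s})\omega_{\varphi_s}^{[n]}$, hence
\begin{equation*}
\J^{\omega_0}_{\v,\v\tilde{\v}}(\varphi) = \int_0^1 s \int_X|d\varphi|^2_{\varphi_s}\v(\mu_{\varphi_s})\omega_{\varphi_s}^{[n]}\, ds \geq c_0(I_1 - J_1)(\varphi),
\end{equation*}
where $c_0 := \min_P \v > 0$ exists by compactness of $P$ and positivity of $\v$. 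Applying Lemma \ref{l:au:ma} with the trivial weight (i.e.\ classical Aubin's inequality) gives $I_1 - J_1 \geq \tfrac{1}{n} J_1$, so
\begin{equation*}
\J^{\omega_0}_{\v,\v\tilde{\v}}(\varphi) \geq \tfrac{c_0}{n}\,J_1(\varphi).
\end{equation*}

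Finally, on $\Kn$ one has $\I(\varphi) = 0$, so by Darvas \cite{Dar} the functional $J_1(\varphi)$ satisfies $J_1(\varphi) \geq A' d_1(0,\varphi) - B'$ for positive constants $A',B'$, and the desired estimate $\J^{\omega_0}_{\v,\v\tilde{\v}}(\varphi) \geq A\,d_1(0,\varphi) - B$ follows. The main obstacle is the clean identification in the first step, which hinges on the two algebraic facts $\Lambda_{\varphi,\v}(\omega_\varphi) = \tilde{\v}(\mu_\varphi)$ and \eqref{i-j}; once these are in place, the rest is a routine comparison exploiting the positive lower bound of $\v$ on $P$ and Darvas's equivalence between $J_1$ and $d_1$ on $\Kn$.
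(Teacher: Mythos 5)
Your proof is correct and follows the same overall structure as the paper's: identify $\J^{\omega_0}_{\v,\v\tilde{\v}}$ with $I_\v - J_\v$, apply the weighted Aubin inequality of Lemma \ref{l:au:ma}, reduce to the unweighted functional $J_1$ via a positive lower bound on $\v$, and invoke the Darvas comparison between $J_1$ and $d_1$ on $\Kn$. The two places where you deviate are both slightly cleaner. First, for the identification, the paper writes $\J^{\omega_0}_{\v,\v\tilde\v}=\J^{\omega_0}_\v-\I_{\v\tilde\v}$ and then proves $D_\varphi \J^{\omega_0}_\v = D_\varphi \I_{\v\tilde\v} + D_\varphi(I_\v - J_\v)$ through a chain of moment-map manipulations; you instead linearize the weighted trace to obtain $\Lambda_{\varphi,\v}(\omega_0)-\tilde\v(\mu_\varphi)=-\Delta_{\varphi,\v}\varphi$ directly and conclude by applying the self-adjointness \eqref{def d*} twice — this is shorter and makes the symmetry visible. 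Second, the paper reduces to the unweighted case via $J_\v\geq (\inf_P\v)\, J_1$ from \cite[Lemma 6.4]{AJL}; you instead write $I_\v - J_\v$ as a double integral along the linear path $s\mapsto s\varphi$ and bound $\v(\mu_{s\varphi})\geq c_0$ in the integrand, bypassing that external reference. Both choices are sound and equivalent in strength, so the net result is the same coercivity estimate.
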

It is worth noticing that the above Lemma is true only for the particular choice of $\theta=\omega_0$ and the weight $\w=\v\tilde{\v}$.

\begin{proof}

For any $(1,1)$-form $\rho$ with moment map, let $\J_{\v}^{\rho} : \mathcal{K}(X,\omega_0)^\T \longrightarrow \R$ with variation

\begin{equation}{\label{j-v}}
   (D_\varphi\J^{\rho}_{\v})(\dot{\varphi}):=\int_X \dot{\varphi}\Lambda_{\varphi,\v}(\rho)\v(\mu_\varphi)\omega_{\varphi}^{[n]} ,\quad \J^{\rho}_{\v}(0)=0,
\end{equation}
which is well-defined thanks to \cite[Lemma 4]{Lah19}. Hence $\J^{\omega_0}_{\v,\v\tilde{\v}}= \J^{\omega_0}_{\v} - \mathbf{E}_{\v\tilde{\v}}$. The last identity comes from the fact that $D_\varphi (\J^{\omega_0}_{\v,\v\tilde{\v}}) =D_\varphi (\J^{\omega_0}_{\v} - \mathbf{E}_{\v\tilde{\v}})$ and that all the functionals are normalized to be $0$ at $0$.\\
Using the definition of $\Lambda_{\varphi,\v}$, of $\Delta_{\varphi, \v}$ and the fact that $\mu_\varphi= \mu_0 +d^c \varphi$, we obtain 
  \[
   \begin{split}
       D_\varphi\J_{\v}^{\omega_0}(\dot\varphi)=&\int_X\dot{\varphi}\left(     \Lambda_\varphi (\omega_0)\v(\mu_\varphi)+\langle d\log \v(\mu_\varphi),\mu_0\rangle \v(\mu_\varphi) \right) \omega_\varphi^{[n]}\\
       =&\int_X\dot{\varphi}\left( (n-\Delta_\varphi \varphi)\v(\mu_\varphi)+\langle d\log\v(\mu_\varphi),\mu_\varphi-d^c\varphi\rangle)\v(\mu_\varphi) \right)\omega_\varphi^{[n]}\\
       =&\int_X\dot{\varphi}(n+\langle d\log\v(\mu_\varphi),\mu_\varphi\rangle)\v(\mu_\varphi)\omega_\varphi^{[n]} -\int_X\dot{\varphi}\Delta_{\varphi, \v}(\varphi)\v(\mu_\varphi)\omega_\varphi^{[n]}\\
       =&D_\varphi\mathbf{E}_{\v\tilde{\v}}(\dot\varphi)-\int_X\varphi\Delta_{\varphi,\v}(\dot\varphi)\v(\mu_\varphi)\omega_\varphi^{[n]}\\
       =&D_\varphi\mathbf{E}_{\v\tilde{\v}}(\dot\varphi)+ D_\varphi(I_\v-J_\v)(\dot{\varphi}),
   \end{split}
   \]
   where the last equality follows from \eqref{i-j}. The normalization $\J_\v^{\omega_0}(0)=\mathbf{E}_{\v\tilde{\v}}(0)=I_\v(0)=J_\v(0)=0$ yields
  
 \begin{equation*}
 \begin{split}
  \J_{\v,\v\tilde{\v}}^{\omega_0}(\varphi)=\J_\v^{\omega_0}(\varphi)-\mathbf{E}_{\v\tilde{\v}}(\varphi) = I_\v(\varphi)-J_\v(\varphi)\geq \frac{1}{n}J_\v(\varphi),
 \end{split}
\end{equation*} 
where for the last inequality we use Lemma \ref{l:au:ma}. Moreover by \cite[Lemma 6.4]{AJL}, $J_\v(\varphi) \geq \inf \v\, J_1(\varphi)$. We then conclude since $J_1$ and $d_1$ on $\Kn$ are comparable, as proved in \cite[Proposition 5.5]{DR}.
\end{proof}

We end this section with a lemma which will be very useful in the following:

\begin{lemma}\label{J and d1}
 For any $(1,1)$-form $\rho$ with moment map, any $\v>0$ and $\w$, we have that
 
\begin{equation*}
    |\J^{\rho}_{\v,\w}(\varphi)| \leq A d_1(0, \varphi) + B,
\end{equation*}
where $A$ and $B$ are positive constants.

\end{lemma}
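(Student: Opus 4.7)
\smallskip
\noindent\textbf{Proof proposal.} The plan is to split $\J^{\rho}_{\v,\w} = \J^{\rho}_{\v} - \I_\w$, where $\J^{\rho}_{\v}$ is the auxiliary functional introduced in \eqref{j-v} and $\I_\w$ is defined by
\[
(D_\varphi \I_\w)(\dot\varphi) = \int_X \dot\varphi\,\w(\mu_\varphi)\omega_\varphi^{[n]}, \qquad \I_\w(0) = 0,
\]
and to bound each piece separately in terms of $d_1$.

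For the $\I_\w$ part, since $\w$ is smooth on the compact polytope $P$, we have
\[
|D_\varphi \I_\w(\dot\varphi)| \leq \|\w\|_{C^0(P)} \int_X |\dot\varphi|\,\omega_\varphi^{[n]} = \frac{\|\w\|_{C^0(P)}}{n!}\,\|\dot\varphi\|_\varphi,
\]
so $\I_\w$ is $\frac{\|\w\|_{C^0(P)}}{n!}$-Lipschitz along any smooth path in $\mathcal{K}(X,\omega_0)^\T$; integrating from $0$ to $\varphi$ and infimising over paths yields $|\I_\w(\varphi)| \leq \frac{\|\w\|_{C^0(P)}}{n!}\,d_1(0,\varphi)$, which is already of the desired form.

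The $\J^{\rho}_{\v}$ part is the real content, since $\Lambda_{\varphi,\v}(\rho)$ is not uniformly bounded in $\varphi$ and no direct pointwise estimate will do. The plan is to repeat the Bernstein--fibration reduction used in the proof of Lemma~\ref{l:au:ma}. By Bernstein polynomial approximation on $P$ and linearity of $\J^\rho_\v$ in the weight, it suffices to treat $\v = \mathrm{p}$ of the form $\mathrm{p}(x) = \prod_a (\langle p_a, x\rangle + c_a)^{m_a}$. For such $\mathrm{p}$, take a semisimple principal $(X,\T)$-fibration $\pi : Y \to B$ with fibre $X$ such that any compatible K\"ahler metric satisfies $\tilde\omega_{\tilde\varphi}^{[n+m]}|_X = \mathrm{p}(\mu_\varphi)\omega_\varphi^{[n]}$, and lift $\rho$ to a closed $\T$-invariant $(1,1)$-form $\tilde\rho$ on $Y$ as in \cite{AJL}. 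Unwinding the construction identifies $\vol(B)\,\J^{\rho}_{\mathrm{p}}(\varphi)$ with the classical unweighted Aubin--Mabuchi type functional $\J^{\tilde\rho}_{1}(\tilde\varphi)$ on $Y$ evaluated at the lifted potential.

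On $Y$, writing $\tilde\rho = \tilde\rho_+ - \tilde\rho_-$ as a difference of two $\T$-invariant K\"ahler forms reduces matters to the standard pluripotential estimate $|\J^{\theta}(\tilde\varphi)| \leq A\,d_1^Y(0,\tilde\varphi) + B$ for K\"ahler $\theta$, which follows from the equivalence $J \sim d_1$ in Mabuchi--Darvas theory. Combined with the direct comparison $d_1^Y(0,\tilde\varphi) \leq C\,d_1(0,\varphi)$ obtained by lifting $d_1$-paths through $\pi$, this yields the bound for polynomial $\v = \mathrm{p}$; passing to the limit in the Bernstein approximation gives the estimate for arbitrary smooth $\v > 0$. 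The main technical obstacle is the precise identification $\vol(B)\,\J^\rho_\mathrm{p}(\varphi) = \J^{\tilde\rho}_1(\tilde\varphi)$ and the uniform control of constants under the Bernstein limit; both are essentially bookkeeping exercises within the framework of \cite{AJL, Jub23}.
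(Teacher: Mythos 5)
Your decomposition $\J^{\rho}_{\v,\w}=\J^{\rho}_{\v}-\I_\w$ is exactly the one the paper uses, and your treatment of the $\I_\w$ piece is correct and arguably cleaner than the paper's: since $\w(\mu_\varphi)$ is uniformly bounded, the variation is pointwise dominated by $\|\w\|_{C^0}\,\|\dot\varphi\|_\varphi/n!$, giving a Lipschitz bound with respect to $d_1$ directly, whereas the paper simply cites estimates from \cite{AJL} combined with \cite[Prop.~5.4--5.5]{DR}.

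For $\J^{\rho}_{\v}$, however, you have taken a genuinely different — and more fragile — route than the paper. The paper again cites \cite[Lemma 6.5, eq.~(55)]{AJL}, which bound $\J^{\rho}_{\v}(\varphi)$ directly on $X$ by quantities of the form $J(\varphi)+\|\varphi\|_{L^1}$, using only that $\rho$ is pinched between $\pm C\omega_0$ and then concluding via \cite{DR}. You instead propose a Bernstein-plus-fibration reduction modelled on the proof of Lemma~\ref{l:au:ma}. The difficulty is that the step you label ``bookkeeping'' — the identity $\vol(B)\,\J^\rho_\mathrm{p}(\varphi)=\J^{\tilde\rho}_1(\tilde\varphi)$ — is not analogous to anything used in Lemma~\ref{l:au:ma}. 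There the reduction relies only on the \emph{volume form} identity $\tilde\omega_{\tilde\varphi}^{[n+m]}|_X=\mathrm{p}(\mu_\varphi)\omega_\varphi^{[n]}$, which suffices because $I$ and $J$ are pure Aubin--Mabuchi energies involving no traces. Here you also need the \emph{pointwise trace} identity $\Lambda_{\tilde\omega_{\tilde\varphi}}(\tilde\rho)\big|_{Y_b}=\Lambda_{\omega_\varphi,\mathrm{p}}(\rho)$, which requires first constructing a closed $\T$-invariant lift $\tilde\rho$ on $Y$ (it necessarily carries a base component $\sum_a\langle\mu_\rho,p_a\rangle\,\pi_B^*\omega_{B_a}$, which is not part of the naive pullback), and then proving a decomposition lemma for $\Lambda_{\tilde\omega}$ of the type of Lemma~\ref{l:operators-tilde}(iii), specialised to $\tilde\rho$ and including the cross-terms $\langle d\mu_\rho\wedge\theta\rangle$. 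Neither the construction of $\tilde\rho$ nor that trace decomposition appears in the paper or in \cite{AJL} in the form you would need, and establishing them is essentially as much work as proving a weighted version of Lemma~\ref{l:Y-Scal} for an arbitrary $(1,1)$-form; it is not a formal consequence of \eqref{vol:comp}. Until that identity is actually proved, the argument for the $\J^\rho_\v$ piece has a real gap, and the paper's direct route via the intrinsic $X$-side estimates in \cite{AJL,DR} is the shorter and safer one.
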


\begin{proof}
By definition of $\J^{\rho}_{\v,\w}$ \eqref{J-theta-fonc}, we have the following decomposition

\begin{equation*}
    \J^{\rho}_{\v,\w}= \J^{\rho}_{\v} - \mathbf{E}_\w,
\end{equation*}
where $\J^{\rho}_{\v}$ is defined in \eqref{j-v} and $\mathbf{E}_\w$ in \eqref{I:v:fonc}. By \cite[Lemma 6.5 and eq. (55)]{AJL}, both $\J^{\rho}_{\v}$ and $\mathbf{E}_\w$ are controlled by $J(\varphi)$ and $\| \varphi \|_{L^1(\omega_0)}$. Hence, we conclude by \cite[Propositions 5.4 and 5.5]{DR}.
\end{proof}

\subsection{Closeness for time less than one}
In this section, we show that $S_{t_1}$ (defined in \eqref{set-solution}) is closed for any $t_1<1$ under the assumption that the weighted Mabuchi energy $\M_{\v,\w}$ is bounded from below. 

The weighted Mabuchi energy was introduced in \cite{Lah19}  and is defined by its variation 

\begin{equation*}
    D_{\varphi}\left(\mathbf{M}_{\v,\w}\right)(\dot{\varphi})=-\int_X \dot{\varphi}\big(\mathrm{Scal}_\v(\omega_\varphi)-\w(\mu_\varphi)\big)\v(\mu_\varphi)\omega_\varphi^{[n]}, \quad \mathbf{M}_{\v,\w}(0)=0.
\end{equation*}
The functional $\M_{\v, \w}$ admits a weighted Chen-Tian decomposition \cite[Theorem 5]{Lah19}:
\begin{align}
\begin{split}\label{Chen-Tian}
\M_{\v,\w}(\varphi)=\Ent_\v(\varphi)  -\J^{2\mathrm{Ric}(\omega_0)}_{\v,\w}(\varphi) - \int_X \log\v(\mu_0)\v(\mu_0)\omega_0^{[n]},
\end{split}
\end{align}
where $\Ent_\v(\varphi):= \int_X \log\left( \frac{\v(\mu_\varphi)\omega_\varphi^n}{\omega_0^n}\right)\v(\mu_\varphi)\omega_\varphi^{[n]}$ and $\J_{\v, \w}^{2\mathrm{Ric}(\omega_0)}$ is defined in \eqref{J-theta-fonc}.

\begin{prop}{\label{p:closed:t1}}
Suppose that the weighted Mabuchi functional $\M_{\v,\w}$ is bounded from below on $\mathcal{K}(X,\omega_0)^{\mathbb{T}}$, then $S_{t_1}$ is closed for any $t_1<1$.
\end{prop}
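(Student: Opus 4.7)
The plan is to take a sequence $\{t_k\} \subset S_{t_1}$ with $t_k \to t_\infty \in (0, t_1]$ and associated solutions $\varphi_k := \varphi_{t_k} \in \Kn$, and to show that a subsequence converges in $C^\infty(X)^{\T}$ to a solution of the continuity path at $t_\infty$. The crucial quantitative input is that $t_\infty \leq t_1 < 1$, so that $r_k := (1-t_k)/t_k$ lies in a compact subinterval of $(0, \infty)$; this bounded range is what distinguishes the closedness for $t < 1$ (treated here) from the extension to $t = 1$.

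The first task is to establish a uniform bound on the weighted entropy $\Ent_{\v}(\varphi_k)$. The Chen-Tian decomposition \eqref{Chen-Tian} combined with the hypothesis $\M_{\v,\w} \ge -M_0$ reduces this to upper bounds on $\M_{\v,\w}(\varphi_k)$ and on $\J^{2\mathrm{Ric}(\omega_0)}_{\v,\w}(\varphi_k)$. For the latter, Lemma \ref{J and d1} allows us to replace the task by bounding $d_1(0, \varphi_k)$, which by the coercivity statement of Lemma \ref{l:j:coercive} further reduces to bounding $\J^{\omega_0}_{\v,\v\tilde{\v}}(\varphi_k)$ from above. Both remaining bounds can be obtained from a monotonicity argument along the continuity path: one forms the natural Lagrangian $\mathcal{G}_t := \M_{\v,\w} + \frac{1-t}{t}\J^{\omega_0}_{\v,\v\tilde{\v}}$, whose critical points are solutions of \eqref{continuity-path-weithed}, and verifies that $\mathcal{G}_t(\varphi_t)$ is non-increasing in $t$ by using the non-negativity of $\J^{\omega_0}_{\v,\v\tilde{\v}}$ on $\Kn$ established in the proof of Lemma \ref{l:j:coercive}. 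Integrating from a small $t_0 \in S_{t_1}$ (which exists by Theorem \ref{Thm-openess}) and using $\frac{1-t_k}{t_k} \ge \frac{1-t_1}{t_1} > 0$ produces both upper bounds simultaneously.

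With the entropy bound in hand, I would recast \eqref{continuity-path-weithed} at $t = t_k$, namely
\[
\mathrm{Scal}_{\v}(\omega_{\varphi_k}) = \w(\mu_{\varphi_k}) + r_k\, \v(\mu_{\varphi_k})\bigl(\Lambda_{\varphi_k, \v}(\omega_0) - \tilde{\v}(\mu_{\varphi_k})\bigr),
\]
as a twisted weighted cscK system of the form \eqref{wsystem1}, with twist form $\beta_{\varphi_k}$ essentially proportional to $\v(\mu_{\varphi_k})\omega_0$ and uniformly comparable to $\omega_0$ because $r_k$ is bounded away from $0$ and $\infty$. The a priori estimates from \cite{DJL}, supplemented if needed by the $W^{1,2p}$-estimates of Section \ref{s:esti}, then provide uniform $C^{k,\alpha}$-bounds on $\varphi_k$ for every $k$. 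A diagonal Arzel\`a-Ascoli argument extracts a subsequence converging in $C^\infty$ to a $\T$-invariant limit $\varphi_\infty \in \Kn$, and passing to the limit in the equation shows that $\varphi_\infty$ solves the continuity path at $t_\infty$. Hence $t_\infty \in S_{t_1}$, proving closedness.

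\textbf{Main obstacle.} The most delicate step is recasting \eqref{continuity-path-weithed} as a system \eqref{wsystem1} with a twist $\beta_{\varphi_k}$ satisfying the hypotheses of the estimates in \cite{DJL} and Section \ref{s:esti}, in particular its positivity and a uniform lower bound on $\int_X e^{-p_0 f_k}\,\omega_0^{[n]}$, where $f_k$ is the normalized potential of $\beta_{\varphi_k} - \beta_0$. For $t_k$ bounded away from $1$ this is manageable because $\beta_{\varphi_k}$ is comparable to $\omega_0$ with uniform constants, so $f_k$ is uniformly bounded in $C^0$ and the exponential integral is trivially controlled. The argument degenerates as $t \to 1$, which is exactly why the extension to $t = 1$ requires the more refined integral estimates developed in Section \ref{s:esti}.
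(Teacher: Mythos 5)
Your overall architecture — bound $\Ent_\v(\varphi_k)$ via Chen--Tian decomposition, the coercivity of $\J^{\omega_0}_{\v,\v\tilde\v}$ from Lemma~\ref{l:j:coercive}, the lower bound on $\M_{\v,\w}$, and Lemma~\ref{J and d1}, then feed the resulting uniform entropy bound into the a priori estimates — is exactly the paper's strategy, including the observation that for $t<1$ the twist in the resulting system is a fixed K\"ahler form with $r_k$ bounded away from $0$ and $\infty$, so the degenerate estimates of Section~\ref{s:esti} are not needed.

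However, the step where you produce the upper bound on the Lagrangian $\mathcal G_t=\M_{\v,\w}+\tfrac{1-t}{t}\J^{\omega_0}_{\v,\v\tilde\v}$ has a genuine gap. You claim $\mathcal G_t(\varphi_t)$ is non-increasing by computing $\tfrac{d}{dt}\mathcal G_t(\varphi_t)=r'(t)\J^{\omega_0}_{\v,\v\tilde\v}(\varphi_t)\le 0$ and integrating from a small $t_0$. This computation requires the family $t\mapsto\varphi_t$ to be differentiable, which is not automatic: $S_{t_1}$ merely records times at which \emph{some} solution exists, and an arbitrary sequence $\varphi_{t_k}$ need not lie on a single differentiable branch. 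The paper sidesteps this entirely by invoking the convexity of both $\M_{\v,\w}$ and $\J^{\omega_0}_{\v,\v\tilde\v}$ along weak geodesics (cited from \cite{Lah20}): this upgrades any critical point $\varphi_t$ of $\M^t_{\v,\w}=t\,\mathcal G_t$ to a \emph{global minimizer}, and then the trivial comparison $\M^t_{\v,\w}(\varphi_t)\le\M^t_{\v,\w}(0)\le C$ gives the uniform bound with no starting point $t_0$ and no differentiability assumption. Your monotonicity argument can be repaired in the same spirit — replacing the derivative computation with $\mathcal G_{t'}(\varphi_{t'})\le\mathcal G_{t'}(\varphi_t)\le\mathcal G_t(\varphi_t)$ for $t'>t$, using first global minimization and then $r(t')\le r(t)$ and $\J^{\omega_0}_{\v,\v\tilde\v}\ge 0$ — but the global-minimization property, hence the convexity result from \cite{Lah20}, is indispensable and should be stated explicitly. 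As written, you only observe that solutions are \emph{critical points}, which is not enough.
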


\begin{proof}
Suppose that there exists a sequence of solutions $\{\varphi_i\}_{i \in \mathbb{N}} \subset \mathcal{K}(X,\omega_0)^{\mathbb{T}}$ of (\ref{continuity-path-weithed}) at time $t_i$, with $t_i \rightarrow t_1 < 1$. We want to show that $t_1\in S$. To simply notations we drop the index $i$.\\
By definition, the twisted weighted Mabuchi functional

\begin{equation*}
    \M_{\v,\w}^{t}:= t \M_{\v,\w} + (1-t)\J^{\omega_0}_{\v,\v\tilde{\v}} 
\end{equation*}
is the Euler-Lagrange functional of the PDE \eqref{continuity-path-weithed}. Hence, a solution $\varphi_t$ of  \eqref{continuity-path-weithed} is a critical point $\M_{\v,\w}^{t}$.
Since $ \M_{\v,\w}$ and $\J^{\omega_0}_{\v,\v\tilde{\v}}$ are convex (in time) along weak geodesic segment (see \cite[Lemma 7, Corollary 4 and Theorem 4]{Lah20}), $\varphi_t$ is a global minima of $ \M_{\v,\w}^{t}$ on $\mathcal{K}(X,\omega_0)^\T$. In particular, for any solution $\varphi_t$ of \eqref{continuity-path-weithed} we have that

\begin{equation}{\label{bound:twi:mab}}
\M_{\v,\w}^{t}(\varphi_t)  \leq \M^t_{\v,\w}(0) \leq C,
\end{equation}
where $C$ does not depend on $t$. Thanks to the assumption, the above and Lemma \ref{l:j:coercive} we infer that $$C\geq \M_{\v,\w}^{t}(\varphi)\geq A(1-t)d_1(0, \varphi)-B, $$ and in particular $d_1(0, \varphi) \leq \frac{C_1}{1-t}$. By Lemma \ref{J and d1} there exists a constant $C'$ such that both $|\J^{\omega_0}_{\v,\v\tilde{\v}}(\varphi)|$ and  $|\J^{2\mathrm{Ric}(\omega_0)}_{\v,\w}(\varphi)|$ are bounded by $C'd_1(0,\varphi)$.
\begin{eqnarray*}
t\Ent_\v(\varphi) &\leq &  C +t|\J^{2\mathrm{Ric}(\omega_0)}_{\v,\w}(\varphi)|+ (1-t)|\J^{\omega_0}_{\v,\v \tilde{\v}}(\varphi)|+ \int_X \v(\mu_0) \log \v(\mu_0) \omega_0^{[n]}\\
&\leq& C_2 +C' td_1(0,\varphi) +C' (1-t)d_1(0,\varphi)\\
&\leq& C_2 +\frac{C'' t}{(1-t)}+C''
\end{eqnarray*}
i.e. $\Ent_\v(\varphi) \leq \frac{C_3}{t} +\frac{C_4}{(1-t)}$ for $t\in (0,1)$.
As showed in \cite[Lemma 4.1]{DJL} the weighted entropy $\Ent_\v$ is bounded if and only if the classical entropy $\Ent:=\Ent_1$ does. Hence, we conclude by applying the a priori-estimates proved in \cite{DJL}.
\end{proof}

\subsection{Closeness at time one}
We now prove that the set $S_1$ defined in \eqref{set-solution} is closed under the assumption that $\M_{\v,\w}$ is $\T^{\C}$-coercive.   Consider a sequence $(\varphi_i)_{i\geq0} \subset \mathcal{K}(X,\omega_0)^\T$ of solutions of \eqref{continuity-path-weithed} at $t=t_i$, $t_i\rightarrow1$.
Using the continuity of $\M_{\v,\w}$ and $\J_{\v,\v\tilde{\v}}^{\omega_0}$ along weak geodesic (see \cite{Lah20}), the following generalization of  \cite[Lemma 3.7]{CC21c} is straightforward.

\begin{lemma}{\label{l:minimizing:sequence}}
We have the followings

\begin{equation*}
    \begin{split}
        & \M_{\v,\w}^{t_i}({\varphi}_i) =\inf_{\varphi \in \mathcal{K}(X,\omega_0)^\T} \M_{\v,\w}^{t_i}(\varphi)\underset{t_i\to 1}{\longrightarrow}  \inf_{\varphi \in \mathcal{K}(X,\omega_0)^\T} \M_{\v,\w}(\varphi), \\
        & \M_{\v,\w}({\varphi}_i)\underset{t_i\to 1}{\longrightarrow} \inf_{\varphi \in \mathcal{K}(X,\omega_0)^\T} \M_{\v,\w}(\varphi),
        \\
        & (1-t_i)\J_{\v,\v\tilde{\v}}^{\omega_0}({\varphi}_i)\underset{t_i\to 1}{\longrightarrow} 0.
    \end{split}
\end{equation*}
\end{lemma}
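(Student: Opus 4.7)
The approach is to combine the variational characterisation of $\varphi_i$ as a minimizer of the twisted Mabuchi functional with the linear decomposition $\M_{\v,\w}^{t_i}=t_i\M_{\v,\w}+(1-t_i)\J^{\omega_0}_{\v,\v\tilde{\v}}$ and the coercivity estimate of Lemma \ref{l:j:coercive}. The main obstacle is essentially bookkeeping: one needs the normalization of $\varphi_i$ to be compatible with all three functionals so that Lemma \ref{l:j:coercive} can be invoked without losing control, which is ensured by the translation invariance of the weighted Aubin--Mabuchi functionals established in Section \ref{s:close}.

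First I would note that, by construction, $\varphi_i$ is a critical point of $\M_{\v,\w}^{t_i}$. Since both $\M_{\v,\w}$ and $\J^{\omega_0}_{\v,\v\tilde{\v}}$ are continuous (in fact convex) along weak geodesic segments (see \cite{Lah20}), the same holds for $\M_{\v,\w}^{t_i}$; its critical point $\varphi_i$ is therefore a global minimum, establishing the first equality in the first statement.

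Next, to prove $\inf_{\mathcal{K}}\M_{\v,\w}^{t_i}\to \inf_{\mathcal{K}}\M_{\v,\w}$, I would argue by separate inequalities. For the $\limsup$, testing on any fixed $\psi\in\mathcal{K}(X,\omega_0)^{\T}$ and letting $t_i\to 1$,
\begin{equation*}
\inf \M_{\v,\w}^{t_i}\leq \M_{\v,\w}^{t_i}(\psi)= t_i \M_{\v,\w}(\psi)+(1-t_i)\J^{\omega_0}_{\v,\v\tilde{\v}}(\psi)\longrightarrow \M_{\v,\w}(\psi),
\end{equation*}
so that $\limsup_i \inf \M_{\v,\w}^{t_i}\leq \inf \M_{\v,\w}$. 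For the matching $\liminf$, I would use translation invariance of both functionals to reduce to $\varphi_i\in \Kn$. Lemma \ref{l:j:coercive} then yields $\J^{\omega_0}_{\v,\v\tilde{\v}}(\varphi_i)\geq -B$, while the $\T^{\C}$-coercivity hypothesis on $\M_{\v,\w}$ provides $\M_{\v,\w}(\varphi_i)\geq \inf \M_{\v,\w}>-\infty$. Together:
\begin{equation*}
\inf\M_{\v,\w}^{t_i}=\M_{\v,\w}^{t_i}(\varphi_i)\geq t_i\inf \M_{\v,\w} -(1-t_i)B,
\end{equation*}
whose right-hand side converges to $\inf \M_{\v,\w}$. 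This completes the first statement.

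Finally, both the second and third statements follow from the algebraic identity
\begin{equation*}
(1-t_i)\J^{\omega_0}_{\v,\v\tilde{\v}}(\varphi_i)= \M_{\v,\w}^{t_i}(\varphi_i)-t_i \M_{\v,\w}(\varphi_i).
\end{equation*}
For the third statement, the left-hand side is bounded below by $-(1-t_i)B\to 0$, and the right-hand side is bounded above by $\inf\M_{\v,\w}^{t_i}-t_i\inf\M_{\v,\w}\to 0$ (using $\M_{\v,\w}(\varphi_i)\geq\inf\M_{\v,\w}$ together with the first statement just proved), forcing convergence to $0$. Substituting this back into the identity yields $t_i\M_{\v,\w}(\varphi_i)\to \inf \M_{\v,\w}$, and since $t_i\to 1$ and the limit is finite, $\M_{\v,\w}(\varphi_i)\to \inf \M_{\v,\w}$, giving the second statement.
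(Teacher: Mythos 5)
Your proof is correct and fills in the details that the paper leaves implicit by referring to ``the following generalization of [CC21c, Lemma 3.7] is straightforward.'' Your scheme (global minimality of $\varphi_i$, $\limsup$ by testing on a fixed potential, $\liminf$ via the lower bound $\J^{\omega_0}_{\v,\v\tilde{\v}}\geq -B$ from Lemma~\ref{l:j:coercive} combined with $\M_{\v,\w}\geq\inf\M_{\v,\w}>-\infty$, and the algebraic identity for the second and third limits) is precisely the CC21c argument transposed to the weighted setting.

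One point worth spelling out, since you invoke it in passing: the reduction ``by translation invariance of both functionals to $\varphi_i\in\Kn$'' is legitimate. For $\J^{\omega_0}_{\v,\v\tilde{\v}}$, differentiating in a constant direction gives $\int_X(\Lambda_{\varphi,\v}(\omega_0)-\tilde{\v}(\mu_\varphi))\v(\mu_\varphi)\omega_\varphi^{[n]}$, which vanishes by the remark preceding Lemma~\ref{cohomo:const:trace}, so $\J^{\omega_0}_{\v,\v\tilde{\v}}$ is genuinely invariant under $\varphi\mapsto\varphi+c$. For $\M_{\v,\w}$, the derivative in the constant direction is itself a constant multiple of $c$; if it were nonzero, $\M_{\v,\w}$ would be unbounded below along translations, contradicting coercivity, so it must be identically zero. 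Hence both functionals descend to $\Kn$ and Lemma~\ref{l:j:coercive} applies.
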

From the  the $\T^{\C}$-coercivity of $\M_{\v,\w}$, Lemma \ref{l:j:coercive} and \eqref{bound:twi:mab}, we get an uniform upper bound of the $\T^{\mathbb{C}}$-relative $d_1$-distance

\begin{equation*}
        \sup_{i\in \mathbb{N}} d_{1,\mathbb{T}^{\mathbb{C}}}(0, \varphi_i) < \infty,
\end{equation*}

\noindent where $d_{1,\mathbb{T}^{\mathbb{C}}}(0,\varphi):= \inf_{\gamma \in \TC}d_1(0, \gamma \cdot \varphi)$ and for any normalized potential $\varphi \in \Kn$,   $\gamma \cdot \varphi$ is defined as the unique potential in $\Kn$  such that $\gamma^*\omega_{\varphi}=\omega_{\gamma \cdot \varphi}$. We also mention that the infimum in  $d_{1,\mathbb{T}^{\mathbb{C}}}(0,\varphi)$ is attained for an element $\gamma_0 \in \mathbb{T}^{\mathbb{C}}$. In other words, for every $\varphi \in \Kn$ there exists $\gamma_0 \in \mathbb{T}^{\mathbb{C}}$ such that

\begin{equation}{\label{inf:d1}}
    d_{1,\mathbb{T}^{\mathbb{C}}}(0,\varphi)=  d_{1}(0,\gamma_0 \cdot \varphi).
\end{equation}
This result was first obtained  in term of Aubin-Mabuchi functionals in \cite[Lemma 6.2]{BD87}, and in term of the $d_1$-distance in \cite[Proposition 6.8.]{DR}. We then deduce the following bound 

\begin{equation}{\label{bdd:d1:dist}}
    \sup_{i\in \mathbb{N}} d_1 (0, \tilde{\varphi}_i) < \infty, \qquad \tilde{\varphi}_i:= \gamma_i \cdot \varphi_i,
\end{equation}
where $\gamma_i\in \TC$ is given by \eqref{inf:d1}.

\noindent

\begin{lemma}{\label{l:T-indep}}
The weighted Mabuchi energy $\mathbf{M}_{\v,\w}$ is $\TC$-invariant if and only if the weighted Futaki invariant vanishes. In particular, $\mathbf{M}_{\v,\w}$ is $\mathbb{T}^{\mathbb{C}}$-invariant if it is bounded from below.
\end{lemma}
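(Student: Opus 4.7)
The backbone is the behavior of $\M_{\v,\w}$ along one-parameter subgroups of $\TC$. Fix $\xi \in \mathfrak{t}$, let $\gamma_s := \exp(sJ\xi) \subset \TC$, and write $\varphi_s := \gamma_s \cdot \varphi \in \Kn$. By a standard identity (see e.g.\ \cite{DR}, and the discussion preceding \eqref{inf:d1}), the induced path satisfies $\dot{\varphi}_s = \mu^{\xi}_{\varphi_s} := \langle \mu_{\varphi_s},\xi\rangle$. Plugging this into the variational formula for $\M_{\v,\w}$ gives
\[
\frac{d}{ds}\M_{\v,\w}(\varphi_s) = -\int_X \mu^{\xi}_{\varphi_s}\big(\mathrm{Scal}_\v(\omega_{\varphi_s}) - \w(\mu_{\varphi_s})\big)\v(\mu_{\varphi_s})\,\omega_{\varphi_s}^{[n]}.
\]
Using the $\TC$-equivariance $\omega_{\varphi_s} = \gamma_s^*\omega_\varphi$ and $\mu_{\varphi_s} = \mu_\varphi \circ \gamma_s$, the change of variables by $\gamma_s$ shows that the right-hand side is independent of $s$. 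Consequently $s\mapsto \M_{\v,\w}(\varphi_s)$ is an \emph{affine} function of $s$, with slope
\[
c(\xi) := -\int_X \mu^{\xi}_{\varphi}\big(\mathrm{Scal}_\v(\omega_{\varphi}) - \w(\mu_{\varphi})\big)\v(\mu_{\varphi})\,\omega_{\varphi}^{[n]}.
\]

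For the first equivalence, I would identify $c(\xi)$, up to sign, with the weighted Futaki invariant $\mathbf{F}_{\v,\w}(\ell_\xi)$ evaluated on the linear function $\ell_\xi(x):=\langle x,\xi\rangle$; this uses the class-invariance of $\mathbf{F}_{\v,\w}$ established in \cite{Lah19}, together with the normalization \eqref{norm:scalv} (equivalent to $\mathbf{F}_{\v,\w}(1)=0$), which allows one to pass from vanishing on linear functions to vanishing on all affine functions. Since the compact torus $\T$ acts trivially on $\K=\mathcal{K}(X,\omega_0)^{\T}$ (its elements preserve each $\T$-invariant potential), and $\TC=\T\cdot\exp(J\mathfrak{t})$, invariance of $\M_{\v,\w}$ under every subgroup $\exp(sJ\xi)$ is equivalent to $\TC$-invariance, which in turn is equivalent to $\mathbf{F}_{\v,\w}\equiv 0$ on affine functions.

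For the second statement, assume $\M_{\v,\w}$ is bounded from below on $\K$. Then for each $\xi\in\mathfrak{t}$ the affine function $s\mapsto \M_{\v,\w}(\varphi_s)$ on $\R$ is bounded below, hence constant, i.e.\ $c(\xi)=0$. Running this for every $\xi$ gives invariance of $\M_{\v,\w}$ under every $\exp(sJ\xi)$, hence $\TC$-invariance. The main technical point is the identification of $c(\xi)$ with $\mathbf{F}_{\v,\w}(\ell_\xi)$: the Mabuchi derivative carries an extra factor $\v(\mu_\varphi)$ compared to the definition of $\mathbf{F}_{\v,\w}$ given in the introduction, so the equivalence must be extracted from the characterization of $\mathbf{F}_{\v,\w}$ as the Lie-algebra character of $\mathfrak{t}$ obtained from $D_\varphi\M_{\v,\w}$ in \cite{Lah19}; everything else is a formal consequence of the variational formula and equivariance.
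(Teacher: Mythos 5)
Your proof follows essentially the same route as the paper: reduce to one-parameter subgroups $\exp(sJ\xi)$, observe that $s\mapsto\M_{\v,\w}(\gamma_s\cdot\varphi)$ is affine with slope given by a Futaki-type integral (cited to \cite{Lah19}), and for the second statement use that an affine function bounded below must be constant. The paper likewise leans on \cite[Section 6]{Lah19} for the identity $\tfrac{d}{dt}\M_{\v,\w}(\gamma_t^*\omega)=\mathbf F_{\v,\w}(\ell)$ rather than deriving it.

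One point worth keeping in mind, which you correctly flag but do not resolve: the slope $c(\xi)$ coming from the variational formula carries the measure $\v(\mu_\varphi)\omega_\varphi^{[n]}$, whereas the displayed definition \eqref{don:fut:gene} of $\mathbf F_{\v,\w}$ in the paper uses the unweighted $\omega^{[n]}$, and $\ell(\mu)\v(\mu)$ is not affine in $\mu$, so the two vanishing conditions are not formally identical. Both you and the paper defer this to \cite{Lah19}; if you wanted a self-contained argument, you would need to either work with the $\v$-weighted version of the Futaki invariant from \cite{Lah19} throughout (in which case the lemma is immediate from your computation), or supply the integration-by-parts argument reconciling the two. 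Also note that the sign of $\dot\varphi_s$ depends on the $d^c$ and moment-map conventions and on the normalization forcing $\varphi_s\in\Kn$, so $\dot\varphi_s=\mu_{\varphi_s}^\xi$ should really be asserted only up to sign and an additive constant; neither affects the conclusion.
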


\begin{proof}
Let $\gamma \in \TC$. Consider a path $(\gamma_t)_{t \in [0,1]} \subset \TC$ satisfyng $\gamma_0= \mathrm{Id}$ and $\gamma_1=\gamma$ generate by the flow of a real holomorphic vector field $\xi \in \mathfrak{t}$. We deduce from \cite[Section 6]{Lah19} that, for any $\mathbb{T}$-invariant K\"ahler metric $\omega$, the derivative of $\M_{\v,\w}$ along $\gamma_t^*\omega$ is given by the weighted Futaki invariant $\mathbf{F}_{\v,\w}$

\begin{equation}{\label{deriv:mab}}
    \frac{d}{dt}\M_{\v,\w}(\gamma_t^*\omega)= \mathbf{F}_{\v,\w}(\ell),
\end{equation}
where $\ell(x):=\langle x, \xi \rangle + c$, $c \in \R$. In the above equation $\mathbf{F}_{\v,\w}$ is the weighted Futaki invariant, which is defined by 

\begin{equation}{\label{don:fut:gene}}
\mathbf{F}_{\v,\w}(\ell):=\int_X \big(\mathrm{Scal}_\v(\omega) - {\w} (\mu_{\omega})\big) \ell(\mu_{\omega}) \omega^{[n]}.
\end{equation}
The RHS of the above equality does not depend on the choice of $\omega \in [\omega_0]$ by \cite[Section 6]{Lah19}.
It follows from \eqref{deriv:mab} that $\mathbf{F}_{\v,\w} \equiv 0$ implies $\M_{\v,\w}(\gamma^*\omega)=\M_{\v,\w}(\omega)$.
The other implication follows similarly.
Also, from \eqref{deriv:mab} we have $\M_{\v,\w}(\gamma^*\omega)= t \mathbf{F}_{\v,\w}(\ell)+D$. If $\M_{\v,\w}$ is bounded from below, the only possibility is $\mathbf{F}_{\v,\w}(\ell)\equiv 0$.
\end{proof}

\begin{corollary}{\label{bound:ent}}
If $\mathbf{M}_{\v,\w}$ is $\mathbb{T}^{\mathbb{C}}$-invariant, then the entropy is uniformly bounded, i.e. $$\sup_i \Ent(\tilde{\varphi}_i) \leq C,$$ for a constant $C$ independent of $i$.
\end{corollary}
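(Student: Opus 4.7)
The plan is to read off the bound directly from the weighted Chen-Tian decomposition \eqref{Chen-Tian}, applied to each normalized potential $\tilde{\varphi}_i \in \Kn$. Solving this decomposition for $\Ent_\v(\tilde{\varphi}_i)$ yields
\begin{equation*}
\Ent_\v(\tilde{\varphi}_i) \;=\; \M_{\v,\w}(\tilde{\varphi}_i) \;+\; \J^{2\mathrm{Ric}(\omega_0)}_{\v,\w}(\tilde{\varphi}_i) \;+\; \int_X \log\v(\mu_0)\,\v(\mu_0)\,\omega_0^{[n]},
\end{equation*}
so the task reduces to controlling the first two terms on the right-hand side uniformly in $i$.

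The first term is handled via the hypothesis of the corollary: since $\M_{\v,\w}$ is $\TC$-invariant, $\M_{\v,\w}(\tilde{\varphi}_i)=\M_{\v,\w}(\gamma_i\cdot\varphi_i)=\M_{\v,\w}(\varphi_i)$, and Lemma~\ref{l:minimizing:sequence} shows this converges to $\inf_{\varphi\in\K}\M_{\v,\w}(\varphi)$, which is finite because the $\TC$-coercivity assumption of the main theorem gives a lower bound on $\M_{\v,\w}$. Hence $(\M_{\v,\w}(\tilde{\varphi}_i))_i$ is bounded. For the second term, Lemma~\ref{J and d1} applied with $\rho=2\mathrm{Ric}(\omega_0)$ and weights $(\v,\w)$ gives
\begin{equation*}
\big|\J^{2\mathrm{Ric}(\omega_0)}_{\v,\w}(\tilde{\varphi}_i)\big| \;\le\; A\,d_1(0,\tilde{\varphi}_i)+B,
\end{equation*}
and the right-hand side is uniformly bounded by \eqref{bdd:d1:dist}. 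The third term is simply a $\varphi$-independent constant. Putting these three estimates together shows $\Ent_\v(\tilde{\varphi}_i)$ is bounded above uniformly in $i$.

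To pass from the weighted entropy $\Ent_\v$ to the classical entropy $\Ent$, I invoke \cite[Lemma 4.1]{DJL} (already used in the proof of Proposition~\ref{p:closed:t1}), which shows that $\Ent_\v$ and $\Ent$ are comparable up to constants depending only on $\v$; a uniform upper bound for one gives a uniform upper bound for the other, so $\sup_i \Ent(\tilde{\varphi}_i) \leq C$. There is no substantive obstacle in the argument—it is essentially bookkeeping—the one genuinely delicate point being that the $d_1$-bound needed in the second step is only available at the orbit representatives $\tilde{\varphi}_i$ and not at the original $\varphi_i$. This is precisely why the preceding discussion replaced the sequence by its $\TC$-orbit minimizers in \eqref{bdd:d1:dist}, and also why the $\TC$-invariance of $\M_{\v,\w}$ (here a hypothesis, but in our context guaranteed by Lemma~\ref{l:T-indep}) is indispensable: without it, the first term $\M_{\v,\w}(\tilde{\varphi}_i)$ could not be transported back to $\M_{\v,\w}(\varphi_i)$ where Lemma~\ref{l:minimizing:sequence} applies.
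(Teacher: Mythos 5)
Your proof is correct and follows the same route as the paper: solve the Chen--Tian decomposition \eqref{Chen-Tian} for $\Ent_\v(\tilde{\varphi}_i)$, bound $\M_{\v,\w}(\tilde{\varphi}_i)=\M_{\v,\w}(\varphi_i)$ by $\TC$-invariance plus Lemma~\ref{l:minimizing:sequence}, control $\J^{2\mathrm{Ric}(\omega_0)}_{\v,\w}(\tilde{\varphi}_i)$ via Lemma~\ref{J and d1} and \eqref{bdd:d1:dist}, and pass to $\Ent$ by the comparability of weighted and classical entropies from \cite[Lemma 4.1]{DJL}. Your closing observation—that the $d_1$-bound is available only at the orbit representatives and that $\TC$-invariance is what lets the Mabuchi value be carried across the orbit—is exactly the mechanism the paper relies on.
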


\begin{proof}
Using the $\mathbb{T}^{\mathbb{C}}$-invariance of $\mathbf{M}_{\v,\w}$, and combing Lemma \ref{l:minimizing:sequence} and \ref{l:T-indep}, we find that for any $\varepsilon>0$, there exists $i_0$ such that for any $i\geq i_0$ we have
$$\M_{\v,\w}(\tilde{\varphi}_i)=\M_{\v,\w}(\varphi_i)\leq  \M_{\v,\w}^{t_i}({\varphi_i}) +2\varepsilon \leq \M_{\v,\w}^{t_i}(0)+2\leq C,$$
where $C$ in independent of $i$. 
Now using the weighted Chen-Tian formula \eqref{Chen-Tian} together with Lemma \ref{J and d1} we find that

\begin{equation*}
\begin{split}
\Ent_\v(\tilde{\varphi}_i) &\leq  C +|\J^{2\mathrm{Ric}(\omega_0)}_{\v,\w}(\tilde{\varphi}_i)|+ \int_X \v(\mu_0) \log \v(\mu_0)\omega_0^{[n]}\\
&\leq C_1 +C_2 d_1(0,\tilde{\varphi}_i) \\
& \leq C_3,
\end{split}
\end{equation*}
where we use \eqref{bdd:d1:dist} for the last inequality.
We then infer that the entropy  $\Ent$ is bounded if and only if the weighted entropy $\Ent_\v$ is so \cite[Lemma 4.1]{DJL}.
\end{proof}
A direct adaptation of \cite[Section 3]{DJL} shows that a solution $\varphi_i$ of \eqref{continuity-path-weithed} satisfies

 \begin{equation*}
     \begin{cases}
     F_i &= \log\left( \v(\mu_{\varphi_i}) \frac{\omega_{\varphi_i}^n}{\omega^n_0}\right) \\
    \Delta_{\varphi_i,\v} F_i  &= \tilde{\w}_i(\mu_{\varphi_i}) + 2 \Lambda_{\varphi_i,\v} \left( \alpha_{0,i}\right),
     \end{cases}
\end{equation*}
where 
$$\tilde{\w}_i(x) := - \frac{\w(x)}{\v(x)}+ \frac{(1-t_i)}{t_i}\tilde{\v}(x),\qquad
        \alpha_{0,i} := \mathrm{Ric}(\omega_0 ) - \frac{(1-t_i)}{2t_i} \omega_0.$$
        For every $i\geq0$, let $\omega_i:=\gamma_i^*\omega_0$ and $h_i$ be such that $\omega_i= \omega_0 +dd^c h_i$ with the normalization $\sup_X h_i=0$. In particular, $\tilde{\varphi_i}=h_i+\gamma^* {\varphi_i}$.

\begin{lemma}
The potential $\tilde{\varphi}_i$ defined in \eqref{bdd:d1:dist} satisfies 

 \begin{equation*}
     \begin{cases}
     \tilde{F}_i &= \log\left( \v(\mu_{\tilde{\varphi}_i}) \frac{\omega_{\tilde{\varphi}_i}^n}{\omega^n_0}\right) \\
    \Delta_{\tilde{\varphi}_i,\v} \tilde{F_i}  &=   \tilde{\w}_i(\mu_{\tilde{\varphi}_i})+ 2 \Lambda_{\tilde{\varphi}_i,\v} ( \mathrm{Ric}(\omega_0 ) - \beta_i),
     \end{cases}
\end{equation*}
\noindent where $\beta_i=  \frac{1-t_i}{2t_i}  \omega_i$. Or equivalently,
 \begin{equation}\label{twisted cscK_eq}
     \begin{cases}
     \tilde{F}_i &= \log\left( \v(\mu_{\tilde{\varphi}_i}) \frac{\omega_{\tilde{\varphi}_i}^n}{\omega^n_0}\right) \\
    \Delta_{\tilde{\varphi}_i,\v} (\tilde{F}_i+\tilde{f}_i)  &=   \tilde{\w}_i(\mu_{\tilde{\varphi}_i})+ 2 \Lambda_{\tilde{\varphi},\v} \big(\alpha_{0,i}\big)
     \end{cases}
\end{equation}
where $\tilde{f}_i:=\frac{1-t_i}{2t_i}h_i$.
\end{lemma}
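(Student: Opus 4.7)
The proof proceeds by pulling back the elliptic system satisfied by $\varphi_i$ through $\gamma_i^*$, and carefully tracking how the reference metric changes from $\omega_0$ to $\gamma_i^*\omega_0 = \omega_i$. The ingredients I would use are: (i) by naturality of the $\T^{\C}$-action, $\omega_{\tilde{\varphi}_i} = \gamma_i^*\omega_{\varphi_i}$ and, using the moment-map normalization $\mu_\varphi = \mu_0 + d^c\varphi$ together with $\gamma_i^*\mu_0 = \mu_0 + d^c h_i$, also $\mu_{\tilde{\varphi}_i} = \gamma_i^*\mu_{\varphi_i}$; (ii) the weighted operators $\Delta_{\varphi,\v}$ and $\Lambda_{\varphi,\v}$ depend intrinsically on $\omega_\varphi$ and the $\T$-action, hence commute with pullback by $\gamma_i$; (iii) the Ricci form is natural, so $\gamma_i^*\mathrm{Ric}(\omega_0) = \mathrm{Ric}(\omega_i)$.

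First I would apply $\gamma_i^*$ to the first equation of the system for $\varphi_i$, obtaining $\gamma_i^*F_i = \log\bigl(\v(\mu_{\tilde{\varphi}_i})\omega_{\tilde{\varphi}_i}^n/\omega_i^n\bigr)$. Comparing with the desired identity gives the bridging relation
\[
\tilde{F}_i = \gamma_i^*F_i + \log\bigl(\omega_i^n/\omega_0^n\bigr),
\]
which already yields the first line of \eqref{twisted cscK_eq}. Next I would pull back the second equation of the system for $\varphi_i$ to obtain
\[
\Delta_{\tilde{\varphi}_i,\v}(\gamma_i^*F_i) \;=\; \tilde{\w}_i(\mu_{\tilde{\varphi}_i}) + 2\Lambda_{\tilde{\varphi}_i,\v}(\gamma_i^*\alpha_{0,i}),
\]
and expand $\gamma_i^*\alpha_{0,i} = \mathrm{Ric}(\omega_i) - \tfrac{1-t_i}{2t_i}\omega_i = \mathrm{Ric}(\omega_i) - \beta_i$. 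Using the transformation $\mathrm{Ric}(\omega_i) = \mathrm{Ric}(\omega_0) - dd^c\log(\omega_i^n/\omega_0^n)$, I would rewrite this as $\gamma_i^*\alpha_{0,i} = \bigl(\mathrm{Ric}(\omega_0)-\beta_i\bigr) - dd^c\log(\omega_i^n/\omega_0^n)$. Applying $2\Lambda_{\tilde{\varphi}_i,\v}$ converts the last term into $\Delta_{\tilde{\varphi}_i,\v}\log(\omega_i^n/\omega_0^n)$, which is precisely cancelled by the extra term coming from the bridging relation above; this leaves
\[
\Delta_{\tilde{\varphi}_i,\v}\tilde{F}_i \;=\; \tilde{\w}_i(\mu_{\tilde{\varphi}_i}) + 2\Lambda_{\tilde{\varphi}_i,\v}\bigl(\mathrm{Ric}(\omega_0)-\beta_i\bigr),
\]
establishing the first formulation of the claim.

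Finally, for the equivalent system \eqref{twisted cscK_eq}, I would use the decomposition $\beta_i = \tfrac{1-t_i}{2t_i}\omega_0 + dd^c\tilde{f}_i$ (which follows from $\omega_i = \omega_0 + dd^c h_i$ and $\tilde{f}_i = \tfrac{1-t_i}{2t_i}h_i$), giving $\mathrm{Ric}(\omega_0)-\beta_i = \alpha_{0,i} - dd^c\tilde{f}_i$. Applying $2\Lambda_{\tilde{\varphi}_i,\v}$ and moving the $dd^c\tilde{f}_i$-term inside the Laplacian on the left-hand side yields the asserted $\Delta_{\tilde{\varphi}_i,\v}(\tilde{F}_i+\tilde{f}_i) = \tilde{\w}_i(\mu_{\tilde{\varphi}_i}) + 2\Lambda_{\tilde{\varphi}_i,\v}(\alpha_{0,i})$, in perfect analogy with the equivalence between \eqref{wsystem1} and \eqref{wsystem2}. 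The only subtle point is tracking the normalization conventions so that the coefficient of $\tilde{f}_i$ comes out right --- this is entirely controlled by the identity $2\Lambda_{\varphi,\v}\circ dd^c = \Delta_{\varphi,\v}$, which is already built into the derivation of \eqref{wsystem2}.
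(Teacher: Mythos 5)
Your proof is correct and follows essentially the same route as the paper: you apply $\gamma_i^*$ to both equations, derive the bridging identity $\tilde F_i = \gamma_i^*F_i + \log(\omega_i^n/\omega_0^n)$, expand $\gamma_i^*\alpha_{0,i}$ via naturality of the Ricci form, and use the transformation rule for $\mathrm{Ric}$ under a change of reference volume form to cancel the extra logarithmic term. The one point worth being careful about is normalization: the paper works with $dd^c = 2i\partial\bar\partial$, so $\mathrm{Ric}(\omega_i) = \mathrm{Ric}(\omega_0) - \tfrac{1}{2}dd^c\log(\omega_i^n/\omega_0^n)$ and $\Delta_{\varphi,\v} = \Lambda_{\varphi,\v}\circ dd^c$, whereas you use the conventions $\mathrm{Ric}(\omega_i) = \mathrm{Ric}(\omega_0) - dd^c\log(\omega_i^n/\omega_0^n)$ and $2\Lambda_{\varphi,\v}\circ dd^c = \Delta_{\varphi,\v}$; the two factor-of-two discrepancies cancel exactly, so the final computation agrees, but the stated identity $2\Lambda_{\varphi,\v}\circ dd^c = \Delta_{\varphi,\v}$ is not the paper's convention and should be reconciled with it (or at least flagged). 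Where you invoke ``naturality'' of $\Delta_{\tilde\varphi_i,\v}$ and $\Lambda_{\tilde\varphi_i,\v}$ under $\gamma_i^*$, the paper verifies the pullback identity $\Delta_{\tilde\varphi_i,\v}(\gamma_i^*F_i) = \gamma_i^*(\Delta_{\varphi_i,\v}F_i)$ by an explicit computation using $\mu_{\tilde\varphi_i}=\gamma_i^*\mu_{\varphi_i}$ and the form expression of the weighted Laplacian --- your shortcut is valid but you should make sure the reader agrees these operators are intrinsic to $(\omega_\varphi, \T)$. You also spell out the passage to the system with $\tilde f_i$, which the paper states as ``equivalent'' without proof; that is a useful addition.
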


\begin{proof}
By definition $\gamma^*\omega_\varphi=\omega_{\tilde{\varphi}}$. It follows that $\mu_{\tilde{\varphi}}=\gamma^*\mu_{{\varphi}}$. Indeed,
$\omega_{{\varphi}}(\xi, \cdot) = - d\langle \mu_{{\varphi}}, \xi \rangle,$
gives $\omega_{\tilde{\varphi}}(\xi, \cdot)  = \gamma^* \omega_{{\varphi}}(\xi, \cdot) = -d\langle \gamma^*\mu_{{\varphi}}, \xi \rangle,$
which precisely means that $\mu_{\tilde{\varphi}}=\gamma^*\mu_{\varphi}$. In particular for each $i$, $\v(\mu_{\tilde{\varphi}_i})=\gamma^* \v(\mu_{\varphi_i})$ and

\begin{equation*}
    \begin{split}
        \tilde{F}_i = &\log\left( \v(\mu_{\tilde{\varphi}_i}) \frac{\omega_{\tilde{\varphi}_i}^n}{\omega^n_0}\right)= \log\left(  \frac{\gamma_i^*(\v(\mu_{\varphi_i})\omega_{\varphi_i}^n)}{\omega^n_0}\right) \\
        = &\log\left(  \frac{\gamma_i^*(\v(\mu_{\varphi_i})\omega_{\varphi_i}^n)}{\omega^n_{i}}\right)  + \log\left( \frac{\omega^n_i}{\omega_0^n} \right) \\
         = &\gamma_i^*\left(\log\left(  \frac{\v(\mu_{\varphi_i})\omega_{\varphi_i}^n}{\omega^n_0}\right)\right)  + \log\left( \frac{\omega_i^n}{\omega_0^n} \right) \\
        = &\gamma_i^*F_i  + \log\left( \frac{\omega^n_i}{\omega_0^n} \right).  
    \end{split}
\end{equation*}
On the other hand,

\begin{equation*}
    \begin{split}
        \Delta_{\tilde{\varphi}_i, \v} (\gamma_i^*F_i)=& \Delta_{\tilde{\varphi}_i}( \gamma^*_i F_i) + g_{\tilde{\varphi}_i}(d \log \v(\mu_{\tilde{\varphi}_i}), d( \gamma_i^*F_i)) \\
        =& \frac{dd^c \gamma_i^* F_i \wedge \omega^{[n-1]}_{\tilde{\varphi}_i}}{\omega^{[n]}_{\tilde{\varphi}_i}} + \frac{d(\log \v(\mu_{\tilde{\varphi}_i})) \wedge d^c \gamma_i^*F_i \wedge  \omega^{[n-1]}_{\tilde{\varphi}_i}}{\omega^{[n]}_{\tilde{\varphi}_i} } \\
        =& \gamma_i^*\left(\frac{dd^c F_i \wedge \omega^{[n-1]}_{\varphi_i}}{\omega^{[n]}_{\varphi_i}}\right) + \gamma^*_i\left( \frac{d \log \v(\mu_{\varphi_i}) \wedge d^cF_i\wedge  \omega^{[n-1]}_{\varphi_i}}{\omega^{[n]}_{\varphi_i} } \right) \\
        =&\gamma_i^*\left(\Delta_{\varphi_i, \v} F_i\right) \\
    =& \gamma_i^*\big( \tilde{\w}_i(\mu_{\varphi_i})+ 2 \Lambda_{\varphi_i,\v} (\alpha_{0,i}) \\  
    =&  \tilde{\w}_i(\mu_{\tilde{\varphi}_i})+ \gamma_i^*\left(  2 \Lambda_{{\varphi}_i,\v} \big( \mathrm{Ric}(\omega_0 ) - \frac{(1-t_i)}{2t_i}  \omega_0\big)\right) \\ 
   =& \tilde{\w}_i(\mu_{\tilde{\varphi}_i})+ 2 \Lambda_{\tilde{\varphi}_i,\v} ( \mathrm{Ric}(\omega_i ) -\beta_i ).
    \end{split}
\end{equation*}
    The last line follows from similar computations to the ones above. Then

\begin{equation*}
\begin{split}
    \Delta_{\tilde{\varphi}_i,\v} \tilde{F}_i =  & \Delta_{\tilde{\varphi}_i,\v} \gamma^*_i F_i +  \Delta_{\tilde{\varphi}_i,\v} \log\left(\frac{\omega^n_i}{\omega_0^n}\right) \\
    =  & \tilde{\w}_i(\mu_{\tilde{\varphi}_i})+ 2 \Lambda_{\tilde{\varphi}_i,\v} ( \mathrm{Ric}(\omega_i ) - \beta_i) +  \Lambda_{\tilde{\varphi}_i,\v} dd^c \log\left(\frac{\omega^n_i}{\omega_0^n}\right) \\
    = & \tilde{\w}_i(\mu_{\tilde{\varphi}_i})+ 2 \Lambda_{\tilde{\varphi}_i,\v} \left( \mathrm{Ric}(\omega_0\right) - \beta_i). \\
    \end{split}
\end{equation*}
\end{proof}

The rest of this section is devoted to the proof of the following theorem:
\begin{theorem}{\label{t:sol:t:1}}
Assume that $\M_{\v,\w}$ is $\T^{\mathbb{C}}$-coercive, then $S_{t_1}$ is closed. 
\end{theorem}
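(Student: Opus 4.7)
\medskip

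The plan is to reduce Theorem \ref{t:sol:t:1} to the a priori estimates of Section \ref{s:esti} applied to the translated sequence $\tilde\varphi_i$, and then to extract a smooth limit. If $t_1<1$, Proposition \ref{p:closed:t1} already gives closedness since $\T^\mathbb{C}$-coercivity implies $\M_{\v,\w}$ is bounded below. So I assume $t_1=1$ and let $\varphi_i$ solve \eqref{continuity-path-weithed} at $t_i\to 1$. As explained in \eqref{inf:d1}--\eqref{bdd:d1:dist}, I pick $\gamma_i\in\T^\mathbb{C}$ realizing $d_{1,\T^\mathbb{C}}(0,\varphi_i)$ and set $\tilde\varphi_i=\gamma_i\cdot\varphi_i$, $\omega_i=\gamma_i^*\omega_0=\omega_0+dd^c h_i$ with $\sup_X h_i=0$, $\tilde f_i=\frac{1-t_i}{2t_i}h_i$ and $\beta_{0,i}=\frac{1-t_i}{2t_i}\omega_0$. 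Then $d_1(0,\tilde\varphi_i)$ is uniformly bounded and, by Corollary \ref{bound:ent}, so is $\Ent(\tilde\varphi_i)$; moreover $\tilde\varphi_i$ solves the system \eqref{twisted cscK_eq}.

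Next I apply Theorem \ref{t:estimates:w12p} to that system. Uniform bounds on $\|\v\|_{C^0}$, $\|\tilde\w_i\|_{C^0}$, $\max_X|\beta_{0,i}|_0=\frac{1-t_i}{2t_i}$ and $\Ent(\tilde\varphi_i)$ are immediate. The delicate hypothesis is uniform boundedness of $\int_X e^{-p_0\tilde f_i}\omega_0^{[n]}=\int_X e^{-p_0\frac{1-t_i}{2t_i}h_i}\omega_0^{[n]}$ for some fixed $p_0\geq \kappa_n$. Since $\{h_i\}$ lies in the compact family $\mathrm{PSH}(X,\omega_0)\cap\{\sup_X\cdot=0\}$, Skoda integrability (equivalently, positivity of the $\alpha$-invariant of $(X,[\omega_0])$) supplies $\alpha_0>0$ and $C>0$ with $\int_X e^{-\alpha_0 h_i}\omega_0^{[n]}\leq C$ uniformly in $i$. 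As $\frac{1-t_i}{2t_i}\to 0$, for $i$ large enough $p_0\frac{1-t_i}{2t_i}<\alpha_0$, giving the required integrability. Theorem \ref{t:estimates:w12p} then delivers uniform $W^{1,2q}$-bounds on $\tilde F_i+\tilde f_i$ and uniform $L^q$-bounds on $n+\Delta\tilde\varphi_i$ for every $q<p_0$.

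To conclude, I observe that $\tilde f_i\to 0$ in every $L^p$ norm (the product of a vanishing constant and an $L^p$-bounded function), so the $W^{1,2q}$ bounds descend to $\tilde F_i$ alone. A standard bootstrap---Ko\l odziej's estimate applied to $\v(\mu_{\tilde\varphi_i})\omega_{\tilde\varphi_i}^n=e^{\tilde F_i}\omega_0^n$ for $C^0$-control on $\tilde\varphi_i$, followed by the Chen--Cheng-type $C^{1,1}$ estimate and Evans--Krylov plus Schauder, exactly as assembled in \cite{DJL}---upgrades this to uniform $C^{k,\alpha}$ estimates. Arzel\`a--Ascoli extracts a subsequence converging in $C^\infty$ to some $\tilde\varphi_\infty\in\mathcal{K}(X,\omega_0)^\T$. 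Passing to the limit in \eqref{twisted cscK_eq}, with $\tilde f_i,\beta_{0,i}\to 0$ and $\tilde\w_i\to -\w/\v$, identifies $\tilde\varphi_\infty$ as a genuine solution of \eqref{wcsck}. Hence $1\in S_1$, which proves closedness.

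The main obstacle is transferring the pluripotential control from $\T^\mathbb{C}$-coercivity (via $d_1$-boundedness) into the analytic integrability hypothesis $\int_X e^{-p_0\tilde f_i}\omega_0^{[n]}\leq C$ required by Theorem \ref{t:estimates:w12p}. The resolution hinges on two ingredients: the sup-$0$-normalized potentials $h_i$ form a relatively $L^1$-compact family of $\omega_0$-psh functions, and the twisting coefficient $\frac{1-t_i}{2t_i}$ vanishes in the limit; together with Skoda's theorem these yield the uniform bound. Once it is in hand, the remaining bootstrap to a smooth limit is routine given the machinery of Section \ref{s:esti} and \cite{DJL}.
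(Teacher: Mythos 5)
Your setup, including the choice of $\gamma_i$ realizing $d_{1,\T^{\mathbb C}}(0,\varphi_i)$, the reduction to the translated twisted system \eqref{twisted cscK_eq}, and the identification of $\int_X e^{-p_0\tilde f_i}\omega_0^{[n]}\leq C$ as the crucial hypothesis, matches the paper. Your Skoda/$\alpha$-invariant argument for that bound is essentially what the paper imports from \cite[Lemma 3.12]{CC21c}: $h_i$ ranges over the $L^1$-compact set of sup-normalized $\omega_0$-psh functions, and the coefficient $\tfrac{1-t_i}{2t_i}$ tends to $0$, so the uniform Skoda exponent eventually dominates $p_0\tfrac{1-t_i}{2t_i}$.

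The gap is in your last step, where you claim that once the integrability is in hand ``the remaining bootstrap to a smooth limit is routine.'' It is not, and the paper does something genuinely different. Two specific problems. First, the statement that ``the $W^{1,2q}$ bounds descend to $\tilde F_i$ alone'' is unjustified: $\tilde f_i=\tfrac{1-t_i}{2t_i}h_i\to 0$ in every $L^p$, but $dh_i$ is \emph{not} uniformly bounded in $L^{2q}$ for $q\geq 1$ (an $\omega_0$-psh function need not lie in $W^{1,p}$ for $p\geq 2$), so subtracting $\tilde f_i$ from $\tilde F_i+\tilde f_i$ does not preserve the $W^{1,2q}$ bound; at best $\tilde F_i$ inherits an $L^p$ bound. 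Second, and more seriously, Theorem~\ref{t:estimates:w12p} only delivers $\|\tilde F_i+\tilde f_i\|_{W^{1,2q}}$ and $\|\Lambda_0(\omega_{\tilde\varphi_i})\|_{L^q}$ bounds. To invoke the $C^{1,1}$ machinery of \cite{DJL} you would need a $C^0$ bound on $\tilde F_i$ (not just on $\tilde F_i+\tilde f_i$) and an $L^\infty$ bound on $\Lambda_0(\omega_{\tilde\varphi_i})$; neither is available, because the $\gamma_i$ need not be bounded in $\T^{\mathbb C}$, so $h_i$ can go to $-\infty$ somewhere, $\tilde F_i$ can blow up, and $\omega_{\tilde\varphi_i}$ can concentrate volume in an $L^q$-but-not-$L^\infty$ controlled way. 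There is no ``routine'' upgrade from $L^q$ to $C^{k,\alpha}$ here; this is precisely why the paper does \emph{not} try to extract a smooth limit by Arzel\`a--Ascoli. Instead it extracts a weak limit (via Sobolev embedding, $C^{1,\alpha}$ for $\tilde\varphi_i$ and $C^\alpha$ for $\tilde F_i+\tilde f_i$, plus weak $L^p$ convergence of the higher derivatives), shows that $\|e^{-\tilde f_i}\|_{L^p}\to 1$, establishes in Proposition~\ref{p:weak:wcscK} that the limit $\varphi_*$ is a weak $(\v,\w)$-cscK metric by integrating the twisted equation against test functions, and only afterward invokes a separate elliptic regularity argument to conclude smoothness. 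You need to replace your final two steps with this weak-limit-then-regularity scheme.
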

We follow the ideas in Chen-Cheng \cite{CC21c}.
%%Hence in the notation of Theorem \ref{t:estimates:w12p} $(\beta_0)_i=\frac{1-t_i}{t_i}\omega_0$ and  $f_i=\frac{1-t_i}{t_i}h_i$.
By \cite[Lemma 3.12]{CC21c} and the initial arguments in  \cite[Proposition 3.13]{CC21c}, for any $p>1$, there exists $\varepsilon_p>0$ such that if $t_i \in (1-\varepsilon_p,1)$, one has

\begin{equation*}
    \int_X e^{-p\tilde{f}_i}\omega_0^{[n]} \leq C,
\end{equation*}
for a constant $C$ uniform in $i$ and $p$. Moreover, we can choose $\varepsilon_p$ small enough such that $e^{-\tilde{f}_i}$ is in $L^q$ for any $q>>1$. Hence, following \cite[Proposition 3.13]{CC21c}, we deduce:

\begin{prop}
For any $p > 1$, there exist $C>0$, and $\varepsilon_p' > 0$, such that for any $t_i \in (1 - \varepsilon_p', 1)$,
\[
\|\tilde{F_i} + \tilde{f}_i\|_{W^{1,2}_p} \leq C, \quad \|\Lambda_{\omega_0}(\omega_{\tilde{\varphi}})\|_{L^p(\omega_0^n)} \leq C,
\]
where $\varepsilon_p'$ depends only on $p$, $\omega_0$, and $C=C(p,\omega_0,\|\v\|_{C^0}, \|\tilde{\w_i}\|_{C^0}, \max |\alpha_{0,i}|_0, \Ent(\tilde{\varphi}_i))$. 
\end{prop}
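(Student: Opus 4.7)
The plan is to recognize that the twisted equation \eqref{twisted cscK_eq} for $\tilde\varphi_i$ is literally an instance of the system \eqref{wsystem2} studied in Section \ref{s:esti}, so one simply needs to match the data and then invoke Theorem \ref{t:estimates:w12p} with constants that are uniform in $i$. Concretely, I would set
\[
\beta_{0,i}:=\tfrac{1-t_i}{2t_i}\omega_0, \qquad \alpha_{0,i}:=\mathrm{Ric}(\omega_0)-\beta_{0,i},
\]
so that $\beta_i=\beta_{0,i}+dd^c\tilde f_i$ with $\sup_X\tilde f_i=0$ (because $\sup_X h_i=0$), and the weight is $\tilde\w_i=-\w/\v+\frac{1-t_i}{t_i}\tilde\v$.

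The heart of the argument is to verify, uniformly in $i$, each of the quantities appearing in the constant $C$ of Theorem \ref{t:estimates:w12p}. The weight $\v$ is fixed and log-concave, so $\|\v\|_{C^0}$ is harmless. Since $t_i\to 1$, $\|\tilde\w_i\|_{C^0}$ is uniformly bounded, and $\max_X|\beta_{0,i}|_0=\frac{1-t_i}{2t_i}\to 0$, so $\max_X|\alpha_{0,i}|_0$ is uniformly controlled. The entropy $\mathrm{Ent}(\tilde\varphi_i)$ is uniformly bounded by Corollary \ref{bound:ent}, using the $\TC$-coercivity hypothesis on $\mathbf{M}_{\v,\w}$ together with Lemmas \ref{l:minimizing:sequence}--\ref{l:T-indep}. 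Finally, for any prescribed $p_0\geq 1$, the $L^{p_0}$-bound
\[
\int_X e^{-p_0\tilde f_i}\,\omega_0^{[n]}\leq C
\]
is available for $t_i$ sufficiently close to $1$, by the preceding application of \cite[Lemma 3.12]{CC21c} that also gave \eqref{bdd:d1:dist}; this is exactly what forces the restriction to $t_i\in(1-\varepsilon'_p,1)$, where $\varepsilon'_p$ is chosen so that the exponent $p_0$ demanded by Theorem \ref{t:estimates:w12p} for the target integrability $W^{1,2p}$ (say $p_0\geq 8n^2$ and $p_0>p$) is admissible.

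With all hypotheses checked, Theorem \ref{t:estimates:w12p} applied to $\tilde\varphi_i$ and $F=\tilde F_i$, $f=\tilde f_i$ delivers both bounds
\[
\|\tilde F_i+\tilde f_i\|_{W^{1,2p}(\omega_0^n)}\leq C, \qquad \|n+\Delta_0\tilde\varphi_i\|_{L^p(\omega_0^n)}=\|\Lambda_{\omega_0}(\omega_{\tilde\varphi_i})\|_{L^p(\omega_0^n)}\leq C,
\]
with $C$ depending only on the data listed in the statement.

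The main obstacle is really the $L^{p_0}$-integrability of $e^{-\tilde f_i}$, which is what dictates $\varepsilon'_p$; once this has been established via the twist of the potentials by the minimizing elements $\gamma_i\in\TC$, the rest of the proposition is a direct application of the a priori estimates of Section \ref{s:esti}, with the uniform entropy bound supplied by coercivity being the only non-elementary ingredient.
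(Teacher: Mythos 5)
Your proposal is correct and takes essentially the same route as the paper: the paper's own proof consists of noting that \eqref{twisted cscK_eq} is an instance of the system \eqref{wsystem1}--\eqref{wsystem2} with $\beta_{0,i}=\frac{1-t_i}{2t_i}\omega_0$, that the $L^{p_0}$-bound on $e^{-\tilde f_i}$ holds for $t_i$ close enough to $1$ via \cite[Lemma 3.12 and Proposition 3.13]{CC21c}, and then invoking Theorem \ref{t:estimates:w12p}. Your write-up fills in exactly the identification of the data $\beta_{0,i}, \alpha_{0,i}, \tilde f_i$ with $\sup_X \tilde f_i=0$, the uniformity in $i$ of $\|\tilde\w_i\|_{C^0}$ and $\max|\alpha_{0,i}|_0$ as $t_i\to 1$, and the choice of $\varepsilon'_p$ so that the needed integrability exponent $p_0$ (taken $\geq 8n^2$ and $>p$) is admissible, which is precisely what the paper's terse reference is encoding.
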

Observe that $ \|\tilde{\w}_i\|_{C^0}$ is controlled by $ \|{\w}\|_{C^0},  \|{\v}\|_{C^0}$ and $\|\tilde{\v}\|_{C^0}$. Also, $\max |\alpha_{0,i}|_0$ is uniformly bounded by $\max |\mathrm{Ric}(\omega_0)|_0+|\omega_0|_0$ and $\Ent(\tilde{\varphi}_i)$ is uniformly bounded by Corollary \ref{bound:ent} if $\M_{\v,\w}$ is $\T^{\mathbb{C}}$-coercive. Then, up to a subsequence, the sequences $(\tilde{\varphi}_i)$, $(\tilde{F}_i+\tilde{f}_i)$ converge to a limit $\varphi_*$ and $F_*$ and by Sobolev embedding theorem

\begin{align}
    {\label{conv2}}    & \tilde{\varphi}_i \rightarrow \varphi_* \text{ in } C^{1,\alpha} \quad \text{and} \quad  dd^c\tilde{\varphi}_i \rightarrow dd^c\varphi_* \text{ weakly in } L^p, \\
       {\label{conv}} & \tilde{F}_i+\tilde{f}_i \rightarrow F_* \text{ in } C^{\alpha}\quad  \text{and}\quad  d(\tilde{F}_i+\tilde{f}_i) \rightarrow dF_* \text{  weakly in } L^p.
    \end{align}
By \cite[Lemma 3.14]{CC21c}, for any $1 \leq p < \infty$ and any $ 1 \leq k \leq n$,

\begin{equation*}
  \omega_{\tilde{\varphi}_i}^{[k]} \rightarrow  \omega_{\varphi_*}^{[k]} \quad \text{weakly in $L^p$}.
\end{equation*}
Moreover by \eqref{conv2} we have that  $\v(\mu_{\tilde{\varphi}_i}) \rightarrow \v(\mu_{\varphi_*})$ in $C^0$ since $\mu_{\tilde{\varphi}_i}= \mu_0 + d^c \tilde{\varphi}_i$. Similarly, $\tilde{\w}_i(\mu_{\tilde{\varphi}_i})\rightarrow -\frac{\w(\mu_{{\varphi}_*})}{\v(\mu_{{\varphi}_*})}$ in $C^0$. 
Hence

\begin{equation*}
 \v(\mu_{\tilde{\varphi}_i}) \omega_{\tilde{\varphi}_i}^{[k]} \rightarrow  \v(\mu_{\varphi_*}) \omega_{\varphi_*}^{[k]} \quad \text{weakly in $L^p$}.
\end{equation*}

\noindent Following the proof in \cite[Lemma 3.16]{CC21c} combined with Lemma \ref{l:j:coercive} and \ref{l:au:ma} we can also infer that for any $p$ we have
\begin{equation}\label{twist zero}
\|e^{-\tilde{f}_{i}}\|_{L^p(\omega_0^n)} \rightarrow 1.
\end{equation}

\begin{prop}{\label{p:weak:wcscK}}
The limit $\varphi_*$ is a weak $(\v,\w)$-cscK metric. More precisely,

 \begin{equation*}
      \v(\mu_{\varphi_*}) \omega_{\varphi_*}^n= e^{F_*} \omega_0^n
 \end{equation*}     
and 
\begin{equation*} 
\begin{split}
    \int_X \big(F_* dd^c u  - 2u \mathrm{Ric}(\omega_0)\big)&\wedge \v(\mu_{\varphi_*}) \omega_{\varphi_*}^{[n-1]} \\
    =&\int_X\left(-\frac{\w(\mu_{\varphi_*})}{\v(\mu_{\varphi_*})} +2\langle d\log(\v(\mu_{\varphi_*})), \mu_{\mathrm{Ric}(\omega_0)} \rangle \right)\v(\mu_{\varphi_*})\omega_{\varphi_*}^{[n]},
\end{split}
\end{equation*}
for any smooth function $u$ on $X$.
\end{prop}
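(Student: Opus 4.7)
The strategy is to pass to the limit in the twisted system \eqref{twisted cscK_eq}, tested against smooth functions, using the convergence properties already recorded: $\tilde{\varphi}_i\to \varphi_*$ in $C^{1,\alpha}$ (hence $\mu_{\tilde{\varphi}_i}\to \mu_{\varphi_*}$ in $C^{0,\alpha}$), $\tilde{F}_i+\tilde{f}_i\to F_*$ in $C^{\alpha}$, $\omega_{\tilde{\varphi}_i}^{[k]}\rightharpoonup \omega_{\varphi_*}^{[k]}$ weakly in $L^p$ for $1\leq k\leq n$, $\tilde{\w}_i(\mu_{\tilde{\varphi}_i})\to -\w(\mu_{\varphi_*})/\v(\mu_{\varphi_*})$, $\v(\mu_{\tilde{\varphi}_i})$ and $\v_{,a}(\mu_{\tilde{\varphi}_i})$ uniformly, and the smooth convergence $\alpha_{0,i}\to \mathrm{Ric}(\omega_0)$.

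\emph{Monge-Amp\`ere identity.} Rewrite $\v(\mu_{\tilde{\varphi}_i})\omega_{\tilde{\varphi}_i}^n = e^{\tilde{F}_i+\tilde{f}_i}\,e^{-\tilde{f}_i}\,\omega_0^n$. Since $\tilde{f}_i=\tfrac{1-t_i}{2t_i}h_i\leq 0$ (from $\sup_X h_i=0$), the integrand $e^{-p\tilde{f}_i}$ is pointwise $\geq 1$; combined with \eqref{twist zero} this forces $e^{-p\tilde{f}_i}\to 1$ in $L^1$, hence $\tilde{f}_i\to 0$ in measure. Together with the uniform convergence $\tilde{F}_i+\tilde{f}_i\to F_*$, dominated convergence yields $e^{\tilde{F}_i}\to e^{F_*}$ in $L^p$. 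On the other hand, $\v(\mu_{\tilde{\varphi}_i})\omega_{\tilde{\varphi}_i}^{[n]}\rightharpoonup \v(\mu_{\varphi_*})\omega_{\varphi_*}^{[n]}$ weakly in $L^p$ (as a $C^0$-convergent factor times a weakly $L^p$-convergent one). Equating the two limits as distributions yields $\v(\mu_{\varphi_*})\omega_{\varphi_*}^n=e^{F_*}\omega_0^n$.

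\emph{Weak scalar-curvature equation.} Pair the second line of \eqref{twisted cscK_eq} with a smooth test function $u$ against $\v(\mu_{\tilde{\varphi}_i})\omega_{\tilde{\varphi}_i}^{[n]}$. Using the divergence-form identity
\[
\Delta_{\tilde{\varphi}_i,\v}(H)\,\v(\mu_{\tilde{\varphi}_i})\omega_{\tilde{\varphi}_i}^{[n]} = d\bigl(\v(\mu_{\tilde{\varphi}_i})\,d^c H\wedge \omega_{\tilde{\varphi}_i}^{[n-1]}\bigr)
\]
(valid because $\omega_{\tilde{\varphi}_i}^{[n-1]}$ is closed) and two Stokes integrations — legitimate since $H_i:=\tilde{F}_i+\tilde{f}_i$ is smooth — the left-hand side becomes
\[
\int_X H_i\,\v(\mu_{\tilde{\varphi}_i})\,dd^c u\wedge \omega_{\tilde{\varphi}_i}^{[n-1]} + \int_X H_i\sum_a \v_{,a}(\mu_{\tilde{\varphi}_i})\, d\mu^a_{\tilde{\varphi}_i}\wedge d^c u\wedge \omega_{\tilde{\varphi}_i}^{[n-1]}.
\]
Symmetrically, expand the right-hand side via $\Lambda_{\tilde{\varphi}_i,\v}(\alpha_{0,i})\v(\mu_{\tilde{\varphi}_i})\omega_{\tilde{\varphi}_i}^{[n]} = \v(\mu_{\tilde{\varphi}_i})\alpha_{0,i}\wedge \omega_{\tilde{\varphi}_i}^{[n-1]} + \v(\mu_{\tilde{\varphi}_i})\langle d\log\v(\mu_{\tilde{\varphi}_i}),\mu_{\alpha_{0,i}}\rangle \omega_{\tilde{\varphi}_i}^{[n]}$. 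In every resulting integrand all factors except one top form ($\omega_{\tilde{\varphi}_i}^{[n-1]}$ or $\omega_{\tilde{\varphi}_i}^{[n]}$) converge uniformly, so weak $L^p$-convergence of the remaining factor allows the limit to pass term by term. Reassembling and using the decomposition $\Lambda_{\varphi_*,\v}(\mathrm{Ric}(\omega_0))=\Lambda_{\varphi_*}(\mathrm{Ric}(\omega_0))+\langle d\log\v(\mu_{\varphi_*}),\mu_{\mathrm{Ric}(\omega_0)}\rangle$ yields the claimed identity.

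\emph{Main obstacle.} The delicate term is $d\mu^a_{\tilde{\varphi}_i}\wedge d^c u\wedge \omega_{\tilde{\varphi}_i}^{[n-1]}$: since $d\mu^a_{\tilde{\varphi}_i}=d\mu^a_0+\iota_{\xi_a}(dd^c\tilde{\varphi}_i)$ and $dd^c\tilde{\varphi}_i$ converges only weakly in $L^p$, this is a priori the wedge of two weakly-convergent quantities, which is not stable under weak limits. The key to bypassing this is the Cartan-calculus identity
\[
d\mu^a_{\tilde{\varphi}_i}\wedge \omega_{\tilde{\varphi}_i}^{[n-1]} = -\iota_{\xi_a}\bigl(\omega_{\tilde{\varphi}_i}^{[n]}\bigr),
\]
which collapses the problematic wedge into $d^c u(\xi_a)\,\omega_{\tilde{\varphi}_i}^{[n]}$ — an $i$-independent smooth function times a single weakly-convergent top form. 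Once this reduction is made, each limit is routine, and the identities combine (using the uniform vanishing of the $(1-t_i)$-terms in $\tilde{\w}_i$ and $\alpha_{0,i}$) into the claimed weak wcscK equation.
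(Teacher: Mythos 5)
Your proof is correct and follows essentially the same route as the paper: integrate the second equation of \eqref{twisted cscK_eq} against a smooth test function $u$, move the weighted Laplacian onto $u$ by parts, expand the weighted trace, and pass to the limit using the recorded $C^{1,\alpha}$/$C^\alpha$/weak-$L^p$ convergences together with \eqref{twist zero}. Your explicit treatment of the potentially problematic wedge $d\mu^a_{\tilde{\varphi}_i}\wedge d^cu\wedge\omega^{[n-1]}_{\tilde{\varphi}_i}$ via the contraction identity (equivalently $g_{\tilde{\varphi}_i}(d\mu^a_{\tilde{\varphi}_i},du)=d^cu(\xi_a)$) correctly spells out what the paper uses implicitly when it writes the gradient piece of $\Delta_{\tilde{\varphi}_i,\v}(u)$ as $\langle d\log\v(\mu_{\tilde{\varphi}_i}),d^cu\rangle$, thereby avoiding the wedge of two merely weakly convergent factors.
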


\begin{proof}
As above
$$
 \v(\mu_{\tilde{\varphi}_i}) \omega_{\tilde{\varphi}_i}^{[k]} \rightarrow  \v(\mu_{\varphi_*}) \omega_{\varphi_*}^{[k]} \quad \text{weakly in $L^p$}.
$$
Also, $e^{\tilde{F}_i}= e^{\tilde{F}_i+\tilde{f}_i} e^{-\tilde{f}_i}$ and $\tilde{F}_i+\tilde{f}_i$ converges to $F_*$ in $L^p$ thanks to \eqref{conv}. By \eqref{twist zero} we can can deduce the first equality.\\
We now multiply the second equation in \eqref{twisted cscK_eq} by $u \in \mathcal{C}^{\infty}(X)^\T$  and we integrate w.r.t. to $\v(\mu_{\tilde{\varphi}_i})\omega_{\tilde{\varphi}_i}^{[n]}$. Integrating by parts gives

\begin{equation*}
    \int_X (\tilde{F}_i+\tilde{f}_i)\Delta_{\tilde{\varphi}_i,\v}(u) \,\v(\mu_{\tilde{\varphi}_i})\omega_{\tilde{\varphi}_i}^{[n]} = \int_X u\big(\tilde{\w}_i(\mu_{\tilde{\varphi}_i})+ 2 \Lambda_{\tilde{\varphi}_i,\v}((\alpha_{0,i})\big)\v(\mu_{\tilde{\varphi}_i})\omega_{\tilde{\varphi}_i}^{[n]}.
\end{equation*}

\noindent Then

\begin{equation}{\label{w:extremal:eq}}
\begin{split}
    \int_X (\tilde{F}_i+\tilde{f}_i)&\big(dd^cu - 2\alpha_{0,i}\big)\v(\mu_{\tilde{\varphi}_i})\omega_{\tilde{\varphi}_i}^{[n-1]} \\
   & = \int_X u\left(\tilde{\w}_i(\mu_{\tilde{\varphi}_i})+ 2\langle d\log \v(\mu_{\tilde{\varphi}_i}), \mu_{\alpha_{0,i}}-d^cu \rangle\right)\v(\mu_{\tilde{\varphi}_i})\omega_{\tilde{\varphi}_i}^{[n]}.
\end{split}    
\end{equation}
From the discussion above we know that $\v(\mu_{\tilde{\varphi}_i})\omega_{\tilde{\varphi}_i}^{[n]}$ converge to $\v(\mu_{\varphi_*})\omega_{\varphi_*}^{[n]}$ in $L^p$  and that $\mu_{\tilde{\varphi}_i}\rightarrow \mu_{\varphi_*}$ in $\mathcal{C}^0(X)$. We infer that $\mu_{\alpha_{0,i}}\rightarrow \mu_{\mathrm{Ric}(\omega_0)}$  since  $\mu_{\alpha_{0,i}}= \mu_{\mathrm{Ric}(\omega_0)}- \frac{(1-t_i)}{2t_i}\mu_{\omega_0}$. \\
Finally, as observed above, we have $\tilde{\w}_i(\mu_{\tilde{\varphi}_i})\rightarrow-\frac{\w(\mu_{{\varphi}_*})}{\v(\mu_{{\varphi}_*})}:= \tilde{\w}(\mu_{{\varphi}_*})$ in $C^0$. 
Hence

\begin{equation}
    \text{ RHS of \eqref{w:extremal:eq} } \longrightarrow \int_X u\big(\tilde{\w}(\mu_{{\varphi}_*})+ 2\langle d\log(\v(\mu_{\varphi_*})), \mu_{\mathrm{Ric}(\omega_0)} \rangle-d^cu\big)\v(\mu_{\varphi_*})\omega_{\varphi_*}^{[n]}. 
\end{equation}
For the LHS we already know that $\tilde{F}_i + \tilde{f}_i \rightarrow F_*$ in $L^p$. Hence
\begin{equation*}
    \text{ LHS of \eqref{w:extremal:eq} }\rightarrow   \int_X F_*\bigg(dd^cu   - 2\mathrm{Ric}(\omega_0)\bigg)\v(\mu_{\varphi_*}) \omega_{\varphi_*}^{[n-1]}.
\end{equation*}
\end{proof}

It follows from \eqref{conv2} that $\varphi_*$ belong to $W^{2,p}(X)$. For any $\xi \in \mathfrak{t}$, we have $\langle \mu_{\varphi_*},\xi\rangle= \langle \mu_{0},\xi\rangle+d^c\varphi(\xi)$. Hence, $\langle \mu_{\varphi_*},\xi\rangle$ belongs to $W^{1,p}(X)$. By Proposition \ref{p:weak:wcscK}, $\varphi_*$ satisfies $\omega_{\varphi_*}^{n}=e^{F_*-\log(\v(\mu_{\varphi_*}))}\omega_0^n$. Since $F_*$ also is in  $W^{1,p}(X)$, $F_*-\log(\v(\mu_{\varphi_*}))$ belongs too $W^{1,p}(X)$. Hence the same argument of Chen--Cheng \cite[Corollary 3.18]{CC21c}, based on the apriori estimates of Chen--He \cite{CH12} for complex Monge-Ampère equation with $W^{1,p}$ RHS apply.

\section{Applications}{\label{s:app}}

\subsection{Openness of existence in the space of weights}{\label{s:app:open}}

In this section, we generalize a theorem of Apostolov–Lahdili–Legendre \cite[Theorem 1.1]{ALL} for \(\v\)-solitons to any weighted cscK metric.  Let \(\mathcal{C}^{\infty}_{\log\text{-}\mathrm{c}}(P,\mathbb{R}_{>0})\) be the space of \(\log\)-concave functions in \(\mathcal{C}^{\infty}(P,\mathbb{R}_{>0})\). We define the space of weights such that the weighted Futaki invariant vanishes

\begin{equation*}
    F(X,[\omega_0]):= \{ (\v,\w) \in \mathcal{C}_{\log\text{-}\mathrm{c}}^{\infty}(P,\R_{>0}) \times \mathcal{C}^{\infty}(P,\R) \text{ s.t. } \mathbf{F}_{\v,\w} \equiv 0\},
\end{equation*}
where $\mathbf{F}_{\v,\w}$ is defined in \eqref{don:fut:gene}. We let
\begin{equation*}
    S(X,[\omega_0]):= \{ (\v,\w) \in \mathcal{C}_{\log\text{-}\mathrm{c}}^{\infty}(P,\R_{>0}) \times \mathcal{C}^{\infty}(P,\R) \text{ s.t. } \exists \, \omega \text{ } (\v,\w)\text{-cscK} \text{ in } [\omega_0]\}.
\end{equation*}

Observe that, since $(\v,\w)$-cscK metrics are minimizers of $\M_{\v,\w}$, it follows that $\M_{\v,\w}$ is bounded. Hence, by Lemma \ref{l:T-indep} the weighted Futaki invariant vanishes, and in particular

\begin{equation*}
    S(X,[\omega_0]) \subset F(X,[\omega_0]).
\end{equation*}

\begin{lemma}{\label{l:coerc}}
Suppose $\M_{\v,\w}$ is $\TC$-coercive. Then for any $(\v_{\varepsilon}, \w_{\varepsilon})$ in $ F(X,[\omega_0])$ close to $(\v,\w)$ for the $\mathcal{C}^{\infty}$-topology, $\M_{\v_\varepsilon,\w_\varepsilon}$ is $\TC$-coercive.
\end{lemma}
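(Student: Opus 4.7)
The strategy is to compare $\M_{\v_\varepsilon,\w_\varepsilon}$ to $\M_{\v,\w}$ term-by-term via the Chen--Tian decomposition \eqref{Chen-Tian}, and to control the error by $\M_{\v,\w}$ itself and by $d_1$. Writing $\rho:=2\mathrm{Ric}(\omega_0)$, \eqref{Chen-Tian} gives
\[
\M_{\v_\varepsilon,\w_\varepsilon}(\varphi)-\M_{\v,\w}(\varphi)=\bigl(\Ent_{\v_\varepsilon}-\Ent_\v\bigr)(\varphi)-\bigl(\J^{\rho}_{\v_\varepsilon,\w_\varepsilon}-\J^{\rho}_{\v,\w}\bigr)(\varphi)+c(\varepsilon),
\]
with $c(\varepsilon)$ a $\varphi$-independent constant tending to $0$.

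For the entropy difference, I would expand
\[
\Ent_{\v_\varepsilon}(\varphi)-\Ent_\v(\varphi)=\int_X\log\!\Bigl(\tfrac{\omega_\varphi^n}{\omega_0^n}\Bigr)(\v_\varepsilon-\v)(\mu_\varphi)\,\omega_\varphi^{[n]}+\int_X\bigl(\v_\varepsilon\log \v_\varepsilon-\v\log \v\bigr)(\mu_\varphi)\,\omega_\varphi^{[n]}.
\]
The elementary inequality $t\log t\ge -1/e$ yields $\int_X|\log(\omega_\varphi^n/\omega_0^n)|\,\omega_\varphi^{[n]}\le \Ent(\varphi)+C$, so the first integral is bounded by $\|\v_\varepsilon-\v\|_{C^0}(\Ent(\varphi)+C)$ and the second by $C\|\v_\varepsilon\log\v_\varepsilon-\v\log\v\|_{C^0}$. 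Using the equivalence $\Ent_\v\sim \Ent$ of \cite[Lemma 4.1]{DJL}, this produces $|\Ent_{\v_\varepsilon}(\varphi)-\Ent_\v(\varphi)|\le \eta_1(\varepsilon)(\Ent_\v(\varphi)+1)$ with $\eta_1(\varepsilon)\to 0$. Rewriting \eqref{Chen-Tian} as $\Ent_\v=\M_{\v,\w}+\J^{\rho}_{\v,\w}+\mathrm{const}$ and applying Lemma \ref{J and d1} gives the supplementary bound $\Ent_\v(\varphi)\le \M_{\v,\w}(\varphi)+C(d_1(0,\varphi)+1)$.

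For the $\J$-functional difference I would use the splitting $\J^\rho_{\v,\w}=\J^\rho_{\v}-\I_\w$ already exploited in the proof of Lemma \ref{l:j:coercive}. Linearity in $\w$ gives $\I_{\w_\varepsilon}-\I_\w=\I_{\w_\varepsilon-\w}$, and Lemma \ref{J and d1} directly bounds $|\I_{\w_\varepsilon-\w}(\varphi)|\le C\|\w_\varepsilon-\w\|_{C^0}(d_1(0,\varphi)+1)$. For $\J^\rho_{\v_\varepsilon}-\J^\rho_\v$, I would integrate the first variation along the ray $t\mapsto t\varphi$ and expand $\Lambda_{t\varphi,\v}(\rho)\v(\mu_{t\varphi})=\Lambda_{t\varphi}(\rho)\v(\mu_{t\varphi})+\sum_a \v_{,a}(\mu_{t\varphi})\mu_\rho^a$, so that the $\v$-dependence enters only through bounded factors. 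Cohomological estimates of the type $\int_X|\varphi|\,\rho\wedge\omega_{t\varphi}^{[n-1]}\le C(J(\varphi)+\|\varphi\|_{L^1(\omega_0)})$ as in \cite[Lemma 6.5]{AJL} then give $|\J^\rho_{\v_\varepsilon}(\varphi)-\J^\rho_\v(\varphi)|\le \eta_2(\varepsilon)(d_1(0,\varphi)+1)$ with $\eta_2(\varepsilon)\to 0$.

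Combining the two bounds, for $\varepsilon$ small I obtain
\[
\M_{\v_\varepsilon,\w_\varepsilon}(\varphi)\ge (1-\eta_1(\varepsilon))\M_{\v,\w}(\varphi)-C(\eta_1(\varepsilon)+\eta_2(\varepsilon))d_1(0,\varphi)-C'.
\]
Both $\M_{\v,\w}$ and $\M_{\v_\varepsilon,\w_\varepsilon}$ are $\TC$-invariant by Lemma \ref{l:T-indep} (the former is bounded below by coercivity; for the latter one uses $(\v_\varepsilon,\w_\varepsilon)\in F(X,[\omega_0])$). For each $\varphi\in\Kn$, I would pick $\gamma\in\TC$ realizing $d_{1,\TC}(0,\varphi)=d_1(0,\gamma\cdot\varphi)$ (cf.\ \cite[Proposition 6.8]{DR}), apply the previous bound to $\gamma\cdot\varphi$, and use $\TC$-invariance of both Mabuchi functionals together with the coercivity hypothesis $\M_{\v,\w}(\varphi)\ge A\,d_{1,\TC}(0,\varphi)-B$ to deduce
\[
\M_{\v_\varepsilon,\w_\varepsilon}(\varphi)\ge\bigl((1-\eta_1(\varepsilon))A-C(\eta_1(\varepsilon)+\eta_2(\varepsilon))\bigr)d_{1,\TC}(0,\varphi)-B'.
\]
For $\varepsilon$ sufficiently small the bracketed coefficient is positive, proving $\TC$-coercivity of $\M_{\v_\varepsilon,\w_\varepsilon}$. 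The delicate step is the entropy estimate: since $\Ent(\varphi)$ is not controlled by $d_1(0,\varphi)$, the error must be absorbed into $\M_{\v,\w}(\varphi)$ itself via the small multiplicative prefactor $(1-\eta_1)$, and keeping this prefactor positive is exactly what forces the smallness requirement on $\varepsilon$.
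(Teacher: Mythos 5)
Your proof is correct and reaches the same final structure as the paper's (a bound of the form $\M_{\v_\varepsilon,\w_\varepsilon}\ge M\cdot\M_{\v,\w}-C_\varepsilon\,d_1-C'$ followed by $\TC$-invariance and the choice of $\gamma_0$ realizing the relative $d_1$-distance), but the entropy step is handled by a genuinely different device. You split $\Ent_{\v_\varepsilon}-\Ent_\v$ \emph{additively} into the $\log(\omega_\varphi^n/\omega_0^n)\cdot(\v_\varepsilon-\v)$ piece, controlled by $\|\v_\varepsilon-\v\|_{C^0}\bigl(\Ent(\varphi)+C\bigr)$ via $t\log t\ge -1/e$, and the $\v_\varepsilon\log\v_\varepsilon-\v\log\v$ piece, controlled pointwise on $P$; this yields a two-sided bound $|\Ent_{\v_\varepsilon}-\Ent_\v|\le\eta_1(\varepsilon)(\Ent_\v+1)$, and you absorb the resulting $\eta_1\Ent_\v$ error by $\Ent_\v\le\M_{\v,\w}+C(d_1+1)$. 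The paper instead runs a \emph{multiplicative} argument: writing $f_\varepsilon=h\,\v_\varepsilon/\v$ and factoring through $\sup_P(\v/\v_\varepsilon)$, it obtains directly $\Ent_{\v_\varepsilon}\ge -C_\varepsilon+M_\varepsilon\,\Ent_\v$ with $M_\varepsilon=\frac{V_\varepsilon}{V}\sup_P(\v/\v_\varepsilon)\ge 1$, $M_\varepsilon\to1$, which is tailored so that the $\J$-split $-\J^{\rho}_{\v_\varepsilon}=-M_\varepsilon\J^{\rho}_\v-\J^{\rho}_{\v_\varepsilon-M_\varepsilon\v}$ aligns with the same factor $M_\varepsilon$. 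Your route is a bit more transparent (no $M_\varepsilon$ bookkeeping; the smallness of $\varepsilon$ appears only as the usual absorption of a small multiple of $\Ent_\v$), while the paper's gives the cleaner $M_\varepsilon\ge 1$ for free. For the $\J$-piece, the paper simply invokes linearity of $\J^\rho_\v$ in $\v$ (\cite[Lemma 6.12]{AJL}) rather than integrating the first variation along $t\mapsto t\varphi$ as you propose; both are correct, the paper's just states the linearity outright. No gap, both arguments close.
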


\begin{proof}
Let $f_\varepsilon:=\v_{\varepsilon}(\mu_\varphi)\frac{\omega^n_{\varphi}}{\omega_0^n}$ and $h:=\v(\mu_\varphi)\frac{\omega^n_{\varphi}}{\omega_0^n}$, so that $f_\varepsilon=h\frac{\v_{\varepsilon}(\mu_\varphi)}{\v(\mu_\varphi)}$. To simply notations, we write 

\[V:= \mathrm{Vol}_\v([\omega_0])=\int_X \v(\mu_\varphi) \omega_\varphi^{[n]} \quad \text{ and } \quad V_\varepsilon:=\mathrm{Vol}_{\v_\varepsilon}([\omega_0])= \int_X \v_\varepsilon(\mu_\varphi) \omega_\varphi^{[n]},\]
which does not depend on $\omega_\varphi \in [\omega_0]$ K\"ahler by \cite[Lemma 1]{Lah19}. A similar computation than the one in \cite[Lemma 4.1]{DJL} gives:

\begin{equation*}
    \begin{split}
        \Ent_{\v}(\varphi)= &\frac{1}{V}\int_X h \log h \omega_0^{[n]} - \log V\\
        = & \frac{1}{V} \int_X \frac{\v(\mu_\varphi)}{\v_{\varepsilon}(\mu_\varphi)} f_\varepsilon \log \left(\frac{\v(\mu_\varphi)}{\v_{\varepsilon}(\mu_\varphi)} f_\varepsilon\right) \,\omega_0^{[n]}  - \log V \\
        =&   \frac{1}{V} \int_X \frac{\v(\mu_\varphi)}{\v_{\varepsilon}(\mu_\varphi)} f_\varepsilon \log \left(\frac{\v(\mu_\varphi)}{\v_{\varepsilon}(\mu_\varphi)} \right) \,\omega_0^{[n]} + \frac{1}{V} \int_X \frac{\v(\mu_\varphi)}{\v_{\varepsilon}(\mu_\varphi)} f_\varepsilon \log  f_\varepsilon \,\omega_0^{[n]}- \log V
        \\
        \leq &  \frac{1}{V} \int_X \v(\mu_\varphi)\log \left(\frac{\v(\mu_\varphi)}{\v_{\varepsilon}(\mu_\varphi)} \right) \omega_\varphi^{[n]} 
        %\frac{1}{V} \inf_P\left(\frac{\v}{\v_{\epsilon}}\right) \int_{\{f_\varepsilon \leq 1\}}  f_\varepsilon \log  f_\varepsilon \,\omega_0^{[n]}  \\
        + \frac{1}{V} \sup_P\left(\frac{\v}{\v_{\epsilon}}\right) \int_{\{f_\varepsilon>1\}}  f_\varepsilon \log f_\varepsilon \,\omega_0^{[n]} - \log V \\
       % \leq  & C_\varepsilon + \frac{1}{V} \inf_P\left(\frac{\v}{\v_{\epsilon}}\right) \int_{\{f_\varepsilon \leq 1\}}  f_\varepsilon \log  f_\varepsilon \,\omega_0^{[n]} + \frac{1}{\vol_\v([\omega_0])} \sup_P\left(\frac{\v}{\v_{\epsilon}}\right) \int_{f>1}  f \log \left( f\right) \,\omega_0^{[n]}   \\
      %  &- \log\left(\vol_\v([\omega_0])\right) \\
     \leq  & C_\varepsilon  + \frac{1}{V}\sup_P\left(\frac{\v}{\v_{\varepsilon}}\right) \int_{X}  f_\varepsilon \log  f_\varepsilon \,\omega_0^{[n]}+   \frac{1}{V}\sup_P\left(\frac{\v}{\v_{\varepsilon}}\right) \int_{\{f_\varepsilon \leq 1\}}  f_\varepsilon (-\log  f_\varepsilon) \,\omega_0^{[n]} -\log V \\
   %  &- \log\left(\vol_\v([\omega_0])\right) \\
     \leq & C_\varepsilon + \frac{V_\varepsilon}{V} \sup_P\left(\frac{\v}{\v_{\epsilon}}\right) \Ent_{\v_{\epsilon}}(\varphi) +\frac{1}{V}\sup_P\left(\frac{\v}{\v_{\varepsilon}}\right) e^{-1} + \log{V_\varepsilon}{V}^{-1}.
        \end{split}
\end{equation*}
Hence

\begin{equation*}
   \Ent_{\v_{\varepsilon}}(\varphi) \geq -C_\varepsilon +  M_{\varepsilon}\Ent_\v(\varphi),
\end{equation*}
where 

\begin{equation*}
    M_{\varepsilon}:=\frac{V_\varepsilon}{V} \sup_P\left(\frac{\v}{\v_{\epsilon}}\right).
\end{equation*}
Observe that  $M_\varepsilon \rightarrow 1$ when $\v_{\varepsilon} \rightarrow \v $ smoothly. We consider the function $\J_{\v_{\varepsilon}}^{2\mathrm{Ric}(\omega_0)}$ defined in  \eqref{j-v}. By linearity of $\J_\v^{2\mathrm{Ric}(\omega_0)}$ with respect to $\v$ (see e.g. \cite[Lemma 6.12]{AJL}), we have that

\begin{equation*}
      - \J_{\v_{\varepsilon}}^{2\mathrm{Ric}(\omega_0)}(\varphi) = -M_{\varepsilon}\J_{\v}^{2\mathrm{Ric}(\omega_0)}(\varphi) - \J_{\v_{\varepsilon}-M_{\varepsilon}\v}^{2\mathrm{Ric}(\omega_0)}(\varphi).
\end{equation*}
Moreover, since $ -C \omega_0 \leq 2 \mathrm{Ric}(\omega_0) \leq C \omega_0 $, by \cite[(54) \& (55)]{AJL} we have

\begin{equation*}
    \J_{\v_{\varepsilon}-M_{\varepsilon}\v}^{2\mathrm{Ric}(\omega_0)}(\varphi) \leq C_{\varepsilon} \int | \varphi | \omega_{\varphi}^{[n]}
\end{equation*}
where $C_{\varepsilon}$ is equal (up to a multiplicative positive constant independent of $\varepsilon$ and $\varphi$) to 

\begin{equation*}
    \sup_{P}|\v_{\varepsilon}-M_\varepsilon \v| + \sup_{P \times P} | \langle d\v_{\varepsilon}-M_\varepsilon d\v, \cdot \rangle |.
\end{equation*}
 When \( \v_{\varepsilon} \) approaches \( \v \) in the smooth topology, we do have  that \( M_{\varepsilon} \) tends to \( 1 \), hence \( C_{\varepsilon} \) tends to \( 0 \). In the following, \( C_{\varepsilon} \) will denote several positive constants independent of \( \varphi \), which goes to \( 0 \) when \( \v_{\varepsilon} \) converges to \( \v \) in the smooth topology. Now, applying a result of Darvas \cite[Theorem 5.5]{Dar}, we obtain that

\begin{equation*}
   - \J_{\v_{\varepsilon}}^{2\mathrm{Ric}(\omega_0)}(\varphi)
\geq  - M_{\varepsilon} \J_{\v}^{2\mathrm{Ric}(\omega_0)}(\varphi) -  C_{\varepsilon} d_{1}(0,\varphi) - C_{\epsilon}.
\end{equation*}
Moreover, by \cite[Lemma 6.9]{AJL} and \cite[Theorem 5.5]{Dar},

\begin{equation*}
    \begin{split}
        \mathbf{E}_{\w_{\varepsilon}}(\varphi) =  &M_{\varepsilon} \mathbf{E}_{\w}(\varphi) + \mathbf{E}_{\w_{\varepsilon}-M_{\varepsilon}\w}(\varphi) \\
        \geq & M_{\varepsilon} \mathbf{E}_{\w}(\varphi) - \sup_P|\w_{\varepsilon}-M_{\varepsilon}\w| \int_C |\varphi|\omega_{\varphi}^{[n]}  \\
        \geq & M_{\varepsilon}\mathbf{E}_{\w}(\varphi) -C_{\varepsilon} d_{1}(0,\varphi),
    \end{split}
\end{equation*}
(see \eqref{I:v:fonc} for the definition of $\mathbf{E}_\w$). By definition of $ \J^{2\mathrm{Ric}(\omega_0)}_{\v,\w}$ we get
$$-\J^{2\mathrm{Ric}(\omega_0)}_{\v_\varepsilon,\w_\varepsilon}(\varphi)= -\J^{2\mathrm{Ric}(\omega_0}_{\v_\varepsilon} (\varphi)+ \mathbf{E}_{\w_\varepsilon}(\varphi) \geq  - M_{\varepsilon} \J_{\v, \w}^{2\mathrm{Ric}(\omega_0)}(\varphi) -  C_{\varepsilon} d_{1}(0,\varphi) - C_{\epsilon}.$$
Hence,
\begin{equation*}
\begin{split}
   \M_{\v_{\varepsilon},\w_{\varepsilon}}(\varphi) \geq &M_{\varepsilon}\M_{\v,\w}(\varphi) - C_{\varepsilon}d_{1}(0,\varphi) - C_{\epsilon}.
 \end{split}  
\end{equation*}
Now let $\gamma_0 \in \TC$ such that

\begin{equation*}
    d_1(0,\gamma_0 \cdot \varphi)= \inf_{ \gamma \in \TC} d_{1}(0,\gamma \cdot \varphi).
\end{equation*}
Since $(\v_{\varepsilon}, \w_{\varepsilon}) \in F(X,[\omega_0])$, $\M_{\v_{\varepsilon},\w_{\varepsilon}}$ is $\T^{\mathbb{C}}$-invariant (see Lemma \ref{l:T-indep}). The same conclusion holds for $(\v,\w)$. Hence 

\begin{equation*}
\begin{split}
  \M_{\v_{\varepsilon},\w_{\varepsilon}}( \varphi)=& \M_{\v_{\varepsilon},\w_{\varepsilon}}(\gamma_0 \cdot \varphi) \\
  \geq &M_{\varepsilon}\M_{\v,\w}(\gamma_0 \cdot \varphi) - C_{\varepsilon}d_{1}(0,\gamma_0 \cdot \varphi) - C_{\epsilon} \\
  =& M_{\varepsilon}\M_{\v,\w}(\varphi) - C_{\varepsilon} \inf_{ \gamma \in \TC}d_{1}(0, \varphi) - C_{\epsilon}\\
    \geq & M_{\varepsilon}\inf_{ \gamma \in \TC}d_{1}(0, \varphi)- C_{\varepsilon} \inf_{ \gamma \in \TC}d_{1}(0, \varphi) - C \\
   \geq & ( M_{\varepsilon}- C_{\varepsilon} )\inf_{ \gamma \in \TC}d_{1}(0, \varphi) - C, \\
 \end{split}  
\end{equation*}
which conclude the proof by choosing $\v_\varepsilon$ close enough to $\v$ for the $\mathcal{C}^{\infty}$-topology.

\end{proof}

\begin{corollary}{\label{c:weight:open}}
$S(X,[\omega_0])$ is open in $F(X,[\omega_0])$ for the $\mathcal{C}^{\infty}$-topology.
\end{corollary}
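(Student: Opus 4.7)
The proof is a three-step assembly combining the main existence theorem, Lemma \ref{l:coerc}, and the converse implication ``existence of a weighted cscK metric implies $\T^{\mathbb{C}}$-coercivity of the weighted Mabuchi functional''.

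Let $(\v,\w) \in S(X,[\omega_0])$, so that some $\omega \in [\omega_0]$ is $(\v,\w)$-cscK. The first step is to invoke the easy direction of the weighted Yau--Tian--Donaldson correspondence: the existence of a $(\v,\w)$-cscK metric forces $\M_{\v,\w}$ to be $\T^{\mathbb{C}}$-coercive. This is provided by \cite[Theorem 1]{AJL} and ultimately relies on the convexity of $\M_{\v,\w}$ along weak geodesic segments (see \cite{Lah20}), together with the fact that critical points of a convex functional are global minimizers.

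The second step is to apply Lemma \ref{l:coerc} to transfer this coercivity to nearby weights: any pair $(\v_\varepsilon,\w_\varepsilon) \in F(X,[\omega_0])$ sufficiently close to $(\v,\w)$ in the $\mathcal{C}^\infty$-topology satisfies that $\M_{\v_\varepsilon,\w_\varepsilon}$ is $\T^{\mathbb{C}}$-coercive as well. Note that membership of $(\v_\varepsilon,\w_\varepsilon)$ in $F(X,[\omega_0])$ is the hypothesis used in Lemma \ref{l:coerc} to ensure that the resulting Mabuchi energy is $\T^{\mathbb{C}}$-invariant (via Lemma \ref{l:T-indep}), so that the coercivity estimate can be upgraded along the $\T^{\mathbb{C}}$-orbit.

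The third and final step is to apply the main result, Theorem \ref{t:exi}, to $(\v_\varepsilon,\w_\varepsilon)$. By definition of $F(X,[\omega_0])$ the weight $\v_\varepsilon$ is strictly positive and log-concave on $P$. Moreover, testing the identity $\mathbf{F}_{\v_\varepsilon,\w_\varepsilon} \equiv 0$ against the constant affine function $\ell \equiv 1$, and using the fact that $\mathbf{F}_{\v_\varepsilon,\w_\varepsilon}$ is independent of the chosen representative of $[\omega_0]$, yields the normalization condition \eqref{norm_intro} for every $\varphi \in \mathcal{K}(X,\omega_0)^{\T}$. Together with the coercivity established in the second step, all hypotheses of Theorem \ref{t:exi} are satisfied, and we conclude that there exists a $(\v_\varepsilon,\w_\varepsilon)$-cscK metric in $[\omega_0]$, i.e.\ $(\v_\varepsilon,\w_\varepsilon) \in S(X,[\omega_0])$.

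The only serious input is Theorem \ref{t:exi} and Lemma \ref{l:coerc}, both of which are already at our disposal; once these are in place the corollary is essentially their juxtaposition, and the main (minor) verification is that the Futaki vanishing condition automatically provides the normalization required to apply Theorem \ref{t:exi}.
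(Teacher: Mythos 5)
Your proof is correct and follows the same three-step route as the paper: invoke the existence-implies-coercivity direction of the weighted YTD correspondence from \cite[Theorem 1]{AJL}, transfer coercivity to nearby weights via Lemma \ref{l:coerc}, and close with Theorem \ref{t:exi}. You also spell out the verification (which the paper leaves implicit in the corollary's proof) that $\mathbf{F}_{\v_\varepsilon,\w_\varepsilon}\equiv 0$, tested against $\ell\equiv 1$, yields the normalization hypothesis \eqref{norm_intro} of Theorem \ref{t:exi}; the only small imprecision is your parenthetical gloss that \cite[Theorem 1]{AJL} rests on critical points of a convex functional being global minimizers, which by itself does not give coercivity, but since you correctly cite the theorem rather than reprove it, this does not affect the argument.
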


\begin{proof}
By \cite[Theorem 1]{AJL}, $\M_{\v,\w}$ is $\TC$-coercive. From Lemma \ref{l:coerc}, we deduce that $\M_{\v_\varepsilon,\w_\varepsilon}$ is $\TC$-coercive. We conclude thanks to Theorem \ref{t:exi}.
\end{proof}

\begin{remark}
The results above holds for the $(\mathcal{C}^1 \times \mathcal{C}^0)$-topology as well.
\end{remark}

\subsection{Weighted toric Yau-Tian-Dolnaldson correspondence}{\label{s:ytd}}

We consider a toric K\"ahler manifold $(X,\omega_0,\mathbb{T})$ with labeled Delzant polytope $(P, \textbf{L})$ \cite{VA4, TD}, where $\textbf{L} =(L_j)_{j=1}^k$ is the collection of non-negative affine linear functions defining $P$, with $dL_j$ being primitive elements of the lattice of circle subgroups of $\mathbb{T}$.\\
Following \cite{SKD, Lah19, LLS}, given $\v \in \mathcal{C}^{\infty}(P,\R_{>0})$ and  $\w \in \mathcal{C}^{\infty}(P,\R)$, we introduce the toric weighted Donaldson--Futaki invariant

\begin{equation}{\label{define-futaki}}
  \mathcal{F}_{\v,\w}(f):=  2\int_{\partial P} f\v d\sigma -  \int_P  f  \w  dx,
\end{equation}
defined for all continuous functions $f$ on $P$, where $d\sigma$ is the induced measure on each face $F_i \subset \partial P$ by letting $dL_i \wedge d \sigma = -dx$, $dx$ being the Lebesgue measure on $P$. The toric weighted Futaki invariant is related to the one defined in \eqref{don:fut:gene}. \\
We denote by $\mathcal{CV}^{\infty}(P)$ the set of continuous convex functions on $P$ which are smooth in the interior $P^0$.  We fix $x_0 \in P^0$ and consider the following  normalization
 \begin{equation}{\label{normalized-function-polytope}}
     \mathcal{CV}^{\infty}_*(P):=\{ f \in \mathcal{CV}^{\infty}(P) \text{ } | \text{ } f(x) \geq f(x_0)=0 \}.
 \end{equation}
 We denote by $f^*$ the linear projection of $f\in \mathcal{CV}^{\infty}(P)$ to  $\mathcal{CV}^{\infty}_*(P)$.
 \smallskip
 
 We now recall the definition of stability introduced in \cite{Jub23, LLS}, which is a generalization of the one in \cite{SKD}:

\begin{define}{\label{uniform-K-stable}}
A labeled Delzant polytope $(P,\mathbf{L})$ is  $(\v,\w)$-uniformly K-stable if there exists $\lambda > 0$ such that

\begin{eqnarray}{\label{uniform-equation}}
\mathcal{F}_{\v,\w}(f) \geq \lambda \| f^*\|_{L_1(P)},
\end{eqnarray}
\noindent for all  $f \in \mathcal{CV}^{\infty}(P)$.
%where $\| \cdot \|_{L_1}$ denotes the $L^1$-norm on $P$.
\end{define}

\subsubsection{Existence via coercivity}

We recall the following fact proved in \cite[Proposition 7.3 and (7.17)]{Jub23}:
%The following proposition is show in Proposition 7.3 and (7.17) in \cite{Jub23}.

\begin{prop}{\label{p:stable:propre}}
Suppose that the labeled moment polytope $(P,\mathbf{L})$ is $(\v,\w)$-uniformly K-stable. Then the weighted Mabuchi energy $\M_{\v,\w}$ of $(X,\omega_0,\T)$ is $\mathbb{T}^{\mathbb{C}}$-coercive.
\end{prop}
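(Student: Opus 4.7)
The plan is to translate the coercivity of $\M_{\v,\w}$ into a convex-analytic estimate on $P$ via the symplectic-potential picture. This is the toric strategy pioneered by Donaldson in the cscK case \cite{SKD} and extended to the weighted setting in \cite{Lah19,Jub23}, of which Proposition \ref{p:stable:propre} is the main output.

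First I would recall that for a toric Kähler manifold, $\T$-invariant Kähler potentials $\varphi \in \K$ correspond via Legendre transform (modulo affine-linear functions, which on the polytope side encode the $\TC$-action) to symplectic potentials $u \in \mathcal{CV}^{\infty}(P)$ with prescribed boundary behaviour dictated by $\mathbf{L}$. Under this correspondence $\mu_\varphi$ becomes the identity on $P$, and Abreu's formula combined with several integrations by parts yields the toric decomposition
\begin{equation*}
\M_{\v,\w}(\varphi_u)\;=\;\mathcal{F}_{\v,\w}(u)\;-\;\int_P \v(x)\,\log\det(u_{ij})\,dx\;+\;C(\v,\mathbf{L},\omega_0),
\end{equation*}
where $\mathcal{F}_{\v,\w}$ is the toric weighted Donaldson--Futaki functional from \eqref{define-futaki} and $C$ is a constant. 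Both summands are invariant under the shift $u \mapsto u+\ell$ for affine-linear $\ell$: this is immediate for the entropy integral, and for $\mathcal{F}_{\v,\w}$ it follows from the stability inequality \eqref{uniform-equation} applied to $\pm\ell$, which forces the weighted Futaki invariant to vanish on affine functions.

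Next, applying Jensen's inequality to the concave function $\log$ against the probability measure $\v\,dx/\int_P \v\,dx$ produces a uniform lower bound $-\int_P \v \log\det(u_{ij})\,dx \geq -C_1$ depending only on $(P,\mathbf{L},\v)$. Combining this with the stability assumption $\mathcal{F}_{\v,\w}(u) \geq \lambda \|u^*\|_{L^1(P)}$ (applied to the affine-invariant representative $u^*$ of $u$), one obtains
\begin{equation*}
\M_{\v,\w}(\varphi_u)\;\geq\;\lambda\,\|u^*\|_{L^1(P)}-C_2.
\end{equation*}
The final ingredient is the equivalence, in the toric setting, between $\|u^*\|_{L^1(P)}$ and the $\TC$-reduced Darvas distance $\inf_{\gamma\in\TC} d_1(0,\gamma\cdot\varphi_u)$. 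Since the $\TC$-action adds affine-linear functions to $u$, picking the optimal $\gamma$ amounts precisely to the normalization $u\mapsto u^*$; for normalized $u$, the toric analogue of \cite[Prop.~5.5]{DR} gives $d_1(0,\varphi_u)\asymp \|u\|_{L^1(P)}$. Putting these three steps together yields the desired $\TC$-coercivity bound.

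The main obstacle is the rigorous treatment of the second step: $\det(u_{ij})$ genuinely degenerates on $\partial P$ in the manner dictated by $\mathbf{L}$, so the integral $\int_P \v \log\det(u_{ij})\,dx$ is improper and the integration by parts used to derive the toric formula for $\M_{\v,\w}$ must be controlled near $\partial P$ via the Guillemin-type asymptotics of symplectic potentials; this is where the weight $\v$ being log-concave and bounded away from $0$ on $P$ becomes essential, and where the boundary term of $\mathcal{F}_{\v,\w}$ in \eqref{define-futaki} naturally emerges. All of this technical work is carried out in detail in \cite[\S7]{Jub23}.
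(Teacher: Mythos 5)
The paper itself gives no standalone proof of this proposition; it defers entirely to \cite[Proposition 7.3 and (7.17)]{Jub23}. Your outline reproduces what is, to my knowledge, the expected toric argument there: decompose $\M_{\v,\w}(\varphi_u)$ into the toric Donaldson--Futaki functional $\mathcal{F}_{\v,\w}(u)$ plus a weighted entropy integral, bound the entropy from below uniformly, feed the uniform K-stability hypothesis into the linear part, and identify $\|u^*\|_{L^1(P)}$ with the $\TC$-reduced $d_1$-distance. That is the right skeleton.

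However, the entropy step is flawed as stated. Jensen for the concave function $\log$ against the probability measure $\v\,dx/\int_P\v\,dx$ gives $\int_P \v\log\det(u_{ij})\,dx \leq \big(\int_P\v\,dx\big)\log\!\left(\int_P\v\det(u_{ij})\,dx/\int_P\v\,dx\right)$, and the right-hand side is $+\infty$: under the Guillemin boundary condition $\det(u_{ij})\sim\prod_j L_j^{-1}$ near $\partial P$, which is never integrable (already in dimension one, $\int_0^1 x^{-1}\,dx=\infty$). So Jensen applied on $P$ is vacuous and does not yield the claimed uniform lower bound on $-\int_P\v\log\det(u_{ij})\,dx$. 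The conclusion you want is nonetheless true, but the correct justification is to recognize $-\int_P \v\log\det(u_{ij})\,dx$ as (up to an additive constant determined by the Guillemin reference potential $u_0$) the weighted entropy $\Ent_\v(\varphi_u)$, which is bounded below by Jensen on $X$ with the convex function $t\mapsto t\log t$ applied to $\v(\mu_\varphi)\omega_\varphi^n/\omega_0^n$ against the fixed volume $\omega_0^{[n]}$ -- i.e., the standard nonnegativity of relative entropy, which the paper invokes via the comparability $\Ent_\v \asymp \Ent$ in \cite[Lemma 4.1]{DJL}. A secondary inaccuracy: log-concavity of $\v$ plays no role in controlling the boundary integrations by parts behind the toric formula -- positivity and smoothness of $\v$ on the compact $P$ suffice there; log-concavity enters elsewhere in the paper, in the a priori estimates for the twisted system. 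With the entropy step corrected, the remaining steps of your outline (affine-invariance of $\mathcal{F}_{\v,\w}$ from stability, the bound $\mathcal{F}_{\v,\w}(u)\geq\lambda\|u^*\|_{L^1}$, and passage to $\inf_{\gamma\in\TC}d_1(0,\gamma\cdot\varphi_u)$) are sound.
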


It then follows straightforwardly from Theorem \ref{t:exi} that

\begin{corollary}{\label{c:ytd}}
Let $(X,\omega_0,\T)$ be a compact toric K\"ahler manifold with labeled Delzant polytope $(P,\textbf{L})$. Suppose that $\v$ is $\log$-concave. Then there exists a $(\v,\w)$-cscK metric in $[\omega_0]$ if and only if $(P,\mathbf{L})$ is uniformly K-stable.
\end{corollary}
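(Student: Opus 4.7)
The plan is to chain together existing pieces. For the implication \emph{stability $\Rightarrow$ existence}, I would first observe that since $\T$ is a maximal torus of $\mathrm{Aut}_{\mathrm{red}}(X)$ in the toric setting and $(P,\mathbf{L})$ is $(\v,\w)$-uniformly K-stable, Proposition \ref{p:stable:propre} directly yields that the weighted Mabuchi energy $\M_{\v,\w}$ is $\TC$-coercive on $\mathcal{K}(X,\omega_0)^\T$. The hypothesis that $\v$ is log-concave is part of the assumption of the corollary, so in order to invoke Theorem \ref{t:exi} it only remains to verify the normalization condition \eqref{norm_intro}. This is where one uses the stability inequality \eqref{uniform-equation} applied to $\pm\ell$ for an arbitrary affine function $\ell$ on $P$: since $\mathcal{F}_{\v,\w}$ is linear and the coercive term $\lambda\|\,\cdot\,^\ast\|_{L^1}$ is non-negative, adding the two inequalities forces $\mathcal{F}_{\v,\w}(\ell)=0$. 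By the toric translation of the weighted Futaki invariant, this is exactly \eqref{norm_intro}. Theorem \ref{t:exi} then produces a $(\v,\w)$-cscK metric in $[\omega_0]$.

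For the implication \emph{existence $\Rightarrow$ stability}, I would simply cite \cite{LLS}, where it is shown that any $f\in\CV^\infty(P)$ induces a weak geodesic ray in $\mathcal{K}(X,\omega_0)^\T$ whose Mabuchi slope at infinity equals the toric weighted Donaldson--Futaki invariant $\mathcal{F}_{\v,\w}(f)$, and a quantitative version of this computation produces the uniform lower bound in terms of $\|f^\ast\|_{L^1(P)}$; convexity of $\M_{\v,\w}$ along weak geodesics together with the existence of a $(\v,\w)$-cscK minimizer then forces the slope to be bounded below by $\lambda\|f^\ast\|_{L^1(P)}$.

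In this corollary there is essentially no independent difficulty beyond what is already packaged into Theorem \ref{t:exi}; the only genuinely technical input is Proposition \ref{p:stable:propre}, which is the bridge from combinatorial stability on $P$ to analytic coercivity of $\M_{\v,\w}$ on $\mathcal{K}(X,\omega_0)^\T$. The one subtlety worth highlighting is the compatibility between the normalization assumed in Theorem \ref{t:exi} and the stability inequality: once one notes that affine functions belong to $\CV^\infty(P)$ (along with their negatives), the vanishing of $\mathcal{F}_{\v,\w}$ on the space of affine functions is an immediate consequence of uniform K-stability and matches \eqref{norm_intro} exactly, so both directions plug in without any further analytical work.
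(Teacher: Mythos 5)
Your proof is correct and follows essentially the same route as the paper: the forward direction chains Proposition \ref{p:stable:propre} (uniform K-stability $\Rightarrow$ $\TC$-coercivity of $\M_{\v,\w}$) into Theorem \ref{t:exi}, and the reverse direction is cited from \cite{LLS}. The paper simply states the corollary "follows straightforwardly" from these two inputs without further elaboration. The one thing you add beyond the paper's exposition is the explicit verification of the normalization hypothesis \eqref{norm_intro} of Theorem \ref{t:exi}: your argument that applying \eqref{uniform-equation} to $\pm\ell$ for $\ell$ affine forces $\mathcal{F}_{\v,\w}(\ell)=0$ (since $\ell^*=0$ and the toric Futaki is linear) is correct, and taking $\ell\equiv 1$ and translating via the moment-map correspondence gives exactly \eqref{norm_intro}. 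The paper leaves this compatibility implicit, so your filling it in is a genuine improvement in completeness, not a deviation in strategy. The remark about $\T$ being a maximal torus is not actually used — neither Proposition \ref{p:stable:propre} nor Theorem \ref{t:exi} requires it — but it does no harm.
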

For completeness, we mention that the direction ``existence implies stability" is shown in \cite{LLS}.

\subsubsection{Existence via estimates}{\label{s:ytd2}}

We now present an alternative proof of Corollary \ref{c:ytd}, which only relies on the estimates provided in \cite{DJL}. We follow a beautiful idea in \cite{LLS23} in the cscK case. 
%We use the notation of Section \ref{s:ytd}.  

Let \(\mathcal{S}(P,\mathbf{L})\) be the space of symplectic potentials, i.e. the space of continuous convex functions on \(P\), smooth in the interior \(P^0\), which satisfy Guillemin's boundary conditions defined in \cite{Gui}. It is well known that each Kähler potential defines a Kähler structure on \(X\) \cite{SKD, Gui}. We denote by \(\mathrm{Scal}_\v(u) : P \longrightarrow \mathbb{R}\) the weighted scalar curvature of the Kähler structure associated with the symplectic potential \(u\); see \cite[eq. (60)]{Lah19} for an explicit expression of \(\mathrm{Scal}_\v(u)\).  

Let \(u_0\) be the Guillemin Kähler potential. Given $\v_0>0$ a fixed weight, we consider the continuity path

\begin{equation}{\label{toric:cont:path}}
    \mathrm{Scal}_{\v_t}(u_t)=\w_t, \qquad t\in[0,1], u_t \in \mathcal{S}(P,\mathbf{L}),
\end{equation}
where $\w_t:=t \w +(1-t) \mathrm{Scal}_{\v_0}(u_0)$ and $\v_t:= t\v + (1-t)\v_0$. We let

\begin{equation*}
    S:=\{  t \in [0,1] \text{ } |  \text{ } \exists \, u_{t} \in \mathcal{S}(P,\mathbf{L})\text{ solution of }\eqref{toric:cont:path}\}.
\end{equation*}
We start with the following key observation:

\begin{lemma}{\label{l:stable}}
Suppose that $(P,\mathbf{L})$ is $(\v,\w)$-uniformly K-stable. Then $(P,\mathbf{L})$ is $(\v_t,\w_t)$-uniformly K-stable for any $t\in[0,1]$.
\end{lemma}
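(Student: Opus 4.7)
The plan is to exploit that $\mathcal{F}_{\v,\w}(f)$ is bilinear in $(\v,\w)$ and that the toric Futaki invariant of the ``tautological'' pair $(\v_0,\mathrm{Scal}_{\v_0}(u_0))$ is non-negative on convex test functions. Since the formula
\[
\mathcal{F}_{\v,\w}(f)=2\int_{\partial P}f\v\,d\sigma-\int_P f\,\w\,dx
\]
is linear in $\v$ and in $\w$ separately, and since by definition $\v_t=t\v+(1-t)\v_0$ and $\w_t=t\w+(1-t)\mathrm{Scal}_{\v_0}(u_0)$, the first step is simply to write
\[
\mathcal{F}_{\v_t,\w_t}(f)=t\,\mathcal{F}_{\v,\w}(f)+(1-t)\,\mathcal{F}_{\v_0,\mathrm{Scal}_{\v_0}(u_0)}(f).
\]

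The second, and main, step is to establish
\[
\mathcal{F}_{\v_0,\mathrm{Scal}_{\v_0}(u_0)}(f)\ge 0\qquad\text{for every }f\in\mathcal{CV}^{\infty}(P).
\]
For this I would invoke Abreu's identity for the weighted scalar curvature, namely $\mathrm{Scal}_{\v_0}(u_0)\,\v_0=-\sum_{i,j}(u_0^{ij}\v_0)_{,ij}$, substitute it into $\int_P f\,\mathrm{Scal}_{\v_0}(u_0)\,dx$, and integrate by parts twice. Guillemin's boundary conditions on $u_0\in\mathcal{S}(P,\mathbf{L})$ are exactly what make the boundary contributions telescope to $2\int_{\partial P}f\v_0\,d\sigma$, yielding the identity
\[
\mathcal{F}_{\v_0,\mathrm{Scal}_{\v_0}(u_0)}(f)=\int_P \Bigl(\sum_{i,j}u_0^{ij}f_{,ij}\Bigr)\v_0\,dx.
\]
Since $(u_0^{ij})$ is the inverse of the positive-definite Hessian of the symplectic potential, while $(f_{,ij})$ is positive semi-definite by convexity of $f$, the trace of their product is non-negative; combined with $\v_0>0$, this gives the claim. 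The only subtlety here is booking the boundary terms correctly, which is a standard Donaldson-type computation once one works in the slice adapted to the facet structure $\mathbf{L}$; this is the step I expect to need the most care.

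Combining the two steps with the assumed bound $\mathcal{F}_{\v,\w}(f)\ge\lambda\|f^*\|_{L^1(P)}$ gives
\[
\mathcal{F}_{\v_t,\w_t}(f)\ge t\lambda\,\|f^*\|_{L^1(P)}+(1-t)\cdot 0=t\lambda\,\|f^*\|_{L^1(P)},
\]
which proves $(\v_t,\w_t)$-uniform K-stability with constant $t\lambda>0$ for every $t\in(0,1]$; the endpoint $t=0$ is tautological because $u_0$ itself solves \eqref{toric:cont:path} at $t=0$. Note that the argument only uses the linearity of $\mathcal{F}_{\cdot,\cdot}$ and the fact that $u_0$ lies in $\mathcal{S}(P,\mathbf{L})$, so log-concavity of $\v$ does not enter at this stage.
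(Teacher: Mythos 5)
You use the same linear decomposition as the paper, $\mathcal{F}_{\v_t,\w_t}(f)=t\,\mathcal{F}_{\v,\w}(f)+(1-t)\,\mathcal{F}_{\v_0,\mathrm{Scal}_{\v_0}(u_0)}(f)$, but the treatment of the second term is where your argument falls short of the lemma. You prove $\mathcal{F}_{\v_0,\mathrm{Scal}_{\v_0}(u_0)}(f)\ge 0$ via the weighted Abreu identity and integration by parts. Even granting that the boundary bookkeeping works out with the $\v_0$ weight present (the clean formula $\int_P \v_0 u_0^{ij}f_{,ij}\,dx$ needs care, since the $\v_0^{-1}$ prefactor in the Abreu identity does not obviously commute with the two integrations by parts against the unweighted measure in $\int_P f\,\w\,dx$), what you obtain is only $(\v_0,\mathrm{Scal}_{\v_0}(u_0))$-K-semistability, not \emph{uniform} K-stability: no lower bound of the form $\lambda_0\|f^*\|_{L^1(P)}$ with $\lambda_0>0$.

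This is a genuine gap. First, the resulting constant $t\lambda$ degenerates as $t\to 0$, so you do not get a uniform-in-$t$ stability constant; the paper's decomposition yields $\lambda_t=t\lambda_1+(1-t)\lambda_0\ge\min(\lambda_0,\lambda_1)>0$, which is exactly what is needed for the uniform entropy bound in Proposition \ref{p:toric:ent}. Second, and more fundamentally, your claim that the endpoint $t=0$ is ``tautological'' because $u_0$ solves \eqref{toric:cont:path} at $t=0$ is not correct: the passage from ``a smooth solution exists'' to ``the polytope is \emph{uniformly} K-stable'' is precisely the non-trivial ``existence implies uniform stability'' theorem of \cite{LLS}. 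The paper's proof invokes this theorem to obtain a strictly positive $\lambda_0$ for the pair $(\v_0,\mathrm{Scal}_{\v_0}(u_0))$; your Abreu-identity argument, even if the boundary terms are fully booked, only reproduces the much weaker non-negativity statement and cannot replace \cite{LLS}. To repair your proof you would need, at minimum, to cite the result of \cite{LLS} for the $(\v_0,\mathrm{Scal}_{\v_0}(u_0))$ term, at which point it coincides with the paper's argument.
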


\begin{proof}
The symplectic potential $u_0$  trivially solve
\begin{equation}{\label{cont:path:tor}}
    \mathrm{Scal}_{\v_0}(u_0)=\w_0, \qquad u_0 \in \mathcal{S}(P,\mathbf{L}).
\end{equation}
Hence by \cite{LLS}, $(P,\mathbf{L})$ is $(\v_0,\mathrm{Scal}_{\v_0}(u_0))$ uniformly K-stable. We let $\lambda_0$ and $\lambda_1$ be the constants in the stability condition in \eqref{uniform-equation} corresponding to the weights  $(\v_0,\mathrm{Scal}_{\v_0}(u_0))$ and $(\v,\w)$ respectively.\\
Let $f \in \mathcal{CV}^{\infty}(P)$. Since $P$  is uniformly K-stable with respect to $(\v,\mathrm{Scal}(u_0))$ and $(\v,\w)$ we have
\begin{equation*}
\begin{split}
    \mathcal{F}_{\v_t,\w_t}(f)  =&(1-t) \mathcal{F}_{\v,\mathrm{Scal}_{\v_0}(u_0)}(f) + t \mathcal{F}_{\v,\w}(f) \\
        \geq & \lambda_t \int_P f^* dx,
\end{split}    
\end{equation*}
where $\lambda_t:= t \lambda_1 + (1-t) \lambda_0$.
\end{proof}
Following \cite{SKD, Jub23}, we let $\M_{\v,\w}^P : \mathcal{S}(P,\mathbf{L})\longrightarrow \mathbb{R}$ be defined as

\begin{equation*}
    \M_{\v,\w}^P(u)= \frac{1}{(2\pi)^n} \M_{\v,\w}(\varphi_u),
\end{equation*}
where $\varphi_u \in \mathcal{K}(X,\omega_0)^\T$ is the K\"ahler potential corresponding the symplectic potential $u$ via the Legendre transform (see \cite{SKD, Gui}).

\begin{prop}{\label{p:toric:ent}}
Suppose that $(P,\mathbf{L})$ is $(\v,\w)$-uniformly K-stable. Then the entropy $\Ent: \mathcal{K}(X,\omega_0)^\T \longrightarrow \R$ is bounded along the K\"ahler potentials $\varphi_{{u_t}} \in \mathcal{K}(X,\omega_0)^\T$ corresponding to a symplectic potential $u_t \in \mathcal{S}(P,\mathbf{L})$.
\end{prop}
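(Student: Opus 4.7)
The plan is to transfer the problem to the polytope side, where the weighted Mabuchi functional admits a Chen--Tian-type decomposition in which the toric Futaki invariant \eqref{define-futaki} appears explicitly (as in \cite{Jub23,LLS}), and to combine this with the uniform K-stability along the continuity path provided by Lemma \ref{l:stable}. Since $u_t$ solves $\mathrm{Scal}_{\v_t}(u_t)=\w_t$, the K\"ahler potential $\varphi_{u_t}$ is a critical point of $\M_{\v_t,\w_t}$; the existence of a solution forces the weighted Futaki invariant to vanish, so $\M_{\v_t,\w_t}$ is $\TC$-invariant by Lemma \ref{l:T-indep}. Combined with convexity along weak geodesics \cite{Lah20}, this makes $\varphi_{u_t}$ a global minimizer, whence
\[
\M^P_{\v_t,\w_t}(u_t) \;\le\; \M^P_{\v_t,\w_t}(u_0),
\]
and the right-hand side is uniformly bounded in $t\in[0,1]$ by continuity of the weights $(\v_t,\w_t)$ in $t$.

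Next, one invokes the toric Chen--Tian decomposition of the form
\[
\M^P_{\v,\w}(u) \;=\; H_\v(u) \;+\; \mathcal{F}_{\v,\w}(u^*) \;+\; C_{\v,\w},
\]
where $H_\v(u):=-\int_P \log\det(\mathrm{Hess}\,u)\,\v\,dx$ is the (gauge-invariant) toric entropy, $u^*\in\mathcal{CV}^\infty_*(P)$ is the normalization from \eqref{normalized-function-polytope} (so $u-u^*$ is affine-linear, and one uses that $\mathcal{F}_{\v,\w}$ vanishes on affine-linear functions whenever a solution exists), and $C_{\v,\w}$ depends only on $u_0$ and the weights. By Lemma \ref{l:stable}, $(P,\mathbf{L})$ is $(\v_t,\w_t)$-uniformly K-stable with constant $\lambda_t\ge\min(\lambda_0,\lambda_1)>0$; applied to the convex function $u_t^*$,
\[
\mathcal{F}_{\v_t,\w_t}(u_t^*) \;\ge\; \lambda_t\,\|u_t^*\|_{L^1(P)} \;\ge\; 0,
\]
and combining with the upper bound on $\M^P_{\v_t,\w_t}(u_t)$ from the previous step yields $H_{\v_t}(u_t)\le C$ uniformly in $t$.

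Finally, the standard toric identity $\omega_{\varphi_u}^n/\omega_0^n=\det(\mathrm{Hess}\,u_0)/\det(\mathrm{Hess}\,u)$ (pulled back along the moment map) gives, up to a uniformly bounded correction depending only on $u_0$ and $\v_t$, an identity of the shape
\[
\Ent_{\v_t}(\varphi_{u_t}) \;=\; c_n\, H_{\v_t}(u_t) + O(1),
\]
with $c_n$ a positive dimensional constant; hence $\Ent_{\v_t}(\varphi_{u_t})\le C$ uniformly. Applying \cite[Lemma 4.1]{DJL} uniformly (the weights $\v_t$ vary in a compact $C^0$-family) the classical entropy also satisfies $\Ent(\varphi_{u_t})\le C$, as required. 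The main obstacle is the second step: making the weighted toric Chen--Tian decomposition precise and checking that the nonnegativity $\mathcal{F}_{\v_t,\w_t}(u_t^*)\ge 0$ is available uniformly along the continuity path; once this is in place the entropy bound reduces to a direct algebraic combination of the three identities above.
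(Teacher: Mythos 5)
Your proposal diverges from the paper at the crucial step and, as written, has a genuine gap. The paper does \emph{not} use a polytope-side entropy identity; instead it works with the Chen--Tian decomposition on $X$ (formula \eqref{Chen-Tian}), bounds $\J^{2\mathrm{Ric}(\omega_0)}_{\v_t,\w_t}(\varphi_{u_t})$ by $C\,d_1(0,\varphi_{u_t})$ via Lemma~\ref{J and d1}, then bounds $d_1(0,\varphi_{u_t})$ by $\int_P|u_t|\,dx$ (the reference [apo, eq.\,(66)]), and finally controls $\int_P|u_t^*|\,dx$ using the full strength of uniform K-stability, i.e.\ the inequality $\mathcal{F}_{\v_t,\w_t}(u_t^*)\ge\lambda_t\|u_t^*\|_{L^1(P)}$ together with an upper bound on $\mathcal{F}_{\v_t,\w_t}(u_t^*)$ coming from the Mabuchi bound.

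The problematic step in your argument is the asserted ``standard toric identity'' $\Ent_{\v_t}(\varphi_{u_t}) = c_n\,H_{\v_t}(u_t) + O(1)$. This is not correct. Writing $\phi_u$ for the Legendre transform of $u$, the ratio $\omega_{\varphi_u}^n/\omega_0^n$ at a point with $\mu_{\varphi_u}$-coordinate $x$ equals $\det(\mathrm{Hess}\,\phi_{u_0})^{-1}$ evaluated at $\nabla u(x)$ times $\det(\mathrm{Hess}\,u(x))^{-1}$; the first factor is a Hessian of $u_0$ evaluated at a point in $P$ that depends on $u$, not at $x$ itself, and its logarithm is not integrably bounded independently of $u$ (near $\partial P$ it behaves like $-\sum_j\log L_j$ at a $u$-dependent location). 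Comparing the two decompositions of $\M^P_{\v,\w}$ makes this transparent: from $\M^P_{\v,\w}(u)=H_\v(u)+\mathcal{F}_{\v,\w}(u)+c$ and $(2\pi)^n\M^P_{\v,\w}(u)=\Ent_\v(\varphi_u)-\J^{2\mathrm{Ric}(\omega_0)}_{\v,\w}(\varphi_u)-c'$, one gets
\[
\Ent_\v(\varphi_u)=(2\pi)^n H_\v(u)+(2\pi)^n\mathcal{F}_{\v,\w}(u)+\J^{2\mathrm{Ric}(\omega_0)}_{\v,\w}(\varphi_u)+\text{const},
\]
so the ``$O(1)$'' in your formula is in fact $(2\pi)^n\mathcal{F}_{\v_t,\w_t}(u_t)+\J^{2\mathrm{Ric}(\omega_0)}_{\v_t,\w_t}(\varphi_{u_t})$, neither of which is bounded a priori. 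Bounding these terms requires precisely the $L^1$ control on $u_t^*$ that the paper extracts from the quantitative K-stability inequality. Your proof only uses $\mathcal{F}_{\v_t,\w_t}(u_t^*)\ge 0$, which suffices for $H_{\v_t}(u_t)\le C$ but does not give the entropy bound. The missing analytical step is to show $\|u_t^*\|_{L^1(P)}\le C$, which the paper obtains by combining the upper bound on $\M^P_{\v_t,\w_t}(u_t)$ with a lower bound $H_{\v_t}(u)\ge -\epsilon\,\mathcal{F}_{\v_t,\w_t}(u^*)-C_\epsilon$ (a Donaldson-type inequality), giving $\mathcal{F}_{\v_t,\w_t}(u_t^*)\le C$ and hence, via uniform K-stability, $\|u_t^*\|_{L^1}\le C/\lambda$; only then does the entropy follow via the $\J$-bound. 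The first step of your argument (global minimization of $\M^P_{\v_t,\w_t}$ at $u_t$, via Futaki vanishing and convexity) is a valid alternative to the paper's direct citation of [Jub23, Prop.\,7.7].
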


\begin{proof}
Using the explicit formula for $\M_{\v,\w}^P$ in \cite[Section 7.4]{Jub23}, we find that

\begin{equation*}
    \M_{\v_t,\w_t}^P= t\M_{\v,\w}^P + (1-t)\M_{\v_0,\mathrm{Scal}_{\v_0}(u_0)}^P.
\end{equation*}
Since $u_t$ minimize $ \M_{\v_t,\w_t}^P$ (by \cite[Proposition 7.7]{Jub23}), using the above we find that

\begin{equation*}
    \begin{split}
       \M_{\v_t,\w_t}^P(u_t) \leq \M_{\v,\w}^P(0) + \M_{\v_0,\mathrm{Scal}_{\v_0}(u_0)}^P(0)\leq  C,
    \end{split}
\end{equation*}
where $C>0$ is a constant independent of $t$ and $u_t$. By the weighted Chen-Tian \eqref{Chen-Tian} formula we find

\begin{equation*}
\begin{split}
    \Ent_{\v_t}(\varphi_{u_t})=& \M_{\v_t,\w_t}(\varphi_{u_t}) + \J^{2\mathrm{Ric}(\omega_0)}_{\v_t,\w_t}(\varphi_t)+\int_X \log \v(\mu_0) \v(\mu_0) \omega_0^n \\
    = &(2\pi)^n \M^{P}_{\v_t,\w_t}(u_t) + \J^{2\mathrm{Ric}(\omega_0)}_{\v_t,\w_t}(\varphi_{u_t}) +\int_X \log \v(\mu_0) \v(\mu_0) \omega_0^n\\
    \leq &  C+ \J^{2\mathrm{Ric}(\omega_0)}_{\v_t,\w_t}(\varphi_{u_t}) \\
     \leq &C + Cd_1(0,\varphi_{u_t}) \\
    \leq  & C_1 + C_1\int_P |u_t| dx \leq C_2.
\end{split}    
\end{equation*}
The second inequality follows from \cite[Lemma 6.12 and 6.5]{AJL}, as well as \cite[Theorem 3 and Corollary 4.14]{Dar}. The third inequality is proved in \cite[eq. (66)]{apo}. Finally, the last equality follows from Lemma \ref{l:stable} and the fact that \(\int_P |u_t| \, dx\) is bounded, given that \((P,\mathbf{L})\) is \((\v_t,\w_t)\)-uniformly K-stable. The conclusion follows from the fact that the standard entropy \(\Ent\) is bounded if and only if the weighted entropy \(\Ent_\v\) is bounded (see \cite[Lemma 4.1]{DJL}).
\end{proof}

\begin{lemma}
$S$ is non-empty, open and closed. 
\end{lemma}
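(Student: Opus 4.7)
\textbf{Non-emptiness.} By choice of $\w_0 = \mathrm{Scal}_{\v_0}(u_0)$, the Guillemin potential itself solves \eqref{toric:cont:path} at $t=0$, so $0 \in S$. We also observe that we may, and do, choose $\v_0=\v$ so that $\v_t \equiv \v$ is $\log$-concave along the whole path (only $\w_t$ varies); the estimates of \cite{DJL} then apply uniformly in $t$.

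\textbf{Openness.} Fix $t_*\in S$ with solution $u_{t_*}$ and denote by $\varphi_{u_{t_*}}\in\mathcal{K}(X,\omega_0)^\T$ the associated Kähler potential. The linearization of the map $(u,t)\mapsto \mathrm{Scal}_{\v_t}(u)-\w_t$ in the $u$--direction coincides, on $\T$-invariant functions transported to $X$, with $-2\mathbb{L}_{\varphi_{u_{t_*}},\v_{t_*}}$; by Lemma~\ref{l:line:op} and \cite[Appendix A]{DJL} this is a self-adjoint, elliptic fourth order operator whose kernel consists of the affine functions on $P$ (since $\T$ is maximal, these are the only Killing potentials). The image is therefore the $L^2$--orthogonal complement of the affines, and adding an affine function to $u$ leaves the Kähler structure, and hence the equation, unchanged. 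Working on the transverse slice $\mathcal{CV}^{\infty}_*(P)$ introduced in \eqref{normalized-function-polytope} and recalling that the compatibility condition for the right-hand side to lie in the range reads $\mathcal{F}_{\v_t,\w_t}(\ell)=0$ for every affine $\ell$ (which holds along the path by Lemma~\ref{l:stable} and \eqref{define-futaki}), a standard quantitative implicit function theorem, exactly as in Section~\ref{s:IFT}, yields a smooth family of solutions $u_t\in\mathcal{S}(P,\mathbf L)$ for $t$ near $t_*$. Hence $S$ is open.

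\textbf{Closedness.} Let $t_i\to t_*\in\overline S\cap[0,1]$ with solutions $u_{t_i}\in\mathcal{S}(P,\mathbf L)$ of \eqref{toric:cont:path}. Normalizing by an affine function we may assume $u_{t_i}\in\mathcal{CV}^{\infty}_*(P)$. By Lemma~\ref{l:stable} the pair $(P,\mathbf{L})$ is $(\v_{t_i},\w_{t_i})$-uniformly K-stable with a stability constant $\lambda_{t_i}$ bounded away from $0$ in $[0,1]$, and Proposition~\ref{p:toric:ent} therefore furnishes a uniform bound
\[ \sup_i \Ent(\varphi_{u_{t_i}}) \leq C. \]
Since $\v_{t_i}\equiv \v$ is $\log$-concave and \eqref{toric:cont:path} rewrites as the weighted cscK system \eqref{wsystem2} with $\beta_\varphi=0$ and right-hand side $\w_{t_i}$ uniformly bounded in $C^\infty$, the a priori estimates of Theorem~\ref{t:c0:estimates}, Corollary~\ref{c:int:estim}, Proposition~\ref{p:estim:c1} and Theorem~\ref{t:estimates:w12p} (specialized to $f\equiv 0$, i.e.\ the estimates of \cite{DJL}) give uniform $W^{1,2p}$-bounds on $F_i:=\log(\v(\mu_{\varphi_{u_{t_i}}})\omega_{\varphi_{u_{t_i}}}^n/\omega_0^n)$ and on $n+\Delta\varphi_{u_{t_i}}$. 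Standard bootstrapping via the second equation of \eqref{wsystem2} then promotes these to uniform $C^{k,\alpha}$-bounds for every $k$. Extracting a subsequence, $\varphi_{u_{t_i}}\to\varphi_*$ in $C^\infty(X)$ for some $\varphi_*\in\mathcal{K}(X,\omega_0)^\T$; by the Legendre transform correspondence between $\mathcal{K}(X,\omega_0)^\T$ and $\mathcal{S}(P,\mathbf L)/\mathrm{Aff}(P)$, this means $u_{t_i}\to u_{t_*}$ in $C^\infty_{\rm loc}(P^{\circ})$ to some $u_{t_*}\in\mathcal{S}(P,\mathbf L)$ (the Guillemin boundary behaviour is preserved because it is determined by the fixed combinatorial data $\mathbf L$). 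Passing to the limit in \eqref{toric:cont:path} gives $t_*\in S$.

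\textbf{Main obstacle.} The genuinely nontrivial input is the closedness step: it rests on Proposition~\ref{p:toric:ent} (which converts toric K-stability into a uniform entropy bound using the weighted Chen–Tian formula and the toric estimate $d_1(0,\varphi_u)\lesssim \|u\|_{L^1(P)}$) together with the $\log$-concave a priori estimates of \cite{DJL}. Everything else, including the constancy of the stability constant along the path and the functional-analytic setup for openness, is soft.
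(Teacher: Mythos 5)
Your proof is correct and follows the same approach as the paper, which is terser and delegates the details of closedness to \cite[Theorem 4.3]{LLS23}: non-emptiness from the Guillemin potential, openness via the elliptic self-adjoint Lichnerowicz linearization together with the vanishing of $\mathcal{F}_{\v_t,\w_t}$ on affines, and closedness from the entropy bound of Proposition~\ref{p:toric:ent} combined with the a priori estimates of \cite{DJL}. One point you make explicit that the paper leaves implicit, and that is in fact necessary, is the choice $\v_0=\v$: a convex combination of $\log$-concave weights need not be $\log$-concave (e.g.\ $\tfrac12(e^x+e^{-x})=\cosh x$), so this choice is what guarantees that the \cite{DJL} estimates apply uniformly along the path.
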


\begin{proof}
$S$ is clearly non-empty as $u_0$ is a solution at $t=0$. \\
Via the moment map $\mu_{\varphi_{u_t}} :X \longrightarrow P$, the equation $\mathrm{Scal}_{\v_t}(u_t)-\w_t=0$ writes on $X$ (up to a factor $\v_t(\mu_{\varphi_{u_t}})$) as

\begin{equation*}
\frac{1}{\v_t(\mu_{\varphi_{u_t}})}(\mathrm{Scal}_{\v_t}(\varphi_{u_t})- \w_t(\mu_{\varphi_{u_t}}))=0.
\end{equation*}
By \eqref{comp:lichne}, the linearization of the LHS at $\varphi_{{u_t}}$ is equal to the weighted Lichnerowicz operator
%\eqref{define-lichne-pondéré} 

\begin{equation*}
    D_{\varphi_{u_t}}\left(\frac{1}{\v_t(\mu_{\varphi_{u_t}})}\left(\mathrm{Scal}_{\v_t}({\varphi_{u_t}})- \w_t(\mu_{{\varphi_{u_t}}})\right)\right)(\dot{\varphi})= \mathbb{L}_{\varphi_{u_t},\v_t}(\dot{\varphi}).
\end{equation*}
Since \(\mathbb{L}_{\varphi,\v}\) is elliptic and self-adjoint with respect to \(\v(\mu_\varphi)\omega_\varphi^{[n]}\) \cite[Lemma B1]{Lah19}, standard arguments from elliptic PDE theory show the openness in the space of Kähler potentials. By the well-known correspondence between Kähler and symplectic potentials \cite{SKD, Gui} we get the openness for $S$.

For the closeness we use the same arguments in the proof of \cite[Theorem 4.3]{LLS23} (which work thanks to Proposition \ref{p:toric:ent}) and the estimates in \cite{DJL}.
\end{proof}

\subsection{Weighted cscK problem on semisimple principal fibration}{\label{s:fibration}}

We briefly recall the setting of a semisimple principal fibration as in \cite{AJL}.  

Let \((X, \omega_0)\) be a compact Kähler \(n\)-manifold endowed with a Hamiltonian isometric action of an \(r\)-dimensional torus \(\T\). Let \(\pi_B : Q \longrightarrow B\) be a principal \(\T\)-bundle over a \(2m\)-dimensional manifold \((B,\omega_B) = \prod_{a=1}^{k} (B_a, \omega_{B_a})\), where each \((B_a, \omega_{B_a})\), \(a = 1, \ldots, k\), is a compact cscK polarized Kähler \(2m_a\)-manifold, such that there exists a connection 1-form \(\theta \in \Omega^1(Q, \mathfrak{t})\) with curvature

\begin{equation}\label{def:connection}
    d\theta = \sum_{a=1}^{k} \pi_B^*(\omega_{B_a}) \otimes p_a,
\end{equation}  
where \(p_a \in \mathfrak{t}\).  We consider the action of \(\T\) generated by elements of the form \(-\xi_i^X + \xi_i^Q\) on \(X \times Q\) and define  
\[
Y := (X\times Q)/\T.
\]  
It is shown in \cite[Section 5]{AJL} that \(\pi_B : Y \longrightarrow B\) is an holomorphic fibration over \(B\). Moreover, thanks to \cite[eq. (25)]{AJL}, we have the following identification:

\begin{equation}{\label{iden:func}}
    \mathcal{C}^{\infty}(Y)^\T = \mathcal{C}^{\infty}(X \times B)^\T.
\end{equation}  
Also, from \cite[Section 5]{AJL}, any Kähler metric \(\omega\) on \(X\) induces a Kähler metric \(\tilde{\omega}\) on \(Y\) such that the restriction \((Y_b, \tilde{\omega}_b)\) of \((Y, \tilde{\omega})\) to a fiber over a point \(b \in B\) is \(\T\)-equivariantly Kähler isomorphic to \((X, \omega)\). Such a metric \(\tilde{\omega}\) on \(Y\) is the \emph{compatible Kähler metric associated with} \(\omega\).  From now on, $\omega$ will denote a K\"ahler metric on $X$ and $\tilde{\omega}$ the compatible one on $Y$.
%\textcolor{red}{question: if $\omega$ $T$-invariant, then what $\tilde{\omega}$ $T$-invariant?}
%\textcolor{magenta}{Yes its true. Its discuss in \cite[(24) + below]{AJL}}
\smallskip

We recall the following fundamental formula for the scalar curvature of a compatible Kähler metric (see \cite[Lemma 5.8]{AJL}):

\begin{lemma}\label{l:Y-Scal}   Let $\v>0$ be a weight function on $P$. Then
\begin{equation*}
\frac{\mathrm{Scal}_\v(\tilde \omega)}{\v(\mu_\omega)} = \frac{\mathrm{Scal}_{\mathrm{p} \v}(\omega) }{(\mathrm{p}\v )(\mu_{{\omega}})}+ \v(\mu_\omega)q(\mu_{\omega}) 
\end{equation*}
with $\mathrm{p}(x)= \prod_{a=1}^k(\langle p_a, x\rangle + c_a)^{m_a}$ and $\mathrm{q}(x)=\sum_{a=1}^k \frac{\mathrm{Scal}(\omega_{B_a})}{\langle p_a, x\rangle + c_a}$,
where $c_a$ are constant such that $\langle p_a, x\rangle + c_a>0$.
\end{lemma}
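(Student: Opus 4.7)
The plan is to compute the weighted scalar curvature of $\tilde{\omega}$ by reducing to fiber-wise and base-wise contributions, exploiting the explicit block structure of a compatible K\"ahler metric on a semisimple principal fibration.

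First, I would recall the explicit form of $\tilde{\omega}$ from the construction of \cite[Section 5]{AJL}: $\tilde{\omega}$ admits a horizontal-vertical decomposition relative to the connection $\theta$ in which the vertical component restricts to $\omega$ on each fiber, while the horizontal component is the twisted pullback $\sum_a (\langle p_a, \mu_\omega\rangle + c_a)\, \pi_B^* \omega_{B_a}$. Under the identification \eqref{iden:func} the $\T$-moment map of $\tilde{\omega}$ coincides with $\mu_\omega$, and one obtains the volume form identity $\tilde{\omega}^{[n+m]} = \mathrm{p}(\mu_\omega)\,\omega^{[n]} \wedge \pi_B^* \omega_B^{[m]}$ (already recorded in \eqref{vol:comp}).

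Second, I would compute $\mathrm{Ric}(\tilde{\omega}) = -\tfrac{1}{2} dd^c \log (\tilde{\omega}^{n+m}/\Omega)$ for a suitable product reference volume, using the volume identity above. This produces three pieces: the fiberwise $\mathrm{Ric}(\omega)$, the pullback $\pi_B^* \sum_a \mathrm{Ric}(\omega_{B_a})$, and the correction $-\tfrac{1}{2} dd^c \log \mathrm{p}(\mu_\omega)$. Forming the $\v$-weighted Ricci $\mathrm{Ric}_\v(\tilde{\omega}) = \mathrm{Ric}(\tilde{\omega}) - \tfrac{1}{2} dd^c \log \v(\mu_\omega)$ then combines the $\log \mathrm{p}$ and $\log \v$ corrections into the single term $-\tfrac{1}{2} dd^c \log(\mathrm{p}\v)(\mu_\omega)$, so that the fiber contribution is exactly $\mathrm{Ric}_{\mathrm{p}\v}(\omega)$.

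Third, I would apply the weighted trace $\Lambda_{\tilde{\omega},\v}$. The block structure diagonalizes $\tilde{\omega}$ along the fiber-base splitting, so the trace splits correspondingly. On the fiber directions, using that $\tilde{\omega}|_{\text{fiber}}=\omega$, the trace against $\mathrm{Ric}_{\mathrm{p}\v}(\omega)$ produces $\mathrm{Scal}_{\mathrm{p}\v}(\omega)/(\mathrm{p}\v)(\mu_\omega)$ after incorporating the factor of $2\v(\mu_\omega)$ at the end. On the horizontal directions, the restriction of $\tilde{\omega}$ to the $B_a$-factor is $(\langle p_a,\mu_\omega\rangle + c_a)\,\pi_B^* \omega_{B_a}$, so the trace against $\pi_B^*\mathrm{Ric}(\omega_{B_a})$ gives $\mathrm{Scal}(\omega_{B_a})/(\langle p_a,\mu_\omega\rangle + c_a)$. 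Summing over $a$ yields $\mathrm{q}(\mu_\omega)$, and one checks the combinatorial factor to recover the stated formula.

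The main obstacle is the bookkeeping in the second and third steps: tracking how $dd^c \log \mathrm{p}(\mu_\omega)$ restricts to the fiber and horizontal parts (where connection curvature terms \eqref{def:connection} enter), and confirming that the cscK hypothesis on each $(B_a,\omega_{B_a})$ is what allows the base contribution to assemble into a smooth function of $\mu_\omega$ alone (namely $\mathrm{q}(\mu_\omega)$). In practice, these computations are essentially those of \cite[Lemma 5.8]{AJL}, so I would invoke that reference for the most delicate identities and restrict attention to verifying that the weighted modifications combine as claimed.
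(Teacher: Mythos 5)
The paper does not prove this lemma; it is stated as a recall with a citation to \cite[Lemma 5.8]{AJL}, so there is no ``paper's own proof'' to compare against beyond that reference. Your sketch outlines the computational route one would take to derive it from scratch, and the structure you identify is correct: the block decomposition of the compatible metric $\tilde\omega$, the volume-form identity $\tilde\omega^{[n+m]}=\mathrm{p}(\mu_\omega)\,\omega^{[n]}\wedge\pi_B^*\omega_B^{[m]}$ (which is \eqref{vol:comp}), the induced decomposition of $\mathrm{Ric}(\tilde\omega)$ into a fiberwise piece, a base piece, and a $-\tfrac12 dd^c\log\mathrm{p}(\mu_\omega)$ correction, and the observation that this correction combines with $-\tfrac12 dd^c\log\v(\mu_\omega)$ so that the fiber contribution is governed by $\mathrm{Ric}_{\mathrm{p}\v}(\omega)$. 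This mechanism --- the $\mathrm{p}$-factor in the relative volume promoting fiberwise operators to their $\mathrm{p}$-weighted versions --- is exactly the one the paper makes explicit for the Laplacian in the proof of Lemma~\ref{l:operators-tilde}(i), so your reasoning is consistent with how the paper itself handles weighted operators on the fibration.

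That said, you explicitly leave the two hardest steps to \cite{AJL}: (a) tracking how $dd^c\log\mathrm{p}(\mu_\omega)$, computed on $Y$, produces connection-curvature cross-terms via \eqref{def:connection} and how these cancel or reassemble; and (b) verifying the precise coefficient in front of $\mathrm{q}(\mu_\omega)$ --- note that the statement has an extra $\v(\mu_\omega)$ multiplying $\mathrm{q}(\mu_\omega)$, which does not fall out of a naive horizontal trace and requires the careful bookkeeping you flag as ``the main obstacle.'' Since you defer exactly these to \cite[Lemma 5.8]{AJL}, your proposal is in the end functionally equivalent to the paper's citation: a plausible roadmap, but one whose delicate identities are outsourced rather than proved.
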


\begin{lemma}\label{l:operators-tilde} Let $\psi$ be a $\T$-invariant smooth function on $Y$ -seen as a $\T$-invariant function on $X \times B$ via \eqref{iden:func}. For any $x\in X$, let $\psi_x$ be the smooth function on $B$ associate to $\psi$; and for any $b\in B$, let $\psi_b$ be the smooth function on $X$ associate to $\psi$. We then have 
\smallskip
\begin{itemize}
\item[(i)] $\Delta_{\tilde \omega,\v}^Y \psi = \Delta^X_{\omega,\mathrm{p}\v } \psi_b + \Delta^B_{x} \psi_x$;
\smallskip
\item[(ii)] $\mathbb{L}^Y_{\tilde \omega,\v} \psi  = \mathbb{L}^{X}_{\omega, \mathrm{p}\v } \psi_b  + \mathbb{L}^B_x \psi_x + \Delta^B_x\left(\Delta^X_{\omega,\mathrm{p}\v}\psi_b \right)_x +  \Delta^X_{\omega,\mathrm{p}\v}\left(\Delta^B_{x} \psi_x \right)_{b} 
                  + \sum_{a=1}^k Q_a(x)\Delta^{B_a}_{\omega_{B_a}} \psi_x$;     
                  \smallskip
\item[(iii)] $\left({\mathbb{H}}^{\tilde \theta}_{\tilde \omega, \v}\right)^Y \psi  = \left({{\mathbb{H}}}^{\theta}_{\omega, \mathrm{p}\v }\right)^X \psi_b  + \sum_{a=1}^k P_a(x)\Delta^{B_a}_{\omega_{B_a}} \psi_x$.
\end{itemize}
\end{lemma}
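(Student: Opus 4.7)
The plan is to exploit the explicit block decomposition of the compatible Kähler form $\tilde{\omega}$ on $Y$. With respect to the splitting $TY = V \oplus H$ induced by the connection $\theta$, where $V$ is the vertical (fiber) distribution identified with $TX$ and $H \cong \pi_B^*TB$ is the horizontal one, the metric $\tilde{\omega}$ restricts to $\omega$ on $V$ and to $\sum_{a=1}^k (\langle p_a, \mu_\omega(x)\rangle + c_a)\,\pi_B^*\omega_{B_a}$ on $H$, with mixed components controlled by $\mu_\omega$ and $\theta$ (see \cite[Sec.~5]{AJL}). In particular, the volume form satisfies $\tilde{\omega}^{[n+m]} = \mathrm{p}(\mu_\omega)\, \omega^{[n]} \wedge \pi_B^*\omega_B^{[m]}$ fiberwise, and the identification \eqref{iden:func} allows us to work with $\T$-invariant functions on $X \times B$. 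These two ingredients drive all three assertions.

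For (i), I would characterize $\Delta^Y_{\tilde{\omega},\v}$ via the integration-by-parts identity \eqref{def d*}: $\int_Y \psi_1 (-\Delta^Y_{\tilde{\omega},\v}\psi_2)\,\v(\mu_\omega)\tilde{\omega}^{[n+m]} = \int_Y g_{\tilde{\omega}}(d\psi_1, d\psi_2)\,\v(\mu_\omega)\tilde{\omega}^{[n+m]}$. For $\T$-invariant functions the vertical and horizontal gradients are $g_{\tilde{\omega}}$-orthogonal, so the quadratic form splits as a sum of a fiber integral and a base integral. The volume factor $\mathrm{p}(\mu_\omega)$ combines with $\v(\mu_\omega)$ to produce the fiber weight $(\mathrm{p}\v)(\mu_\omega)$, giving the summand $\Delta^X_{\omega,\mathrm{p}\v}\psi_b$; the horizontal part is the unweighted Dirichlet form on $(B, \sum_a(\langle p_a,\mu_\omega(x)\rangle + c_a)\omega_{B_a})$, yielding $\Delta^B_x\psi_x$.

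For (iii), I would expand $\mathbb{H}^{\tilde{\theta}}_{\tilde{\omega},\v}\psi$ via the explicit formula \eqref{hash}. The trace $\Lambda_{\tilde{\omega}}\tilde{\theta}$ decomposes into a vertical contribution equal to $\Lambda_\omega\theta$ and a horizontal contribution of the form $\sum_a \frac{\langle p_a,\mu_\theta(x)\rangle + c_a}{\langle p_a,\mu_\omega(x)\rangle + c_a}$; differentiating the horizontal piece and pairing with $dd^c\psi$ contributes precisely the $P_a(x)\Delta^{B_a}_{\omega_{B_a}}\psi_x$ term. The remaining purely vertical pieces recombine, using the fact that $d\log\v(\mu_{\tilde{\omega}})$ absorbs the $d\log \mathrm{p}$ that was stripped off the volume form, into $\mathbb{H}^{\theta}_{\omega,\mathrm{p}\v}\psi_b$.

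Part (ii) is the main obstacle, since $\mathbb{L}_{\omega,\v}$ is fourth-order and built from the $J$-anti-invariant Hessian $\nabla^{-}d$. I would combine two viewpoints. First, a direct block computation: decomposing $\nabla^{-}d\psi$ in a vertical/horizontal frame, the pure vertical block of $(\nabla^{-}d)^{*_{\tilde{\omega}}}(\v(\mu_\omega)\nabla^{-}d\psi)$ reproduces $\mathbb{L}^X_{\omega,\mathrm{p}\v}\psi_b$ after the same weight shift $\v \to \mathrm{p}\v$ used in (i), and the pure horizontal block reproduces $\mathbb{L}^B_x\psi_x$. Second, a variational check: applying \eqref{comp:lichne}, which identifies $-2\mathbb{L}_{\varphi,\v}$ with the top-order part of $D_\varphi(\mathrm{Scal}_\v/\v)$, to both sides of Lemma~\ref{l:Y-Scal}, with $\varphi$ varying along $\T$-invariant potentials on $X$ (and, independently, along Kähler potentials on $B$). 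The mixed fourth-order terms $\Delta^B_x\Delta^X_{\omega,\mathrm{p}\v} + \Delta^X_{\omega,\mathrm{p}\v}\Delta^B_x$ are forced on one by matching $(\Delta^Y_{\tilde{\omega},\v})^2$ via (i) with the leading symbol of $2\mathbb{L}^Y_{\tilde{\omega},\v}$. Finally, the curvature correction $\sum_a Q_a(x)\Delta^{B_a}_{\omega_{B_a}}\psi_x$ encodes the obstruction to commuting vertical and horizontal derivatives, governed by the connection curvature $d\theta = \sum_a \pi_B^*\omega_{B_a}\otimes p_a$, together with the base scalar curvatures $\mathrm{Scal}(\omega_{B_a})$ entering through the $\v(\mu_\omega)\mathrm{q}(\mu_\omega)$ term in Lemma~\ref{l:Y-Scal}. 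The hardest step is the careful bookkeeping that links the mixed Hessian components of $\nabla^{-}d\psi$ to the $p_a$'s in $d\theta$ and identifies the explicit polynomial $Q_a(x)$; I would do this by computing $[\nabla^{\tilde{\omega},V}, \nabla^{\tilde{\omega},H}]$ on $\T$-invariant functions and reading off the coefficient of each $\Delta^{B_a}$ contribution.
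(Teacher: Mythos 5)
Your approach for parts (i) and (iii) is sound but heavier than what the paper does: instead of re-deriving the Dirichlet-form decomposition and expanding \eqref{hash} from scratch, the paper simply observes that the unweighted cases are already proved in \cite[Lemma A.3]{AJL} and only the extra term contributed by $\v$ needs to be computed. For (i) the extra term is $g_{\tilde{\omega}}(d_Y\psi, d_Y\log\v(\mu_\omega))$, which is purely vertical and collapses to $g_\omega(d_X\psi_b, d_X\log\v(\mu_\omega))$; for (iii) the $\v$-dependent term in \eqref{hash} is $g_{\tilde{\omega}}(\tilde\theta, d\log\v(\mu_{\tilde\omega})\wedge d^c\psi)$, again purely vertical, so (iii) follows directly from the $\v=1$ case. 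Your version would get there but duplicates work.

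Part (ii) is where your proposal has a genuine gap. The paper's proof hinges on the weighted Weitzenb\"ock-type identity (from \cite[Lemma A.2]{AJL})
\[
\mathbb{L}^Y_{\tilde\omega,\v}\psi = \tfrac{1}{2}\big(\Delta^Y_{\tilde\omega,\v}\big)^2\psi + \big(d^Y\big)^{*_{\tilde\omega,\v}}\Big(\mathrm{Ric}_{\tilde\omega,\v}\big((d^c_Y\psi)^\sharp\big)\Big),
\]
which reduces the fourth-order operator to part (i) (for the $\Delta^2$ piece) together with a decomposition of the Ricci term, the latter being controlled by \cite[eq.~(79)]{AJL} and Lemma~\ref{l:Y-Scal}. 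You do not invoke any identity of this kind. Your direct block computation of $(\nabla^{-}d)^{*_{\tilde\omega}}(\v(\mu_\omega)\nabla^{-}d\psi)$ is far more involved than your sketch admits: the Levi-Civita connection of $\tilde\omega$ does not preserve the vertical/horizontal splitting (the O'Neill tensors built from $d\theta=\sum_a\pi_B^*\omega_{B_a}\otimes p_a$ mix the two), so the "pure vertical block" is not simply $\mathbb{L}^X_{\omega,\mathrm{p}\v}\psi_b$ without tracking these corrections explicitly. Your secondary "variational matching" argument via \eqref{comp:lichne} and Lemma~\ref{l:Y-Scal} constrains the operator only up to lower-order terms annihilated by the test variations; in particular it does not by itself pin down the second-order correction $\sum_a Q_a(x)\Delta^{B_a}_{\omega_{B_a}}\psi_x$, which is exactly the piece you flag as "the hardest step" and leave as a programme rather than a proof. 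To close the gap you should first establish the Weitzenb\"ock identity above (or cite it), then handle the $\Delta^2$ term with (i) and the Ricci term using Lemma~\ref{l:Y-Scal} and the identity $\mathrm{Scal}_\v(\tilde\omega)=\v(\mu_{\tilde\omega})\Lambda_{\tilde\omega,\v}(\mathrm{Ric}_\v(\tilde\omega))$, as the paper does.
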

Here $\Delta^B_x$ denotes the Laplacian with respect to $\omega_{B,x}:= \sum_{a=1}^k (\langle p_a, x\rangle + c_a)\pi_B^* (\omega_{B_a})$ on $B$. \\

This lemma was proved in \cite[Lemma A.3]{AJL} when $\v=1$. We provide a proof in the general case for reader's convenience.

\begin{proof}
By \cite[Lemma A.3]{AJL}
\begin{equation*}
\Delta_{\tilde \omega}^Y \psi = \Delta^X_{\omega,\mathrm{p}} \psi_b + \Delta^B_{x} \psi_x.
\end{equation*}
Hence we only need to compute the weighted part of the Laplacian, i.e. $g_{\tilde{\omega}}(d_Y \psi, d_Y \log(\v(\mu_\omega)))$. By \eqref{iden:func}, $d_Y \psi= d_X \psi_b + d_B \psi_x$. Let $\theta{^r}:=\theta_1 \wedge \dots \wedge \theta_r$, where $\theta$ is defined in \eqref{def:connection} and $\theta_i:= \theta(\xi_i)$ with $(\xi_1,\dots, \xi_r)$ a fixed basis of $\mathfrak{t}$.
Also,

\begin{equation*}
    \begin{split}
       g_{\tilde{\omega}}(d_Y \psi_b, d_Y \log(\v(\mu_\omega)))
       &= \frac{d_Y \psi_b \wedge d^c_Y \log(\v(\mu_\omega)) \wedge \tilde \omega^{[m+n-1]}\wedge \theta^{ r}}{\tilde \omega^{[m+n]}\wedge \theta^{ r}} \\
       &=  \frac{d_X \psi_b \wedge d^c_X \log(\v(\mu_\omega)) \wedge \omega^{[n-1]}\wedge \mathrm{p}(\mu_{\omega}) (\pi_B^* \omega_B)^{[m]}\wedge \theta^{ r}}{\omega^{[n]}\wedge \mathrm{p}(\mu_{\omega})(\pi_B^* \omega_B)^{[m]} \wedge \theta^{ r}} \\
       &= g_{\omega}(d_X \psi_b, d_X \log \v(\mu_\omega)).
    \end{split}
\end{equation*}
We refer to \cite[eq. (27)]{AJL} for the first equality. The term involving $d_B \psi_x$ can be computed similarly. The proof of $(i)$ concludes simply using the definition of $\Delta^Y_{\tilde{\omega}, \v}$. \\
For $(ii)$ we first recall that
\begin{equation}\label{liche:decompo}
\begin{split}
\mathbb{L}^Y_{\tilde{\omega},\v}(\psi)=&\frac{1}{2}(\Delta^Y_{\tilde{\omega},\v})^{2}(\psi)+\left(d^Y\right))^*_{\tilde{\omega},\v}\left(\mathrm{Ric}_{\tilde{\omega},\v}((d_Y^{c}\psi)^{\sharp})\right), \\
\end{split}
\end{equation}
where $\sharp$ stands for the riemannian duality between $TY$ and $T^*Y$ by using the K\"ahler metric $\tilde{\omega}$, see \cite[Lemma A.2]{AJL}. By $(i)$ the first term of the RHS of the above equality writes as

\begin{equation*}
  (\Delta^Y_{\tilde{\omega},\v})^{2}(\psi) = (\Delta^X_{{\omega},\mathrm{p}\v})^{2}(\psi_b) + (\Delta^B_x)^{2}(\psi_x)  + \Delta^B_x\left(\Delta^X_{\omega,\mathrm{p}\v}\psi_b) \right)_x +  \Delta^X_{\omega,\mathrm{p}\v}\left(\Delta^B_{x} \psi_x \right)_{b}.
\end{equation*}
For the second term of \eqref{liche:decompo} we follow \cite[eq. (79)]{AJL}, and using Lemma \ref{l:Y-Scal} together with the fact that $\mathrm{Scal}_\v(\tilde{\omega})=\v(\mu_{\tilde{\omega}})\Lambda_{{\tilde{\omega}},\v}(\mathrm{Ric}_\v(\tilde{\omega}))$, we obtain that

\begin{equation*}
\begin{split}
      \left(d_Y\right)^{*,\tilde{\omega},\v}\left(\mathrm{Ric}_{\tilde{\omega},\v}\left((d_Y^{c}\psi)^{\sharp,\tilde{\omega}}\right)\right)=  & \left(d^X\right)^{*,\omega,\mathrm{p}\v}\left(\mathrm{Ric}_{\omega,\mathrm{p}\v }\left(\left(d_X^{c}\psi_b\right)^{\sharp,\omega}\right)\right)\\
      &+   \left(d^B\right)^{*,\omega_B}\left(\mathrm{Ric}_{\omega_B}\left(\left(d_B^{c}\psi_x\right)^{\sharp,\omega_B}\right)\right) 
      + \sum_{a=1}^k R_a(\mu_\omega) \Delta^{B_a}_{\omega_{B_a}}(\psi_x),
\end{split}      
\end{equation*}
%where $\sharp,\tilde{\omega}$  is the riemman duality with respect to $\tilde{\omega}$ and similary for $\sharp,\omega$ and $\sharp,\omega_B$. 
where $\left(d_Y\right)^{*,\tilde{\omega},\v}$ denote the adjoint operator of $d_Y$ with respect to $\v(\mu_{\tilde{\omega}})\tilde{\omega}^{[n+m]}$ and similarly for $\left(d_X\right)^{*,\omega,\mathrm{p}\v}$ and $d_B^{*,\omega_B}$. Combining the above identities we get $(ii)$; finally
$(iii)$ follows directly from the case when $\v=1$ (\cite[Lemma A.3]{AJL}).
\end{proof}

We can now state the main result of this section which extends \cite[Theorem 2]{AJL}: while the latter concerns the existence of extremal metrics on $Y$ ($\v=1$ and $\w=$affine extremal function), here we deal with the general case of weighted extremal metrics.

\begin{corollary}{\label{c:fibra:sec}}  Let $(\v,\w)$ be weights on $P$ with $\v>0$, $\log$-concave. The following are equivalent:
\begin{itemize}
\item[(i)] $Y$ admits a $(\v,\w)$-extremal metric in $[\tilde{\omega}_0]$;
\item[(ii)] The weighted Mabuchi energy $\M^Y_{\v,\w}$ of $(Y,[\tilde{\omega}_0],\T)$  is $\TC$-coercive;
\item[(iii)] $X$ admits a  $\T$-invariant $(\mathrm{p}\v, \tilde \w)$-cscK metric in the K\"ahler class $[\omega_0]$ (or equivalently $\M^X_{\mathrm{p}\v ,\tilde{\w}}$ is $\TC$-coercive),  where
\[\mathrm{p}(x)=\prod_{a=1}^k(\langle p_a, x\rangle+ c_a)^{m_a}, \qquad \tilde \w(x)=  \w(x)-\sum_{a=1}^k \frac{\mathrm{Scal}(\omega_{B_a})}{\left(\langle p_a, x \rangle + c_a\right)}  ;\].
\item[(iv)] The weighted Mabuchi energy $\M^X_{\mathrm{p}\v,\tilde{\w}}$ of $(X,\T,[\omega_0])$  is $\TC$-coercive.
\end{itemize}
\smallskip
Moreover, suppose that $(X,\omega_0,\T)$ is toric with Delzant polytope $P$. Then the later conditions are equivalent to:

\begin{itemize}
\item[(v)] $P$ is $(\mathrm{p}\v ,\tilde{\w})$ uniformly $K$-stable.
\end{itemize}
\end{corollary}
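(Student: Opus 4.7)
The plan is to deduce the corollary by combining Theorem~\ref{t:exi} (coercivity implies existence), the converse ``existence implies coercivity'' direction of \cite[Theorem~1]{AJL}, and the dictionary between compatible Kähler metrics on $Y$ and $\T$-invariant Kähler metrics on $X$ furnished by Lemmas~\ref{l:Y-Scal} and~\ref{l:operators-tilde}. The toric statement~(v) is then a direct application of Corollary~\ref{c:ytd} to $(X,\omega_0,\T)$ with weights $(\mathrm{p}\v,\tilde{\w})$; note that $\mathrm{p}$ is log-concave, as $\log \mathrm{p}=\sum_a m_a \log(\langle p_a,x\rangle+c_a)$ is a sum of concave functions on $P$, so $\mathrm{p}\v$ is log-concave whenever $\v$ is.

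The first two equivalences are almost formal. The implications (ii)~$\Rightarrow$~(i) and (iv)~$\Rightarrow$~(iii) follow from Theorem~\ref{t:exi} applied to $(Y,[\tilde{\omega}_0],\T)$ with weights $(\v,\w)$ and to $(X,[\omega_0],\T)$ with weights $(\mathrm{p}\v,\tilde{\w})$ respectively; the hypothesis that $\v$ is log-concave is used on the $Y$ side, and on the $X$ side it propagates to $\mathrm{p}\v$ by the remark above. The reverse directions (i)~$\Rightarrow$~(ii) and (iii)~$\Rightarrow$~(iv) are the content of \cite[Theorem~1]{AJL}, which asserts that existence of a weighted extremal metric implies $\TC$-coercivity of the corresponding weighted Mabuchi energy. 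So it remains to connect the $Y$-side with the $X$-side.

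The heart of the argument is the equivalence (i)~$\Leftrightarrow$~(iii). The key pointwise identity is Lemma~\ref{l:Y-Scal}: when $\tilde{\omega}$ is the compatible Kähler metric associated with $\omega$ on $X$, the relation between $\mathrm{Scal}_\v(\tilde{\omega})$ and $\mathrm{Scal}_{\mathrm{p}\v}(\omega)$, together with the contribution $\mathrm{q}(\mu_\omega)$ coming from the cscK factors $(B_a,\omega_{B_a})$, shows that $\tilde{\omega}$ is $(\v,\w)$-cscK if and only if $\omega$ is $(\mathrm{p}\v,\tilde{\w})$-cscK, with $\tilde{\w}=\w-\mathrm{q}$ as in the statement. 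To upgrade this to (iii)~$\Rightarrow$~(i) one takes the compatible lift of a $(\mathrm{p}\v,\tilde{\w})$-cscK metric on $X$, which is automatically $(\v,\w)$-cscK on $Y$, and since $\w$ is assumed of weighted extremal form we obtain a $(\v,\w)$-extremal metric in $[\tilde{\omega}_0]$. For the converse (i)~$\Rightarrow$~(iii), one invokes (i)~$\Rightarrow$~(ii) already proved, together with the comparison of Mabuchi energies along compatible potentials: using Lemma~\ref{l:operators-tilde} and the fact that each $(B_a,\omega_{B_a})$ is cscK, a direct computation yields an identity of the form
\[
\M^{Y}_{\v,\w}(\tilde{\varphi}) \;=\; C\cdot \M^{X}_{\mathrm{p}\v,\tilde{\w}}(\varphi) \,+\, \text{constant},
\]
for every $\T$-invariant $\omega_0$-relative potential $\varphi$ on $X$ and its compatible lift $\tilde{\varphi}$ on $Y$, where $C>0$ depends only on fixed fibration data. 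Since the map $\varphi\mapsto \tilde{\varphi}$ is equivariant for the $\T^{\mathbb C}$-action on the fibers, this identity transfers the $\TC$-coercivity from $\M^Y_{\v,\w}$ to $\M^X_{\mathrm{p}\v,\tilde{\w}}$, giving (ii)~$\Rightarrow$~(iv); Theorem~\ref{t:exi} then produces the desired $(\mathrm{p}\v,\tilde{\w})$-cscK on $X$.

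The main obstacle I anticipate is the Mabuchi-energy identity above: one must track the weighted Chen--Tian decomposition \eqref{Chen-Tian} on $Y$, plug in Lemma~\ref{l:operators-tilde} to split the weighted entropy and the $\J$-functional into fiber and base pieces, and show that the horizontal contributions collapse to a constant after integration over $B$, using precisely the cscK hypothesis on each $(B_a,\omega_{B_a})$ and the relation $\tilde{\w}=\w-\mathrm{q}$. This is the same mechanism as in the unweighted extremal case handled in \cite[Theorem~2]{AJL}; the new point is that every occurrence of the volume form must be accompanied by the weight $\mathrm{p}\v$ on $X$ versus $\v$ on $Y$, and all integrations by parts must be performed with the weighted Laplacian $\Delta^Y_{\tilde{\omega},\v}$ whose decomposition is given by part~(i) of Lemma~\ref{l:operators-tilde}. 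Once this bookkeeping is carried out, the proof concludes.
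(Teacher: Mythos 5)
Your proof is correct and follows essentially the same route as the paper: both establish the equivalences (i)$\Leftrightarrow$(ii) and (iii)$\Leftrightarrow$(iv) by combining Theorem~\ref{t:exi} with the converse direction from \cite[Theorem~1]{AJL}, connect the $Y$-side to the $X$-side through the fibration dictionary (Lemmas~\ref{l:Y-Scal}, \ref{l:operators-tilde}) and a Mabuchi-energy comparison adapted from \cite[Theorem~2]{AJL} and \cite[Theorem~6.1]{Jub23}, and obtain (v) from Corollary~\ref{c:ytd}. You give slightly more detail than the paper's terse citation list: you spell out the direct pointwise argument for (iii)$\Rightarrow$(i) via the scalar-curvature identity of Lemma~\ref{l:Y-Scal}, you make explicit the functional identity $\M^Y_{\v,\w}(\tilde{\varphi}) = C\,\M^X_{\mathrm{p}\v,\tilde{\w}}(\varphi)+\text{const}$ used to transfer coercivity, and you record that $\mathrm{p}\v$ is log-concave (needed to invoke Theorem~\ref{t:exi} and Corollary~\ref{c:ytd} on $X$). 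None of this departs from the paper's strategy.
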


\begin{proof}
The equivalence $(i)$ \(\Leftrightarrow\) $(ii)$ and $(iii)$ \(\Leftrightarrow\) $(iv)$ follow from Theorem \ref{t:exi}. The equivalence $(ii)$ \(\Leftrightarrow\) $(iii)$ is a straightforward adaptation of \cite[Theorem 6.1]{Jub23} and \cite[Theorem 2]{AJL}, combined with Lemma \ref{l:operators-tilde} and Theorem \ref{t:exi}. Lastly, when \((X,\T)\) is toric, the implication $(v)$ \(\Rightarrow\) $(iii)$ follows from Proposition \ref{p:stable:propre}, while $(iii)$ \(\Rightarrow\) $(v)$ is established in Corollary \ref{c:ytd}.
\end{proof}

\subsection{A sufficient condition for existence on toric fibration}

%We use the same notation than Section \ref{s:ytd}. 

$(X,\omega_0,\T)$ be a toric K\"ahler manifold of dimension $n$. We recall the cone decomposition used in \cite{DJ, ZZ}. Let \(F_j\) be the facet of \(P\) defined by \(L_j\). 
For each \(j=1,\cdots,d\), let \(P_j\) be the cone with basis \(F_j\) and vertex \(x_0\) as illustrated in Figure~\ref{figure-cone}.
For a function \(f\in \mathcal{C}^\infty(P)\), we denote by \(d_xf\) its differential at \(x\in P\). We then have

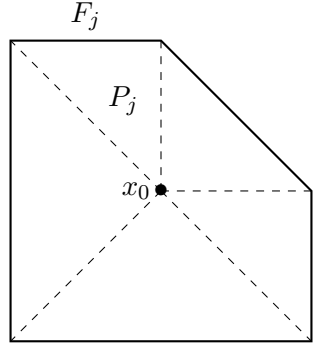
\begin{figure}
\label{figure-cone}
\centering
\caption{The cone decomposition}
\begin{tikzpicture}  
\draw (0,0) node{$\bullet$};
\draw (0,0) node[left]{\(x_0\)};
\draw [thick] (-2,-2) -- (2,-2) -- (2,0) -- (0,2) -- (-2,2) -- cycle;
\draw (-1,2) node[above]{$F_j$};
\draw (-0.5,1.2) node{$P_j$};
%\draw (1,1) node[above right]{$F_2$};
%\draw (2,-1) node[right]{$F_3$};
%\draw (0,-2) node[below]{$F_4$};
%\draw (-2,0) node[left]{$F_5$};
\draw [dashed] (0,0) -- (-2,-2);
\draw [dashed] (0,0) -- (2,-2);
\draw [dashed] (0,0) -- (2,0);
\draw [dashed] (0,0) -- (0,2);
\draw [dashed] (0,0) -- (-2,2);
\end{tikzpicture}
\end{figure}

\begin{theorem}[\cite{DJ}]{\label{prop-combinatorial}}
Assume that for all $j=1, \dots, d$, for all \(x \in P_j\), 
\begin{equation}{\label{combinatorial1}}
    \frac{1}{L_j(x_0)}\left(\v(x)( n+1) + d_x \v(x-x_0) \right) - \frac{\w(x)}{2} \geq 0.
\end{equation}
Then $(P,\textbf{L})$ is $(\v,\w)$-uniformly K-stable. \\
In particular, when $X$ is Fano and $[\omega_0]= t c_1(X)$ the condition 
\begin{equation*}
    \frac{1}{t}\left(\v(x)( n+1) + d_x \v(x-x_0) \right) - \frac{\w(x)}{2} \geq 0 \qquad \forall x \in P,
\end{equation*}
implies that $(P,\textbf{L})$ is $(\v,\w)$-uniformly K-stable. 
\end{theorem}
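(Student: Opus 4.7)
The plan is to use a cone-decomposition / integration-by-parts argument, generalizing the unweighted cscK strategy of~\cite{DJ} to the weighted setting. Decomposing $P = \bigcup_{j=1}^d P_j$ with each $P_j$ the cone from $x_0$ over the facet $F_j$, I will first establish the cone identity
\begin{equation*}
L_j(x_0)\int_{F_j} g\, d\sigma = \int_{P_j}\bigl(n\,g + d_xg(x-x_0)\bigr)dx,
\end{equation*}
valid for any smooth $g$ on $P_j$. This follows from the divergence theorem applied to the vector field $g(x-x_0)$: the flux vanishes on the lateral cone faces through $x_0$ since $(x-x_0)$ is tangent there, while on $F_j$ the relation $\langle x-x_0,\nabla L_j\rangle = -L_j(x_0)$ together with the normalization $dL_j\wedge d\sigma = -dx$ produces the boundary term on the left.

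Applying this identity with $g = f^*\v$, where $f^*\in\CV^{\infty}_*(P)$ is the normalized projection of $f\in\CV^{\infty}(P)$ (so $f^*\geq 0$, $f^*(x_0)=0$), expanding $d_x(f^*\v)(x-x_0) = \v\, d_xf^*(x-x_0) + f^*\, d_x\v(x-x_0)$, and using the convexity estimate $d_xf^*(x-x_0)\geq f^*(x)$ (obtained by evaluating the convexity of $f^*$ along the segment from $x_0$ to $x$) together with $\v>0$, gives cone-wise
\begin{equation*}
L_j(x_0)\int_{F_j} f^*\v\, d\sigma \geq \int_{P_j} f^*\bigl((n+1)\v + d_x\v(x-x_0)\bigr)dx.
\end{equation*}
Summing over $j$ and subtracting $\int_P f^*\w\, dx$ yields
\begin{equation*}
\mathcal{F}_{\v,\w}(f^*) \geq 2\sum_{j=1}^d\int_{P_j} f^*\left[\frac{(n+1)\v + d_x\v(x-x_0)}{L_j(x_0)} - \frac{\w}{2}\right] dx \geq 0
\end{equation*}
under hypothesis~\eqref{combinatorial1}. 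Applied to $\pm\ell$ for $\ell$ affine (where $\ell^*\equiv 0$), this shows that the weighted Futaki invariant vanishes on affine functions, hence $\mathcal{F}_{\v,\w}(f)=\mathcal{F}_{\v,\w}(f^*)$ for every $f\in\CV^{\infty}(P)$.

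For the uniform coercivity $\mathcal{F}_{\v,\w}(f^*)\geq \lambda\|f^*\|_{L^1(P)}$, I will retain the exact identity---not discarding the positive term $\tfrac{2\v}{L_j(x_0)}[d_xf^*(x-x_0)-f^*]$ appearing before the convexity step---and exploit the strict positivity $\v\geq c>0$ on the compact polytope together with a standard convex-function Poincar\'e inequality to extract $\lambda>0$. This last step is the main obstacle, since the convexity inequality $d_xf^*(x-x_0)\geq f^*$ becomes equality along radially affine rays from $x_0$ and hence the coercivity constant is not automatic from non-negativity alone; it must be recovered from the strict positivity of $\v$ and a quantitative compactness argument on $P$. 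Finally, the Fano corollary is immediate: for $[\omega_0]=tc_1(X)$ with $x_0$ the origin in the anticanonical polytope one has $L_j(x_0)=t$ uniformly in $j$, reducing the cone-wise inequality to the single inequality on all of $P$.
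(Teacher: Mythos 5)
The paper quotes this result from~\cite{DJ} without supplying a proof, so there is no in-paper argument to compare against; your proposal has to stand on its own. Your cone identity via the divergence theorem applied to $g(x-x_0)$, the instantiation $g=f^*\v$, and the convexity estimate $d_xf^*(x-x_0)\geq f^*(x)$ for $f^*\in\CV^\infty_*(P)$ are all correct and do yield $\mathcal{F}_{\v,\w}(f^*)\geq 0$ under hypothesis~\eqref{combinatorial1}. The genuine gap is exactly the one you flag: upgrading nonnegativity to $\mathcal{F}_{\v,\w}(f^*)\geq\lambda\|f^*\|_{L^1(P)}$. The repair you propose --- keeping the positive remainder $\tfrac{2}{L_j(x_0)}\int_{P_j}\v\,\bigl(d_xf^*(x-x_0)-f^*\bigr)\,dx$ and feeding it into a Poincar\'e-type inequality --- cannot work in the form you describe, because the convexity gap $d_xf^*(x-x_0)-f^*(x)$ vanishes identically whenever $f^*$ is positively homogeneous of degree one about $x_0$; for example $f^*(x)=|x-x_0|$ gives $d_xf^*(x-x_0)=|x-x_0|=f^*(x)$ pointwise. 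For such $f^*$ one has $\|f^*\|_{L^1(P)}>0$ while the retained term is $0$, so no inequality $\int_P\bigl(d_xf^*(x-x_0)-f^*\bigr)\,dx\gtrsim\|f^*\|_{L^1(P)}$ can hold, and the entire coercivity must then come from $\sum_j\int_{P_j}f^*\,B_j\,dx$ with $B_j:=\frac{2\bigl((n+1)\v+d_x\v(x-x_0)\bigr)}{L_j(x_0)}-\w$, which under the stated $\geq 0$ hypothesis is merely nonnegative. Closing the argument requires an additional ingredient (a quantitative lower bound on $B_j$, a compactness argument on normalized convex test functions, or a separate treatment of the degree-one homogeneous cone) that is not present in your outline.

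A secondary flaw: the claim that applying the cone-wise inequality to $\pm\ell$ shows the weighted Futaki invariant vanishes on affine functions does not follow, because the convexity bound you use is only available for $f^*\geq 0$ with $f^*(x_0)=0$, while a generic affine $\ell$ changes sign on $P$. Carrying out the cone identity exactly for affine $\ell$ gives $\mathcal{F}_{\v,\w}(\ell)=\sum_j\int_{P_j}\ell\,B_j\,dx-\ell(x_0)\,c$ with $c=\sum_j\frac{2}{L_j(x_0)}\int_{P_j}\v\,dx>0$, and since $\ell$ is signed and $B_j\geq 0$ only, this is not forced to vanish. The vanishing of $\mathcal{F}_{\v,\w}$ on affine functions is in fact a \emph{consequence} of uniform K-stability (test $\pm\ell$ against the definition, using $\ell^*\equiv 0$), so it must either be part of the standing hypotheses on $(\v,\w)$ or established by a separate argument; hypothesis~\eqref{combinatorial1} alone does not supply it.
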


%When $X$ is Fano, and $[\omega_0] := t c_1(X)$, we can infer that there exists a point $x_0 \in P^0$ such that $L_1(x_0) = \cdots = L_d(x_0) = t$. \textcolor{red}{why? it seems that the thm below is just a consequence of that. maybe write differently} \textcolor{magenta}{Its a standar result of toric geometry. You can find this result in the book of Cox (I check the precise ref Monday in the office where i have the book)} In this particular case we have:

We then arrive at
\begin{corollary}{\label{c:effective:cond}}
Let $(Y,\tilde{\omega}_0, \T)$ be a semisimple principal fibration with toric fiber $(X, \omega_0,\T)$. Assume that the weights $(\mathrm{p}\v, \tilde{\w})$ satisfy the conditions in Theorem \ref{prop-combinatorial} and that $\v$ is $\log$-concave. Then there exists a $(\v,\w)$-cscK metric in $[\tilde{\omega}_0]$.
\end{corollary}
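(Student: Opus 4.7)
The plan is to chain together two results already established in the paper: the combinatorial K-stability criterion of Theorem \ref{prop-combinatorial} feeds directly into the fibration correspondence of Corollary \ref{c:fibra:sec}. First I would apply Theorem \ref{prop-combinatorial} to the weight pair $(\mathrm{p}\v, \tilde{\w})$ on the labeled Delzant polytope $(P,\mathbf{L})$ of the toric fiber $X$. Since by assumption the cone inequalities \eqref{combinatorial1} are satisfied with $\v$ replaced by $\mathrm{p}\v$ and $\w$ by $\tilde{\w}$, the theorem yields directly that $(P,\mathbf{L})$ is $(\mathrm{p}\v, \tilde{\w})$-uniformly K-stable. This is exactly condition $(v)$ of Corollary \ref{c:fibra:sec}.

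Next I would verify that the standing hypotheses of Corollary \ref{c:fibra:sec} hold: $\v>0$ and $\v$ log-concave on $P$ are given. It is also worth noting that $\mathrm{p}\v$ is log-concave, since $\mathrm{p}=\prod_{a=1}^k (\langle p_a, \cdot\rangle+c_a)^{m_a}$ is a positive product of log-affine factors and log-concavity is preserved under positive products. Then I would invoke the implication $(v)\Rightarrow(i)$ of Corollary \ref{c:fibra:sec} to conclude that $Y$ admits a $(\v,\w)$-extremal K\"ahler metric in $[\tilde{\omega}_0]$. Since a $(\v,\w)$-extremal metric is, by the paper's convention, a solution of \eqref{wcsck} with weights $(\v,\w)$, this is precisely a $(\v,\w)$-cscK metric in $[\tilde{\omega}_0]$, which is the desired conclusion.

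There is no substantial obstacle here, as the statement reduces to a straightforward concatenation of the combinatorial criterion (Theorem \ref{prop-combinatorial}) and the $(v)\Rightarrow(i)$ direction of the fibration correspondence (Corollary \ref{c:fibra:sec}). The hard work is hidden inside those two results: the combinatorial inequality encodes K-stability through explicit boundary/interior integral bounds, while the fibration correspondence requires the full strength of the main existence Theorem \ref{t:exi} applied on both sides (together with the transfer of operators given by Lemma \ref{l:operators-tilde} and Lemma \ref{l:Y-Scal}). The only minor point left to check in this proof is the log-concavity of $\mathrm{p}\v$, which is immediate from the formula for $\mathrm{p}$.
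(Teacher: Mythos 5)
Your proof is correct and follows the intended route: the paper states Corollary \ref{c:effective:cond} with no explicit proof, and the natural argument is precisely your concatenation of Theorem \ref{prop-combinatorial} (applied to the weights $(\mathrm{p}\v,\tilde{\w})$, giving condition $(v)$ of Corollary \ref{c:fibra:sec}) with the implication $(v)\Rightarrow(i)$ of Corollary \ref{c:fibra:sec}. One small terminological slip: the factors $(\langle p_a,\cdot\rangle+c_a)^{m_a}$ of $\mathrm{p}$ are not log-affine; rather, $m_a\log(\langle p_a,x\rangle+c_a)$ is concave as a nonnegative multiple of $\log$ composed with a positive affine function, so $\mathrm{p}$ is log-concave, and $\mathrm{p}\v$ is log-concave as a product of log-concave positive functions -- which is, as you note, the implicit hypothesis needed so that Corollary \ref{c:fibra:sec} can invoke Theorem \ref{t:exi} on the $X$-side with weight $\mathrm{p}\v$.
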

Here the weight $\mathrm{p}$ is the one introduced in Lemma \ref{l:Y-Scal}.
Observe that the case $\mathrm{p}=1$ and $\tilde{\w}=\w$ corresponds to the case when $B$ is a point and $(Y,\tilde{\omega}_0, \T)=(X, \omega_0,\T)$.

\bibliographystyle{plain}
\bibliography{biblio.bib}
\bigskip

 \noindent{\sc Universit\`a di Roma Tor Vergata, \\Via della Ricerca Scientifica 1,
Rome, Italy.\\
\tt {dinezza@mat.uniroma2.it}}
\bigskip
  
 \noindent{\sc Sorbonne Université, Université Paris Cité, CNRS, IMJ-PRG,\\ F-75005 Paris, France,\\
 \tt {simon.jubert@imj-prg.fr}}
\bigskip

\noindent{\sc Campus Saint-Jean, University of Alberta, 8406, 91 Street, Edmonton (AB),
\\ Canada T6C 4G9,\\
 \tt {lahdili.abdellah@gmail.com}}
\bigskip

\end{document}